\def\wilh@nomsection{section}}
\def\wilh@nomsection{chapter}}
\def\thenomenclature{%
  \begin{multicols}{2}[%
    \csname\wilh@nomsection\endcsname*{\nomname}
    \if@intoc\addcontentsline{toc}{\wilh@nomsection}{\nomname}\fi
    \nompreamble]
  \list{}{%
    \labelwidth\nom@tempdim
    \leftmargin\labelwidth
    \advance\leftmargin\labelsep
    \itemsep\nomitemsep
    \let\makelabel\nomlabel}%
}
\def\endthenomenclature{%
  \endlist
  \end{multicols}
  \nompostamble}
\renewcommand{\nomname}{Nomenclature}
\newcommand{\pipe}{\|} %
\theoremstyle{definition}
\newtheorem{definition}{Definition}[section]
\newtheorem{example}[definition]{Example}
\theoremstyle{plain}
\newtheorem{theorem}[definition]{Theorem}
\newtheorem{corollary}[definition]{Corollary}
\newtheorem{proposition}[definition]{Proposition}
\newtheorem{lemma}[definition]{Lemma}
\theoremstyle{remark}
\newtheorem{remark}[definition]{Remark}
\newcommand\Tstrut[1][2.6]{\rule{0pt}{#1ex}}  %
\newcommand\Bstrut{\rule[-0.9ex]{0pt}{0pt}}   %
\newcommand\shline[1]{\Bstrut \\ \hline \Tstrut[#1]}
\newcommand{\eg}{e.\,g.,\ } 
\newcommand{\ie}{i.\,e.,\ }
\newcommand{\eul}{\mathrm{e}}
\newcommand{\imag}{\mathrm{i}}
\newcommand{\A}{\mathcal{A}}
\newcommand{\B}{\mathcal{B}}
\newcommand{\D}{\mathcal{D}}
\newcommand{\E}{\mathcal{E}}
\newcommand{\W}{\mathcal{W}}
\newcommand{\Y}{\mathcal{Y}}
\newcommand{\cP}{\mathcal{P}}
\newcommand{\eio}[1][\omega]{\eul^{\imag #1}}
\newcommand{\emio}[1][\omega]{\eul^{-\imag #1}}
\newcommand{\N}{\mathbb{N}}
\newcommand{\R}{\mathbb{R}}
\newcommand{\C}{\mathbb{C}}
\newcommand{\K}{\mathbb{K}}
\newcommand{\cK}{{K}}
\newcommand{\cW}{\mathcal{W}}
\newcommand{\cR}{\mathcal{R}}
\newcommand{\cJ}{\mathcal{J}}
\newcommand{\mat}[3][K]{
\mathbb{#1}^{
\IfSubStr{#2}{+}{(#2)}{#2}
\times
\IfSubStr{#3}{+}{(#3)}{#3}
}
}
\newcommand{\matz}[3][K]{\mathbb{#1}[z]^
{
	\IfSubStr{#2}{+}{(#2)}{#2}
	\times
	\IfSubStr{#3}{+}{(#3)}{#3}
}}
\newcommand{\matrz}[3][K]{\mathbb{#1}(z)^
	{
		\IfSubStr{#2}{+}{(#2)}{#2}
		\times
		\IfSubStr{#3}{+}{(#3)}{#3}
	}}
\newcommand{\rkr}[1][K]{\operatorname{rk}_{\mathbb{#1}(z)}}
\newcommand{\rk}{ \operatorname{rk}}
\newcommand{\im}{ \operatorname{im}}
\newcommand{\In}{ \operatorname{In}}
\newcommand{\diag}{ \operatorname{diag}}
\newcommand{\sign}{\varepsilon}
\DeclareDocumentCommand{\Sigmn}{O{m} O{n} }{\Sigma_{#1,#2}(\K)}
\DeclareDocumentCommand{\Sigmnq}{O{m} O{n} O{q}}{\Sigma_{#1,#2,#3}(\K)}
\DeclareDocumentCommand{\Sign}{ O{n} }{\Sigma_{#1}(\K)}
\DeclareDocumentCommand{\Sigmnw}{O{m} O{n} }{\Sigma_{#1,#2}^w(\K)}
\DeclareDocumentCommand{\system}{ O{E} O{A} O{B} O{m} O{n} s t{S}}{
\IfBooleanTF{#7}{(#1,\,#2)
 \IfBooleanTF{#6}{}
  {\in\Sigmn[0][#5]}
}
{(#1,\,#2,\,#3)
  \IfBooleanTF{#6}{}
  {\in\Sigmn[#4][#5]}}
}
\DeclareDocumentCommand{\outputsystem}{ O{E} O{A} O{B} O{C} O{D} O{m} O{n} s t{S}}{
	\IfBooleanTF{#9}{(#1,\,#2)
		\IfBooleanTF{#8}{}
		{\in\Sigmnq[0][#9]}
	}
	{(#1,\,#2,\,#3,\,#4,\,#5)
		\IfBooleanTF{#8}{}
		{\in\Sigmnq[#6][#7]}}
}
\DeclareDocumentCommand{\wsystem}{ O{E} O{A} O{B} O{Q} O{S} O{R} O{m} O{n} s}{(#1,\,#2,\,#3,\,#4,\,#5,\,#6)
\IfBooleanTF{#9}{}
  {\in\Sigmnw[#7][#8]}
}
\DeclareDocumentCommand{\bset}{ O{E} O{A} O{B} t{T} }{
\mathfrak{B}_{
\IfBooleanTF{#4}{(#1,\,#2)}
{
\system[#1][#2][#3]*} 
}
}
\DeclareDocumentCommand{\behavior}{s t{C} O{x} O{u} O{E} t{T}  }{
\IfBooleanTF{#1}{}
{
\IfBooleanTF{#6}{#3\in}
{\vect{#3,#4}\in}}
\IfBooleanTF{#6}{\bset[#5]T}{\bset[#5]}
}
\DeclareDocumentCommand{\behaviorC}{s}{CHANGE!}
\DeclareDocumentCommand{\wsystemF}{s}{
	\IfBooleanTF{#1}{\wsystem[E_F][A_F][B_F][Q_F][S_F][R_F][m][n]*}
	{\wsystem[E_F][A_F][B_F][Q_F][S_F][R_F][m][n]}
}
\DeclareDocumentCommand{\systemF}{s}{
	\IfBooleanTF{#1}{\system[E_F][A_F][B_F][m][n]*}
	{\wsystem[E_F][A_F][B_F][m][n]}
}
\DeclareDocumentCommand{\objfunc}{t{C} O{x} O{u} }{
\mathcal J(#2,#3)
}
\DeclareDocumentCommand{\inffunc}{t{C} O{E} O{x^0} }{
\mathcal W_+(#2 #3)
}
\DeclareDocumentCommand{\Vcons}{t{F} O{E} O{A} O{B}}{
\IfBooleanTF{#1}
{\mathcal{W}^c_F%
}
{\mathcal{W}^c%
}
}
\DeclareDocumentCommand{\cV}{O{\Sigma} t{F} }{
	\mathcal{V}
	\IfBooleanTF{#2}
	{_{\systemF*}}
	{_{\system*}}
}
\DeclareDocumentCommand{\Vshift}{O{E} O{A} O{B}}{\mathcal{W}_{\system[#1][#2][#3]*}
}
\DeclareDocumentCommand{\VshiftZD}{O{E} O{A} O{B} O{C} O{D}}{\mathcal{W}^0_{\outputsystem[#1][#2][#3][#4][#5]*}
}
\DeclareDocumentCommand{\ZD}{O{E} O{A} O{B} O{C} O{D}}{\frak{ZD}_{\outputsystem[#1][#2][#3][#4][#5]*}
}
\DeclareDocumentCommand{\Vdiff}{O{E} O{A} O{B}}{\mathcal{W}%
^\OpDiff}
\DeclareDocumentCommand{\wsystemS}{s}{
\IfBooleanTF{#1}{ \wsystem[I_{n_1}][A_s][B_s][Q_s][S_s][R_s][m][n_1]* }
  { \wsystem[I_{n_1}][A_s][B_s][Q_s][S_s][R_s][m][n_1] }
 }
 \DeclareDocumentCommand{\OpShift}{}{\sigma}
 \DeclareDocumentCommand{\OpDisc}{O{1}}{\varDelta_#1}
 \DeclareDocumentCommand{\OpDiff}{}{\frac{\mathrm{d}}{\mathrm{d}t}}
\NewDocumentCommand \vect { s o m }
 {
  \IfBooleanTF {#1}
  {{ \vectauxstar{#3}}^*
  }
  {{ \vectauxn{#3}}}
}
\DeclarePairedDelimiterX \vectauxstar [1] {\big\lparen} {\big\rparen}
 { \, \dbacc_vect:n { #1 } \, }
 \DeclarePairedDelimiterX \vectauxn [1] {\lparen} {\rparen}
 { \, \dbacc_vectn:n { #1 } \, }
 \newcommand{\propfour}{
 	for every $\varepsilon>0$ and every $x^0\in\Vshift$ there exists a $\behavior\cap(\ell^2(\K^n) \times \ell^2(\K^m))$ such that $Ex_0 = Ex^0$ and   
 	$\| Kx+ Lu\|_{\ell^2} < \varepsilon,$
 	}
 \newcommand{\propfourI}{
 	for every $\varepsilon>0$ and every $x^0\in\K^n$ there exists a $\behavior\cap(\ell^2(\K^n) \times \ell^2(\K^m))$ such that $Ex_0 = Ex^0$ and   
 	$\| Kx+ Lu\|_{\ell^2} < \varepsilon,$
 	}
\begin{document}
\title{On linear-quadratic optimal control of implicit difference equations}
\author[rvt,fn0]{Daniel~Bankmann\corref{cor}}
\cortext[cor]{Corresponding author}
\ead{bankmann@math.tu-berlin.de}
\author[rvt,fn]{Matthias~Voigt}
\ead{mvoigt@math.tu-berlin.de}
\fntext[fn0]{The first author has been partially supported by the European Research Council through the Advanced Grant ``Modeling, Simulation and
Control of Multi-Physics Systems'' (MODSIMCONMP).}
\fntext[fn]{The second author has been supported by the project \textit{SE1: Reduced Order Modeling for Data Assimilation} within the framework of the Einstein Center for Mathematice (ECMath) funded by the Einstein Foundation Berlin.}

\address[rvt]{Technische Universit\"at Berlin, Institut für Mathematik, Sekretariat MA 4--5, Stra{\ss}e des 17. Juni 136, 10623 Berlin, Germany} 
\begin{abstract}
In this work we investigate explicit and implicit difference equations and the corresponding infinite time horizon linear-quadratic optimal control problem. We derive conditions for feasibility of the optimal control problem as well as existence and uniqueness of optimal controls under certain weaker assumptions compared to the standard approaches in the literature which are using algebraic Riccati equations. To this end, we introduce and analyze a discrete-time Lur'e equation and a corresponding Kalman-Yakubovich-Popov inequality. We show that solvability of the Kalman-Yakubovich-Popov inequality can be characterized via the spectral structure of a certain palindromic matrix pencil. The deflating subspaces of this pencil are finally used to construct solutions of the Lur'e equation. The results of this work are transferred from the continuous-time case. However, many additional technical difficulties arise in this context.
\end{abstract}
\begin{keyword}
  discrete-time systems, implicit difference equations, Kalman-Yakubovich-Popov lemma, Lur'e equation, optimal control, palindromic matrix pencils, quasi-Hermitian matrices, Riccati equations
  \MSC[2010] 15A21 \sep 15A22 \sep 15B57 \sep 49J21 \sep 49K21 \sep 93C05 \sep 93C55
\end{keyword}
\maketitle

\section{Introduction}

In this article we revisit the discrete-time linear-quadratic optimal control problem, that is minimizing a quadratic cost functional given by
\begin{equation*}\label{eq:objectivefunctioncontintro}
 \sum_{j=0}^\infty{
  \begin{pmatrix}
   x_j\\
   u_j
  \end{pmatrix}^*
  \begin{bmatrix}
   Q	& S\\
   S^*	& R
  \end{bmatrix}
  \begin{pmatrix}
   x_j\\
   u_j
  \end{pmatrix}
 }
\end{equation*}
subject to the \emph{implicit difference equation} 
\begin{equation}\label{eq:linsystemdisc}
 E\OpShift x_j = Ax_j + Bu_j,
\end{equation}
with the initial condition $Ex_0 = Ex^0$ and the stabilization condition $\lim_{j \to \infty} Ex_j = 0$. Here $\OpShift$ denotes the shift operator, \ie $\OpShift x_j = x_{j+1}$. Moreover, $(x_j)_j\in(\K^n)^{\N_0}$ is the \emph{state sequence}, and $(u_j)_j\in(\K^m)^{\N_0}$ is the \emph{input sequence}. Throughout this work we will further assume that the matrix pencil $zE-A\in\matz{n}{n}$ is regular, \ie {${\det(zE-A)\not\equiv 0}$}. Such discrete-time systems often appear during the time-discretization \cite{BreCP96} or discrete-time lifting \cite{KahMP99} of continuous-time differential-algebraic equations, but many problems can also be directly modeled as implicit difference equations \cite{LueA77,PeaCS88}. 

There is a large body of work concerning the linear-quadratic optimal control problem for differential-algebraic equations, see, \eg \cite{mehrmann_autonomous_1991, lancaster_algebraic_1995, backes_extremalbedingungen_2006,kurina_linear-quadratic_2004,reis_kalmanyakubovichpopov_2015}, just to mention a few. 

So far, the discrete-time optimal control problem has only been discussed in a few works, most of which treat the case $E=I_n$. However, several technical assumptions are usually made. In \cite{laub_schur_1979}, the case of an invertible $A$ with $Q \succeq 0$ and $R \succ 0$ is discussed. The invertibility is needed to form a symplectic matrix that is associated with a discrete-time algebraic Riccati equation
\begin{equation}\label{eq:DARE}
 A^*XA-X - (A^*XB + S)(B^*XB+R)^{-1} (B^*XA + S^*) + Q = 0, \quad X=X^*
\end{equation}
and the necessary optimality conditions. The invertibility condition is relaxed in \cite{pappas_numerical_1980-1,laub_invariant_1991} where instead of the symplectic matrix, a symplectic matrix pencil is considered. Another difficulty arises, if $R$ is not invertible. Then also the symplectic pencil cannot be formed and one has to turn to an extended symplectic pencil \cite{ionescu_computing_1992}, which is essentially what we will later call the BVD matrix pencil. However, for the analysis it is still assumed that this pencil is regular.

The relation of the optimal control problem to a certain linear matrix inequality (the Kalman-Yakubovich-Popov inequality) and so-called Popov functions is discussed in \cite{stoorvogel_zeug_1998}. Here a generalized algebraic Riccati equation is considered, where $B^*XB+R$ is not assumed to be invertible. It is shown that the solutions of this equation fulfill a certain rank-minimization property of the associated linear matrix inequality which, in contrast to the continuous-time case, do not need to be solutions of the algebraic Riccati equation \eqref{eq:DARE}. 

The case of optimal control problems for implicit difference equations, \ie the case where $E$ might be singular, has only been briefly considered in the literature. In \cite{bender_laub_svdzeug_1987}, a discrete-time algebraic Riccati equation similar to \eqref{eq:DARE} is derived by transforming the system into SVD coordinates and modifying the cost functional accordingly. This analysis needs an index-1 condition on the system to ensure the solvability of the optimality system. The monograph \cite{mehrmann_autonomous_1991} treats the problem numerically, \ie structure-preserving algorithms for symplectic matrix pencils are devised.

The goal of this work is a full theoretical analysis of the infinite time horizon linear-quadratic optimal control problem for implicit difference equations. In contrast to most other works, we do not impose any definiteness conditions on the cost functional nor the index of the system. Also, our notion of rank-minimality turns out to be more general than in \cite{stoorvogel_zeug_1998}. The results obtained in this paper are motivated by recent achievements for the continuous-time case \cite{reis_lure_2011,reis_kalmanyakubovichpopov_2015,voigt_linear-quadratic_2015}.

This paper is structured as follows. In Section~\ref{chap:prelim} we recap basic matrix and control theoretic notations and results. In Section~\ref{chap:kyp} we introduce a variant of the Kalman-Yakubovich-Popov inequality for implicit difference equations
given by
\begin{equation*}\label{eq:introkyp}
\mathcal M (P):=
    \begin{bmatrix}
      A^*PA - E^*PE + Q		& A^*PB + S \\
      B^*PA + S^*		& B^*PB + R
    \end{bmatrix} \succeq_{\cV} 0,\qquad P=P^*,
    \end{equation*}
a discrete-time version of the inequality introduced in \cite{reis_kalmanyakubovichpopov_2015}, where $\succeq_{\cV}$ denotes an inequality projected on a certain subspace $\cV$, \ie $V^*\mathcal M (P)V \succeq 0$ holds for any basis matrix $V$ of $\cV$. We show statements which relate the solvability of this inequality 
to the non-negativity of the Popov function on the unit circle, a certain rational matrix function defined by
\begin{equation*}
\Phi(z):=
  \begin{bmatrix}
      (zE-A)^{-1}B	\\
      I_m		
  \end{bmatrix} ^\sim
  \begin{bmatrix}
      Q		& S	\\
      S^*	& R	
  \end{bmatrix} 
  \begin{bmatrix}
      (zE-A)^{-1}B	\\
      I_m		
  \end{bmatrix} \in\matrz{m}{m},
\end{equation*}
where $G^\sim(z):=G\left(\overline{z}^{-1}\right)^*$ for a rational matrix $G(z)\in\matrz{n}{n}$.

In Sections~\ref{chap:inertia} and \ref{sec:inertia} we introduce the notion of inertia for palindromic matrix pencils evaluated on the unit circle and provide spectral  characterizations regarding positivity of the Popov function, similar to the characterizations which were obtained in \cite{reis_lure_2011} and \cite{voigt_linear-quadratic_2015} for even matrix pencils in the continuous-time case.

In Section~\ref{chap:lure} we investigate the Lur'e equation for the discrete-time optimal control problem which is a generalization of the algebraic Riccati equation \eqref{eq:DARE}. This means that we seek solution triples $(X,\,K,\,L)\in\mat{n}{n}\times\mat{q}{n}\times\mat{q}{m}$ fulfilling
\begin{equation*}
 \begin{bmatrix}
  A^*XA-E^*XE + Q		&  A^*XB + S	\\
  B^*XA	    + S^*	&  B^*XB + R
 \end{bmatrix} =_{\cV}
 \begin{bmatrix}
  K^*\\
  L^*
 \end{bmatrix}
 \begin{bmatrix}
  K & L
 \end{bmatrix}, \quad X=X^*,
\end{equation*}
where $q:=\rkr\Phi(z)$.

We show that solvability of this equation can be related to the existence of certain deflating subspaces of a palindromic matrix pencil of the form %
\begin{equation*}z
  \begin{bmatrix}
    0		& E	& 0 		\\
    A^*		& Q	& S 	\\
	B^*	& S^*	& R
  \end{bmatrix}
    -
  \begin{bmatrix}
    0			& A		& B 		\\
    E^*		& Q	& S 	\\
   0		& S^*	& R
  \end{bmatrix}\in\matz{2n+m}{2n+m}.
  \end{equation*}
  The explicit construction of these deflating subspaces turns out to be much more demanding compared to the continuous-time setting.
  Furthermore, solutions of the Lur'e equation can be obtained from these subspaces. Due to the symmetry of the above pencil, robust and efficient structure-exploiting numerical methods \cite{byers_symplectic_2009,schroder_palindromic_2008} can be used.

Finally, in Section~\ref{chap:applications} we apply these results to the optimal control problem. Here we consider the stabilizing solutions of the Lur'e equations which can be used to construct optimal controls (in case of existence) and to determine the optimal value. In particular, we show that under some weak conditions, the existence of stabilizing solutions is equivalent to the feasibility of the optimal control problem. We further characterize existence and uniqueness of optimal controls in terms of the zero dynamics of the closed-loop system. Finally, we discuss how the deflating subspaces of the palindromic and BVD pencils appear in the solution of the corresponding boundary value problems.

\begin{table}[h!]
\footnotesize
\printnomenclature[2.4cm]
  \label{tab:nomenclature}
\end{table}
\section{Mathematical Preliminaries}\label{chap:prelim}

\subsection{Matrix Pencils}
In this section we briefly discuss some basic notions of matrix pencils $zE-A\in\matz{m}{n}$.
\begin{definition}[Equivalence of matrix pencils]
 Two matrix pencils $zE_1 - A_1, \, zE_2 - A_2 \in \matz{m}{n}$ are called equivalent if there exist invertible matrices $W \in \mat{m}{m}$ and $T \in \mat{n}{n}$ such that 
 \begin{equation*}
  zE_2 - A_2 = W (zE_1 - A_1) T.
 \end{equation*}
\end{definition}
Each matrix pencil $zE-A \in \matz[C]{m}{n}$ can be transformed to Kronecker canonical form via equivalence transformations. This is made precise in the next theorem.
\begin{theorem}[Kronecker canonical form (KCF)]\cite{gantmacher_theory_1960}
	For every matrix pencil $zE-A\in\matz[C]{m}{n}$, there exist invertible matrices $W\in\mat[C]{m}{m}$ and $T\in\mat[C]{n}{n}$ such that
	\begin{equation*}
	W(zE-A)T  =  \diag \left( \cK_1(z),\,\ldots,\,\cK_l(z)\right),\qquad l\in\N,
	\end{equation*}
where each block $\cK_j(z)$ is in one of the forms in Table~\ref{tab:kcf} and
\begin{equation*}
N_k=
\begin{bmatrix}
0& 		1	&		  	& \\
& 			\ddots&\ddots		&\\
&		& \ddots&1\\
&&&0   
\end{bmatrix}\in\mat[C]{k}{k},\,
K^R_k = \begin{bmatrix}
1&       &\\
0&\ddots&\\
 		&\ddots	 &1\\
&&0
\end{bmatrix}\in\mat[C]{k+1}{k},\,
K^L_k = \begin{bmatrix}
0& 1      & & \\
& \ddots&\ddots &\\
0 		&	 &0&1
\end{bmatrix}\in\mat[C]{k}{k+1}.
\end{equation*}
\begin{table}\centering
	\caption{Blocks in Kronecker canonical form}
	\begin{tabular}{cccc}
		type & size & $\cK_j(z)$  & parameters \\\toprule
		K1 &  $k\times k $ &
		$(z-\lambda)I_k - N_k
			$
			& $ k\in\N,\,\lambda\in\C$\\
			K2 & $k\times k$ & $zN_k - I_k $ & $ k\in\N$\\
				K3 & $k\times (k+1)$ & $z(K_k^R)^T - K_k^L $ & $ k\in\N_0$\\
				K4 & $(k+1)\times k$ & $zK_k^R - (K_k^L)^T $ & $ k\in\N_0$
		\\\bottomrule    
	\end{tabular}
	\label{tab:kcf}
\end{table}
The KCF is unique up to permutations of the blocks.
\end{theorem}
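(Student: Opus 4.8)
The plan is to prove existence by strong induction on $m+n$, peeling off one canonical block at each step, and to prove uniqueness by recognizing each block as the carrier of an equivalence invariant of $zE-A$. First I would examine the right kernel of $zE-A$ over $\C(z)$: if it is nontrivial, pick a polynomial vector $x(z)=\sum_{i=0}^{\varepsilon}z^i x_i$ with $x_\varepsilon\neq0$ and $(zE-A)x(z)=0$ of least degree $\varepsilon\geq0$. If $\varepsilon=0$, then $x_0\in\ker E\cap\ker A$, and after a change of the column basis $zE-A$ has a literal zero column, which is a type K3 block with $k=0$ (size $0\times 1$); split it off and recurse. If $\varepsilon\geq1$, comparing coefficients of powers of $z$ yields the chain relations $Ax_0=0$, $Ax_{i+1}=Ex_i$ for $0\leq i\leq\varepsilon-1$, and $Ex_\varepsilon=0$; using minimality of $\varepsilon$ one shows that $x_0,\dots,x_\varepsilon$ are linearly independent and that $Ex_0,\dots,Ex_{\varepsilon-1}$ are linearly independent, and completing these two families to bases of $\C^n$ and $\C^m$ exhibits $zE-A$ as equivalent to a block-triangular pencil with leading diagonal block $z(K_\varepsilon^R)^T-K_\varepsilon^L$ (type K3) and a smaller trailing diagonal block $z\hat E-\hat A$. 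A further equivalence transformation annihilates the off-diagonal coupling block, leaving $\diag\bigl(z(K_\varepsilon^R)^T-K_\varepsilon^L,\ z\hat E-\hat A\bigr)$, and I recurse on $z\hat E-\hat A$. If the right kernel is trivial but the left kernel is not, the same argument applied to the transposed pencil $zE^T-A^T$ splits off a type K4 block.

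When the right and left kernels are both trivial the remaining pencil $z\tilde E-\tilde A\in\matz[C]{N}{N}$ is regular, and here I would invoke the Weierstrass reduction. Since $\C$ is infinite there is $\mu$ with $\tilde A-\mu\tilde E$ invertible; left-multiplying by $(\tilde A-\mu\tilde E)^{-1}$ turns the pencil into $(z-\mu)\hat E-I_N$ with $\hat E:=(\tilde A-\mu\tilde E)^{-1}\tilde E$. The Fitting decomposition $\C^N=\ker\hat E^N\oplus\im\hat E^N$ is $\hat E$-invariant, so in adapted coordinates the pencil is block-diagonal with a block $(z-\mu)\hat E_f-I$ in which $\hat E_f$ is invertible and a block $(z-\mu)\hat E_0-I$ in which $\hat E_0$ is nilpotent. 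Left-multiplying the first by $\hat E_f^{-1}$ and passing to the Jordan form of $\hat E_f^{-1}$ turns it into a direct sum of type K1 blocks $(z-\lambda)I_k-N_k$; passing to the Jordan form of $\hat E_0$ and performing one more equivalence turns the second into a direct sum of type K2 blocks $zN_k-I_k$. This finishes the existence part.

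For uniqueness I would argue that constant invertible $W$ and $T$ induce degree-preserving isomorphisms of the right and left $\C(z)$-kernels of the pencil and unimodular equivalences of the Smith form of $zE-A$ over $\C[z]$ and of the reversed pencil $E-wA$ over $\C[w]$; hence the right minimal indices, the left minimal indices, the finite elementary divisors $(z-\lambda)^k$, and the infinite elementary divisors are all equivalence invariants. A direct inspection then shows that each type K3 block of parameter $k$ contributes exactly one right minimal index, equal to $k$, and nothing else; each type K4 block exactly one left minimal index $k$; each type K1 block exactly one finite elementary divisor $(z-\lambda)^k$; and each type K2 block exactly one infinite elementary divisor of degree $k$. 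Thus the multiset of blocks is determined by these invariants, which gives uniqueness up to permutation.

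The main obstacle is the singular-part reduction, specifically the lemma that the off-diagonal coupling block between the split-off type K3 block and the remainder $z\hat E-\hat A$ can always be annihilated by an equivalence transformation — the point being that, were this impossible, one could construct a right-kernel vector of degree strictly less than $\varepsilon$, contradicting minimality — together with the two linear-independence statements for the chain $\{x_i\}$ and the family $\{Ex_i\}$. Everything else is routine: the degenerate zero-row and zero-column blocks are immediate, and the regular case is the classical Weierstrass/Jordan theory.
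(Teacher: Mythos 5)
The paper does not prove this classical theorem at all---it simply cites Gantmacher---and your outline is essentially the proof found in that reference: peeling off singular blocks via a minimal-degree polynomial kernel vector (with the coupling-block elimination lemma, whose role and proof idea you correctly identify as the crux), reducing the regular remainder by the Weierstrass/Jordan theory to blocks of types K1 and K2, and establishing uniqueness through the equivalence invariants (right and left minimal indices, finite and infinite elementary divisors), each carried by exactly one block type. Your proposal is correct in outline and matches the cited classical argument.
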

Equivalent matrix pencils share the same spectral structure which can be read off the KCF. Here blocks of type K1 and K2 correspond to finite eigenvalues and infinite eigenvalues, respectively. Blocks of these types and combinations of them are regular. Blocks of types K3 and K4 are rectangular and thus not regular. Note that we allow for blocks of type K3 or K4 to have zero rows or zero columns, respectively. Such blocks represent a zero row or zero column, respectively, in the KCF of $zE-A$. Based on the KCF we define the \emph{index} of the pencil $zE-A \in \matz[C]{m}{n}$ as the size $k$ of the largest block of type K2 or K4 in its KCF \cite{BerR13}. 

When characterizing the eigenstructure of matrices $A\in\mat{n}{n}$, often invariant subspaces are involved, \ie subspaces $\mathcal V \subseteq\K^n$ such that $A\mathcal V \subseteq \mathcal V$. The generalization of invariant subspaces to matrix pencils $zE-A\in\matz{m}{n}$ are so-called deflating subspaces. Here, we are using a general definition which is also suitable for singular matrix pencils, see \cite{voigt_linear-quadratic_2015, dooren_reducing_1983}.
\begin{definition}[Basis matrix, deflating subspaces]
  Let $zE-A\in\matz{n}{n}$ and some subspace $\Y\subseteq \K^n$ be given.
  \begin{enumerate}[label=(\alph*)]
   \item A matrix $Y\in\mat[C]{n}{k}$ with full column rank such that $\Y = \im Y$ is called \emph{basis matrix} of $\Y$.
   \item If for a basis matrix $Y\in\mat[C]{n}{k}$ of $\Y$ there exist $W\in\mat[C]{n}{p}$ and $z\hat E- \hat A\in\matz[C]{p}{n}$ such that
  \begin{equation*}
    (zE-A)V = W (z\hat E - \hat A)
  \end{equation*}
  and $\rkr[C] (z\hat E- \hat A) = p$, then $\Y$ is called \emph{deflating subspace} of $z E-  A$.
    \end{enumerate}
\end{definition}
Indeed, every invariant subspace $\mathcal V \subseteq\K^n$ of $A\in\mat{n}{n}$ with basis matrix $V\in\mat{n}{k}$  describes a deflating subspace for the associated matrix pencil $zI_n -A$ by setting $W=V$ and $(z\hat E - \hat A)= zI_k -\Lambda$, where $\Lambda\in\mat{k}{k}$ fulfills $AV=V\Lambda$.

An important property that deflating subspaces might have is $E$-neutrality.
\begin{definition}[$E$-neutrality]\cite{gohberg_indefinite_2006, reis_lure_2011}
Let $E\in\mat{n}{n}$ and some subspace $\Y\subseteq\K^n$ be given. Then $\Y$ is called \emph{$E$-neutral} if for all $x,\,y\in\Y$ it holds that
$
 x^*E y=0.
$
It is called \emph{maximally} $E$-neutral if every proper superspace $\W\supsetneq\Y$ is not  $E$-neutral. 
\end{definition}
For a subspace $\Y\subseteq\K^n$ we can check $E$-neutrality by testing whether $Y^*EY=0$,
where $Y\in\mat{n}{k}$ is given such that $\im Y = \Y$.
  \nomenclature[fasystems]{$\Sigmn$}{set of all $\system *\in\mat{n}{n}\times\mat{n}{n}\times\mat{n}{m}$ with regular $zE-A$}

\subsection{Feedback Equivalence}
Let $\Sigmn$ denote the set containing all the system triples $\system*\in\mat{n}{n}\times\mat{n}{n}\times\mat{n}{m}$ with regular $zE-A$, \ie $\det(zE-A)\neq 0$. Later, we will also use the set $\Sigmnw$ containing all systems $\wsystem *\in\mat{n}{n}\times\mat{n}{n}\times\mat{n}{m}\times\mat{n}{n}\times\mat{n}{m}\times\mat{m}{m}$, where $\system$ and $Q$ and $R$ are Hermitian.
Furthermore, we call the space of all $\vect{x, u}\in(\K^n)^{\N_0}\times(\K^m)^{\N_0}$ that solve the IDE \eqref{eq:linsystemdisc} the \emph{behavior} of the system $\system*$. The behavior is denoted by $\behavior*$.
In this subsection we introduce an equivalence relation on the set $\Sigmn$   which will be particularly useful in Sections~\ref{chap:kyp} and~\ref{chap:lure}. This subsection is mainly based on \cite[Section 2.3]{reis_kalmanyakubovichpopov_2015}.
\begin{definition}[Feedback equivalence]
  Two systems $(E_i,\,A_i,\,B_i)\in\Sigmn,\,i=1,2$, are said to be \emph{feedback equivalent} if there exist invertible matrices $W,\,T\in\mat{n}{n}$ and a feedback matrix $F\in\mat{m}{n}$ such that 
  \begin{equation*}
    \begin{bmatrix}
	zE_2-A_2 & -B_2
    \end{bmatrix}
    = W 
    \begin{bmatrix}
	zE_1-A_1 & -B_1
    \end{bmatrix}
    \mathcal{T}_F,
  \end{equation*}
  
  where 
  \[
    \mathcal{T}_F =
    \begin{bmatrix}
      T	& 0 \\
      FT & I_m
    \end{bmatrix}.
  \]
  If this is the case we say that $(E_1,\,A_1,\,B_1)$ is feedback equivalent to $(E_2,\,A_2,\,B_2)$ via $W$ and $\mathcal{T}_F$.
\end{definition}
Note that in the behavior sense, \ie looking at the system defined by $z\E-\A$, where
\begin{equation*}
 \E :=
 \begin{bmatrix}
  E	& 0
 \end{bmatrix},\qquad
 \A:=
 \begin{bmatrix}
  A	& B
 \end{bmatrix},
\end{equation*}
feedback equivalence corresponds to strong equivalence as introduced in \cite{kunkel_differential-algebraic_2006}.
In particular, this means that feedback equivalence is indeed an equivalence relation, see \cite[Lemma~2.2.]{kunkel_differential-algebraic_2006}.

Given such an equivalence relation, one is usually interested in some condensed form. The following result provides such a form.
\begin{theorem}[Feedback equivalence form] \cite[Proposition 2.12]{ilchmann_outer_2014}
  Let the system $\system$ be given. Then  $\system *$ is feedback equivalent to $\system[E_F][A_F][B_F]*$ via some $W$ and $\mathcal T _F$, where
  \begin{equation}\label{eq:fef}
  \begin{bmatrix}
   zE_F - A_F & -B_F 
  \end{bmatrix}=\left[
    \begin{array}{@{} ccc|c@{}}
      zI_{n_1}-A_{11}	& 0		& 0			& -B_1	\\
      0			& - I_{n_2}	& zE_{23}		& -B_2	\\
      0			& 0		& zE_{33} - I_{n_3}	& 0
    \end{array}\right],
  \end{equation}
  $n_1, n_2, n_3\in\N_0$, and $E_{33}$ is nilpotent. 
  \end{theorem}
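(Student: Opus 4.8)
The plan is to take the system apart in three stages: first split off the ``slow'' (finite) part by the Weierstrass form, then reduce the remaining ``fast'' (nilpotent) part by feedback, and finally decouple the pieces. \emph{Stage 1 (slow/fast splitting).} Since $zE-A$ is regular, its Kronecker canonical form contains only blocks of types K1 and K2, so there are invertible $W_1,T_1$ with $W_1(zE-A)T_1=\diag(zI_{n_f}-J,\,zN-I_{n_\infty})$ for some nilpotent $N$, with $n_f+n_\infty=n$. Writing $W_1B=:\begin{bmatrix}B_f\\ B_\infty\end{bmatrix}$, the given system is feedback equivalent (with $F=0$) to $\bigl(\diag(I_{n_f},N),\,\diag(J,I_{n_\infty}),\,\begin{bmatrix}B_f\\ B_\infty\end{bmatrix}\bigr)$. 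The slow block $(I_{n_f},J,B_f)$ will end up inside the first block row of~\eqref{eq:fef}; we never require this block to be canonical, so couplings \emph{within} it that arise later are harmless.

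\emph{Stage 2 (reduction of the fast part).} This is the technical core: the subsystem $(N,I_{n_\infty},B_\infty)$ with $N$ nilpotent must be brought, by a feedback equivalence acting on the $n_\infty$ states and the $m$ inputs, to the shape
\[
\begin{bmatrix}
zI_{\hat n_1}-\hat A_{11} & 0 & 0 & -\hat B_1\\
0 & -I_{n_2} & zE_{23} & -B_2\\
0 & 0 & zE_{33}-I_{n_3} & 0
\end{bmatrix},\qquad E_{33}\ \text{nilpotent}.
\]
A useful bookkeeping device is the $n_\infty\times(n_\infty+m)$ pencil $\begin{bmatrix}zN-I_{n_\infty}&-B_\infty\end{bmatrix}$: since $z_0N-I_{n_\infty}$ is invertible for every finite $z_0$, it has full (normal) row rank and no finite elementary divisors, so its Kronecker form consists only of blocks of type K2 (square, of sizes $l_1,\dots,l_p$) and of exactly $m$ blocks of type K3 (of sizes $\varepsilon_1,\dots,\varepsilon_m\ge0$). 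Morally: a K2 block of size $\ge2$ is rigid (no input reaches it) and contributes a nilpotent Jordan block to $E_{33}$; a K3 block of size $\varepsilon\ge1$ is an integrator chain $\sigma y_1=y_2,\dots,\sigma y_\varepsilon=u$, hence a controllable ODE that is absorbed into $\hat A_{11}$ and moves $\varepsilon$ of the infinite eigenvalues of the fast part to finite ones; and whatever algebraic constraints are left driven by the input in a way that feedback cannot undo form the purely algebraic block $-I_{n_2}$ with its input $B_2$, the rest joining $E_{33}$ or being redundant inputs. The subtlety is that feedback equivalence is strictly coarser than strict equivalence of this rectangular pencil --- only $\mathcal T_F$-type column operations are admissible, so the $m$ input columns can be neither rescaled nor mixed among themselves --- hence the Kronecker form above cannot simply be invoked. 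One must instead argue, in the spirit of Brunovsky's theorem for $(A,B)$-pairs (here applied to controllability ``at infinity''), that the displayed block structure is realizable by admissible transformations, carefully tracking which infinite eigenvalues become finite and which algebraic constraints acquire a direct input; equivalently, the reduction can be carried out by induction on the index of $N$, peeling off at each step the deepest algebraic layer, split into an input-reachable part (which promotes the layer above it and lowers the index) and a rigid part.

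\emph{Stage 3 (decoupling and assembly).} Lifting the Stage-2 transformations to the whole system (via $\diag(I_{n_f},\cdot)$ and the feedback $\begin{bmatrix}0&F_2\end{bmatrix}$) produces a block-upper-triangular pencil whose couplings between the slow part and the $n_2$- and $n_3$-columns are constant (they stem from $B_fF_2$). Since $zI_{n_1}-A_{11}$ (finite spectrum), $-I_{n_2}$ (no eigenvalues) and $zE_{33}-I_{n_3}$ (eigenvalue $\infty$ only) have pairwise disjoint spectra, the associated generalized Sylvester/Stein equations are solvable --- removing the coupling to the $n_3$-block amounts to solving $X-A_{11}XE_{33}=C$, which has a unique solution because $E_{33}$ is nilpotent --- and are realized by an equivalence $W'(\cdot)T'$ with $W'$ unit upper block triangular and $T'$ of $\mathcal T_F$-type with zero feedback, affecting only the first block row. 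Composing all transformations (feedback equivalence being an equivalence relation) yields a single $W$ and $\mathcal T_F$ bringing $(E,A,B)$ into the form~\eqref{eq:fef}, with $n_1=n_f+\hat n_1$ and $A_{11}$ the resulting (not necessarily canonical) matrix.

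The step I expect to be the main obstacle is Stage 2: proving that the ``controllability at infinity'' structure of the fast part can be realized inside the narrower class of feedback equivalences, together with the clean extraction of the algebraic block $-I_{n_2}$ and of the unavoidable residual coupling $zE_{23}$.
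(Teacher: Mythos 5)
The paper itself does not prove this theorem; it simply cites \cite[Proposition 2.12]{ilchmann_outer_2014}, whose (continuous-time, purely pencil-algebraic) proof transfers verbatim. So your proposal has to stand on its own. Your overall architecture (Weierstrass splitting of the regular pencil, feedback reduction of the nilpotent part, Sylvester/Stein decoupling) is the standard route, and Stages 1 and 3 are correct: couplings inside the prospective $n_1$-block are constant and hence absorbed into $A_{11}$, and the coupling to the $zE_{33}-I_{n_3}$ block is removed by the fixed-point equation $Z - A_{11}ZE_{33} = C$, solvable because $E_{33}$ is nilpotent, via transformations of the admissible type.

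The genuine gap is Stage 2, which you yourself identify as the core but never carry out: you describe the reduction of $(N,I_{n_\infty},B_\infty)$ only ``morally'' via the KCF of $\bigl[\,zN-I_{n_\infty}\;\; -B_\infty\,\bigr]$, concede that this KCF cannot be invoked (the needed column operations mix input and state columns, which $\mathcal{T}_F$ forbids), and then defer to an unspecified Brunovsky-style or index-induction argument. Worse, the heuristic dictionary you offer in its place is false, so it cannot guide the missing induction. Take $N=\left[\begin{smallmatrix}0&1\\0&0\end{smallmatrix}\right]$, $B_\infty=\left[\begin{smallmatrix}1\\0\end{smallmatrix}\right]$, i.e.\ $\sigma x_2 = x_1+u$, $0=x_2$. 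Subtracting the first state column from the input column shows that the KCF of $\bigl[\,zN-I_2\;\; -B_\infty\,\bigr]$ consists of one K2 block of size $2$ and one K3 block of size $0$; your dictionary (``K2 of size $\ge 2$ is rigid, no input reaches it, goes into $E_{33}$''; ``size-$0$ K3 $=$ redundant input'') would predict $n_2=0$, $n_3=2$ with no input entering the fast part. But that form is unreachable: the input column of any feedback-equivalent pencil is $-WB\neq 0$, and $\det\bigl(zN-(I+B_\infty F)\bigr)=1+f_1$ is constant, so $n_1=0$; since $\operatorname{rk}E_F=\operatorname{rk}E=1$ forces $n_3\ge 1$, the actual form has $n_1=0$, $n_2=n_3=1$, $E_{23}=1$, $E_{33}=0$ and $B_2\neq 0$ — the size-$2$ infinite chain is split between the $n_2$- and $n_3$-blocks and \emph{is} reached by the input. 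So the unrestricted KCF of the behavior pencil does not determine the feedback form block-by-block, and the decisive step — realizing the form \eqref{eq:fef} within the restricted transformation class, e.g.\ by the induction on the index of $N$ you allude to — is exactly what is missing from the proposal.
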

  A similar form has also been achieved in \cite[Theorem 4.1]{byers_descriptor_1997} via unitary transformations.
  \begin{example}\label{ex:simplecircuitfeedback}
    Consider the system given by 
 \begin{equation}\label{eq:simplecircuitmatrices}
      E=
      \begin{bmatrix}
	  0	& 0\\
	  0	& 1
      \end{bmatrix},\quad A=
      \begin{bmatrix}
	-1 	& 1  \\
	1 	& 0\\
      \end{bmatrix},\quad
      B=
      \begin{bmatrix}
        -1\\
        0
      \end{bmatrix}.
    \end{equation}    
    We obtain that the system is feedback equivalent to
    \begin{equation}\label{eq:simplecircuitfeedback}
    \begin{bmatrix}
      z E_F - A_F & -B_F
    \end{bmatrix}=
    W
        \begin{bmatrix}
      z E- A & -B
    \end{bmatrix}\mathcal T_F= 
      \begin{bmatrix}
        z -1 	& 0	& 1\\
        0	& -1	& -1
      \end{bmatrix}
    \end{equation}
    via zero feedback, \ie $F=0$ and
    \begin{equation*}\label{eq:simplecircuitfeedbacktransform}
      W=
      \begin{bmatrix}
	  1	& 1\\
	  -1			& 0        
      \end{bmatrix}, \quad\mathcal T _F=
      \begin{bmatrix}
      1			& 1		 	& 0\\
      1			& 0			& 0\\
      0			& 0			& 1
      \end{bmatrix}.
    \end{equation*}
    Thus, we have $n_1=n_2=1$ and $n_3=0$ in \eqref{eq:fef}.
  \end{example}

  \begin{proposition}\label{prop:fef}
   Let the system $\system$ be  feedback equivalent to the system $\system[E_F][A_F][B_F]$ in feedback equivalence form \eqref{eq:fef}. Further, denote by
    \begin{equation*}
    \system[I_{n_1}][A_{11}][B_1][m][n_1]
    \end{equation*}
    the associated \emph{explicit difference equation (EDE)} system.
   Then for $\lambda\in\C$ we have $\det(\lambda E_F -A_F)\neq0$  if and only if $\det(\lambda I_{n_1}  - A_{11})\neq0$.
\end{proposition}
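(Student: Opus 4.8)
The plan is to compute $\det(\lambda E_F - A_F)$ directly from the block structure of the feedback equivalence form \eqref{eq:fef} and to exploit the nilpotency of $E_{33}$. Note that the claim only concerns the matrices appearing in \eqref{eq:fef} together with the associated EDE system $(I_{n_1},A_{11},B_1)$, so the feedback equivalence of $(E,A,B)$ to $(E_F,A_F,B_F)$ plays no further role than supplying this normal form.

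First I would discard the input column $-B_F$ and observe that the square pencil $\lambda E_F - A_F$ is block upper triangular,
\[
  \lambda E_F - A_F =
  \begin{bmatrix}
    \lambda I_{n_1} - A_{11} & 0 & 0 \\
    0 & -I_{n_2} & \lambda E_{23} \\
    0 & 0 & \lambda E_{33} - I_{n_3}
  \end{bmatrix},
\]
so that $\det(\lambda E_F - A_F) = \det(\lambda I_{n_1} - A_{11})\cdot(-1)^{n_2}\cdot\det(\lambda E_{33} - I_{n_3})$. The middle factor is a nonzero constant. For the last factor I would argue that, since $E_{33}$ is nilpotent, $\lambda E_{33}$ is nilpotent for every $\lambda\in\C$ (including $\lambda = 0$), hence $1$ is not an eigenvalue of $\lambda E_{33}$, so $I_{n_3} - \lambda E_{33}$ is invertible; equivalently, $\det(\lambda E_{33} - I_{n_3}) = (-1)^{n_3}\det(I_{n_3} - \lambda E_{33}) \neq 0$.

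Combining these observations, $\det(\lambda E_F - A_F)$ differs from $\det(\lambda I_{n_1} - A_{11})$ only by the nonzero scalar $(-1)^{n_2}\det(\lambda E_{33} - I_{n_3})$, whence $\det(\lambda E_F - A_F)\neq 0$ if and only if $\det(\lambda I_{n_1} - A_{11})\neq 0$, which is the assertion. There is no genuine obstacle in this argument; the only point that requires (minor) care is the invertibility of $I_{n_3} - \lambda E_{33}$ for \emph{all} $\lambda\in\C$, which follows from the stability of nilpotency under scalar multiplication.
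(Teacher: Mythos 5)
Your argument is correct and follows essentially the same route as the paper's proof: exploit the block-triangular structure of $\lambda E_F - A_F$ and use the nilpotency of $E_{33}$ to see that $\det(\lambda E_{33}-I_{n_3})$ is a nonzero constant (the paper notes it equals $(-1)^{n_3}$, whereas you only need its nonvanishing). Nothing is missing.
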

  \begin{proof}
  Note that every nilpotent matrix $E_{33}\in\mat{n_3}{n_3}$ has only eigenvalues zero and thus $\det (zE_{33}-I_{n_3}) = (-1)^{n_3}$. Then the assertion follows immediately from the block-triangular structure of $zE_F-A_F$.
  \end{proof}
\subsection{System Space}
In this subsection we investigate properties of the solution space of the IDEs given by a system $\system$. This section is based on \cite[Chapter 3]{reis_kalmanyakubovichpopov_2015}. 
\begin{definition}\label{def:systemspace}
  Let $\system$. The smallest subspace $\cV\subseteq\K^{n+m}$  such that 
  \[
    \begin{pmatrix}
      x_j	\\
      u_j	
    \end{pmatrix}
    \in \cV
  \]
  for all $j\in\N_0$ and for all $\behavior $ is called the system space of $\system *$.
\end{definition}
\begin{lemma}\label{lem:feedbacksysspace}
  Let $\system$. Further, assume that $\system[E_F][A_F][B_F]$ is feedback equivalent to $\system *$ via $W$ and $\mathcal{T}_F$. Then the system spaces $\cV$ and $\cV F$ of $\system *$ and $\system[E_F][A_F][B_F]*$, respectively,  are related via
  \begin{equation*}
    \cV = \mathcal{T}_F \cV F .
  \end{equation*}

\end{lemma}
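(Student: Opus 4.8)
The plan is to chase the definitions. The statement relates the system space $\cV$ of $\system*$ to the system space $\cV F$ of the feedback-equivalent system $\system[E_F][A_F][B_F]*$, and the natural strategy is to show that the map $\vect{x,u}\mapsto\mathcal{T}_F^{-1}\vect{x,u}$ (pointwise in $j$) sets up a bijection between the behaviors $\behavior*$ and $\behavior[E_F][A_F][B_F]*$, and then to transfer this to the system spaces via Definition~\ref{def:systemspace}, using that $\mathcal{T}_F$ is invertible.

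First I would recall the defining relation of feedback equivalence, $\begin{bmatrix}zE_F-A_F & -B_F\end{bmatrix} = W\begin{bmatrix}zE-A & -B\end{bmatrix}\mathcal{T}_F$, and read off what it says about trajectories. If $\vect{x,u}\in\behavior*$, so that $E\OpShift x_j = Ax_j + Bu_j$ for all $j$, I would set $\vect{\tilde x_j,\tilde u_j} := \mathcal{T}_F^{-1}\vect{x_j,u_j}$, i.e. $\tilde x_j = T^{-1}x_j$ and $\tilde u_j = u_j - FT\tilde x_j = u_j - Fx_j$. A short computation — substituting the block form of $\mathcal{T}_F$ and using the equivalence relation with $W$ invertible — should show $E_F\OpShift\tilde x_j = A_F\tilde x_j + B_F\tilde u_j$, hence $\vect{\tilde x,\tilde u}\in\behavior[E_F][A_F][B_F]*$. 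Since $\mathcal{T}_F$ is invertible and feedback equivalence is symmetric, the converse holds as well, so $\mathcal{T}_F$ maps $\behavior[E_F][A_F][B_F]*$ bijectively onto $\behavior*$.

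Given this bijection of behaviors, I would argue the equality $\cV = \mathcal{T}_F\cV F$ in two inclusions. For ``$\subseteq$'': by definition $\cV$ is the smallest subspace containing $\vect{x_j,u_j}$ for every $j$ and every $\vect{x,u}\in\behavior*$; since each such $\vect{x_j,u_j} = \mathcal{T}_F\vect{\tilde x_j,\tilde u_j}$ with $\vect{\tilde x_j,\tilde u_j}\in\cV F$, every generator of $\cV$ lies in $\mathcal{T}_F\cV F$, and $\mathcal{T}_F\cV F$ is a subspace, so $\cV\subseteq\mathcal{T}_F\cV F$. For ``$\supseteq$'': $\mathcal{T}_F^{-1}\cV$ is a subspace containing $\vect{\tilde x_j,\tilde u_j}$ for all $j$ and all trajectories of the feedback-equivalent system (using the bijection in the other direction), hence by minimality of $\cV F$ we get $\cV F\subseteq\mathcal{T}_F^{-1}\cV$, i.e. $\mathcal{T}_F\cV F\subseteq\cV$.

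The only genuinely computational step is verifying that the transformed trajectory solves the transformed IDE; this is where one must be slightly careful because $\OpShift$ acts only on $x$ and not on $u$, and the block structure of $\mathcal{T}_F$ is designed exactly so that the feedback $u_j\mapsto u_j-Fx_j$ does not interfere with the shift. I expect this to be routine once one writes $\begin{bmatrix}zE-A & -B\end{bmatrix}\vect{x,u}$ evaluated along a trajectory as the zero sequence and uses invertibility of $W$ and $T$; the abstract minimality argument for the system spaces is then purely formal. No obstacle beyond bookkeeping is anticipated.
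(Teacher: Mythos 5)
Your proof is correct and is essentially the argument the paper relies on: the paper itself gives no in-text proof but cites the continuous-time analogue (Lemma~3.2 of the Reis--Voigt KYP paper), and your argument is precisely the standard one behind that citation, adapted to the shift operator. The key computation checks out: with $\tilde x_j = T^{-1}x_j$, $\tilde u_j = u_j - Fx_j$ one gets $E_F\tilde x_{j+1} = WEx_{j+1} = W(Ax_j+Bu_j) = A_F\tilde x_j + B_F\tilde u_j$, so $\mathcal{T}_F^{-1}$ maps trajectories of $(E,A,B)$ to trajectories of $(E_F,A_F,B_F)$ and conversely, and invertibility of $\mathcal{T}_F$ together with minimality of the two system spaces yields both inclusions exactly as you describe.
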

\begin{proof}
 The assertion has been shown in \cite[Lemma 3.2]{reis_kalmanyakubovichpopov_2015}.
\end{proof}

\begin{proposition}\label{prop:sysspace}
Let $\system$ be given. Further, assume that $\system[E_F][A_F][B_F]$ is feedback equivalent to $\system *$ via $W$ and $\mathcal{T}_F$  such that  $\system[E_F][A_F][B_F]*$ is in feedback equivalence form \eqref{eq:fef}.  Then we have:
\begin{enumerate}[label=(\alph*)]
  \item \label{it:sysspacea} It holds that
  $\cV F = \im V_F$, where 
  \begin{equation}\label{eq:reprsysspace}
   V_F := 
   \begin{bmatrix}
    I_{n_1}	& 0 & 0	& 0 \\
    0		& 0 & 0	& -B_2\\
    0		& 0 & 0	& 0	\\
    0		& 0 & 0	& I_m
   \end{bmatrix}\in\mat{n+m}{n+m}.
  \end{equation}

  \item \label{it:sysspaceb}
  It holds that
  \begin{equation}\label{eq:sysspace}
    \begin{bmatrix}
      A	& B
    \end{bmatrix}
    \begin{bmatrix}
      (zE-A)^{-1}B\\
      I_m
    \end{bmatrix}
    = z
    \begin{bmatrix}
      E	& 0
    \end{bmatrix}
    \begin{bmatrix}
      (zE-A)^{-1}B\\
      I_m
    \end{bmatrix}.
  \end{equation}
  \item \label{it:sysspacec}
  For all $\lambda\in\C$ with $\det(\lambda E - A)\neq 0 $ it holds that
  \begin{equation*}
    \im
    \begin{bmatrix}
    (\lambda E-A)^{-1}B\\
    I_m
  \end{bmatrix}
  \subseteq \cV .
  \end{equation*}
  \item \label{it:sysspaced}  Consider $V_F$ as in~\ref{it:sysspacea} and let  $V:= \mathcal T_F V_F$. Then
  \begin{equation*}
   V_F \cV F= \cV F
  \end{equation*}
and
\begin{equation*}
 V \mathcal T_F^{-1} \cV = \cV.
\end{equation*}
\end{enumerate}
\end{proposition}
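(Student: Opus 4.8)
The plan is to prove the four parts in the order \ref{it:sysspaceb}, \ref{it:sysspacec}, \ref{it:sysspacea}, \ref{it:sysspaced}, because \ref{it:sysspacec} will be an immediate consequence of \ref{it:sysspaceb}, and \ref{it:sysspaced} will follow from \ref{it:sysspacea} together with Lemma~\ref{lem:feedbacksysspace}. Part \ref{it:sysspaceb} is the purely algebraic identity obtained by rewriting $(zE-A)(zE-A)^{-1}B=B$ as $A(zE-A)^{-1}B+B=zE(zE-A)^{-1}B$, which, written with the block matrices $\begin{bmatrix}A & B\end{bmatrix}$ and $\begin{bmatrix}E & 0\end{bmatrix}$, is exactly \eqref{eq:sysspace}. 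For part \ref{it:sysspacec} I would fix $\lambda\in\C$ with $\det(\lambda E-A)\neq 0$ and $v\in\K^m$, put $w:=(\lambda E-A)^{-1}Bv$, and consider the sequences $x_j:=\lambda^jw$, $u_j:=\lambda^jv$ for $j\in\N_0$ (with the convention $\lambda^0:=1$). Evaluating \ref{it:sysspaceb} at $z=\lambda$ and applying it to $v$ gives $Aw+Bv=\lambda Ew$, hence $E\OpShift x_j=\lambda^{j+1}Ew=\lambda^j(Aw+Bv)=Ax_j+Bu_j$; thus $(x,u)$ solves \eqref{eq:linsystemdisc}, and by Definition~\ref{def:systemspace} the point $\begin{pmatrix}w\\ v\end{pmatrix}=\begin{pmatrix}x_0\\ u_0\end{pmatrix}$ lies in $\cV$. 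Letting $v$ range over $\K^m$ then yields the claimed inclusion.

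For part \ref{it:sysspacea} I would partition the state of $\system[E_F][A_F][B_F]*$ in feedback equivalence form \eqref{eq:fef} as $x=(x^{(1)},x^{(2)},x^{(3)})$ according to the block sizes $n_1,n_2,n_3$. The three block rows of $E_F\OpShift x_j=A_Fx_j+B_Fu_j$ read $x^{(1)}_{j+1}=A_{11}x^{(1)}_j+B_1u_j$, $x^{(2)}_j=E_{23}x^{(3)}_{j+1}-B_2u_j$, and $E_{33}x^{(3)}_{j+1}=x^{(3)}_j$. Iterating the last one gives $x^{(3)}_j=E_{33}^kx^{(3)}_{j+k}$ for all $k\in\N_0$, so nilpotency of $E_{33}$ forces $x^{(3)}\equiv 0$, and consequently $x^{(2)}_j=-B_2u_j$. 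Therefore each value $(x_j,u_j)$ of any solution of \eqref{eq:linsystemdisc} for this system has blocks $(x^{(1)}_j,\,-B_2u_j,\,0,\,u_j)$ and hence lies in $\im V_F$ with $V_F$ as in \eqref{eq:reprsysspace}, which shows $\cV F\subseteq\im V_F$. For the reverse inclusion, given $(\xi,\eta)\in\K^{n_1}\times\K^m$, the choice $x^{(1)}_0:=\xi$, $u_0:=\eta$, $u_j:=0$ for $j\geq 1$, with $x^{(1)}$ propagated by its recursion, yields a solution of \eqref{eq:linsystemdisc} whose value at $j=0$ has blocks $(\xi,\,-B_2\eta,\,0,\,\eta)$; as $(\xi,\eta)$ runs through $\K^{n_1}\times\K^m$ these points exhaust $\im V_F$, so minimality of the system space gives $\im V_F\subseteq\cV F$ and thus equality.

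Finally, for part \ref{it:sysspaced} a direct block multiplication shows $V_F^2=V_F$, \ie $V_F$ is a projector, so combining this with part \ref{it:sysspacea} gives $V_F\,\cV F=V_F\im V_F=\im V_F^2=\im V_F=\cV F$. Moreover Lemma~\ref{lem:feedbacksysspace} gives $\cV=\mathcal T_F\,\cV F$, hence $\mathcal T_F^{-1}\cV=\cV F$, and therefore, with $V=\mathcal T_FV_F$, $V\,\mathcal T_F^{-1}\cV=\mathcal T_FV_F\,\cV F=\mathcal T_F\,\cV F=\cV$. I expect the only genuinely delicate step to be part \ref{it:sysspacea}: one has to extract the behavior of the feedback equivalence form correctly — in particular the fact that the nilpotent block $E_{33}$ pins $x^{(3)}$ to zero — and then verify \emph{both} inclusions in the characterization of $\cV F$ as the smallest subspace containing all trajectory values, the decisive observation for the nontrivial one being that $x^{(1)}_0$ and $u_0$ may be prescribed arbitrarily, so that every point of $\im V_F$ is actually reached.
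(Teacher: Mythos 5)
Your proposal is correct. The paper itself gives no argument for this proposition — it defers parts (a)--(c) to \cite[Prop.~3.3, Lemma~3.5]{reis_kalmanyakubovichpopov_2015} (the continuous-time analogues) and part (d) to \cite[Prop.~2.29(d)]{bankmann_linear-quadratic_2015} — so what you supply is a self-contained discrete-time proof rather than a citation, and it uses exactly the mechanisms one expects those references to use, transplanted to the shift setting: part (b) is the resolvent identity $A(zE-A)^{-1}B+B=zE(zE-A)^{-1}B$; for part (c) your geometric trajectories $x_j=\lambda^j w$, $u_j=\lambda^j v$ replace the exponential solutions $e^{\lambda t}$ of the continuous-time argument (and your convention handles $\lambda=0$ correctly, since then $Aw+Bv=\lambda Ew=0$); for part (a) the decisive points — nilpotency of $E_{33}$ forcing $x^{(3)}\equiv 0$, hence $x^{(2)}_j=-B_2u_j$, together with the freedom to prescribe $x^{(1)}_0$ and $u_0$ so that every point of $\im V_F$ is actually attained by a trajectory — are exactly right, and you verify both inclusions against the ``smallest subspace'' definition; and the projector identity $V_F^2=V_F$ combined with Lemma~\ref{lem:feedbacksysspace} settles part (d). The only refinement worth adding is in part (c) for $\K=\R$ and non-real $\lambda$: your trajectory is then complex-valued and not literally an element of the behavior, so one should take real and imaginary parts (each is a real solution of the real recursion) and read the claimed inclusion in the complexification of $\mathcal{V}$, which is the only sensible interpretation of the statement in that case; this is a one-sentence fix and the same issue is implicit in the statement as printed, not a defect of your argument.
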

 \begin{proof}
  Assertion~\ref{it:sysspacea} is shown in the proof of \cite[Proposition 3.3]{reis_kalmanyakubovichpopov_2015}. Assertion~\ref{it:sysspacec} is shown in \cite[Lemma 3.5]{reis_kalmanyakubovichpopov_2015}, where  part~\ref{it:sysspaceb} is obtained in the proof of  \cite[Lemma 3.5]{reis_kalmanyakubovichpopov_2015}.
  For part~\ref{it:sysspaced} see \cite[Proposition 2.29(d)]{bankmann_linear-quadratic_2015}.
 \end{proof}

\subsection{Controllability and Asymptotic Stability}
Before we introduce the linear-quadratic optimal control problem, we first need to recap several concepts of controllability and asymptotic stability for the system given by $\system$. These concepts are similar to the continuous-time case as in \cite{dai_singular_1989,bunse-gerstner_feedback_1999} and are discussed in, \eg \cite{dai_singular_1989, stykel_input-output_2003}.
\begin{definition}
 The system $\system$ or $\wsystem$ is called
 \begin{enumerate}[label=(\alph*)]
  \item \emph{completely controllable (C-controllable)} if for every initial point $x^0\in\K^n$ and every final point $x^{\rm f}\in\K^n$ there exista a $\behavior$ such that $x_0=x^0$ and $x_{j_{\rm f}}=x^{\rm f}$ at some timepoint $j_{\rm f}\in\N_0$;
  \item \emph{controllable on the reachable set (R-controllable)} if for every initial point $x^0\in\Vshift$ and every final point $x_{\rm f}\in\Vshift$ there exist $\behavior$ such that $Ex_0 = Ex^0$ and $Ex_{j_{\rm f}}=Ex^{\rm f}$ at some timepoint $j_{\rm f}\in\N_0$;
  \item \emph{I-controllable} if for every initial point $x^0\in\K^n$ there exists $\behavior$ such that $Ex_0=Ex^0$, \ie $\Vshift=\K^n$;
  \item \emph{stabilizable} if for every initial point $x^0\in\Vshift$ there exists a $\behavior$ such that $Ex_0=Ex^0$ and $\lim\limits_{j\to\infty} Ex_j = 0$.
 \end{enumerate}
\end{definition}
In the case where $E=I_n$, the notions R-controllability and C-controllability coincide and thus for systems of the form $\system[I_n]$ we omit the prefix R or C and say that they are \emph{controllable}. 
Table~\ref{tab:algchar} shows well-known characterizations of the different controllability notions \cite{dai_singular_1989, stykel_input-output_2003, berger_differential-algebraic_2014}.
\begin{table}\centering
\caption{Algebraic characterizations of controllability and stabilizability, where $S_\infty$ is a basis matrix of $\ker E$}
  \begin{tabular}{cl}
  notion & algebraic characterization \\\toprule
   R-controllability& 
   $ \rk
   \begin{bsmallmatrix}
    \lambda E -A	& B
   \end{bsmallmatrix}
    =n$, $\lambda\in\C$
    \\
    C-controllability &
    $
       \rk\begin{bsmallmatrix}
     E 	& B
   \end{bsmallmatrix}
    =n$,    $ \rk
    \begin{bsmallmatrix}
    \lambda E -A	& B
    \end{bsmallmatrix}
    =n$, $\lambda\in\C$\\
    I-controllability 
    &$
       \rk \begin{bsmallmatrix}
     E & AS_\infty	& B
   \end{bsmallmatrix}
    =n$\\
    Stabilizability
    &
       $ \rk
   \begin{bsmallmatrix}
    \lambda E -A	& B
   \end{bsmallmatrix}
    =n$, $\lambda \in\C,\, |\lambda| \ge 1$\\\bottomrule    
  \end{tabular}
\label{tab:algchar}
\end{table}

Eigenvalues $\lambda\in\C$ of $zE-A\in\matz{n}{n}$ such that 
$
  \rk
   \begin{bmatrix}
    \lambda E -A	& B
   \end{bmatrix}
    \neq n$
 destroy the controllability property. In this case we thus say that there is an \emph{uncontrollable mode at $\lambda$}; otherwise we say that there is a \emph{controllable mode at $\lambda$}.
 \begin{lemma}\label{lem:fefcontr}
  Let the system $\system$ be feedback equivalent to the system $\system[E_F][A_F][B_F]$ in feedback equivalence form \eqref{eq:fef} via $W$ and $\mathcal T_F$. Furthermore, denote by  
      $\system[I_{n_1}][A_{11}][B_1][m][n_1]$ the associated EDE system. Then we have:
    \begin{enumerate}[label=(\alph*)]
   \item \label{it:fefcontra}
        Let $\lambda\in\C$. Then the system $\system[E_F][A_F][B_F]*$ has an uncontrollable mode at $\lambda$ if and only if the system $\system[I_{n_1}][A_{11}][B_1]*$ has an uncontrollable mode at $\lambda$.
     \item \label{it:fefcontrb}
    The system $\system[E_F][A_F][B_F]*$ is R-controllable if and only if $\system[I_{n_1}][A_{11}][B_1]*$ 
    is controllable.
    \item \label{it:fefcontrc} The system $\system *$ is I-controllable if and only if $W$ and $\mathcal T_F$ can be chosen such that for $\system[E_F][A_F][B_F]*$ it holds that $n_3=0$.
   \end{enumerate}   
 \end{lemma}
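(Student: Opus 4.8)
The plan is to reduce everything to the EDE subsystem $\system[I_{n_1}][A_{11}][B_1]*$ via the feedback equivalence form \eqref{eq:fef}, exactly as Proposition~\ref{prop:fef} and Lemma~\ref{lem:fefcontr}\ref{it:fefcontra}--\ref{it:fefcontrb} already do for regularity and for R-controllability/controllability. The key observation is that under feedback equivalence the pencil $[\,zE_F-A_F\ \ -B_F\,] = W[\,zE-A\ \ -B\,]\mathcal T_F$, so since $W$ is invertible and $\mathcal T_F$ is invertible (it is block-triangular with invertible diagonal blocks $T$ and $I_m$), the rank of $[\,\lambda E-A\ \ B\,]$ equals the rank of $[\,\lambda E_F-A_F\ \ B_F\,]$ for every $\lambda\in\C$; one must only check that multiplying on the right by $\mathcal T_F$ does not change the column span of the augmented pencil, which is clear from the block structure. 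So it suffices to prove each of \ref{it:fefcontra}, \ref{it:fefcontrb}, \ref{it:fefcontrc} with $\system*$ replaced by $\system[E_F][A_F][B_F]*$ from the start.

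For parts \ref{it:fefcontra} and \ref{it:fefcontrb} I would compute $[\,\lambda E_F-A_F\ \ B_F\,]$ directly from \eqref{eq:fef}: it is block-triangular, with diagonal blocks $[\,\lambda I_{n_1}-A_{11}\ \ B_1\,]$, $[\,{-I_{n_2}}\ \ \lambda E_{23}\ \ B_2\,]$ (together with the middle column), and $[\,\lambda E_{33}-I_{n_3}\ \ 0\,]$. The second block has full row rank $n_2$ for every $\lambda$ because of the $-I_{n_2}$ entry, and the third has full row rank $n_3$ for every $\lambda$ because $E_{33}$ is nilpotent, so $\lambda E_{33}-I_{n_3}$ is always invertible. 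Hence $\rk[\,\lambda E_F-A_F\ \ B_F\,] = n_2+n_3+\rk[\,\lambda I_{n_1}-A_{11}\ \ B_1\,]$, and this equals $n=n_1+n_2+n_3$ exactly when $\rk[\,\lambda I_{n_1}-A_{11}\ \ B_1\,]=n_1$. By the algebraic characterizations in Table~\ref{tab:algchar}, this is precisely the statement that there is an uncontrollable mode at $\lambda$ for one system iff for the other (part \ref{it:fefcontra}), and taking this over all $\lambda\in\C$ gives R-controllability of $\system[E_F][A_F][B_F]*$ iff controllability of $\system[I_{n_1}][A_{11}][B_1]*$ (part \ref{it:fefcontrb}).

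For part \ref{it:fefcontrc}, the I-controllability characterization in Table~\ref{tab:algchar} is $\rk[\,E\ \ AS_\infty\ \ B\,]=n$ with $S_\infty$ a basis matrix of $\ker E$. One direction is to show that if $W$ and $\mathcal T_F$ can be chosen so that $n_3=0$, then the resulting system — which then has the form $zE_F-A_F$ with $E_F$ block-diagonal $\diag(I_{n_1},\,0)$ (in the relevant coordinates) and the second block row $[\,{-I_{n_2}}\ \ \lambda E_{23}\ \ B_2\,]$ — is I-controllable, and then transfer this back using that I-controllability (like the other controllability notions) is a feedback-equivalence invariant; the cleanest route is to verify directly from \eqref{eq:fef} with $n_3=0$ that $\rk[\,E_F\ \ A_F S_{F,\infty}\ \ B_F\,]=n$, using that $\ker E_F$ is then spanned by the middle coordinate block and $A_F$ acts on it via $-I_{n_2}$. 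The converse direction is the genuinely delicate one: assuming $\system*$ is I-controllable, one must produce a choice of $W$ and $\mathcal T_F$ killing the $n_3$ block. The idea is that the $zE_{33}-I_{n_3}$ block carries the ``higher-index, input-free'' part of the dynamics, and I-controllability forces this part to be trivial — concretely, a nonzero nilpotent block $E_{33}$ produces initial conditions $x^0$ that cannot be matched (the corresponding component of $Ex_0$ is constrained), contradicting $\Vshift=\K^n$; one then argues that an index reduction / further feedback transformation removes this block entirely. I expect this converse in \ref{it:fefcontrc} to be the main obstacle, since it requires an explicit manipulation of the feedback equivalence form rather than a pure rank count; the standard tool here is the observation (cf.\ \cite{reis_kalmanyakubovichpopov_2015, ilchmann_outer_2014}) that I-controllability means the ``non-dynamic algebraic constraints'' in the behavior impose no restriction on $Ex_0$, which in the form \eqref{eq:fef} is exactly the assertion that the $E_{33}$-block may be taken to have size zero.
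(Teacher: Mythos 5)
Your treatment of parts \ref{it:fefcontra} and \ref{it:fefcontrb} is correct and complete: from \eqref{eq:fef} one gets, by eliminating with the $-I_{n_2}$ block and the invertible matrix $\lambda E_{33}-I_{n_3}$, that $\rk\,[\,\lambda E_F-A_F\;\; B_F\,]=n_2+n_3+\rk\,[\,\lambda I_{n_1}-A_{11}\;\; B_1\,]$ for every $\lambda\in\C$, which together with Table~\ref{tab:algchar} yields \ref{it:fefcontra}, and \ref{it:fefcontrb} follows by quantifying over $\lambda$. This is in fact more self-contained than the paper, which obtains \ref{it:fefcontra} by citing \cite[Lemma~2.9(c)]{reis_kalmanyakubovichpopov_2015} and deduces \ref{it:fefcontrb} exactly as you do; your preliminary reduction via $W$ and $\mathcal T_F$ is harmless (right multiplication by the invertible $\mathcal T_F$ preserves rank) but is not even needed for these two parts, since they are stated for $(E_F,\,A_F,\,B_F)$ directly.

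The genuine gap is the converse direction of \ref{it:fefcontrc}, which you flag but do not close (the paper proves \ref{it:fefcontrc} only by citing \cite[Proposition~2.12]{ilchmann_outer_2014}). Your heuristic --- ``a nonzero nilpotent block $E_{33}$ produces initial conditions that cannot be matched'' --- does not establish the implication, because $n_3>0$ by itself is compatible with I-controllability: $E_{33}=0$ is nilpotent and admissible in \eqref{eq:fef}. The correct obstruction comes from computing $\Vshift[E_F][A_F][B_F]$: nilpotency of $E_{33}$ forces $x_3\equiv 0$ along every trajectory, so $E_Fx_0=(x_{1,0},\,0,\,0)$, whereas $E_Fx^0=(x_1^0,\,E_{23}x_3^0,\,E_{33}x_3^0)$; hence I-controllability of $(E_F,\,A_F,\,B_F)$ (equivalently of $(E,\,A,\,B)$, since the sets of consistent initial shift values correspond bijectively under $W$ and $\mathcal T_F$) is equivalent to $E_{23}=0$ \emph{and} $E_{33}=0$. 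Your contradiction argument only rules out $E_{33}\neq 0$, says nothing about $E_{23}$, and in the remaining case $E_{23}=E_{33}=0$ with $n_3>0$ there is no contradiction at all; instead one must exhibit the asserted choice of $W$ and $\mathcal T_F$, which here is immediate once the obstruction is identified: with $E_{23}=E_{33}=0$ the row blocks $[\,0\;\;-I_{n_2}\;\;0\;\;-B_2\,]$ and $[\,0\;\;0\;\;-I_{n_3}\;\;0\,]$ regroup (same $W$, same $\mathcal T_F$, merely a re-partition) into $[\,0\;\;-I_{n_2+n_3}\;\;-\tilde B_2\,]$ with $\tilde B_2=\left[\begin{smallmatrix} B_2\\ 0\end{smallmatrix}\right]$, i.e.\ the form \eqref{eq:fef} with $n_3=0$. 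Without this computation of the consistency space and the regrouping step (or an appeal to \cite[Proposition~2.12]{ilchmann_outer_2014}, as in the paper), the implication ``I-controllable $\Rightarrow$ $n_3=0$ achievable'' remains unproved; your forward direction of \ref{it:fefcontrc} is fine.
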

 \begin{proof}
 Assertion~\ref{it:fefcontra} is shown in  \cite[Lemma~2.9(c)]{reis_kalmanyakubovichpopov_2015}.
   Then  assertion~\ref{it:fefcontrb} is an immediate consequence of \ref{it:fefcontra}, since by Table~\ref{tab:algchar} the system $\system[E_F][A_F][B_F]*$ is R-controllable if and only if all $\lambda\in\C$ there are controllable modes of $\system[E_F][A_F][B_F]*$ at $\lambda$. Part \ref{it:fefcontrc} is shown in \cite[Proposition 2.12]{ilchmann_outer_2014}.
 \end{proof}

\subsection{Zero Dynamics}
In this subsection we consider implicit difference equations with an output of the form
\begin{equation} \label{eq:IDEout}
 E\OpShift x_j = Ax_j + Bu_j, \quad y_j = Cx_j + Du_j,
\end{equation}
where $C \in \K^{q \times n}$ and $D \in \K^{q \times m}$. The set of such systems with $\system$ is denoted by $\Sigma_{m,n,q}(\K)$ and we write $(E,A,B,C,D) \in \Sigma_{m,n,q}(\K)$. The zero dynamics $\ZD$ of \eqref{eq:IDEout} simply consists of all $(x,u) \in \mathfrak{B}_{(E,A,B)}$ that result in a zero output, \ie
\begin{equation*}
 \ZD := \left\{ (x,u) \in (\K^n)^{\N_0} \times (\K^m)^{\N_0} \; \bigg| \; \begin{bmatrix} E \sigma - A & -B \\ C & D \end{bmatrix} \begin{pmatrix} x_j \\ u_j \end{pmatrix} = 0,\, j = 0,\,1,\,2,\,\ldots \right\}.
\end{equation*}
The set of zero dynamics with ``initial state'' $x^0 \in \K^n$ is defined by
\begin{equation*}
 \ZD(x^0) := \left\{ (x,u) \in \ZD \; | \; Ex_0 = Ex^0 \right\}.
\end{equation*}
The set of consistent initial shift variables for the zero dynamics is given by 
\begin{equation*}
 \VshiftZD := \left\{ x^0 \in \K^n \; | \; \ZD(x^0) \neq \emptyset \right\}. 
\end{equation*}
The following definition is an adaptation of the definition for continupus-time systems, see \cite{ilchmann_outer_2014,voigt_linear-quadratic_2015}.
\begin{definition} The zero dynamics $\ZD$ with set of consistent initial shift variables $\VshiftZD$ is called
  \begin{enumerate}[label=(\alph*)]
   \item \emph{stabilizable}, if for all $x^0 \in \VshiftZD$, there exists an $(x,u) \in \ZD(x^0)$ such that $\lim_{j \to \infty} (x_j,u_j) = 0$;
   \item \emph{asymptotically stable}, if for all $(x,u) \in \ZD$ it holds $\lim_{j \to \infty} (x_j,u_j) = 0$;
   \item \emph{strongly stabilizable}, if it is stabilizable and $\VshiftZD = \K^n$;
   \item \emph{strongly asymptotically stable}, if it is asymptotically stable and $\VshiftZD = \K^n$.
  \end{enumerate}
\end{definition}

\begin{proposition}\label{prop:zd}
 Let $(E,A,B,C,D) \in \Sigma_{m,n,q}(\K)$ be given and define $\cR(z) := \left[\begin{smallmatrix} zE - A & -B \\ C & D \end{smallmatrix}\right] \in \K[z]^{(n+q) \times (n+m)}$. Then the zero dynamics $\ZD$ with the space of consistent shift variables $\VshiftZD$ is
 \begin{enumerate}[label=(\alph*)]
   \item \emph{stabilizable}, if and only if $\rk_{\K(z)} \cR(z) = \rk \cR(\lambda)$ for all $\lambda \in \C$ with $|\lambda| \ge 1$;
   \item \emph{stabilizable}, if and only if for all $x^0 \in \VshiftZD$ there exists a $(x,u) \in \ZD(x^0)$ such that $\lim_{j \to \infty} Ex_j = 0$;
   \item \emph{asymptotically stable}, if and only if $\rk \cR(\lambda) = n+m$ for all $\lambda \in \C$ with $|\lambda| \ge 1$;
   \item \emph{asymptotically stable}, if and only if for all $x^0 \in \VshiftZD$ there exists a unique $(x,u) \in \ZD(x^0)$ such that $\lim_{j \to \infty} Ex_j = 0$;
   \item \emph{strongly stabilizable}, if and only if $\rk_{\K(z)} \cR(z) = \rk \cR(\lambda)$ for all $\lambda \in \C$ with $|\lambda| \ge 1$ and the index of $\cR(z)$ is at most one;
   \item \emph{strongly asymptotically stable}, if and only if $\rk \cR(\lambda) = n+m$ for all $\lambda \in \C$ with $|\lambda| \ge 1$ and the index of $\cR(z)$ is at most one;
  \end{enumerate}
\end{proposition}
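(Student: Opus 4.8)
The plan is to reduce everything to the feedback equivalence form \eqref{eq:fef} of the triple $(E,A,B)$ and then to the associated explicit system, exactly as in the continuous-time treatment of \cite{voigt_linear-quadratic_2015}. First I would observe that the zero dynamics of $(E,A,B,C,D)$ is nothing but the behaviour of the enlarged pencil $\cR(z)=\left[\begin{smallmatrix} zE-A & -B \\ C & D\end{smallmatrix}\right]$, read as a system in the variables $\vect{x,u}$ with no input and output $y\equiv 0$; consequently the set $\VshiftZD$ of consistent initial shift variables plays the role of the system space (projected onto the shift component $Ex_0$), and the four stability notions translate into asymptotic/stabilizing behaviour of this homogeneous implicit difference equation. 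The key technical input is the discrete-time analogue of the standard fact: for a regular pencil $z\tilde E-\tilde A$, all solutions of $\tilde E\OpShift x_j=\tilde A x_j$ tend to zero iff all finite eigenvalues lie strictly inside the unit disc, and the ``index at most one'' condition is precisely what guarantees that every initial vector is consistent (no higher-order constraints), so that $\VshiftZD=\K^n$. For a singular pencil one first compresses out the K3/K4 blocks: the K3 (underdetermined) blocks contribute the free part of the solution and are harmless for stabilizability but generally prevent asymptotic stability unless absent, while K4 blocks force $\rk\cR(\lambda)<n+m$ and must be excluded for asymptotic stability.

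The main steps, in order, would be: (1) put $\cR(z)$ into Kronecker canonical form $W\cR(z)T=\diag(\cK_1(z),\dots,\cK_l(z))$ and note that equivalence transformations do not change the rank profile $\rk\cR(\lambda)$, the generic rank $\rk_{\K(z)}\cR(z)$, or (up to the coordinate change $T$) the asymptotic behaviour of solutions; (2) analyse each block type separately — K1 blocks $(z-\lambda)I_k-N_k$ give solutions decaying to zero iff $|\lambda|<1$ and are always consistent for every initial value; K2 blocks $zN_k-I_k$ force $x_j\equiv 0$ and are consistent only on the appropriate subspace unless $k\le 1$; K3 blocks contribute a free sequence (relevant only for $\VshiftZD=\K^n$ and for the failure of uniqueness/asymptotic stability when present with $k\ge 1$), and K4 blocks force an overdetermined constraint that drops the pointwise rank below $n+m$; (3) assemble: stabilizability $\iff$ every finite eigenvalue with $|\lambda|\ge 1$ is ``removable'', i.e.\ $\rk\cR(\lambda)$ stays at the generic rank for all such $\lambda$ — this is (a); asymptotic stability additionally requires that there be no free K3 directions and no purely-constrained-to-zero-but-nontrivial pieces, which forces $\rk\cR(\lambda)=n+m$ for all $|\lambda|\ge 1$ — this is (c); the ``strong'' versions add the index-one requirement, which is exactly the statement that $\VshiftZD=\K^n$ since higher index means the K2/K4 chains impose derivative-type (shift-type) constraints on admissible $x^0$ — this gives (e) and (f); (4) for parts (b) and (d), note that since the pencil $zE-A$ itself is regular, on the consistent set the map $x^0\mapsto Ex_0$ already determines the $x$-solution up to the free input/zero-dynamics directions, so $\lim Ex_j=0$ is equivalent to $\lim(x_j,u_j)=0$ after accounting for the algebraic (instantaneous) constraints; this is where I would cite Proposition~\ref{prop:zd}(a),(c) just proven together with the behaviour-level decomposition of Proposition~\ref{prop:sysspace}.

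I expect the main obstacle to be part (d), the uniqueness claim, and more generally the passage between ``$\lim Ex_j=0$'' and ``$\lim(x_j,u_j)=0$''. In the continuous-time case the analogous bookkeeping is delicate because the singular part of the pencil can carry solution components that are invisible to $E x$ but nonzero, so one must argue that under the rank condition $\rk\cR(\lambda)=n+m$ for $|\lambda|\ge 1$ these hidden components are themselves forced to decay; in the discrete-time setting the extra subtlety is that the nilpotent (infinite-eigenvalue) part can produce solutions supported only on finitely many indices, which automatically tend to zero, so one has to be careful that ``asymptotically stable'' is genuinely characterised by the pointwise rank condition and not something strictly stronger. Establishing uniqueness amounts to showing that the homogeneous system $\cR(\OpShift)\vect{x,u}=0$ has only the trivial $\ell^\infty$-to-zero solution exactly when $\rk\cR(\lambda)=n+m$ for all $|\lambda|\ge 1$, which I would get from the block analysis in step (2) by checking that every block type either contributes only decaying solutions or, if it could contribute a persistent/free solution, is ruled out by the rank hypothesis. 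The remaining steps are essentially the discrete-time transcription of \cite[Section~4]{voigt_linear-quadratic_2015} and should go through with only notational changes, replacing ``$|\lambda|$ in the open right half-plane'' by ``$|\lambda|\ge 1$'' throughout.
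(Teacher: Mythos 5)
Your overall strategy --- Kronecker canonical form of $\cR(z)$, block-wise analysis of the resulting homogeneous implicit difference equation, and ``index at most one'' as the algebraic form of ``every $x^0$ is consistent'' --- is the same as the paper's, which carries it out by transcribing the continuous-time proof of Ilchmann--Reis and citing Berger for the index characterization. However, your bookkeeping for the singular blocks is backwards, and with it the derivation of (c), (d) breaks. It is the wide K3 blocks (size $k\times(k+1)$) that reduce the column rank: each one lowers $\rk_{\K(z)}\cR(z)$, and hence $\rk\cR(\lambda)$ for \emph{every} $\lambda$, below $n+m$, and each one --- including the size $k=0$ zero-column case, which your parenthetical ``$k\ge 1$'' wrongly excludes --- carries a completely free solution component, so K3 blocks are exactly what destroy asymptotic stability and the uniqueness in (d). The tall K4 blocks, by contrast, have full column rank $k$ at every $\lambda\in\C$ (delete the first row of $\lambda K_k^R-(K_k^L)^T$ and a bidiagonal matrix with $-1$ on the diagonal remains), and they admit only the zero solution, so they are perfectly compatible with asymptotic stability; they are constrained only through the index-one condition in (e), (f). Consequently your step (3) --- ``no K3 with $k\ge1$, no K1 with $|\lambda|\ge 1$, and no pieces constrained to zero'' --- is not equivalent to $\rk\cR(\lambda)=n+m$ for all $|\lambda|\ge1$; the two false claims (``K4 must be excluded'' and ``K4 drops the pointwise rank below $n+m$'') merely compensate each other. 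The correct statement is that $\rk\cR(\lambda)=n+m$ for all $|\lambda|\ge1$ holds iff there are no K3 blocks at all and no K1 blocks with $|\lambda|\ge1$, with K2 and K4 blocks unrestricted; likewise K3 blocks are irrelevant to whether the set of consistent initial shift variables equals $\K^n$, which is governed by the K2/K4 chains, i.e.\ by the index.

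Apart from this, parts (b) and (d) --- trading $\lim_{j\to\infty}(x_j,u_j)=0$ for $\lim_{j\to\infty}Ex_j=0$, and the uniqueness claim --- are only gestured at (``the map $x^0\mapsto Ex_0$ determines the solution up to free directions''); this is precisely the point the paper discharges by citing Rem.~2.5(d),(e) of the continuous-time reference. In a self-contained proof you would still have to push the initial condition $Ex_0=Ex^0$ through the Kronecker coordinate change and show that, in the absence of K3 blocks, two zero-dynamics trajectories with the same $Ex_0$ and with $Ex_j\to 0$ coincide (K1 parts are fixed by the consistent initial data, K2/K4 parts vanish identically). That argument is recoverable from your step (2) once the K3/K4 roles are corrected, so the route is salvageable, but as written the proof of (c) and (d) does not go through.
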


\begin{proof} The proof follows the lines of the proof of \cite[Prop.~4.3]{ilchmann_outer_2014}. It is analogously verified that the zero dynamics $\ZD$ is
 \begin{enumerate}[label=(\alph*)]
  \item stabilizable, if and only if all blocks of type K1 in the KCF of $\cR(z)$ correspond to eigenvalues with $|\lambda| < 1$;
  \item asymptotically stable, if and only if all blocks of type K1 in the KCF of $\cR(z)$ correspond to eigenvalues with $|\lambda| < 1$ and the number of blocks of type K3 in the KCF of $\cR(z)$ is zero.
 \end{enumerate}
 Together with \cite[Rem.~2.5 (d), (e)]{ilchmann_outer_2014}, this shows statements (a)--(d).
 Moreover, it is checked that $\VshiftZD = \K^n$ is equivalent to the condition that for every $x^0 \in \K^n$ the IDE
 \begin{equation*}
  \begin{bmatrix} E & 0 \\ 0 & 0 \end{bmatrix}\sigma \begin{pmatrix} x_j \\ u_j \end{pmatrix} = \begin{bmatrix} A & B \\ C & D \end{bmatrix}\begin{pmatrix} x_j \\ u_j \end{pmatrix}, \quad Ex_0 = Ex^0
 \end{equation*}
 has a solution. This is equivalent to $\cR(z)$ being of index at most one \cite{berger_differential-algebraic_2014} and shows statements (e) and (f).
\end{proof}

\subsection{Linear-Quadratic Optimal Control}

One main goal of this work is to provide tools for analyzing the discrete-time infinite horizon linear-quadratic control problem \cite{kurina_linear-quadratic_2004, backes_extremalbedingungen_2006, mehrmann_autonomous_1991}. It is given by:

For $x^0\in\Vshift$ find $\behavior$ such that $Ex_0=Ex^0$, $\lim\limits_{j\rightarrow\infty} Ex_j= 0,$ and the \emph{objective function}
\begin{equation}\label{eq:objectivefunction}
\objfunc :=
 \sum_{j=0}^\infty{
  \begin{pmatrix}
   x_j\\
   u_j
  \end{pmatrix}^*
  \begin{bmatrix}
   Q	& S\\
   S^*	& R
  \end{bmatrix}
  \begin{pmatrix}
   x_j\\
   u_j
  \end{pmatrix}
 }
\end{equation}
is minimized. In other words, we are interested in the value of the functional $\inffunc\!:E\Vshift\rightarrow\R^+_0\cup\{\pm\infty\}$ defined by
\begin{equation*}
 \inffunc := \inf \left\{    
 \objfunc \,\middle|\, \behavior,\, Ex_0=Ex^0,\,\lim\limits_{j\rightarrow\infty} Ex_j= 0
 \right\}.
\end{equation*}
The problem is called \emph{feasible} if $\infty>\inffunc>-\infty$. It is called \emph{solvable} if the infimum is actually a minimum. Note that for $x^0\in\Vshift$ the existence of $\behavior$ such that $Ex_0=Ex^0$ is  guaranteed by the definition of $\Vshift$. 
If further $\system*$ is stabilizable we can choose $u$ such that in addition $\lim\limits_{j\rightarrow\infty} Ex_j= 0$, \ie $\inffunc< \infty$.

\nomenclature[fasystems]{$\Sigmnw$}{set of all $\wsystem *\in\mat{n}{n}\times\mat{n}{n}\times\mat{n}{m}\times\mat{n}{n}\times\mat{n}{m}\times\mat{m}{m}$, where $\system$ and $Q$ and $R$ are Hermitian}

It can be easily seen that the objective function $\objfunc$ does not change if  the system $\system[E_F][A_F][B_F]$ is equivalent to $\system *$ via $W$ and $\mathcal{T}_F$ and we use the modified weights
\begin{equation*}\label{eq:feedbackweights}
  \begin{bmatrix}
    Q_F	& S_F\\
    S_F^*& R_F
  \end{bmatrix}:=
  \begin{bmatrix}
   T^*(Q + F^*S^* + SF + F^*RF)T	& T^*(S+F^*R)\\
   (S^*+RF)T				& R
  \end{bmatrix}.
\end{equation*}

If we assume that the system $\wsystem$ is I-controllable and that
\begin{equation}\label{eq:QSRassumption}
  \begin{bmatrix}
    Q & S\\
    S^*& R
  \end{bmatrix}\succeq 0
\end{equation}
then it is well-known that in this case solutions of the optimal control problem can be characterized via certain structured matrix pencils, see, \cite{mackey_structured_2006, byers_symplectic_2009, kunkel_optimal_2008, mehrmann_autonomous_1991}.
One main contribution of this work is that we actually can drop these assumptions. 

In the discrete-time case, applying  Pontryagin's maximum principle \cite{pontryagin_mathematical_1962, mehrmann_autonomous_1991} leads to 
\begin{gather}
  \begin{split}\label{eq:optconditionbvd}
\begin{bmatrix}
    0			& E	& 0 	\\
    A^* 		& 0	& 0 	\\
    B^*			& 0	& 0
  \end{bmatrix}
  \OpShift
  \begin{pmatrix}
   \mu\\
   x\\
   u
  \end{pmatrix}
    =
  \begin{bmatrix}
    0			&  A	& B 	\\
    E^*			& Q	& S 	\\
	0		& S^*	& R
  \end{bmatrix}
  \begin{pmatrix}
   \mu\\
   x\\
   u
  \end{pmatrix},\quad
  Ex_0 = E x^0, \quad \lim\limits_{j\to\infty}{E^*\mu_j}=0,
  \end{split}
\end{gather}
where $\behavior$, $x^0\in\Vshift$, and $\mu\in{(\K^n)^{\N_0}}$ denote some Lagrange multipliers. This IDE can be analyzed by means of the matrix pencil
\begin{equation}\label{eq:bvdpenc}
  z\E-\A  =
  \begin{bmatrix}
    0			& zE-A	& -B 		\\
    zA^*-E^*		& -Q	& -S 	\\
	zB^*		& -S^*	& -R
  \end{bmatrix}\in\matz{2n+m}{2n+m},
\end{equation}
a so-called \emph{BVD-pencil}; here BVD is an acronym for \emph{\textbf{B}oundary \textbf{V}alue problem for the optimal control of
\textbf{D}iscrete systems}. The structure of this pencil is not invariant under unitary transformations which leads to problems in the numerical treatment \cite{byers_symplectic_2009}. In \cite{byers_symplectic_2009, schroder_palindromic_2008}  it is shown how we can achieve a more structured version if we introduce new variables
\begin{equation*}\label{eq:lagrangetransformation}
  m_j:=\mu_j - \mu_{j+1}. 
\end{equation*}
This reformulation yields 
\begin{gather}
  \begin{split}\label{eq:optconditionpal}
\begin{bmatrix}
    0			& E	& 0 	\\
    A^* 		& Q	& S 	\\
    B^*			& S^*	& R
  \end{bmatrix}
  \OpShift
  \begin{pmatrix}
   \mu\\
   x\\
   u
  \end{pmatrix}
    =
  \begin{bmatrix}
    0			&  A	& B 	\\
    E^*			& Q	& S 	\\
	0		& S^*	& R
  \end{bmatrix}
  \begin{pmatrix}
   \mu\\
   x\\
   u
  \end{pmatrix},\quad  Ex_0 = E x^0, \quad \sum\limits_{j=0}^{\infty}{E^*m_j}=E^*\mu_j
\end{split}
  \end{gather}
with the corresponding matrix pencil
\begin{equation}\label{eq:palpenc}
  z\E-\A  = z\A^*-\A
  \begin{bmatrix}
    0			& zE-A		& -B 		\\
    zA^*-E^*		& (z-1)Q	& (z-1)S 	\\
	zB^*		& (z-1)S^*	& (z-1)R
  \end{bmatrix}\in\matz{2n+m}{2n+m}.
\end{equation}
This pencil has the special property of being \emph{palindromic}, \ie $\E=\A^*$. This structure is preserved under congruence transformation and there exist numerically stable and structure-preserving methods for the computation of eigenvalues and deflating subspaces. In particular, simple eigenvalues on the unit circle stay on the unit circle \cite{byers_symplectic_2009, schroder_palindromic_2008}.
In Section~\ref{chap:inertia} we  discuss properties of palindromic pencils in more detail.
However, in \cite{mehrmann_self-conjugate_2014} it is shown that in an abstract Banach space setting the operator associated to the palindromic pencil \eqref{eq:palpenc} is not self-adjoint;  in contrast to the operator associated to the so-called even pencil  arising in continuous-time, see also \cite{kunkel_self-adjoint_2014}.
We show in Sections~\ref{chap:lure} and \ref{chap:applications} that in analogy to the continuous-time case in  \cite{reis_kalmanyakubovichpopov_2015}, also in the discrete-time case we can drop assumption \eqref{eq:QSRassumption} to obtain the necessary optimality conditions \eqref{eq:optconditionbvd} and \eqref{eq:optconditionpal}.
\section{Kalman-Yakubovich-Popov Lemma}\label{chap:kyp}
Consider the weighted system $\wsystem$ and corresponding system space $\cV$. In this section we relate positive semi-definiteness on the unit circle of the Popov function -- a specific rational matrix function -- to the solvability  of a certain matrix inequality, namely the Kalman-Yakubovich-Popov inequality. We will see in Section~\ref{chap:applications} that positive semi-definiteness on the unit circle of the Popov function is necessary for feasibility of the optimal control problem \eqref{eq:objectivefunction}.

\begin{definition}
Let $\wsystem$ be given. Consider $P=P^*\in\mat{n}{n}$ and
  \begin{equation}\label{eq:KYPmatrix}
    \mathcal{M}(P):=
    \begin{bmatrix}
      A^*PA - E^*PE + Q		& A^*PB + S \\
      B^*PA + S^*		& B^*PB + R
    \end{bmatrix}.
  \end{equation}
  If  $\mathcal{M}(P)\succeq_{\cV}\!0$, then $P$ is called solution of the discrete-time \emph{Kalman-Yakubovich-Popov (KYP)} inequality
  \begin{equation}\label{eq:LMI}
    \mathcal{M}(P)\succeq_{\cV}\!0,\qquad P^*=P.
  \end{equation}
\end{definition}

Throughout this chapter we will make use of the system $\wsystem$  being transformed to $\wsystemF*$ via feedback equivalence,  \ie we have invertible $W,\,T\in\mat{n}{n}$ and a feedback matrix $F\in\mat{m}{n}$ such that
\begin{equation}\label{eq:fbsys}
  \begin{matrix}
     E_F=WET,\qquad A_F=W(A+BF)T ,\qquad B_F=WB,\\
     Q_F= T^*(Q+SF+F^*S^*+F^*RF)T,\qquad S_F=T^*(S+F^*R),\qquad R_F=R.     
  \end{matrix}
\end{equation}
These transformations will allow us to extract an EDE formulation from the IDE problem.
The next results are crucial for the proof of the KYP Lemma  in the IDE case and are mainly adaptions of the corresponding results in \cite[Section 4]{reis_kalmanyakubovichpopov_2015}.
\begin{lemma}\label{lem:zerokyp} Let $\wsystem$. Then we have
  \begin{equation*}\label{eq:zerokyp}
    \begin{bmatrix}
      (zE-A)^{-1}B	\\
      I_m		
    \end{bmatrix} ^\sim
    \begin{bmatrix}
      A^*PA-E^*PE	& A^*PB\\
      B^*PA		& B^*PB
    \end{bmatrix}
    \begin{bmatrix}
      (zE-A)^{-1}B	\\
      I_m		
    \end{bmatrix} = 0.
  \end{equation*}
\end{lemma}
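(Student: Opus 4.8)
The plan is to reduce the claim to a short algebraic manipulation built on part~\ref{it:sysspaceb} of Proposition~\ref{prop:sysspace}. Set
\begin{equation*}
  F(z) := \begin{bmatrix}(zE-A)^{-1}B\\ I_m\end{bmatrix} \in \matrz{n+m}{m},
\end{equation*}
and observe that the matrix sandwiched in the middle of the claimed identity factors as
\begin{equation*}
  \begin{bmatrix} A^*PA-E^*PE & A^*PB\\ B^*PA & B^*PB \end{bmatrix}
  = \begin{bmatrix} A & B \end{bmatrix}^{*} P \begin{bmatrix} A & B \end{bmatrix}
  - \begin{bmatrix} E & 0 \end{bmatrix}^{*} P \begin{bmatrix} E & 0 \end{bmatrix}.
\end{equation*}
Since $(CG)^\sim = G^\sim C^*$ for a constant matrix $C$ and a rational matrix $G(z)$ of compatible size, the left-hand side of the asserted equation equals
\begin{equation*}
  \bigl( \begin{bmatrix} A & B \end{bmatrix} F(z) \bigr)^\sim P \bigl( \begin{bmatrix} A & B \end{bmatrix} F(z) \bigr)
  - \bigl( \begin{bmatrix} E & 0 \end{bmatrix} F(z) \bigr)^\sim P \bigl( \begin{bmatrix} E & 0 \end{bmatrix} F(z) \bigr);
\end{equation*}
in particular, Hermitianness of $P$ plays no role here.

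Next I would invoke Proposition~\ref{prop:sysspace}\ref{it:sysspaceb}, which states precisely that $\begin{bmatrix} A & B \end{bmatrix} F(z) = z \begin{bmatrix} E & 0 \end{bmatrix} F(z)$ as an identity of rational matrix functions. Abbreviating $G(z) := \begin{bmatrix} E & 0 \end{bmatrix} F(z) = E(zE-A)^{-1}B$, the previous display becomes $\bigl( z\,G(z) \bigr)^\sim P \bigl( z\,G(z) \bigr) - G(z)^\sim P\, G(z)$.

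Finally I would track the scalar $z$ through the para-Hermitian conjugation. From the definition $H^\sim(z) = H(\overline{z}^{\,-1})^*$ we get $\bigl( z\,G(z) \bigr)^\sim = \overline{\,\overline{z}^{\,-1}\,}\, G(\overline{z}^{\,-1})^* = z^{-1} G(z)^\sim$, using $\overline{\,\overline{z}^{\,-1}\,} = z^{-1}$. Hence $\bigl( z\,G(z) \bigr)^\sim P \bigl( z\,G(z) \bigr) = z^{-1} G(z)^\sim P\, z\,G(z) = G(z)^\sim P\, G(z)$, so the two terms cancel and the identity follows.

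I do not expect a real obstacle. The one point that needs a little care is the scalar bookkeeping under $(\cdot)^\sim$, \ie the identity $\overline{\,\overline{z}^{\,-1}\,} = z^{-1}$, which is exactly what makes the factor $z$ supplied by Proposition~\ref{prop:sysspace}\ref{it:sysspaceb} cancel out. The rest is just the observation that $A^*PA - E^*PE$ is the $\begin{bmatrix} E & 0 \end{bmatrix}$-versus-$\begin{bmatrix} A & B \end{bmatrix}$ form of a Stein-type difference, which is annihilated along the system space by construction.
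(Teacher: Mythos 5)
Your proposal is correct and follows essentially the same route as the paper's proof: both factor the middle matrix as $\left[\begin{smallmatrix}A^*\\B^*\end{smallmatrix}\right]P\left[\begin{smallmatrix}A & B\end{smallmatrix}\right]-\left[\begin{smallmatrix}E^*\\0\end{smallmatrix}\right]P\left[\begin{smallmatrix}E & 0\end{smallmatrix}\right]$, substitute the identity \eqref{eq:sysspace} from Proposition~\ref{prop:sysspace}\ref{it:sysspaceb}, and cancel the scalar factors $z$ and $z^{-1}$ arising under $(\cdot)^\sim$. Your explicit bookkeeping of $\overline{\,\overline{z}^{\,-1}\,}=z^{-1}$ just spells out what the paper writes compactly as $\overline{z}^{-*}\cdot z=1$.
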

\begin{proof}%
     By using \eqref{eq:sysspace} we obtain
     \begin{align*}
     &\begin{bmatrix}
       (zE-A)^{-1}B	\\
       I_m		
     \end{bmatrix} ^\sim
     \begin{bmatrix}
       A^*PA-E^*PE	& A^*PB\\
       B^*PA		& B^*PB
     \end{bmatrix}
     \begin{bmatrix}
       (zE-A)^{-1}B	\\
       I_m		
     \end{bmatrix} \\
     \overset{\hphantom{\eqref{eq:sysspace}}}=&
     \begin{bmatrix}
       (zE-A)^{-1}B	\\
       I_m		
     \end{bmatrix} ^\sim
     \left(
     \begin{bmatrix}
       A^*\\
       B^*
     \end{bmatrix}
     P
     \begin{bmatrix}
       A	& B
     \end{bmatrix}
     -
         \begin{bmatrix}
       E^*\\
       0
     \end{bmatrix}
     P
     \begin{bmatrix}
       E	& 0
     \end{bmatrix}
     \right)
     \begin{bmatrix}
       (zE-A)^{-1}B	\\
       I_m		
     \end{bmatrix}\\
     \overset{\eqref{eq:sysspace}}=& 
     \begin{bmatrix}
       (zE-A)^{-1}B	\\
       I_m		
     \end{bmatrix} ^\sim
     \left(
     \overline{z}^{-*}
     \begin{bmatrix}
       E^*\\
       0
     \end{bmatrix}
     P
     \begin{bmatrix}
       E	& 0
     \end{bmatrix}
     z
     -
         \begin{bmatrix}
       E^*\\
       0
     \end{bmatrix}
     P
     \begin{bmatrix}
       E	& 0
     \end{bmatrix}
     \right)
     \begin{bmatrix}
       (zE-A)^{-1}B	\\
       I_m		
     \end{bmatrix} = 0.%
   \end{align*}
\end{proof}

\begin{lemma}\label{lem:equivalence}
Let $\wsystem$ with corresponding  system $\wsystemF*$ in feedback equivalence form as in \eqref{eq:fbsys} be given.  Further, let $P=P^*\in\mat{n}{n}$ and set $P_F=W^{-*}PW^{-1}$ and
  \begin{equation*}
    \mathcal{T}_F=
    \begin{bmatrix}
      T	&	0 \\
      FT&	I_m
    \end{bmatrix}.
  \end{equation*}
Then
$
    \mathcal{M}_F(P_F)= \mathcal{T}_F^* \mathcal{M}(P) \mathcal{T}_F,
$
  where $\mathcal{M}_F(P_F)$ is the matrix in \eqref{eq:KYPmatrix} with respect to $\wsystemF*$.
\end{lemma}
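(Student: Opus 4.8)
The plan is to verify the identity $\mathcal{M}_F(P_F) = \mathcal{T}_F^* \mathcal{M}(P) \mathcal{T}_F$ by direct substitution of the feedback-equivalence relations \eqref{eq:fbsys}, breaking the KYP matrix into its two natural pieces. First I would write $\mathcal{M}(P)$ from \eqref{eq:KYPmatrix} as a sum
\begin{equation*}
 \mathcal{M}(P) = \begin{bmatrix} A^* \\ B^* \end{bmatrix} P \begin{bmatrix} A & B \end{bmatrix} - \begin{bmatrix} E^* \\ 0 \end{bmatrix} P \begin{bmatrix} E & 0 \end{bmatrix} + \begin{bmatrix} Q & S \\ S^* & R \end{bmatrix},
\end{equation*}
and similarly for $\mathcal{M}_F(P_F)$ with $E_F, A_F, B_F, Q_F, S_F, R_F$ in place of their unsubscripted counterparts and $P_F = W^{-*} P W^{-1}$.

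The key observation driving the computation is that $\begin{bmatrix} A_F & B_F \end{bmatrix} = W \begin{bmatrix} A & B \end{bmatrix} \mathcal{T}_F$ and $\begin{bmatrix} E_F & 0 \end{bmatrix} = W \begin{bmatrix} E & 0 \end{bmatrix} \mathcal{T}_F$; indeed, from \eqref{eq:fbsys} we have $E_F = WET$, $A_F = W(A+BF)T$, $B_F = WB$, and one checks
\begin{equation*}
 \begin{bmatrix} A & B \end{bmatrix} \mathcal{T}_F = \begin{bmatrix} A & B \end{bmatrix} \begin{bmatrix} T & 0 \\ FT & I_m \end{bmatrix} = \begin{bmatrix} (A+BF)T & B \end{bmatrix},
\end{equation*}
so $W \begin{bmatrix} A & B \end{bmatrix} \mathcal{T}_F = \begin{bmatrix} W(A+BF)T & WB \end{bmatrix} = \begin{bmatrix} A_F & B_F \end{bmatrix}$, and likewise $W \begin{bmatrix} E & 0 \end{bmatrix} \mathcal{T}_F = \begin{bmatrix} WET & 0 \end{bmatrix} = \begin{bmatrix} E_F & 0 \end{bmatrix}$ since the feedback block acts trivially on the zero column. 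Substituting these together with $P_F = W^{-*} P W^{-1}$ into the first two summands of $\mathcal{M}_F(P_F)$, the factors $W$ and $W^{-1}$ cancel and we are left with $\mathcal{T}_F^*$ and $\mathcal{T}_F$ flanking exactly the first two summands of $\mathcal{M}(P)$.

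It then remains to check that the weight block transforms the same way, i.e. that $\begin{bsmallmatrix} Q_F & S_F \\ S_F^* & R_F \end{bsmallmatrix} = \mathcal{T}_F^* \begin{bsmallmatrix} Q & S \\ S^* & R \end{bsmallmatrix} \mathcal{T}_F$. Expanding the right-hand side with $\mathcal{T}_F = \begin{bsmallmatrix} T & 0 \\ FT & I_m \end{bsmallmatrix}$ gives the $(1,1)$ block $T^*(Q + SF + F^*S^* + F^*RF)T$, the $(1,2)$ block $T^*(S + F^*R)$, and the $(2,2)$ block $R$, which match $Q_F$, $S_F$, $R_F$ in \eqref{eq:fbsys} exactly. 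Adding the three transformed summands and factoring out $\mathcal{T}_F^*$ on the left and $\mathcal{T}_F$ on the right yields the claim. I do not expect any genuine obstacle here; the only thing requiring a moment's care is bookkeeping the zero column in $\begin{bmatrix} E & 0 \end{bmatrix}$ so that the feedback block of $\mathcal{T}_F$ drops out, which is precisely why the $E$-term transforms by the same congruence despite $E_F = WET$ not involving $F$.
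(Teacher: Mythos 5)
Your computation is correct: writing $\mathcal{M}(P)$ as $\left[\begin{smallmatrix} A^* \\ B^* \end{smallmatrix}\right]P\left[\begin{smallmatrix} A & B\end{smallmatrix}\right]-\left[\begin{smallmatrix} E^* \\ 0 \end{smallmatrix}\right]P\left[\begin{smallmatrix} E & 0\end{smallmatrix}\right]+\left[\begin{smallmatrix} Q & S \\ S^* & R\end{smallmatrix}\right]$, using $\left[\begin{smallmatrix} A_F & B_F\end{smallmatrix}\right]=W\left[\begin{smallmatrix} A & B\end{smallmatrix}\right]\mathcal{T}_F$ and $\left[\begin{smallmatrix} E_F & 0\end{smallmatrix}\right]=W\left[\begin{smallmatrix} E & 0\end{smallmatrix}\right]\mathcal{T}_F$ so that the $W$-factors cancel against $P_F=W^{-*}PW^{-1}$, and checking the weight block directly against \eqref{eq:fbsys} is exactly the routine congruence verification intended here. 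The paper states this lemma without an explicit proof, and your argument matches the same decomposition the authors use elsewhere (e.g.\ in the proof of Lemma~\ref{lem:zerokyp}), so there is nothing to add.
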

For the generalization of the KYP inequality to implicit difference equations we first need to understand relations between the different Popov functions and KYP inequalities  corresponding to systems $\wsystem$ and 
$
  \wsystemF
$
as in  \eqref{eq:fbsys} and how they are related to explicit difference equations.

If $\wsystemF*$ is also in feedback equivalence form \eqref{eq:fef}, then the associated EDE part is given by
$\wsystemS$
which is defined by
 \begin{equation}\label{eq:EDEsys}
  \begin{matrix}
      A_s=A_{11},\qquad B_s=B_1,\\
     Q_s= Q_{11},\qquad S_s=S_1-Q_{12}B_2,\qquad R_s=B_2^*Q_{22}B_2 - B_2^*S_2-S_2^*B_2+R.     
  \end{matrix}
\end{equation}
\begin{proposition}\label{prop:popov}
  Consider the Popov function 
  \begin{equation*}
  \Phi_F(z)  
  :=
  \begin{bmatrix}
      (zE_F-A_F)^{-1}B_F	\\
      I_m		
  \end{bmatrix} ^\sim
  \begin{bmatrix}
      Q_F		& S_F	\\
      S_F^*	& R_F	
  \end{bmatrix} 
  \begin{bmatrix}
      (zE_F-A_F)^{-1}B_F	\\
      I_m		
  \end{bmatrix} \in \matrz{m}{m}
  \end{equation*}
  of the system $\wsystemF$ as in \eqref{eq:fbsys}.
  \begin{enumerate}[label=(\alph*)]
    \item \label{it:phifphia} The Popov functions $\Phi_F(z)$ and $\Phi(z)$ are related via
    \begin{equation*}\label{eq:phifphi}
      \Phi_F(z)=\Theta_F^\sim(z)\Phi(z)\Theta_F(z),
    \end{equation*}
    where $\Theta_F(z)=I_m+FT(zE_F-A_F)^{-1}B_F\in\matrz{m}{m}$ is invertible.
    \item \label{it:phifphib} Further, assume that $\wsystemF*$ is given in feedback equivalence form as in \eqref{eq:fef} and partitioned accordingly. Then it holds that
    $
    \Phi_F(z)=\Phi_s(z),
    $
    where $\Phi_s(z)$ is the Popov function corresponding to the EDE part
$
  \wsystemS$
as in \eqref{eq:EDEsys}.

  \end{enumerate}
\end{proposition}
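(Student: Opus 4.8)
The plan is to prove both parts by direct manipulation of the rational matrix functions, exploiting the feedback transformation formulas \eqref{eq:fbsys} and, for part~\ref{it:phifphib}, the block structure of the feedback equivalence form \eqref{eq:fef}. For part~\ref{it:phifphia}, first I would establish the resolvent identity relating $(zE_F-A_F)^{-1}$ to $(zE-A)^{-1}$. Using $E_F = WET$, $A_F = W(A+BF)T$, and $B_F = WB$, one computes $zE_F - A_F = W\bigl((zE-A) - BF T T^{-1}\bigr) \cdot$ — more precisely $zE_F-A_F = W\bigl((zE-A)-BF\cdot(\text{stuff})\bigr)T$; carefully, $zE_F-A_F = W(zE-A)T - WBFT = W\bigl(zE-A - BF\bigr)T$ since $FT\cdot T^{-1}=F$ acts on the right after factoring $T$. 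Then a Woodbury-type identity gives $(zE_F-A_F)^{-1}B_F = T^{-1}(zE-A-BF)^{-1}B$, and one relates this to $(zE-A)^{-1}B$ via $(zE-A-BF)^{-1}B = (zE-A)^{-1}B\cdot\Theta_F(z)^{-1}$ or its inverse, where $\Theta_F(z) = I_m + FT(zE_F-A_F)^{-1}B_F$. The key algebraic fact is the intertwining relation
\[
  \begin{bmatrix} (zE_F-A_F)^{-1}B_F \\ I_m \end{bmatrix}
  = \mathcal{T}_F^{-1}
  \begin{bmatrix} (zE-A)^{-1}B \\ I_m \end{bmatrix}\Theta_F(z),
\]
which I would verify by multiplying out the block columns and using the resolvent identity above together with the definition of $\Theta_F$. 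Substituting this into the definition of $\Phi_F(z)$ and using the feedback-weight identity $\begin{bsmallmatrix} Q_F & S_F \\ S_F^* & R_F \end{bsmallmatrix} = \mathcal{T}_F^*\begin{bsmallmatrix} Q & S \\ S^* & R\end{bsmallmatrix}\mathcal{T}_F$ (which is exactly \eqref{eq:fbsys} rewritten, cf. Lemma~\ref{lem:equivalence}) makes the $\mathcal{T}_F$-factors cancel, leaving $\Phi_F(z) = \Theta_F^\sim(z)\Phi(z)\Theta_F(z)$. Invertibility of $\Theta_F(z)$ as a rational matrix function follows because it has constant term $I_m$ at $z=\infty$ (or because $\mathcal{T}_F$ is invertible and the intertwining relation forces $\det\Theta_F(z)\not\equiv 0$).

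For part~\ref{it:phifphib}, I would work directly with the feedback equivalence form \eqref{eq:fef}. Writing $z E_F - A_F$ in the block form of \eqref{eq:fef} and computing $(zE_F-A_F)^{-1}B_F$ blockwise: the third block row gives the component along $\ker$ of $zE_{33}-I_{n_3}$, but since $E_{33}$ is nilpotent, $(zE_{33}-I_{n_3})^{-1}$ is polynomial and the corresponding input block of $B_F$ is zero (the last column block of $-B_F$ in \eqref{eq:fef} has a zero in the third block row), so that component vanishes. The second block row involves $-I_{n_2}$, yielding the component $-(zE_{23}x_3 - B_2 u)$ where $x_3$ is the (zero) third component, hence $B_2 u$. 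The first block row yields $(zI_{n_1}-A_{11})^{-1}B_1 u$. Thus
\[
  \begin{bmatrix} (zE_F-A_F)^{-1}B_F \\ I_m \end{bmatrix}
  = \widetilde V_F
  \begin{bmatrix} (zI_{n_1}-A_{11})^{-1}B_1 \\ I_m \end{bmatrix},
\]
for a constant matrix $\widetilde V_F$ (closely related to the basis matrix $V_F$ of Proposition~\ref{prop:sysspace}\ref{it:sysspacea}, up to deletion of zero columns). Substituting this and the partitioned weights $\begin{bsmallmatrix} Q_F & S_F \\ S_F^* & R_F\end{bsmallmatrix}$ into $\Phi_F(z)$, the matrix $\widetilde V_F^*\begin{bsmallmatrix}Q_F & S_F \\ S_F^* & R_F\end{bsmallmatrix}\widetilde V_F$ collapses — by a straightforward block computation — exactly to $\begin{bsmallmatrix} Q_s & S_s \\ S_s^* & R_s \end{bsmallmatrix}$ with the entries given in \eqref{eq:EDEsys} (this is where the formulas $S_s = S_1 - Q_{12}B_2$ and $R_s = B_2^*Q_{22}B_2 - B_2^*S_2 - S_2^*B_2 + R$ come from). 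Hence $\Phi_F(z) = \Phi_s(z)$.

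The main obstacle is the bookkeeping in part~\ref{it:phifphia}: getting the resolvent/Woodbury identity and the placement of $\mathcal{T}_F^{-1}$ versus $\mathcal{T}_F$ and of $\Theta_F$ versus $\Theta_F^{-1}$ exactly right, since it is easy to introduce a spurious inverse or transpose. A clean way to avoid errors is to establish the intertwining relation first as a standalone identity (by verifying $\mathcal{T}_F$ times the right-hand column equals the left-hand column composed with $\Theta_F^{-1}$, using only $zE_F-A_F = W(zE-A-BF)T$ and the definition of $\Theta_F$), and only then substitute into the quadratic form. For part~\ref{it:phifphib} the only subtlety is confirming that the nilpotency of $E_{33}$ together with the zero input-block in the third row of \eqref{eq:fef} really kills the third component; everything else is a finite block multiplication that can be cross-checked against the definition \eqref{eq:EDEsys}.
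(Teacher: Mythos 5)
Your argument is correct in substance, and it is the standard direct computation: the paper gives no in-text proof of this proposition but defers to \cite[Proposition 3.8]{bankmann_linear-quadratic_2015} and the continuous-time references, where exactly this intertwining-plus-congruence calculation (the relation between the two resolvent columns via $\mathcal{T}_F$ and $\Theta_F$, and the collapse of the transformed weights onto the EDE weights) is carried out. Two small points should be repaired. First, in part (a) your justification of the invertibility of $\Theta_F(z)$ via a ``constant term $I_m$ at $z=\infty$'' is not reliable for implicit systems: when $E_F$ is singular, $(zE_F-A_F)^{-1}$ need not vanish at infinity and can even be polynomial in $z$. Your second argument is the one to keep: the bottom block row of your intertwining relation reads $I_m=\bigl(I_m-F(zE-A)^{-1}B\bigr)\Theta_F(z)$, which exhibits an explicit left (hence two-sided) inverse and shows $\det\Theta_F(z)\not\equiv 0$. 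Second, in part (b) the middle component of $(zE_F-A_F)^{-1}B_F$ is $-B_2$, not $+B_2$: the second block row of \eqref{eq:fef} gives $-x_2+zE_{23}x_3=B_2u$ and the third row forces $x_3=0$, hence $x_2=-B_2u$, consistent with the $-B_2$ entry of $V_F$ in \eqref{eq:reprsysspace}. This sign is precisely what makes the block computation produce $S_s=S_1-Q_{12}B_2$ and $R_s=B_2^*Q_{22}B_2-B_2^*S_2-S_2^*B_2+R$ as in \eqref{eq:EDEsys}; with your sign the cross terms would come out with the wrong sign. With these two corrections the proof goes through exactly as you outline.
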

\begin{proof} See \cite[Proposition 3.8]{bankmann_linear-quadratic_2015} and \cite{reis_kalmanyakubovichpopov_2015,voigt_linear-quadratic_2015}.
\end{proof}

We now turn to a reduction of the problem for systems $\wsystemF$ in feedback equivalence form as in \eqref{eq:fbsys} to the corresponding EDE system
\begin{equation*}
  \wsystemS
\end{equation*}
as in \eqref{eq:EDEsys}.

\begin{lemma}\label{lem:kypreduction}
Assume that $\wsystemF$ as in \eqref{eq:fbsys} is given in feedback equivalence form as in \eqref{eq:fef} and partitioned accordingly. Further, consider the  corresponding EDE part 
\begin{equation*}
\wsystemS  
\end{equation*}
 as in \eqref{eq:EDEsys} and partition the  Hermitian matrix
  \begin{equation*}
    P_F=
    \begin{bmatrix}
      P_{11} 	& P_{12} 	& P_{13}\\
      P_{12}^*	& P_{22} 	& P_{23}\\
      P_{13}^*	& P_{23}^*	& P_{33}
    \end{bmatrix}
    \in\mat{n}{n} 
  \end{equation*}
  accordingly. 
 Then $P_{11}\in\mat{n_1}{n_1}$ is a solution of the KYP inequality \eqref{eq:LMI} corresponding to the EDE part $\wsystemS *$
  if and only if 
  $P_F$ is a solution of the KYP inequality \eqref{eq:LMI} corresponding to 
  $
    \wsystemF *.
 $
\end{lemma}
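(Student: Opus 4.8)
The plan is to compute $\mathcal M_F(P_F)$ explicitly using the block structure \eqref{eq:fef} of $\system[E_F][A_F][B_F]*$ and the partition of $P_F$, and then compare it — after projecting onto the system space $\cV F$ — with $\mathcal M_s(P_{11})$, the KYP matrix of the EDE part $\wsystemS*$. By Proposition~\ref{prop:sysspace}\,\ref{it:sysspacea} we have $\cV F = \im V_F$ with $V_F$ as in \eqref{eq:reprsysspace}, so the inequality $\mathcal M_F(P_F) \succeq_{\cV F} 0$ means $V_F^* \mathcal M_F(P_F) V_F \succeq 0$. First I would carry out the matrix products: using $E_F, A_F, B_F$ from \eqref{eq:fef} (with the nilpotent block $E_{33}$, the block $E_{23}$, and the identity/negative-identity blocks), one computes $A_F^* P_F A_F - E_F^* P_F E_F$ and $A_F^* P_F B_F$ and $B_F^* P_F B_F$ block by block, then adds $Q_F, S_F, R_F$. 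The point is that $V_F$ annihilates the second and third block-row/column (the $n_2$ and $n_3$ components) except through the coupling $-B_2$ that sits in the $(2,4)$ position of $V_F$, so that $V_F^* \mathcal M_F(P_F) V_F$ collapses to a matrix acting only on the $(n_1+m)$-dimensional space corresponding to the $x_1$- and $u$-coordinates.

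The key computation to isolate is the following: when one forms $V_F^* \mathcal M_F(P_F) V_F$, the surviving $(1,1)$-block should be exactly $A_{11}^* P_{11} A_{11} - P_{11} + Q_s$, the surviving $(1,2)$-block exactly $A_{11}^* P_{11} B_1 + S_s$, and the $(2,2)$-block exactly $B_1^* P_{11} B_1 + R_s$, where $Q_s, S_s, R_s$ are precisely the modified weights in \eqref{eq:EDEsys}. The appearance of $Q_s = Q_{11}$, $S_s = S_1 - Q_{12}B_2$, and $R_s = B_2^*Q_{22}B_2 - B_2^*S_2 - S_2^*B_2 + R$ is not a coincidence: these are exactly the terms produced when the $-B_2$ entry of $V_F$ mixes the second block-components of $Q_F, S_F$ into the first and the $u$-block. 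Once this identity
\[
 V_F^* \mathcal M_F(P_F) V_F = \mathcal M_s(P_{11})
\]
is established, where $\mathcal M_s(P_{11})$ is the KYP matrix \eqref{eq:KYPmatrix} for the EDE system $\wsystemS*$ (which, having $E = I_{n_1}$, has trivial system space $\cV_s = \K^{n_1+m}$, so $\succeq_{\cV_s}$ is just $\succeq$), the equivalence is immediate: $P_{11}$ solves the EDE KYP inequality iff $\mathcal M_s(P_{11}) \succeq 0$ iff $V_F^* \mathcal M_F(P_F) V_F \succeq 0$ iff $\mathcal M_F(P_F) \succeq_{\cV F} 0$, i.e. $P_F$ solves the KYP inequality for $\wsystemF*$.

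The main obstacle is purely bookkeeping: carrying out the block multiplication for $A_F^* P_F A_F - E_F^* P_F E_F$ correctly, keeping track of the nilpotent block $E_{33}$ and the $zE_{23}$ coupling (which, in $A_F^*P_F A_F$ at $z$-independent level, contributes nothing since $A_F$ has $-I_{n_2}$ and $0$ in the relevant places, while $E_F$ carries the $E_{23}, E_{33}$ terms), and then verifying that the $V_F$-projection kills exactly the rows and columns indexed by $n_2$ and $n_3$ while correctly propagating the $-B_2$ coupling. There is also a subtlety worth a sentence: one must confirm that the projected matrix genuinely does not depend on the blocks $P_{12}, P_{13}, P_{22}, P_{23}, P_{33}$ of $P_F$ — i.e. that only $P_{11}$ (and the data $B_2$) survives — which is what makes the reduction to a statement about $P_{11}$ alone legitimate. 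This is precisely the discrete-time analogue of \cite[Lemma 4.x]{reis_kalmanyakubovichpopov_2015}, and I would invoke that reference for the structurally identical steps, spelling out only the places where the $E^*PE$ term (absent in continuous time) or the nilpotent block changes the computation.
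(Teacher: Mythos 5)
Your proposal is correct and follows essentially the same route as the paper: the paper parametrizes elements of $\cV F$ via \eqref{eq:reprsysspace} as $(x_1,\,-B_2u,\,0,\,u)$ and verifies that the quadratic form of $\mathcal M_F(P_F)$ on such vectors equals that of $\mathcal M_s(P_{11})$ on $\K^{n_1+m}$, which is exactly your projected identity, including the observation that only $P_{11}$ survives. The only cosmetic point is that $V_F$ in \eqref{eq:reprsysspace} is square of rank $n_1+m$ rather than a basis matrix, so $V_F^*\mathcal M_F(P_F)V_F$ equals $\mathcal M_s(P_{11})$ only up to zero rows and columns (equivalently, use the basis matrix obtained by deleting the zero columns), which does not affect the equivalence of the semidefiniteness statements.
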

\begin{proof}  
  We have
  \begin{multline}\label{eq:feedbackkyp}    
    \begin{bmatrix}
      A_F^*P_FA_F - E_F^*P_FE_F +Q_F		& A_F^*P_FB_F +S_F   \\
      B_F^*P_FA_F +S_F^*  			& B_F^*P_FB_F  +R_F
    \end{bmatrix}\\    
=\left[
    \begin{array}{@{}ccc|c@{}}
      A_{11}^*P_{11}A_{11}-P_{11} 		& A_{11}^*P_{12} 		& M_{13}	& A_{11}^*P_{11}B_1 + A_{11}^*P_{12}B_2 				\\
      P_{12}^*A_{11} 				& P_{22} 			& M_{23}	& P_{12}^*B_1 + P_{22}B_2 							\\
      M_{13}^*					& M_{23}^*				& M_{33}	& M_{34}								\shline{1.8}
      B_1^*P_{11}A_{11} + B_2^*P_{12}^*A_{11}  	& B_1^*P_{12} +B_2^*P_{22}		& M_{34}^*	& M_{44}
    \end{array}\right]
    \\+
\left[
    \begin{array}{@{}ccc|c@{}}
     Q_{11}		&  Q_{12}		& Q_{13}	&   S_1				\\
      Q_{12}^*		&  Q_{22}		& Q_{23}	&  S_2								\\
     Q_{13}^*		&  Q_{23}^*		& Q_{33}	& S_3		\shline{1.8}
      S_1^* 		&  S_2^*		& S_3^*		& R
    \end{array}\right]
  \end{multline}
  for some $M_{13}\in\mat{n_1}{n_3}$, $M_{23}\in\mat{n_2}{n_3}$, $M_{33}\in\mat{n_3}{n_3}$, $M_{34}\in\mat{n_3}{m}$, and
  \begin{equation*}
   M_{44}=B_1^*P_{11}B_1+B_1^*P_{12}B_2 +B_2^*P_{22}B_2 + B_2^*P_{12}^*B_1\in\mat{m}{m}.
  \end{equation*}

  Let
    $\vect*{x,u}\in\cV F$. Thus, by \eqref{eq:reprsysspace} there exists an $x_1\in\K^{n_1}$ such that 
   \begin{equation*}\label{eq:kypreduction1}
   x=\begin{pmatrix}
   x_1\\
   -B_2u\\
   0_{n_3\times1}
   \end{pmatrix}.
   \end{equation*}
   Then we obtain
   \begin{multline*}   \label{eq:reducekyp}
   \begin{pmatrix}
      x\\
      u
   \end{pmatrix}^*
    \begin{bmatrix}
      A_F^*P_FA_F - E_F^*P_FE_F + Q_F		& A_F^*P_FB_F + S_F  \\
      B_F^*P_FA_F + S_F^* 			& B_F^*P_FB_F + R_F 
    \end{bmatrix}
    \begin{pmatrix}
      x\\
      u
    \end{pmatrix}\\
    =
    \begin{pmatrix}
      x_1\\
      u
    \end{pmatrix}^*
    \begin{bmatrix}
      A_s^*P_{11}A_s-P_{11}+Q_s		&  A_s^*P_{11}B_s + S_s 				\\
      B_s^*P_{11}A_s + S_s^*		& B_s^*P_{11}B_s +  R_s
    \end{bmatrix}
    \begin{pmatrix}
      x_1\\
      u
    \end{pmatrix}.
   \end{multline*}
  Thus  
  \begin{equation*}\label{eq:EDEPARTKYP}
  \begin{pmatrix}
      x_1\\
      u
    \end{pmatrix}^*
    \begin{bmatrix}
      A_s^*P_{11}A_s-P_{11}+Q_s		&  A_s^*P_{11}B_s + S_s 				\\
      B_s^*P_{11}A_s + S_s^*		& B_s^*P_{11}B_s +  R_s
    \end{bmatrix}
    \begin{pmatrix}
      x_1\\
      u
    \end{pmatrix}
    \ge 0
  \end{equation*}
  for all $\vect{x_1,u}\in\K^{n_1+m}$ if and only if 
  \begin{equation*}
   \begin{pmatrix}
      x\\
      u
   \end{pmatrix}^*
    \begin{bmatrix}
      A_F^*P_FA_F - E_F^*P_FE_F + Q_F		& A_F^*P_FB_F + S_F  \\
      B_F^*P_FA_F + S_F^* 			& B_F^*P_FB_F + R_F 
    \end{bmatrix}
    \begin{pmatrix}
      x\\
      u
    \end{pmatrix}\ge0
  \end{equation*}
  for all $\vect*{x,u}\in\cV F$. Hence, $P_{11}$ is a solution of the KYP inequality \eqref{eq:LMI} corresponding to the EDE part if and only if  $P_F$ solves \eqref{eq:LMI} corresponding to $\system[E_F][A_F][B_F]*$.
\end{proof}

We are now ready to state the generalization of the KYP lemma for IDEs.
\begin{theorem}[KYP lemma for IDEs]\label{th:kyp}
  Let $\wsystem$ be given with corresponding Popov function $\Phi(z)\in\matrz{m}{m}$.
  \begin{enumerate}[label=(\alph*)]
    \item \label{it:kypa}If there exists some $P\in\mat{n}{n}$ that is a solution of \eqref{eq:LMI}, then
    $\Phi(\eio) \succeq 0$    for all $\omega \in \R$ with $\det (\eio E-A)\neq 0$.
     \item \label{it:kypb}If on the other hand $(E,A,B)$ is R-controllable and  $\Phi(\eio) \succeq 0$    for all $\omega \in \R$ with $\det (\eio E-A)\neq 0$,     
    then there exists a solution $P\in\mat{n}{n}$ of \eqref{eq:LMI}.
  \end{enumerate}
\end{theorem}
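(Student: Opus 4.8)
The plan is to reduce both implications to the explicit difference equation (EDE) case via feedback equivalence, and then invoke (or reprove) the classical KYP lemma for systems of the form $\system[I_{n_1}]$. First I would fix invertible $W,\,T$ and a feedback $F$ such that $\wsystemF*$ is in feedback equivalence form~\eqref{eq:fef}, with associated EDE part $\wsystemS*$ as in~\eqref{eq:EDEsys}. By Proposition~\ref{prop:popov}\ref{it:phifphia} and~\ref{it:phifphib} the Popov functions satisfy $\Phi_F(z)=\Theta_F^\sim(z)\Phi(z)\Theta_F(z)$ with $\Theta_F(z)$ invertible, and $\Phi_F(z)=\Phi_s(z)$; since congruence with an invertible rational matrix preserves positive semi-definiteness pointwise, the condition ``$\Phi(\eio)\succeq 0$ for all $\omega$ with $\det(\eio E-A)\neq 0$'' is equivalent to the same condition for $\Phi_s$. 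Here one must be slightly careful about the sets of admissible $\omega$: by Proposition~\ref{prop:fef}, $\det(\eio E_F-A_F)\neq 0$ iff $\det(\eio I_{n_1}-A_{11})\neq 0$, and for the finitely many exceptional $\omega$ the claimed inequality on the Popov function is vacuous, so this causes no trouble. Likewise, Lemma~\ref{lem:equivalence} gives $\mathcal{M}_F(P_F)=\mathcal{T}_F^*\mathcal{M}(P)\mathcal{T}_F$ with $P_F=W^{-*}PW^{-1}$, so $P$ solves~\eqref{eq:LMI} for $\wsystem$ iff $P_F$ solves~\eqref{eq:LMI} for $\wsystemF*$, and Lemma~\ref{lem:kypreduction} further reduces this to solvability of the EDE KYP inequality (with no subspace projection, since for $E=I_{n_1}$ the system space is all of $\K^{n_1+m}$) in the unknown $P_{11}$.

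For part~\ref{it:kypa}: given a solution $P$ of~\eqref{eq:LMI}, transfer it to $P_{11}$ solving the EDE KYP inequality $\mathcal{M}_s(P_{11})\succeq 0$. For the explicit case $E=I$ the implication ``KYP inequality solvable $\Rightarrow$ Popov function nonnegative on the unit circle'' is classical: one evaluates the quadratic form $\mathcal{M}_s(P_{11})$ along the vector $\bigl((\eio I-A_{11})^{-1}B_s,\,I_m\bigr)$ and uses that the ``$P$-part'' of $\mathcal{M}_s$ contributes zero on the unit circle -- this is exactly the discrete-time telescoping identity behind Lemma~\ref{lem:zerokyp}. Thus $\Phi_s(\eio)\succeq 0$ for all admissible $\omega$, hence $\Phi(\eio)\succeq 0$ for all $\omega$ with $\det(\eio E-A)\neq 0$.

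For part~\ref{it:kypb}: assume $(E,A,B)$ is R-controllable and $\Phi(\eio)\succeq 0$ on the unit circle. By Lemma~\ref{lem:fefcontr}\ref{it:fefcontrb}, R-controllability of $\wsystem$ is equivalent to controllability of the EDE part $\system[I_{n_1}][A_{11}][B_1]*$, and by the reductions above $\Phi_s(\eio)\succeq 0$ on the unit circle. Now invoke the standard KYP lemma for controllable discrete-time state-space systems -- e.g.\ the version that constructs a Hermitian solution $P_{11}$ of $\mathcal{M}_s(P_{11})\succeq 0$ from nonnegativity of the Popov function on $\partial\Dbinner$ under controllability -- to obtain $P_{11}=P_{11}^*$. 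Finally, lift it back: set $P_F=\diag(P_{11},0,0)$ (Lemma~\ref{lem:kypreduction} guarantees this extension solves the feedback-form KYP inequality on $\cV F$) and then $P=W^*P_FW$ solves~\eqref{eq:LMI} for the original system by Lemma~\ref{lem:equivalence}.

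The main obstacle is not the feedback-equivalence bookkeeping, which is routine given the preparatory lemmas, but the genuine analytic core of part~\ref{it:kypb}: constructing a Hermitian $P_{11}$ from pointwise nonnegativity of $\Phi_s$ on the unit circle \emph{without} any definiteness assumption on $\left[\begin{smallmatrix}Q_s & S_s\\ S_s^* & R_s\end{smallmatrix}\right]$ and \emph{without} assuming $R_s$ (equivalently $B_s^*P_{11}B_s+R_s$) invertible -- so one cannot simply complete the square and solve a Riccati equation. The correct tool is a spectral-factorization / Popov-function argument in the style of Reis--Voigt for the continuous-time case: one must show $\Phi_s(z)$ admits a suitable factorization $\Phi_s(z)=\Psi^\sim(z)\Psi(z)$ and read $P_{11}$ off the factor, controlling the behavior at the (possibly present) zeros of $\det(zI-A_{11})$ on the unit circle; controllability is what makes such a factorization and the passage from the frequency-domain inequality to a state-space certificate possible. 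This is the step where the ``many additional technical difficulties'' advertised in the abstract are concentrated, and I would expect the actual proof to either cite a discrete-time spectral factorization result or to run the palindromic-pencil machinery developed later in Sections~\ref{chap:inertia}--\ref{chap:lure}.
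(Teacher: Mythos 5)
Your proposal matches the paper's proof in its essentials: part (b) is argued exactly as in the paper --- reduce via feedback equivalence to the EDE part, transfer R-controllability by Lemma~\ref{lem:fefcontr}\ref{it:fefcontrb} and positivity of the Popov function by Proposition~\ref{prop:popov}, invoke the classical discrete-time KYP lemma for the controllable EDE system (the paper simply cites Rantzer for this analytic core, just as you anticipated), and lift back through Lemma~\ref{lem:kypreduction} and Lemma~\ref{lem:equivalence}. The only immaterial difference is in part (a), where the paper does not reduce at all but evaluates $\mathcal{M}(P)$ directly along $\bigl((\eio E-A)^{-1}B,\,I_m\bigr)$ using Lemma~\ref{lem:zerokyp} and Proposition~\ref{prop:sysspace}, which sidesteps the pointwise-invertibility/continuity caveat that your transfer of positivity back through the congruence with $\Theta_F(\eio)$ would need.
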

\begin{proof}We first show assertion \ref{it:kypa}.
  Assume that $P\in\mat{n}{n}$ fulfills the KYP inequality~\eqref{eq:LMI}, \ie $\mathcal{M}(P)\succeq_{\cV}\!0$. Further, let $\omega\in\R$ be such that $\det(\eio E-A)\neq0$. Then, by Lemma~\ref{lem:zerokyp}, together with Proposition~\ref{prop:sysspace}\ref{it:sysspaceb}, statement~\ref{it:kypa} follows due to
  \begin{align*}\label{eq:popovzerokyp}
    \begin{split}
    \Phi(\eio)=& 
    \begin{bmatrix}
      (\eio E-A)^{-1}B	\\
      I_m		
  \end{bmatrix} ^\sim
  \begin{bmatrix}
      Q		& S	\\
      S^*	& R	
  \end{bmatrix} 
  \begin{bmatrix}
      (\eio E-A)^{-1}B	\\
      I_m		
  \end{bmatrix}\\
  =&
  \begin{bmatrix}
      (\eio E-A)^{-1}B	\\
      I_m		
  \end{bmatrix} ^*
  \mathcal{M}(P)
  \begin{bmatrix}
      (\eio E-A)^{-1}B	\\
      I_m		
  \end{bmatrix} \succeq 0.
  \end{split}
  \end{align*}
  
  For part~\ref{it:kypb} assume that $\Phi(\eio)\succeq 0$ for all $\omega \in \R$ with $\det (\eio E-A)\neq 0$. For the system in feedback equivalence form $\wsystem[E_F][A_F][B_F][Q_F][S_F][R_F]$ and corresponding Popov function $\Phi_F(z)\in\matrz{m}{m}$ we obtain from Proposition~\ref{prop:popov}\ref{it:phifphib} that $\Phi_F(\eio)\succeq 0$ for all $\omega \in \R$ also fulfilling $\det (\eio E_F-A_F)\neq 0$. In particular, by Proposition~\ref{prop:fef} for such $\omega$ we have {${\det(\eio I_{n_1}-A_{11})\neq0}$}. Furthermore, by Proposition~\ref{lem:fefcontr}\ref{it:fefcontrb}
  the associated EDE system $\system[I_{n_1}][A_{11}][B_1]$ is controllable.
  
  This means we are in the situation of this theorem for explicit difference equations \cite{rantzer_kalman-yakubovich-popov_1996} for  the EDE system
  \begin{equation*}
  \wsystemS
  \end{equation*}
  as in \eqref{eq:EDEsys}.
  Thus, applying Lemma \ref{lem:kypreduction} gives a solution $P_F$ of the KYP inequality \eqref{eq:LMI} corresponding to the system $\wsystemF *$.  Then, using Lemma~\ref{lem:equivalence}  completes the proof.
  \end{proof}
  \begin{example}[Example~\ref{ex:simplecircuitfeedback} revisited]\label{ex:simplecircuitkyp}
    Consider the system $\system*$ as in Example~\ref{ex:simplecircuitfeedback}. 
    From its feedback equivalence form as in   \eqref{eq:simplecircuitfeedback}
    we obtain  with \eqref{eq:reprsysspace} that
    \begin{equation*}
      V_F=
      \begin{bmatrix}
        1 & 0	& 0\\
        0 & 0	& -1\\
        0 & 0	& 1
      \end{bmatrix}
    \end{equation*}
    spans the system space $\cV F$ and thus 
    \begin{equation*}
      \cV = \mathcal T_F \cV F=
      \im
       \begin{bmatrix}
        1 		& 0	& -1\\
        1 		& 0	& 0 \\
        0 		& 0	& 1
      \end{bmatrix}\quad
       \text{with}\quad
      \mathcal T _F=
      \begin{bmatrix}
      1			& 1		 	& 0\\
      1			& 0			& 0\\
      0			& 0			& 1
      \end{bmatrix}.
    \end{equation*}  
    From 
    \begin{equation*}
      \begin{bmatrix}
    Q & S\\
    S^* & R
  \end{bmatrix}=I_3
    \end{equation*}
    we obtain as modified weights 
    \begin{equation*}
\begin{bmatrix}
    Q_F	& S_F\\
    S_F^*& R_F
  \end{bmatrix}=
  \begin{bmatrix}
   T^*(Q + F^*S^* + SF + F^*RF)T	& T^*(S+F^*R)\\
   (S^*+RF)T				& R
  \end{bmatrix}
  = 
  \begin{bmatrix}
    2	& 1	& 0\\
    1	& 1	& 0\\
    0	& 0	& 1
  \end{bmatrix}.
    \end{equation*}
    Moreover, the associated EDE part as in \eqref{eq:EDEsys} is given by
     \begin{equation}\label{eq:simpleciruictEDE}
  \begin{matrix}
      A_s=1,\quad B_s=-1,\quad  Q_s= 2,\quad S_s=-1,\quad R_s=2.     
  \end{matrix}
\end{equation}
Thus, $P_{11}$ solves the KYP inequality
\begin{equation*}\label{eq:simplecircuitkyp}
  \begin{bmatrix}
    2		& -P_{11}-1 \\
     -P_{11}-1		& P_{11}+2
  \end{bmatrix}\succeq 0
\end{equation*} if and only if
\begin{equation*}
  -\sqrt{3}\le P_{11}\le \sqrt{3}.
\end{equation*}
Therefore, choosing $P_{11}=-1$, we have that
\begin{equation*}
    P=W^*P_FW=
    \begin{bmatrix}
      1		& -1\\
      1		& 0
    \end{bmatrix}
    \begin{bmatrix}
      -1		& 0\\\
      0		& 0
    \end{bmatrix}
    \begin{bmatrix}
      1		& 1\\
      -1	& 0
    \end{bmatrix}
    =
    \begin{bmatrix}
      -1		& -1\\
      -1		& -1
    \end{bmatrix}
\end{equation*}
solves the KYP inequality \eqref{eq:LMI}. In particular, by Theorem~\ref{th:kyp} we obtain that for the Popov functions $\Phi_F(z)\in\K(z)$ and $\Phi(z)\in\K(z)$ we have $\Phi_F(\eio)\succeq 0$ and $\Phi(\eio)\succeq 0$.
  \end{example}

  \begin{remark}
    The result of Theorem~\ref{th:kyp} is analogous to the continuous-time result in \cite{reis_kalmanyakubovichpopov_2015}. To see this, replace positivity of the Popov function on the unit circle by positivity on the imaginary axis in \ref{it:kypa} and replace $\mathcal M(P)$ by its continuous-time analog.
    However, in \cite{reis_kalmanyakubovichpopov_2015} the assumption of R-controllability was alternatively replaced by the condition that the Popov function has full rank and $\system*$ is \emph{sign-controllable}. To adapt this to the discrete-time setting we would need a discrete-time analog of \cite[Theorem 6.1]{clements_spectral_1997}, which provides  the characterizations via sign-controllability in the ODE case.
  \end{remark}

\section{Structure of Palindromic Matrix Pencils}\label{chap:inertia}
In this section we are concerned with palindromic matrix pencils $z\A^* - \A \in\matz{n}{n}$. For the investigation of these palindromic matrix pencils we first introduce so-called quasi-Hermitian matrices. Then  we show characterizations of the inertia of palindromic matrix pencils similar to what was done in \cite{reis_lure_2011,voigt_linear-quadratic_2015,clements_spectral_1997,clements_spectral_1989} in the case of so-called even matrix pencils.
The concept of quasi-Hermitian matrices  is an extension to  the notion of Hermitian and skew-Hermitian matrices. These are matrices $\A\in\mat{n}{n}$ with the property $\A=\eio \A ^*$ for some $\omega\in[0,2\pi)$. They have the special property that every eigenvalue lies on the line with angle $\omega/2$ through the origin.
We can extend the notion of inertia for Hermitian matrices to quasi-Hermitian matrices.
\begin{definition}
  Let $\A=\eio \A ^*$ be quasi-Hermitian with $\omega\in[0,2\pi)$. Then the inertia of $\A$ along $\frac{\omega}{2}$ is
  \begin{equation*}
   \In(\A) := \In_{\frac\omega2}(\A):=(n_+,\,n_0,\,n_-),
  \end{equation*}
  where $n_+$, $n_0$, and  $n_-$ denote the number of eigenvalues $\lambda=r\eio[\frac{\omega}{2}]$ where $r$ is positive, zero, or negative, respectively. We omit the subscript $\frac\omega2$ in $\In_{\frac\omega2}(\A)$ if the angle is clear from the context.
\end{definition}
Similar to the Hermitian case, also in the quasi-Hermitian case we have a canonical form under congruence transformations.
\begin{theorem}\label{th:inin}
  The inertia of a quasi-Hermitian matrix is invariant under congruence transformations. On the other hand, if $\A,\,\B\in\mat{n}{n}$ are two quasi-Hermitian matrices having the same inertia with respect to the same $\omega\in[0,2\pi)$, then there exists some invertible $U\in\mat{n}{n}$ such that
  \begin{equation*}
    U^*\A U=\B,
  \end{equation*}
  \ie $\A$ and $\B$ are congruent.
\end{theorem}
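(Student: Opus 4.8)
The plan is to reduce both assertions to the classical Sylvester law of inertia for Hermitian matrices by multiplying through by a suitable unimodular scalar. Given a quasi-Hermitian $\A=\eio\A^*$ with $\omega\in[0,2\pi)$, I would first introduce the matrix $H:=\emio[\omega/2]\A$ and check that it is Hermitian: using $\A^*=\emio\A$ one computes $H^*=\eio[\omega/2]\A^*=\eio[\omega/2]\emio\A=\emio[\omega/2]\A=H$. Since $\A=\eio[\omega/2]H$, the matrices $\A$ and $\eio[\omega/2]H$ have the same characteristic polynomial, so the eigenvalues of $\A$ are exactly $\eio[\omega/2]$ times those of $H$; as $H$ is Hermitian its eigenvalues are real, and writing $\lambda=r\eio[\omega/2]$ the sign of $r$ equals the sign of the corresponding eigenvalue of $H$. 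Hence $\In_{\omega/2}(\A)=\In(H)$, the right-hand side being the ordinary Hermitian inertia, with eigenvalues counted by algebraic multiplicity throughout (legitimate, since a Hermitian matrix is diagonalizable).

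The second step is to track how congruence interacts with the assignment $\A\mapsto H$. For any invertible $U$, the matrix $U^*\A U$ is again quasi-Hermitian with the \emph{same} $\omega$, since $(U^*\A U)^*=U^*\A^*U=\emio\,U^*\A U$; and its associated Hermitian matrix is $\emio[\omega/2]U^*\A U=U^*(\emio[\omega/2]\A)U=U^*HU$. Thus congruence of quasi-Hermitian matrices sharing a common $\omega$ corresponds bijectively to congruence of the associated Hermitian matrices.

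Granting these two steps, both claims follow at once. Invariance of $\In_{\omega/2}$ under congruence is immediate: $\In_{\omega/2}(U^*\A U)=\In(U^*HU)=\In(H)=\In_{\omega/2}(\A)$ by Sylvester's law. For the converse, if $\A$ and $\B$ are quasi-Hermitian with respect to the same $\omega$ and $\In_{\omega/2}(\A)=\In_{\omega/2}(\B)$, then their associated Hermitian matrices $H_\A=\emio[\omega/2]\A$ and $H_\B=\emio[\omega/2]\B$ have equal Hermitian inertia, hence are congruent, say $U^*H_\A U=H_\B$ for some invertible $U$; multiplying by $\eio[\omega/2]$ gives $U^*\A U=\B$.

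I do not anticipate a genuine obstacle here: the argument is essentially a transcription of the Hermitian theory through multiplication by $\emio[\omega/2]$. The only point deserving a word of care is that for $\A\neq 0$ the angle $\omega$ is uniquely determined by the relation $\A=\eio\A^*$ (so that the hypothesis ``with respect to the same $\omega$'' is meaningful, and the two associated Hermitian matrices above are formed with a common scalar), while the case $\A=0$ is trivial.
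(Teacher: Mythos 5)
Your argument is correct. Note, however, that the paper does not prove Theorem~\ref{th:inin} at all: its ``proof'' is a citation to \cite{ikramov_inertia_2001}, so there is no internal argument to compare against. Your reduction is the natural self-contained route: setting $H:=\emio[\omega/2]\A$, the identity $\A^*=\emio\A$ gives $H^*=H$, and since $\A=\eio[\omega/2]H$ the eigenvalues of $\A$ are exactly $\eio[\omega/2]$ times the (real) eigenvalues of $H$ with matching multiplicities, so $\In_{\frac{\omega}{2}}(\A)$ equals the ordinary Hermitian inertia of $H$; moreover $U^*\A U$ is quasi-Hermitian for the same $\omega$ with associated Hermitian matrix $U^*HU$, so both directions of the theorem collapse to Sylvester's law of inertia (invariance, and congruence of Hermitian matrices with equal inertia). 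Your closing remark is also the right point of care: the relation $\A=\eio\A^*$ determines $\omega$ uniquely unless $\A=0$, which makes ``the same $\omega$'' well posed and the trivial case harmless. In effect you have supplied an elementary proof of the fact the paper outsources to the literature, which is a gain in self-containedness at essentially no extra cost.
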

\begin{proof}
   See \cite{ikramov_inertia_2001}.
\end{proof}
We are now interested in a structure-preserving canonical form revealing the eigenstructure of a palindromic matrix pencil.

\begin{table}
\caption{Blocks $D(z)$ occurring in palindromic Kronecker canonical form and their inertia $\In\left( D(\eio)\right)$ for $\omega\in[0,2\pi)$, where the addition of two inertia tuples has to be understood component-wise}
\centering
\footnotesize
\arraycolsep=3pt
\medmuskip = 1mu
  \begin{tabular}{cccp{2.7cm}p{4cm}}\label{tab:PKCF}
  Type & Dimension & $D(z)$ & Parameters & $\In\left(D(\eio)\right)$\\\toprule
  P1 & $2k$ & 
  $\varepsilon\left[
    \begin{array}{@{}c|c@{}}
      &zF_k-J_k(\lambda) \shline{2.6}
     zJ_k(\overline\lambda)-F_k&
    \end{array}\right]
  $
 & ${k\in\N_0,}\,\varepsilon\in\{-1,1\},\newline\lambda=r\eio[\theta],\,{|r|<1,}\newline{\theta\in[0,2\pi)}$ &  $ \left(k,0,k\right)$
 \\[1.5em]
   P2 & $2k+1$ & 
   $\varepsilon\left[
    \begin{array}{@{}c|c|c@{}}
      & 			&zF_k-J_k(\lambda) \shline{3.8}
      &z\emio[\frac\theta2]- \eio[\frac\theta2] & -{\left(e^{ k}_1\right)}^T\shline{3.2}
     zJ_k(\overline\lambda)-F_k & ze^{ k}_1&
    \end{array}\right]
  $
  &${k\in\N_0,}\,\varepsilon\in\{-1,1\},\newline\lambda=\eio[\theta],{\theta\in[0,2\pi)}$ &  $\left(k,0,k\right)+\In\left(\sign(\omega-\theta)\right)$
  \\[3.5em]
    P3 & $2k$ & 
   $\varepsilon\left[
    \begin{array}{@{}c|c@{}}
      &zF_k-J_k(\lambda) \shline{3.2}
     zJ_k(\overline\lambda)-F_k& (z-1){e^{k}_1}{\left(e^{ k}_1\right)}^T
    \end{array}\right]
  $
  &${k\in\N_0,}\,\varepsilon\in\{-1,1\},\newline\lambda=\eio[\theta],{\theta\in[0,2\pi)}$ & 
  $\begin{cases}
    \left(k,0,k\right), & \omega \neq \theta \\
    \left(k-1,1,k-1\right) + \In(\varepsilon), &\omega = \theta
  \end{cases} $
  \\[2.5em]
    P4 & $2k$ & 
     $\varepsilon\left[
    \begin{array}{@{}c|c@{}}
      &zF_k-J_k(1) \shline{3.2}
     zJ_k(1)-F_k& \imag(z+1){e^{k}_1}{\left(e^{ k}_1\right)}^T
    \end{array}\right]
  $
  & $k\in\N_0,\,\varepsilon\in\{-1,1\}$ 
  &   $
    \begin{cases}
    \left(k,0,k\right), & \omega \neq 0 \\
    \left(k-1,1,k-1\right) + \In(\varepsilon), &\omega = 0
  \end{cases} $
  \\[2.5em]
    P5 & $2k+1$ & 
    $\left[
    \begin{array}{@{}c|c@{}}
      &zS_R - S_L^T \shline{2.6}
     zS_L - S_R^T&
    \end{array}\right]
  $
  &$k\in\N_0$ 
  &$\left(k,1,k\right)$
  \\\bottomrule
  \end{tabular}
  
\end{table}

\begin{theorem}[Palindromic Kronecker canonical form]\label{th:PKCF}\cite{schroder_palindromic_2008}
  Let $z\A^*-\A\in\matz{n}{n}$ be a palindromic matrix pencil. Then there exists some invertible $U\in\mat[C]{n}{n}$ such that 
  \begin{equation}\label{eq:PKCF}
  U^*(z\A^*-\A)U = \diag\left({D_1(z),\ldots,D_l(z)}
  \right)
  \end{equation}
  for some $l\in\N$ is in \emph{palindromic Kronecker canonical form (PKCF)}, where each block $D_j(z)\in\matz[C]{k_j}{k_j}$, $k_j\in\N$, is of one of the forms shown in Table~\ref{tab:PKCF} and
  \begin{gather*}
  F_k = 
  \begin{bmatrix}
    & & 1\\
    &\iddots&\\
    1&&
  \end{bmatrix}\in\mat[C]{k}{k},\qquad
  J_k(\lambda)=
  \begin{bmatrix}
    & 			&		  	& \lambda\\
    & 			&\iddots		& 1\\
    &\iddots 		& \iddots&\\
   \lambda & 1 & &  
  \end{bmatrix}\in\mat[C]{k}{k},\\
    S^R_k = \begin{bmatrix}
     			&       &1\\
    	 		& \iddots&0\\
     1 		&\iddots	 &\\
    0&&
  \end{bmatrix}\in\mat[C]{k+1}{k},\qquad
    S^L_k = \begin{bmatrix}
     			&       &0 & 1\\
    	 		& \iddots&\iddots &\\
     0 		&	1 &&
  \end{bmatrix}\in\mat[C]{k}{k+1}.
\end{gather*}
The PKCF is unique up to permutations of the blocks, and the quantities $\sign_j\in\{-1,1\}$ are called \emph{sign-characteristics}.
\end{theorem}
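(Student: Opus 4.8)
## Proof proposal for Theorem (Palindromic Kronecker canonical form)

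The plan is to reduce the palindromic normal form to the ordinary Kronecker canonical form of the underlying pencil $z\A^* - \A$ and then track how the palindromic structure $\E = \A^*$ constrains the pairing of Kronecker blocks. The starting point is that a palindromic pencil is a special case of a $*$-palindromic (or, after the substitution $\mu = z$, a $*$-even-type) pencil, so one can invoke the classification of structured pencils under congruence. Concretely, I would first recall that under the (non-structured) equivalence transformation the pencil $z\A^* - \A$ has a Kronecker canonical form with finite eigenvalue blocks, infinite eigenvalue blocks, and singular (K3/K4) blocks. The key structural observation is that the map $z \mapsto \overline{z}^{-1}$ (which is the relevant involution here, since $G^\sim(z) = G(\overline z^{-1})^*$) sends an eigenvalue $\lambda$ to $\overline\lambda^{-1}$, so Jordan blocks come in pairs $(\lambda, \overline\lambda^{-1})$ unless $|\lambda| = 1$, in which case the block is self-paired and carries a sign-characteristic.

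The main steps, in order, would be: (1) Apply a congruence to bring $z\A^* - \A$ to a block-diagonal form separating the regular part from the singular part — this uses the fact that congruence, unlike general equivalence, must be done carefully to preserve the palindromic structure, but a structured Kronecker-type theorem (as in \cite{schroder_palindromic_2008}) guarantees it. (2) For the regular part, decompose into primary components according to eigenvalue orbits under $\lambda \mapsto \overline\lambda^{-1}$. Off-circle eigenvalues $\lambda$ with $|\lambda| < 1$ pair with their partners $\overline\lambda^{-1}$ with $|\overline\lambda^{-1}| > 1$; on each such pair one builds a block of type P1 of size $2k$ by writing the two Jordan blocks as $zF_k - J_k(\lambda)$ in the off-diagonal corners, with a sign $\varepsilon \in \{-1,1\}$ that can be normalized. (3) For eigenvalues $\lambda = \eio[\theta]$ on the unit circle, the block is self-conjugate; here the congruence canonical form of the associated (quasi-)Hermitian structure produces blocks of types P2, P3, P4 — with P2 being the odd-sized case coming from the Jordan-block/Hermitian interplay, P3 the generic even on-circle case, and P4 the special $\lambda = 1$ (or $\theta = 0$) case where an extra $\imag(z+1)$ factor appears because the naive scaling degenerates. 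Each carries a genuine sign-characteristic $\varepsilon$ that is a congruence invariant (this is where Theorem~\ref{th:inin} on invariance of inertia of quasi-Hermitian matrices is used). (4) For the singular part, the K3/K4 blocks must also pair up under the palindromic involution since $z\A^* - \A$ has equal numbers of K3 and K4 blocks of each size (the pencil is square and the structure forces a transpose symmetry in the minimal indices); combining a K3 of index $k$ with its transpose yields the odd-sized block P5 of size $2k+1$, which has inertia $(k,1,k)$ and no sign-characteristic. (5) Finally, verify the inertia column of Table~\ref{tab:PKCF} by directly evaluating each normal-form block $D(\eio)$ at a point on the unit circle and computing the signature of the resulting quasi-Hermitian matrix, distinguishing the cases $\omega = \theta$ (resp. $\omega = 0$) where the factor $(z-1)$ or $\imag(z+1)$ vanishes and drops the rank, from $\omega \neq \theta$ where it does not.

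The hard part will be step (3) together with the bookkeeping of sign-characteristics: ensuring that the on-circle blocks are put into exactly the claimed normal forms and that the signs $\varepsilon$ are genuine invariants rather than artifacts of the chosen congruence. This is precisely the subtlety that distinguishes the palindromic case from the unstructured Kronecker form, and it is where one must either invoke the full strength of the structured canonical form of \cite{schroder_palindromic_2008} or redo the congruence reduction by hand, pairing Jordan chains via the palindromic sesquilinear form $x^*\A y$ and using the Hermitian canonical form on the resulting (quasi-)Hermitian Gram matrix on each root subspace. A secondary technical point is the appearance of the special block P4 at $\lambda = 1$: the scaling $\eio[\theta/2]$ used to symmetrize a general unit-circle block breaks down at $\theta = 0$ (or rather produces the real case where the sign has a different origin), so this eigenvalue genuinely needs separate treatment. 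Uniqueness up to permutation then follows from the congruence-invariance of the eigenvalue structure together with the invariance of the sign-characteristics, which reduces to Theorem~\ref{th:inin}.

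Since the statement is quoted from \cite{schroder_palindromic_2008}, I would in practice present only the skeleton above and refer to that reference for the detailed reduction, emphasizing the inertia computation of step (5) as the part genuinely needed in what follows.
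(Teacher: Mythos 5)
The paper offers no proof of this theorem at all: it is quoted directly from \cite{schroder_palindromic_2008} (with only a remark adjusting the sign-characteristic convention for blocks of type P4), and your closing paragraph—present the skeleton and defer to that reference for the structured congruence reduction—is exactly what the paper does. Your outline of the reduction (pairing of off-circle and singular blocks, sign-characteristics on the unit circle, the special role of $\lambda=1$, and the inertia check that feeds into Lemma~\ref{lem:inertiapatterns}) is consistent with the cited result, so the two treatments coincide in approach.
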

A closely related version of the above theorem was developed in \cite{horn_canonical_2006}.
\begin{remark}We have multiplied the sign-characteristics of the blocks of type P4 occurring in \cite{schroder_palindromic_2008} with $-1$ in order to simplify some of  the upcoming results. This is justified by the fact that if $\tilde{D}_j(z)$ with sign-characteristic $\tilde{\sign}_j$ corresponds to a block of type P4 introduced in \cite{schroder_palindromic_2008}, then $D_j(z)=-U^*\tilde{D}_j(z)U$ with
\begin{equation*}
U=\imag
  \begin{bmatrix}
   I_{k_j/2} & \\
	    & -I_{k_j/2}
  \end{bmatrix}\in\mat[C]{k_j}{k_j}
\end{equation*}
is a block of type P4 with sign-characteristic $\sign_j=-\tilde{\sign}_j$ according to Theorem~\ref{th:PKCF}.
\end{remark}

\begin{remark}By analyzing the eigenstructure of the blocks in the form \eqref{eq:PKCF} we obtain:
 \begin{enumerate}[label=(\alph*)]
   \item Blocks of type P1 correspond to eigenvalues $\lambda$ and ${1}/{\overline{\lambda}}$  with $|\lambda|\neq 1$, \ie these eigenvalues occur  in pairs $\left\{\lambda, {1}/{\overline{\lambda}}\right\}$. In particular, this holds for the pairing $\{0,\infty\}$.
   \item Blocks of type P2, P3, and P4 correspond to eigenvalues $\lambda$ with $|\lambda|= 1$. 
   \item Blocks of type P5 correspond to rank deficiency of the pencil, \ie they correspond to singular blocks.
 \end{enumerate}
\end{remark}
Consider the palindromic matrix pencil $\cP(z)=z\A^*-\A\in\matz{n}{n}$. By inserting $\eio$ for the polynomial variable $z$  we obtain 
\begin{equation*}\label{eq:palquherm}
  \cP(\eio)=\eio \A^*-\A = \imag\eio[\frac{\omega}{2}](\imag\emio[\frac{\omega}{2}]\A-\imag\eio[\frac{\omega}{2}]\A^*)
\end{equation*}
and hence
\begin{equation*}
  \cP(\eio)^*= -\imag\emio[\frac{\omega}{2}](\imag\emio[\frac{\omega}{2}]\A-\imag\eio[\frac{\omega}{2}]\A^*)= (-\imag\emio[\frac{\omega}{2}])^2\cP(\eio)=-\emio \cP(\eio).
\end{equation*}
Thus, $\cP(\eio)$ is quasi-Hermitian and has well-defined inertia. Investigating the block structure of the PKCF leads to the following result.

\begin{lemma}\label{lem:inertiapatterns} Assume that $z\A^*-\A\in\matz{n}{n}$ is in PKCF, \ie it holds that $z\A^*-\A = \diag\left({D_1(z),\ldots,D_l(z)}
  \right)$ for some $l\in\N$. Then the inertia pattern of each block $D_j(z)\in\matz[C]{k_j}{k_j}$, $k_j\in\N$, is given as in Table~\ref{tab:PKCF}.

\end{lemma}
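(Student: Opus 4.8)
The plan is to reduce the claim to a computation for a single block and then run through the five types in Table~\ref{tab:PKCF} separately. Since $z\A^*-\A$ is in PKCF we have $\mathcal P(\eio):=\eio\A^*-\A=\diag\bigl(D_1(\eio),\dots,D_l(\eio)\bigr)$, and, by the discussion preceding the lemma, $\mathcal P(\eio)$ is quasi-Hermitian along the angle $\tfrac\omega2+\tfrac\pi2$; consequently so is each diagonal block $D_j(\eio)$. As the quasi-Hermitian inertia is merely a count of eigenvalues on a fixed ray, it is additive over direct sums, whence $\In\bigl(\mathcal P(\eio)\bigr)=\sum_{j=1}^l\In\bigl(D_j(\eio)\bigr)$, and it suffices to determine $\In\bigl(D_j(\eio)\bigr)$ for one representative of each type. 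Throughout, Theorem~\ref{th:inin} lets me replace a block by any congruent one before counting; in practice I would first multiply $D_j(\eio)$ by the unimodular scalar $\imag\eio[\frac\omega2]$ to turn it into an ordinary Hermitian matrix $H_j$, so that $\In(D_j(\eio))$ becomes the ordinary Hermitian inertia of $H_j$.

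\emph{Generic blocks.} For a block of type P1, and for P3 with $\omega\neq\theta$ or P4 with $\omega\neq0$, the anti-triangular corner $\eio F_k-J_k(\cdot)$ is nonsingular: its determinant equals $\pm(\eio-\lambda)^{k}$, which is nonzero under these hypotheses (automatic for P1, since $|\lambda|<1$). In Hermitian form the block then has the shape $\left[\begin{smallmatrix}0&M\\M^{*}&0\end{smallmatrix}\right]$ with $M\in\mat{k}{k}$ invertible, plus, in the P3/P4 case, a Hermitian rank-one term carried by $e^{k}_1(e^{k}_1)^{*}$ that lies in the span already covered by $M$ and is removed by a congruence of the form $I+e^{k}_1 v^{*}$. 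Since $\left[\begin{smallmatrix}0&M\\M^{*}&0\end{smallmatrix}\right]$ has eigenvalues $\pm\sigma_i(M)$, its inertia is $(k,0,k)$, the value claimed in these rows. The same scheme handles type P5: there the off-diagonal blocks are rectangular of full column rank, so after a congruence $D(\eio)$ splits off a balanced $2k$-dimensional part together with a forced one-dimensional kernel, giving $(k,1,k)$.

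\emph{Resonant blocks.} The substance is the case $\omega=\theta$ for P2 and P3, and $\omega=0$ for P4, where the corner collapses to a single nilpotent Jordan-type block. Here I would apply an explicit congruence eliminating the invertible part of the anti-triangular coupling — a Schur-complement step adapted to the anti-triangular pattern — reducing $D(\eio)$, for P3 and P4, to the direct sum of a generic-type block of inertia $(k-1,1,k-1)$ (the zero eigenvalue being the signature of the nilpotent core) and a residual $1\times1$ quasi-Hermitian scalar whose sign is exactly the sign-characteristic $\varepsilon$; this produces the $(k-1,1,k-1)+\In(\varepsilon)$ entries. For type P2 the analogous reduction isolates the middle entry $z\emio[\frac\theta2]-\eio[\frac\theta2]$, which at $z=\eio$ is, after rescaling to Hermitian form, a nonzero real multiple of $\varepsilon\sin\bigl(\tfrac{\omega-\theta}{2}\bigr)$; this supplies the summand $\In\bigl(\sign(\omega-\theta)\bigr)$ of the table and vanishes precisely when $\omega=\theta$, the one case in which a P2 block makes $D(\eio)$ singular.

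I expect the resonant congruences to be the main obstacle. One must write the anti-triangular Schur complement explicitly enough that the $\varepsilon$-dependence of the residual scalar (for P3, P4) and its $\sin\bigl(\tfrac{\omega-\theta}{2}\bigr)$-dependence (for P2) are transparent, and verify that the rank-one correctors $(z-1)e^{k}_1(e^{k}_1)^{*}$ and $\imag(z+1)e^{k}_1(e^{k}_1)^{*}$ interact with the nilpotent corner so as to shift the inertia by exactly $\In(\varepsilon)$ rather than cancelling or doubling it. A convenient way to keep this under control is to settle the cases $k=1$ by direct evaluation and then reduce general $k$ to them through the anti-triangular Schur complement.
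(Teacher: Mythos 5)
The paper does not actually prove this lemma in the text: the proof is delegated to Lemma~4.11 of the first author's thesis, so there is no in-paper argument to compare you against. Your strategy -- additivity of the quasi-Hermitian inertia over the block diagonal, passage to an ordinary Hermitian matrix by a unimodular scalar, then a congruence/Schur-complement computation for each block type -- is the natural way to verify the table, and the quantitative claims you make check out: the Hermitianized middle entry of a P2 block is $2\varepsilon\sin\bigl(\tfrac{\omega-\theta}{2}\bigr)$, which for $\omega,\theta\in[0,2\pi)$ has the sign of $\varepsilon(\omega-\theta)$ and thus delivers exactly the summand $\In(\varepsilon(\omega-\theta))$ of the table, and representative resonant P3/P4 cases (e.g.\ $k=2$, $\omega=\theta$) do give $(k-1,1,k-1)+\In(\varepsilon)$ as you predict. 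One small correction: the Hermitianizing factor is the inverse $-\imag\emio[\frac{\omega}{2}]$ of $\imag\eio[\frac{\omega}{2}]$, not $\imag\eio[\frac{\omega}{2}]$ itself; multiplying by the latter does not produce a Hermitian matrix.

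Two steps of the sketch must be made explicit before it is a proof. (i) In the generic P2 case, ``isolating the middle entry'' is legitimate only because the Schur-complement correction vanishes: the middle row couples only into the third block column, the inverse of $\left[\begin{smallmatrix}0&M\\N&0\end{smallmatrix}\right]$ is block anti-diagonal, and hence $\begin{bmatrix}0 & -(e_1^k)^T\end{bmatrix}\left[\begin{smallmatrix}0&N^{-1}\\M^{-1}&0\end{smallmatrix}\right]\begin{bmatrix}0\\ \eio e_1^k\end{bmatrix}=0$; this observation should be stated, since otherwise the Schur complement is not the bare scalar $d$. (ii) At resonance for P2 ($\omega=\theta$) both anti-triangular corners are singular, so neither this Schur complement nor your proposed reduction of general $k$ to $k=1$ is available; instead one argues directly that the Hermitianized block has the form $\left[\begin{smallmatrix}0&C\\C^*&0\end{smallmatrix}\right]$ with $C\in\mat[C]{k+1}{k}$ of full column rank $k$ (the strictly anti-triangular corner of rank $k-1$ stacked with the coupling row), which forces inertia $(k,1,k)=(k,0,k)+\In(0)$. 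For resonant P3/P4 the peeling you describe does go through: splitting off the rank-$(k-1)$ part of the nilpotent corner leaves a $2\times 2$ residual $\varepsilon\,\diag(0,h)$ with $h=2\sin(\tfrac{\theta}{2})>0$ (P3) resp.\ $h=2$ (P4), giving the shift by $\In(\varepsilon)$; and in the generic P3/P4 case the Hermitian corner term $W=h\,e_1^k(e_1^k)^T$ is removed by the congruence $\left[\begin{smallmatrix}I&X\\0&I\end{smallmatrix}\right]$ with $X=-\tfrac12 M^{-*}W$, which is the precise form of your ``$I+e_1v^*$'' step. With these additions your outline closes and is, in substance, the same block-by-block verification the paper cites from the thesis.
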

\begin{proof}
	See \cite[Lemma 4.11]{bankmann_linear-quadratic_2015}.
\end{proof}
\begin{remark}
  The results from Lemma~\ref{lem:inertiapatterns} can be used to determine the block structure of a pencil $z\A^*-\A\in\matz{n}{n}$ in the form \eqref{eq:PKCF}, given the inertia patterns for $\omega\in[0,2\pi)$. 
Note that blocks of type P1 have a very simple inertia pattern and thus from a general pattern
  \begin{equation*}
    \In(\eio\A^*-\A) = (k_1,k_2,k_3)
  \end{equation*}
  -- except for the case where $k_2=0$, \ie all blocks are of type P1 -- we cannot tell whether or how many blocks of type P1 are present in the PKCF.. 
\end{remark}

\section{Inertia of Palindromic Pencils in Optimal Control}\label{sec:inertia}
  Let $\wsystem$ be given. We consider palindromic matrix pencils arising in the optimal control problem as in \eqref{eq:palpenc} of the form
\begin{equation*}\label{eq:palpenc2}
  z\A^*-\A  =
  \begin{bmatrix}
    0			& zE-A		& -B 		\\
    zA^*-E^*		& (z-1)Q	& (z-1)S 	\\
	zB^*		& (z-1)S^*	& (z-1)R
  \end{bmatrix}\in\matz{2n+m}{2n+m}.
\end{equation*}
If we insert $\eio$ into \eqref{eq:palpenc} for $z$ we obtain the quasi-Hermitian matrix
\begin{equation}\label{eq:palpencins}
  \D(\omega):= \imag\eio[\frac{\omega}{2}] (\imag\emio[\frac{\omega}{2}]\A-\imag\eio[\frac{\omega}{2}]\A^*)
  =  \imag\eio[\frac{\omega}{2}]
  \begin{bmatrix}
    0			& E_\omega-A_\omega		& B_\omega 		\\
    E_\omega^*-A_\omega^*	& Q_\omega			& S_\omega	 	\\
    B_\omega^*			& S_\omega^*			& R_\omega
  \end{bmatrix}\in\mat[C]{2n+m}{2n+m}
\end{equation}
with $E_\omega=-\imag\eio[\frac{\omega}{2}]E$, $A_\omega=-\imag\emio[\frac{\omega}{2}]A$, $B_\omega=\imag\emio[\frac{\omega}{2}]B$, $Q_\omega=s_\omega Q$, $S_\omega=s_\omega S$ and $R_\omega=s_\omega R$, where $s_\omega= \imag\emio[\frac{\omega}{2}]-\imag\eio[\frac{\omega}{2}]=2 \sin\left(\frac{\omega}{2}\right)$.
\begin{lemma}\label{lem:congrpp}
Let $\wsystem$ and consider the matrix $\D(\omega)$ as in \eqref{eq:palpencins} with $\omega$ such that $\det (E_\omega-A_\omega)\neq 0$. Furthermore, let
\begin{equation*}
  U= 
  \begin{bmatrix}
    I_n		&	0	&	(E_\omega^*-A_\omega^*)^{-1}(Q_\omega(E_\omega-A_\omega)^{-1}B_\omega - S_\omega)	\\
    0		&	I_n	&	-(E_\omega-A_\omega)^{-1}B_\omega							\\
    0		&	0	&	I_m		
  \end{bmatrix}\in\mat[C]{2n+m}{2n+m}.
\end{equation*}
Then $\D(\omega)$ is congruent to
\begin{equation*}
  U^*\D(\omega)U= \imag\eio[\frac{\omega}{2}]
  \begin{bmatrix}
    0				&		E_\omega-A_\omega		&	0	\\
    E_\omega^*-A_\omega^*		&		Q_\omega		&	0	\\
    0				&		0				&	2\sin\left(\frac{\omega}{2}\right)\Phi\left(\eio\right)		
  \end{bmatrix}.
\end{equation*}  
\end{lemma}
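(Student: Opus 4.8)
The plan is to verify the claimed congruence by a direct block computation, treating $U$ as a block upper-triangular change of coordinates that eliminates the off-diagonal coupling between the last block column/row and the first two. Write $\D(\omega) = \imag\eio[\frac{\omega}{2}] \mathcal{N}$ with
\[
\mathcal{N} = \begin{bmatrix} 0 & E_\omega - A_\omega & B_\omega \\ E_\omega^* - A_\omega^* & Q_\omega & S_\omega \\ B_\omega^* & S_\omega^* & R_\omega \end{bmatrix};
\]
since $U^*\D(\omega)U = \imag\eio[\frac{\omega}{2}] U^*\mathcal{N}U$ and the scalar factor is unaffected, it suffices to compute $U^*\mathcal{N}U$. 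I would first compute $\mathcal{N}U$, exploiting that $U$ differs from the identity only in its last block column $(G^*,\,-G_B,\,I_m)^\top$, where I abbreviate $G_B := (E_\omega - A_\omega)^{-1}B_\omega$ and $G^* := (E_\omega^*-A_\omega^*)^{-1}(Q_\omega G_B - S_\omega)$. This makes the first two block columns of $\mathcal{N}U$ identical to those of $\mathcal{N}$, while the last block column becomes $\mathcal{N}(G^*,\,-G_B,\,I_m)^\top$; the key cancellations are that the first block row gives $(E_\omega-A_\omega)(-G_B) + B_\omega = 0$, and the second block row gives $(E_\omega^*-A_\omega^*)G^* - Q_\omega G_B + S_\omega = 0$ by definition of $G^*$. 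Hence $\mathcal{N}U$ has the form with a zero in the $(1,3)$ and $(2,3)$ blocks.

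Next I would left-multiply by $U^*$, which differs from the identity only in its last block row $(G,\,-G_B^*,\,I_m)$. This leaves the first two block rows of $U^*(\mathcal{N}U)$ unchanged — so the $(1,3)$ and $(2,3)$ blocks stay zero and the $(1,1),(1,2),(2,1),(2,2)$ blocks remain $0,\,E_\omega - A_\omega,\,E_\omega^*-A_\omega^*,\,Q_\omega$ — and changes only the last block row. By symmetry of the whole construction (or by a direct mirror of the previous computation, using that $\mathcal{N}$ is Hermitian), the $(3,1)$ and $(3,2)$ blocks of the result vanish as well, leaving a block-diagonal matrix $\operatorname{diag}\!\big(\big[\begin{smallmatrix} 0 & E_\omega - A_\omega \\ E_\omega^* - A_\omega^* & Q_\omega\end{smallmatrix}\big],\, (3,3)\text{-entry}\big)$.

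It then remains to identify the surviving $(3,3)$ block as $2\sin(\tfrac{\omega}{2})\Phi(\eio)$. Expanding $(G,\,-G_B^*,\,I_m)\,\mathcal{N}\,(G^*,\,-G_B,\,I_m)^\top$ and substituting the definitions of $E_\omega, A_\omega, B_\omega, Q_\omega, S_\omega, R_\omega$ in terms of $E, A, B, Q, S, R$ and $s_\omega = 2\sin(\tfrac{\omega}{2})$, one checks that $E_\omega - A_\omega = -\imag\eio[\frac{\omega}{2}]E + \imag\emio[\frac{\omega}{2}]A$ is, up to the scalar $-\imag\emio[\frac{\omega}{2}]$, equal to $\eio E - A$, so that $G_B = (E_\omega - A_\omega)^{-1}B_\omega$ collapses to $(\eio E - A)^{-1}B$ after the scalar factors cancel. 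The bracketed matrix then reduces to
\[
\begin{bmatrix} (\eio E - A)^{-1}B \\ I_m \end{bmatrix}^* \begin{bmatrix} Q_\omega & S_\omega \\ S_\omega^* & R_\omega \end{bmatrix} \begin{bmatrix} (\eio E - A)^{-1}B \\ I_m \end{bmatrix}
\]
plus terms that cancel using the identity $\big[\begin{smallmatrix}A & B\end{smallmatrix}\big]\big[\begin{smallmatrix}(\eio E-A)^{-1}B \\ I_m\end{smallmatrix}\big] = \eio\big[\begin{smallmatrix}E & 0\end{smallmatrix}\big]\big[\begin{smallmatrix}(\eio E-A)^{-1}B \\ I_m\end{smallmatrix}\big]$ from Proposition~\ref{prop:sysspace}\ref{it:sysspaceb}; pulling out the common factor $s_\omega = 2\sin(\tfrac{\omega}{2})$ from $Q_\omega, S_\omega, R_\omega$ yields $2\sin(\tfrac{\omega}{2})\Phi(\eio)$. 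The main obstacle I anticipate is bookkeeping the numerous $\eio[\frac{\omega}{2}]$ and $\emio[\frac{\omega}{2}]$ phase factors correctly so that the Schur complement in the $(3,3)$ position simplifies exactly to the Popov function rather than a phase-rotated version of it; this is where Proposition~\ref{prop:sysspace}\ref{it:sysspaceb} does the real work in killing the cross terms, and it should be invoked explicitly rather than absorbed into "routine computation".
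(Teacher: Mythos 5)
Your overall strategy---treating $U$ as a block elimination and identifying the congruent matrix via the resulting Schur-type complement in the $(3,3)$ position---is the right computation (the paper itself only cites \cite[Lemma 4.13]{bankmann_linear-quadratic_2015} here), and your verification is correct as far as the block structure goes: the $(1,3)$ and $(2,3)$ blocks of $\mathcal{N}U$ vanish by the definition of the last block column of $U$, and the $(3,1)$, $(3,2)$ blocks of $U^*\mathcal{N}U$ then vanish because the inner matrix $\mathcal{N}$ is Hermitian. What survives in the $(3,3)$ position is $G_B^*Q_\omega G_B - G_B^*S_\omega - S_\omega^*G_B + R_\omega$ with $G_B=(E_\omega-A_\omega)^{-1}B_\omega$.

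The final identification, however, is not correct as you describe it: the scalar factors do \emph{not} cancel. Since $E_\omega-A_\omega=-\imag\emio[\frac{\omega}{2}]\left(\eio E-A\right)$ and $B_\omega=\imag\emio[\frac{\omega}{2}]B$, one gets
\begin{equation*}
 G_B=(E_\omega-A_\omega)^{-1}B_\omega=\imag\eio[\frac{\omega}{2}]\left(\eio E-A\right)^{-1}\imag\emio[\frac{\omega}{2}]B=-\left(\eio E-A\right)^{-1}B,
\end{equation*}
i.e.\ $G_B=-H$ with $H:=(\eio E-A)^{-1}B$, not $+H$ as you claim. This minus sign is exactly what converts the cross terms $-G_B^*S_\omega-S_\omega^*G_B$ into $+H^*S_\omega+S_\omega^*H$, so that the $(3,3)$ block equals $s_\omega\left(H^*QH+H^*S+S^*H+R\right)=2\sin\left(\frac{\omega}{2}\right)\Phi(\eio)$ on the nose, with no residual terms; in particular Proposition~\ref{prop:sysspace}\ref{it:sysspaceb} is not needed anywhere in this lemma (it is the engine behind Lemma~\ref{lem:zerokyp}, not behind this congruence). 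If one instead takes $G_B=+H$ and hopes leftover terms cancel via \eqref{eq:sysspace}, the argument does not close: one is left with $s_\omega\left(H^*QH-H^*S-S^*H+R\right)$, which differs from $s_\omega\Phi(\eio)$ by $2s_\omega\left(H^*S+S^*H\right)$, a quantity that is nonzero in general. So the proof is salvageable by a one-line correction of the phase bookkeeping, but the mechanism you invoke for handling the cross terms is not the one that makes the computation work.
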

\begin{proof}
  See \cite[Lemma 4.13]{bankmann_linear-quadratic_2015}.
\end{proof}

\begin{theorem}\label{th:popovinertia}
  Let $\wsystem$ be given with corresponding Popov function $\Phi(z)\in\matrz{m}{m}$ and $\rkr \Phi(z)=q$ for some $q\in\N_0$. Assume that $\system*$ has no uncontrollable modes on the unit circle. Then the following are equivalent:
  \begin{enumerate}[label=(\alph*)]
    \item \label{it:popovinertiaa} The Popov function $\Phi(z)$ is positive semi-definite on the unit circle, \ie $\Phi(\eio)\succeq 0$ for all $\omega \in [0,2\pi)$.
    \item \label{it:popovinertiab} The following conditions     for the PKCF of $z\A^*-\A$ as in \eqref{eq:PKCF} hold:
    \begin{enumerate}[label=(\roman*)]
      \item \label{it:popovinertiabi} There are no blocks of type P2 corresponding to eigenvalues $\lambda=\eio[\theta],\, \theta\neq 0$, and all blocks of type P3 have negative sign-characteristic.
      \item \label{it:popovinertiabii} The number of blocks of type P2 corresponding to an eigenvalue $\lambda=1$ with positive sign-characteristic is greater by $q$ than the number of those with negative sign-characteristic.
    \end{enumerate}
    \item \label{it:popovinertiac} The following conditions     for the PKCF of $z\A^*-\A$ as in \eqref{eq:PKCF} hold:
    \begin{enumerate}[label=(\roman*')]
      \item \label{it:popovinertiaci} There are no blocks of type P2 corresponding to eigenvalues $\lambda=\eio[\theta],\, \theta\neq 0$.
      \item \label{it:popovinertiacii} The number of blocks of type P2 corresponding to an eigenvalue $\lambda=1$ with positive sign-characteristic is greater by $q$ than the number of those with negative sign-characteristic.
    \end{enumerate}
  \end{enumerate}
\end{theorem}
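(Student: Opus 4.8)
The plan is to derive both \ref{it:popovinertiab} and \ref{it:popovinertiac} from the inertia of the quasi-Hermitian matrix $\D(\omega)=(z\A^*-\A)\big|_{z=\eio}$ from \eqref{eq:palpencins}, by comparing two expressions for it: one in terms of the Popov function, coming from Lemma~\ref{lem:congrpp}, and one in terms of the palindromic Kronecker canonical form, coming from Lemma~\ref{lem:inertiapatterns} and Table~\ref{tab:PKCF}. As a preliminary normalization I would exploit that $\system*$ has no uncontrollable modes on the unit circle: by assigning the controllable finite modes one can choose a feedback $F$ so that the feedback-equivalent system $\wsystemF*$ from \eqref{eq:fbsys} satisfies $\det(\eio E_F-A_F)\neq0$ for all $\omega\in[0,2\pi)$. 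By Proposition~\ref{prop:popov}\ref{it:phifphia} this preserves statement~\ref{it:popovinertiaa} and the number $q=\rkr\Phi(z)$, and one checks that the palindromic pencil \eqref{eq:palpenc} transforms by congruence under feedback equivalence, hence keeps the same PKCF; so I may assume $zE-A$ has no eigenvalues on the unit circle. Then $\Phi(z)$ is analytic and Hermitian-valued on the circle, $\rk\Phi(\eio)=q$ outside a finite set, and Lemma~\ref{lem:congrpp} is applicable at every $\omega$.

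The two expressions are as follows. From Lemma~\ref{lem:congrpp}, the leading $2n\times2n$ block having inertia $(n,0,n)$ because $E_\omega-A_\omega$ is invertible, one gets
\begin{equation*}
  \In(\D(\omega))=(n,0,n)+\In\bigl(\Phi(\eio)\bigr)\qquad(\omega\in(0,2\pi)),
\end{equation*}
with ordinary Hermitian inertia on the right; and $\D(0)=\A^*-\A=\left[\begin{smallmatrix}0&M\\-M^*&0\end{smallmatrix}\right]$ with $M=\begin{bmatrix}E-A&-B\end{bmatrix}$, so $\In(\D(0))=(n,m,n)$ because $\rk M=n$ by the controllability hypothesis. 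From the PKCF of $z\A^*-\A$ as in \eqref{eq:PKCF} and Table~\ref{tab:PKCF}, for generic $\omega$ (avoiding the finitely many parameters $\theta$ of blocks of types P2--P4) $n_0(\D(\omega))$ equals the number $p_5$ of blocks of type P5, while $n_\pm(\D(\omega))=K+N_\pm(\omega)$, where $K$ is the sum of the half-dimensions of all blocks and $N_\pm(\omega)$ counts the type-P2 blocks contributing $\In(\pm1)$ to the inertia at $\omega$ (for a block at $\eio[\theta]$ with sign-characteristic $\varepsilon$ this depends on $\varepsilon$ and on whether $\omega$ lies above or below $\theta$). Comparing $n_0$ in the two formulas already yields $p_5=m-q$.

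I would then close the cycle \ref{it:popovinertiab} $\Rightarrow$ \ref{it:popovinertiac} $\Rightarrow$ \ref{it:popovinertiaa} $\Rightarrow$ \ref{it:popovinertiab}; the first implication is trivial. For \ref{it:popovinertiac} $\Rightarrow$ \ref{it:popovinertiaa}: with no type-P2 blocks off $\lambda=1$, $N_\pm(\omega)$ are the constants $a_\pm$ (the numbers of type-P2 blocks at $\lambda=1$ of sign $\pm1$), so $n_\pm(\D(\omega))=K+a_\pm$ and $n_0(\D(\omega))=m-q$; together with $a_+-a_-=q$ and $n_++n_0+n_-=2n+m$ this forces $\In(\D(\omega))=(n+q,m-q,n)$, hence $\In(\Phi(\eio))=(q,m-q,0)$ generically and, by continuity, $\Phi(\eio)\succeq0$ everywhere. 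For \ref{it:popovinertiaa} $\Rightarrow$ \ref{it:popovinertiab}: $\Phi\succeq0$ makes $\In(\D(\omega))=(n+q,m-q,n)$ for all generic $\omega$; demanding that this stay constant as $\omega$ crosses a parameter $\theta_0\neq0$ of a type-P2 block forces the P2 blocks there to be sign-balanced, while evaluating the PKCF formula \emph{at} $\omega=\theta_0$ (where a positively-signed type-P2 or P3 block at $\theta_0$ lowers $n_-$ by one relative to nearby generic $\omega$) and comparing with $n_-(\D(\theta_0))=n$ rules out positively-signed type-P2 and P3 blocks off $\lambda=1$; balanced plus no positive sign then excludes type-P2 blocks off $\lambda=1$ altogether. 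Finally, evaluating at $\omega=0$, where $\In(\D(0))=(n,m,n)$ and a positively-signed type-P3 or P4 block at $\lambda=1$ again lowers $n_-$ by one, and using $n_-(\D(\omega))=K+a_-=n$ generically, forces $a_-=0$ and the absence of positively-signed type-P3 (and P4) blocks at $\lambda=1$; hence all type-P3 blocks have negative sign-characteristic, and $a_+-a_-=q$ follows from $K+a_+=n+q$ and $K+a_-=n$. This gives \ref{it:popovinertiab}.

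The hard part is this last implication: it requires keeping exact track of which PKCF blocks become degenerate at a given frequency $\omega=\theta$ and of their inertia there, and of frequencies that are parameters of several block types at once --- especially $\lambda=1$, where type-P2, P3 and P4 blocks may coexist and where Table~\ref{tab:PKCF} behaves differently than at $\lambda\neq1$. The feedback normalization is what makes the clean identity $\In(\D(\omega))=(n,0,n)+\In(\Phi(\eio))$ available at \emph{all} $\omega$, including $\omega=0$ and the exceptional frequencies; verifying that this normalization leaves the PKCF of the palindromic pencil unchanged is the other point that needs care.
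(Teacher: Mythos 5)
Your proposal is correct and follows essentially the same route as the paper: feedback normalization so that $zE-A$ has no unit-circle eigenvalues (with the palindromic pencil transforming by congruence and hence keeping its PKCF), the inertia identity obtained from Lemma~\ref{lem:congrpp} together with Theorem~\ref{th:inin}, and matching against the PKCF inertia patterns of Lemma~\ref{lem:inertiapatterns}, closing the same cycle of implications; your bookkeeping at the exceptional frequencies and at $\omega=0$ (using $\In(\D(0))=(n,m,n)$, valid because no uncontrollable mode at $\lambda=1$ gives $\rk\,[\,E-A\;\;B\,]=n$) is only a mild variant of the paper's successive removal of P5 and P2 blocks. One small caveat: your closing remark that the identity $\In(\D(\omega))=(n,0,n)+\In(\Phi(\eio))$ is available at \emph{all} $\omega$, including $\omega=0$, is inaccurate, since in Lemma~\ref{lem:congrpp} the $\Phi$-block carries the factor $2\sin\left(\frac{\omega}{2}\right)$, which vanishes at $\omega=0$; this does not affect your argument, because at $\omega=0$ you correctly rely on the direct skew-Hermitian computation of $\In(\D(0))$ rather than on that identity.
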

\begin{proof}
  The strategy of the proof is similar to the one in \cite[Theorem 3.4.2]{voigt_linear-quadratic_2015} for the continuous-time case. First note that since $\system*$ has no uncontrollable modes on the unit circle we can find a feedback matrix $F\in\mat{m}{n}$ such that $zE - (A+BF) \in \matz{n}{n}$ has no eigenvalues on the unit circle. Then by Lemma~\ref{lem:equivalence} and the fact that the palindromic pencil $z\A_F^*-\A_F$ corresponding to $\system[E][A+BF][B]*$ is connected to $z\A^*-\A$ via $\A_F= U_F^*\A U_F$, where
  \begin{equation*}
  U_F:=
   \begin{bmatrix}
   I_n		& 0 		& 0	 \\
   0		& I_n	 	& 0	 \\
   0		& F		& I_m 
  \end{bmatrix}\in\mat{2n+m}{2n+m},
  \end{equation*}
  we can assume without loss of generality that $(E,\,A)$ has no eigenvalues on the unit circle.
  
  Now we show that \ref{it:popovinertiaa} implies \ref{it:popovinertiab}. Therefore, assume that $\Phi(\eio)\succeq 0$  for all $\omega\in[0,2\pi)$ . Then in particular we have
  \begin{equation*}
    \In(\Phi(\eio))=\left(q-a(\omega),m-q+a(\omega),0\right)
  \end{equation*}
  for all $\omega \in(0,2\pi)$, where $a:(0,2\pi)\to\N_0$ is some function which is zero for almost all $\omega \in(0,2\pi)$. Hence, by Lemma~\ref{lem:congrpp} and Theorem~\ref{th:inin} we obtain
  \begin{equation*}
    \In\left(\eio\A^*-\A \right) = \left( n + q - a(\omega), m-q+a(\omega), n \right)
  \end{equation*}
  for $\omega\in(0,2\pi)$. Again, by Theorem \ref{th:inin} the inertia of $\eio\A^*-\A$ coincides with the inertia of the PKCF  of $z\A^*-\A$ as in \eqref{eq:PKCF} evaluated at $\eio$. Since by Theorem~\ref{th:PKCF} the block structure of the PKCF is uniquely determined, we can proceed by identifying blocks by their inertia patterns.\\
  Note that $\rkr (z\A^*-\A) = 2n + q$, since $z\A^*-\A$ can only have a finite amount of rank drops and due to Lemma \ref{lem:congrpp} and the regularity of $zE-A$ there exist infinitely many values $\lambda\in\C$ for which $\rk(\lambda\A^*-\A)=2n + q$.
  From Lemma \ref{lem:inertiapatterns}  we can infer that we have exactly $2n+m-(2n+q)=m-q$ blocks of type P5 in the PKCF of $z\A^*-\A$, since these are the only rank deficient blocks.\\
  Thus, since $\rk (\A^*-\A) = 2r$, where $r:=\rk \left[E-A\enspace B\right]$, the number of blocks of type P2 or P4 corresponding to an eigenvalue $\lambda=1$ is exactly $2(n-r)+q$.
  Then, removing the blocks of type P5 from the PKCF of $z\A^*-\A$  yields a matrix pencil $z\A^*_1-\A_1\in\matz{2n_1+q}{2n_1+q}$ in PKCF with full normal rank and inertia 
  \begin{equation*}
    \In(\eio\A^*_1-\A_1)=\left( n_1 + q - a(\omega), a(\omega), n_1 \right)
  \end{equation*}
  on $(0,2\pi)$.  
  Then, by Lemma \ref{lem:inertiapatterns}, there are $q$ blocks of type P2 with corresponding eigenvalue $\lambda=1$ and positive sign-characteristic, since these are 
present in every combination of blocks with an inertia pattern of the form 
$
    (k+1,0,k)$
  independent of $\omega>0$ . Removing these blocks leads to the pencil $z\A^*_2-\A_2\in\matz{2n_2}{2n_2}$ in PKCF with inertia 
  \begin{equation*}
    \In(\eio\A^*_2-\A_2)=\left( n_2  - a(\omega), a(\omega), n_2 \right)
  \end{equation*}
  on $(0,2\pi)$.
  Furthermore, from Lemma \ref{lem:inertiapatterns} we deduce that there are no blocks of type P2 corresponding to eigenvalues $\lambda=\eio[\theta],\, \theta\neq 0$. Thus, all blocks of type P3 have negative sign-characteristic, since these are
the only blocks with an inertia pattern of the form
    $(k-1,1,k)$
  for exactly one value of $\omega>0$. This shows statement \ref{it:popovinertiabi}. 
  Removing these blocks, we obtain a matrix pencil $z\A^*_3-\A_3\in\matz{2n_3}{2n_3}$ in PKCF with  inertia 
  \begin{equation*}
    \In(\eio\A^*_3-\A_3)=\left( n_3 , 0, n_3 \right)
  \end{equation*}
  on $(0,2\pi)$.
  The inertia pattern of $z\A_3^*-\A_3$ together with Lemma \ref{lem:inertiapatterns}
reveals that the remaining blocks of type P2 corresponding to an eigenvalue $\lambda=1$ are split up equally into those with positive and those with negative sign-characteristic. This shows \ref{it:popovinertiabii} and thus statement~\ref{it:popovinertiab}.
  
  The proof that \ref{it:popovinertiac} follows from \ref{it:popovinertiab} is clear, since condition~\ref{it:popovinertiaci} follows immediately from condition~\ref{it:popovinertiabi} and conditions~\ref{it:popovinertiabii} and~\ref{it:popovinertiacii} coincide. 
  
  Now let the conditions \ref{it:popovinertiaci}, and \ref{it:popovinertiacii} hold. 
  Again, by Lemma~\ref{lem:congrpp}
and Theorem~\ref{th:inin}, for $\omega\in(0,2\pi)$ we obtain
  \begin{align*}
    \In\left(\eio\A^*-\A \right) =& \left( n , 0, n \right) + \In\left(\Phi(\eio)\right)\\
    =& \left( n + m_1 - a_1(\omega) , m-m_1-m_2 + a_1(\omega) + a_2(\omega), n + m_2  - a_2(\omega) \right)
  \end{align*}
   and functions $a_i:(0,2\pi)\to \N_0$, $i=1,2$, which are zero for almost all $\omega \in(0,2\pi)$ such that $m_1  + m_2  = q$.
  We now have to show that $m_2=0= a_2(\omega)$. %
  Then the blocks of type P2 with positive sign-characteristic are the only ones leading to an inertia pattern of the form
   $(k+1,0,k)$
  for $\omega>\theta$. The only blocks that could compensate the additional positive eigenvalue for $\omega>\theta$ are
blocks of type P2 with negative sign-characteristic. By condition~\ref{it:popovinertiaci} we are only allowed to take such blocks with $\theta=0$. By condition~\ref{it:popovinertiacii} then we obtain that $n + m_1 = (n + m_2) + q$ and thus $m_2=0$, $m_1=q$. 
  Hence, we have 
  \begin{equation*}
    \In(\Phi(\eio))=\left( q - a_1(\omega) , m-q + a_1(\omega) + a_2(\omega), - a_2(\omega) \right).
  \end{equation*}
  Since the inertia of a quasi-Hermitian matrix by definition is a triple of non-negative integers, this implies $a_2\equiv0$ and thus $\Phi(\eio)\succeq 0$ for all $\omega\in(0,2\pi)$. Then, by continuity, we also have that ${\Phi(1)\succeq0}$.
\end{proof}

\begin{example}[Example~\ref{ex:simplecircuitfeedback} revisited]
 We consider the system $\wsystem*$  with corresponding system $\wsystemF*$ in feedback equivalence form as in \eqref{eq:simplecircuitmatrices}, \eqref{eq:simplecircuitfeedback}, and Example~\ref{ex:simplecircuitkyp}. The associated palindromic pencil $z\A^*-\A\in\matz{5}{5}$  as in \eqref{eq:palpenc} is given by
    \begin{equation}\label{eq:simplecircuitpalpenc}
    z\A^*-\A=\left[
    \begin{array}{@{}cc|cc|c@{}}
      0 & 0  & 1 	& -1 		& -1\\
      0 & 0  & -1 	& z 		&0\shline{1.6}
      -z & z & z-1 	&  0		&0\\
      z&-1 & 0	& z-1 		&0\shline{1.6}
      z & 0  & 0 	& 0 		&z-1
    \end{array}\right].
  \end{equation}
  Transforming the matrix $\A$ to the corresponding  matrix $\A_F$ of the system in feedback equivalence form \eqref{eq:simplecircuitfeedback}  via 
  \begin{equation*} U_F :=
     \begin{bmatrix}
   W^*		& 0 		& 0	 \\
   0		& T	 	& 0	 \\
   0		& FT 		& I_m 
  \end{bmatrix}=
 \left[
    \begin{array}{@{}cc|cc|c@{}}
   1		& -1		& 0		& 0		& 0	\\
   1		& 0		& 0		& 0		& 0	\shline{1.6}
   0		& 0		& 1		& 1 		& 0	 \\
   0		& 0		& 1		& 0	 	& 0	 \shline{1.6}
   0		& 0		& 0		& 0 		& 1 
 \end{array}\right]\in\mat{5}{5}
  \end{equation*}
  we obtain
  \begin{equation*}
    \A_F=  U_F^* \A U_F=
    \left[
    \begin{array}{@{}cc|cc|c@{}}
      0 & 0  & 1 	& 0 		& \phantom{-}1\\
      0 & 0  & 0 	& 1 		&-1\shline{1.6}
      1 & 0  & 2 	& 1 		&0\\
      0 & 0  & 1	& 1 		&0\shline{1.6}
      0 & 0  & 0 	& 0 		&1
    \end{array}\right].
    \end{equation*}
    The matrix $\A_F$ can can be further transformed to
    \begin{equation}\label{eq:simplecircuitPKCF}
      U^*(z\A_F^*-\A_F)U=
          \left[
\begin{array}{@{}cc|cc|c@{}}
 0 & -1 & 0 & 0 & 0 \\
 z & 0 & 0 & 0 & 0 \shline{1.6}
 0 & 0 & 0 & z-(2+\sqrt{3}) & 0 \\
 0 & 0 & (2+\sqrt{3})z-1 & 0 & 0 \shline{1.6}
 0 & 0 & 0 & 0 & z-1
\end{array}
\right]
  \end{equation}    
    in PKCF as in \eqref{eq:PKCF} via  
  \begin{equation*}  
  U=
\left[
\begin{array}{@{}ccccc@{}}
 0 & -1 & -1+\sqrt{3} & -\frac{3}{2}-\sqrt{3} & 0 \\
 1 & -1 & -1+\frac{1}{\sqrt{3}} & -1-\frac{\sqrt{3}}{2} & 0 \\
 0 & 0 & -\frac{1}{\sqrt{3}} & \frac{1}{4} \left(1+\sqrt{3}\right) & -\frac{1}{\sqrt{2}} \\
 0 & 1 & 1-\frac{1}{\sqrt{3}} & \frac{1}{2} \left(2+\sqrt{3}\right) & 0 \\
 0 & 0 & -1+\frac{1}{\sqrt{3}} & -1-\frac{\sqrt{3}}{2} & 0
\end{array}
\right].
  \end{equation*}
  From \eqref{eq:simplecircuitPKCF} we see that the PKCF of $z\A_F^*-\A_F$ and thus also of $z\A^*-\A$ consists of a $2\times2$ block of type P1 corresponding to the eigenvalues $\{0,\infty\}$, a $2\times2$ block of type P1 corresponding to the eigenvalues $\{2+\sqrt{3},2-\sqrt{3}\}$, and a $1\times1$ block of type P2 corresponding to the eigenvalue $1$.   
    Furthermore, for the Popov function $\Phi_F(z)$ it holds that $\rkr \Phi_F(z)=1$. Thus, we have shown that the assumptions of Theorem~\ref{th:popovinertia}\ref{it:popovinertiab} are fulfilled and hence $\Phi_F(\eio)\succeq 0$ for all $\omega\in[0,2\pi)$. Thus, we have confirmed the result obtained in Example~\ref{ex:simplecircuitkyp}.
\end{example}

\section{Lur'e Equations}\label{chap:lure}
In this section we characterize solvability of Lur'e equations for explicit as well as for implicit difference equations in a similar way as in \cite{reis_kalmanyakubovichpopov_2015} for continuous-time systems.
Finding a solution of the Lur'e equation means finding $X=X^*\in\mat{n}{n}$, $K\in\mat{q}{n}$, and $L\in\mat{q}{m}$ such that
\begin{equation}\label{eq:dlure}
 \mathcal M (X) =
 \begin{bmatrix}
  A^*XA-E^*XE + Q		&  A^*XB + S	\\
  B^*XA	    + S^*	&  B^*XB + R
 \end{bmatrix} =_{\cV}
 \begin{bmatrix}
  K^*\\
  L^*
 \end{bmatrix}
 \begin{bmatrix}
  K & L
 \end{bmatrix},
\end{equation}
where $q:=\rkr\Phi(z)$.

If $X$ is a solution of the KYP inequality \eqref{eq:LMI}, then we can always find $K\in\mat{p}{n}$ and $L\in\mat{p}{m}$ for some $p\in \N_0$ such that \eqref{eq:dlure} holds.
The next result shows that for such solutions it holds that $p\ge q$. Thus, in other words, we are interested in the existence of solutions of \eqref{eq:dlure} with minimal rank $q$.  
\begin{proposition}\label{prop:lurefullrankrhs}
  Let $\wsystem$ be given and  let $q=\rkr \Phi(z)$. Further, let
  \begin{equation*}
      (X,\,K,\,L)\in\mat{n}{n}\times\mat{q}{n}\times\mat{q}{m}
  \end{equation*}
  be a solution of the Lur'e equation \eqref{eq:dlure} and assume, that for $M\in\mat{p}{n}$ and $N\in\mat{p}{m}$ also the triple $(X,\,M,\,N)$ fulfills \eqref{eq:dlure}.
  Then we have $q\le p$ and 
  \begin{equation}\label{eq:lurerankcond}
  \rkr
  \begin{bmatrix}
      zE-A	& -B	\\
      (z-1)K	&(z-1)L
    \end{bmatrix} = n+q.
  \end{equation}
\end{proposition}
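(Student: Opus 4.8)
The plan is to work with the Popov function. First I would recall that the Popov function admits the factorization
\[
\Phi(z) = \begin{bmatrix} (zE-A)^{-1}B \\ I_m \end{bmatrix}^\sim \mathcal{M}(X) \begin{bmatrix} (zE-A)^{-1}B \\ I_m \end{bmatrix}
\]
for any Hermitian $X$, which follows from Lemma~\ref{lem:zerokyp} exactly as in the proof of Theorem~\ref{th:kyp}\ref{it:kypa}; note that by Proposition~\ref{prop:sysspace}\ref{it:sysspacec} the relevant column span lies in $\cV$, so the $=_\cV$ in \eqref{eq:dlure} is enough to pass the right-hand side through. Substituting the Lur'e right-hand side, this yields
\[
\Phi(z) = \begin{bmatrix} (zE-A)^{-1}B \\ I_m \end{bmatrix}^\sim \begin{bmatrix} K^* \\ L^* \end{bmatrix} \begin{bmatrix} K & L \end{bmatrix} \begin{bmatrix} (zE-A)^{-1}B \\ I_m \end{bmatrix} = G(z)^\sim G(z),
\]
where $G(z) := K(zE-A)^{-1}B + L \in \matrz[K]{q}{m}$. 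The same computation with $(X,M,N)$ gives $\Phi(z) = H(z)^\sim H(z)$ with $H(z) := M(zE-A)^{-1}B + N \in \matrz[K]{p}{m}$.

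From $\Phi(z) = G(z)^\sim G(z)$ we get $q = \rkr \Phi(z) \le \rkr G(z) \le \min\{q, m\}$, so $\rkr G(z) = q$; similarly $\rkr H(z) = q$, and since $H(z)$ has $p$ rows, $p \ge q$. This already gives the inequality $q \le p$. The key structural step is then to relate $\rkr G(z)$ to the rank of the pencil in \eqref{eq:lurerankcond}. Here I would use the identity
\[
\begin{bmatrix} zE-A & -B \\ (z-1)K & (z-1)L \end{bmatrix} \begin{bmatrix} (zE-A)^{-1}B \\ I_m \end{bmatrix} = \begin{bmatrix} 0 \\ (z-1)G(z) \end{bmatrix},
\]
together with a block-triangularization: over $\K(z)$ the first block column $\begin{bsmallmatrix} zE-A \\ (z-1)K \end{bsmallmatrix}$ contributes full column rank $n$ (since $zE-A$ is regular), and after eliminating the $(1,2)$ block using $(zE-A)^{-1}$ one is left with a Schur complement equal to $(z-1)\bigl(L + K(zE-A)^{-1}B\bigr) = (z-1)G(z)$, which over $\K(z)$ has rank $\rkr G(z) = q$. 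Hence the total rank is $n + q$, which is \eqref{eq:lurerankcond}. I would write this elimination as a congruence/equivalence by the unimodular-over-$\K(z)$ matrix $\begin{bsmallmatrix} I_n & 0 \\ -(z-1)K(zE-A)^{-1} & I_q \end{bsmallmatrix}$ on the left and a suitable block column operation on the right.

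The main obstacle I anticipate is being careful that the factorization $\Phi(z) = G(z)^\sim G(z)$ genuinely follows from the projected equality $=_\cV$ rather than from full equality of the matrices — this is where Proposition~\ref{prop:sysspace}\ref{it:sysspaceb}--\ref{it:sysspacec} must be invoked, exactly as in the KYP lemma proof, to ensure the vectors $\begin{bsmallmatrix} (zE-A)^{-1}B \\ I_m \end{bsmallmatrix}$ (evaluated at each admissible point, hence as a rational vector by analytic continuation) lie in $\cV$. A secondary technical point is the handling of the factor $(z-1)$: since $z - 1 \not\equiv 0$ in $\K(z)$, multiplication by $(z-1)$ does not change the normal rank, so $\rkr \bigl((z-1)G(z)\bigr) = \rkr G(z) = q$; this must be stated explicitly so that the rank count $n+q$ is clean. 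Everything else is routine linear algebra over the field $\K(z)$.
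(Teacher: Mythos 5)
Your proof is correct; the paper itself gives no inline argument here but defers to \cite[Proposition~5.1]{bankmann_linear-quadratic_2015}, and your route --- the spectral factorization $\Phi(z)=G^\sim(z)G(z)$ with $G(z)=K(zE-A)^{-1}B+L$, justified via Lemma~\ref{lem:zerokyp} and Proposition~\ref{prop:sysspace}\ref{it:sysspacec} so that the projected identity $=_{\cV}$ can indeed be evaluated along the rational columns (at all but finitely many points, hence identically), followed by the Schur-complement rank count over $\K(z)$ in which the scalar factor $z-1$ does not change the normal rank --- is exactly the standard argument underlying that reference and its continuous-time analogue \cite{reis_lure_2011}. One harmless overstatement: the displayed chain $q=\rkr\Phi(z)\le\rkr H(z)\le\min\{p,m\}$ yields only $q\le\rkr H(z)\le p$ rather than $\rkr H(z)=q$, but $q\le p$ is all that is needed.
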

\begin{proof}
  See \cite[Proposition 5.1]{bankmann_linear-quadratic_2015}.
\end{proof}

Note that in the continuous-time case \cite{reis_kalmanyakubovichpopov_2015} instead of rank minimality the condition \eqref{eq:lurerankcond} was used to define solutions fo the Lur'e equation \eqref{eq:dlure}. Proposition~\ref{prop:lurefullrankrhs}  shows that both versions are indeed equivalent.
In the following we will derive certain deflating subspaces of BVD and palindromic matrix pencils, respectively, from a solution of the Lur'e equation \eqref{eq:dlure}. First, we do this for the case of explicit difference equations. Afterwards, based on these results, we do the generalization to the implicit case with the help of feedback transformations similarly to the approach in Section~\ref{chap:kyp}.
\subsection{Explicit Difference Equations}
In the EDE case, \ie systems $\wsystem[I_n]$ finding a solution of the Lur'e equation \eqref{eq:dlure} reduces to:

For $q:=\rkr \Phi(z)$ find $X\in\mat{n}{n}$, $K\in\mat{q}{n}$, and $L\in\mat{q}{m}$ such that

\begin{equation}\label{eq:clure}
\mathcal M (X)=
 \begin{bmatrix}
  A^*XA-X + Q		&  A^*XB + S	\\
  B^*XA	    + S^*	&  B^*XB + R
 \end{bmatrix}=
 \begin{bmatrix}
  K^*\\
  L^*
 \end{bmatrix}
 \begin{bmatrix}
  K & L
 \end{bmatrix}.
\end{equation}
The next result is an analogous version of \cite[Lemma 12]{reis_lure_2011} for  the discrete-time case. 
 \begin{lemma}\label{lem:minxkl}
  Let $\wsystem[I_n]$ be given and let $q=\rkr \Phi(z)$. Furthermore, let $\Phi(\eio)\succeq 0$ for all $\omega\in[0,2\pi)$ such that $\det(\eio I_n-A)\neq 0$ and let a Hermitian $X\in\mat{n}{n}$ be given with
  \begin{equation*}
  \rk
\mathcal M (X) =q.
 \end{equation*}
  Then \eqref{eq:clure} has a solution $(X,\,K,\,L)\in\mat{n}{n}\times\mat{q}{n}\times\mat{q}{m}$.
 \end{lemma}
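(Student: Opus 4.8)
The plan is to exhibit an explicit factorization of the Hermitian matrix $\mathcal M(X)$. By assumption $\mathcal M(X)=\mathcal M(X)^*$ has rank exactly $q$, so the spectral theorem guarantees the existence of a matrix $\begin{bsmallmatrix} K & L\end{bsmallmatrix}\in\mat{q}{n+m}$ (indeed even with orthogonal rows scaled by the nonzero eigenvalues' signs) such that $\mathcal M(X)=\begin{bsmallmatrix}K^*\\ L^*\end{bsmallmatrix}\begin{bsmallmatrix}K & L\end{bsmallmatrix}$ --- \emph{provided} $\mathcal M(X)$ is positive semi-definite. So the crux is not the factorization itself but showing that $\mathcal M(X)\succeq 0$. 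Thus the first step is: reduce the claim to proving that the rank-$q$ Hermitian matrix $\mathcal M(X)$ has no negative eigenvalues; once that is established, take the reduced factorization $\mathcal M(X) = G^*G$ with $G\in\mat{q}{n+m}$ of full row rank $q$, and partition $G=\begin{bsmallmatrix}K & L\end{bsmallmatrix}$ with $K\in\mat{q}{n}$, $L\in\mat{q}{m}$, which is exactly a solution of~\eqref{eq:clure}.

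To show $\mathcal M(X)\succeq 0$, I would connect $\mathcal M(X)$ to the Popov function via the identity from Lemma~\ref{lem:zerokyp} together with Proposition~\ref{prop:sysspace}\ref{it:sysspaceb} (here $E=I_n$, so $\cV=\K^{n+m}$ and the projected inequality is an honest inequality). Concretely, for every $\omega\in\R$ with $\det(\eio I_n - A)\neq 0$ one has, exactly as in the proof of Theorem~\ref{th:kyp}\ref{it:kypa},
\begin{equation*}
\Phi(\eio) = \begin{bmatrix}(\eio I_n - A)^{-1}B\\ I_m\end{bmatrix}^* \mathcal M(X)\begin{bmatrix}(\eio I_n - A)^{-1}B\\ I_m\end{bmatrix}.
\end{equation*}
The hypothesis $\Phi(\eio)\succeq 0$ then says that $\mathcal M(X)$ is positive semi-definite when compressed to the family of subspaces $\im\begin{bsmallmatrix}(\eio I_n-A)^{-1}B\\ I_m\end{bsmallmatrix}$ for almost all $\omega$. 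The task is to upgrade this to positivity on all of $\K^{n+m}$, and this is where the rank condition $\rk\mathcal M(X)=q=\rkr\Phi(z)$ does the work: the rank of $\mathcal M(X)$ matches the rank of the Popov function, so $\mathcal M(X)$ cannot ``have room'' for a negative eigenvalue without forcing $\Phi$ to have larger rank or to take a negative value somewhere. I would make this precise by a Schur-complement / congruence argument: choosing $\omega_0$ with $\det(\eio[\omega_0] I_n - A)\neq 0$ and performing the congruence that block-diagonalizes $\mathcal M(X)$ with respect to $\im\begin{bsmallmatrix}(\eio[\omega_0] I_n-A)^{-1}B\\ I_m\end{bsmallmatrix}$ and a complement, one sees that $\mathcal M(X)$ is congruent to $\diag(\Phi(\eio[\omega_0]), \,\ast)$ where the lower block has size $n$; counting ranks and using $\rk\mathcal M(X)=q\le\rk\Phi(\eio[\omega_0])\le q$ forces the $\ast$-block to vanish, hence $\mathcal M(X)$ is congruent to $\diag(\Phi(\eio[\omega_0]),0)\succeq 0$. (This mirrors the structure of Lemma~\ref{lem:congrpp}.)

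The main obstacle I anticipate is precisely this last congruence/rank-counting step: making rigorous that the rank of $\mathcal M(X)$ equals the rank of the Popov function \emph{as a rational matrix} --- not merely its pointwise rank at generic $\omega$ --- and that this, combined with pointwise semi-definiteness, leaves no slack for a negative inertia index. One has to argue that $\rkr\Phi(z)$ coincides with the generic pointwise rank $\rk\Phi(\eio)$ off the finitely many bad frequencies, and that the congruence transformation (which depends on $\omega_0$ but whose existence only needs one admissible $\omega_0$) genuinely produces a trailing zero block. Once that bookkeeping is done, the sign count is immediate and the factorization $\mathcal M(X)=\begin{bsmallmatrix}K^*\\ L^*\end{bsmallmatrix}\begin{bsmallmatrix}K & L\end{bsmallmatrix}$ with $(K,L)$ of the required dimensions follows from the positive semi-definite square-root/Cholesky-type decomposition of a rank-$q$ positive semi-definite matrix.
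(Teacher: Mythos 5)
Your reduction is the right one: everything hinges on showing $\mathcal M(X)\succeq 0$, the bridge is exactly the identity $\Phi(\eio)=W(\omega)^*\mathcal M(X)W(\omega)$ with $W(\omega):=\left[\begin{smallmatrix}(\eio I_n-A)^{-1}B\\ I_m\end{smallmatrix}\right]$ from Lemma~\ref{lem:zerokyp}, and the genericity statement $\rk\Phi(\eio[\omega_0])=\rkr\Phi(z)=q$ for all but finitely many admissible $\omega_0$ is unproblematic (a not identically vanishing $q\times q$ minor has only finitely many zeros and poles on the unit circle). The genuine gap is the step you yourself flag as the obstacle: the existence of a congruence taking $\mathcal M(X)$ to $\diag\bigl(\Phi(\eio[\omega_0]),\ast\bigr)$. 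For an arbitrary complement $C$ of $\im W(\omega_0)$ the off-diagonal block $W(\omega_0)^*\mathcal M(X)C$ has no reason to vanish, and the Schur-complement elimination you invoke requires the compressed block $\Phi(\eio[\omega_0])$ to be invertible, which fails precisely when $q<m$. Note also that your inequality $\rk\mathcal M(X)\le\rk\Phi(\eio[\omega_0])$ is backwards as a general fact (a compression never increases rank); what the argument needs is $\rk\Phi(\eio[\omega_0])=q$ from genericity, and rank additivity only becomes available \emph{after} block-diagonality has been established.

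The missing idea that closes this gap is to let the rank hypothesis choose the complement: from $q=\rk\Phi(\eio[\omega_0])\le\rk\bigl(\mathcal M(X)W(\omega_0)\bigr)\le\rk\mathcal M(X)=q$ one gets $\im\bigl(\mathcal M(X)W(\omega_0)\bigr)=\im\mathcal M(X)$, hence $\im W(\omega_0)+\ker\mathcal M(X)=\K^{n+m}$; therefore a complement $C$ of $\im W(\omega_0)$ can be chosen with $\im C\subseteq\ker\mathcal M(X)$ (of dimension $n$), and then $[\,W(\omega_0)\ \ C\,]^*\mathcal M(X)[\,W(\omega_0)\ \ C\,]=\diag\bigl(\Phi(\eio[\omega_0]),0\bigr)\succeq 0$, so $\mathcal M(X)\succeq 0$ and a rank-$q$ factorization yields $(K,L)$ of the stated sizes. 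For comparison, the proof the paper refers to (\cite[Lemma~5.2]{bankmann_linear-quadratic_2015}, modeled on \cite[Lemma~12]{reis_lure_2011}) avoids the congruence altogether: it takes a full-row-rank factorization $\mathcal M(X)=\left[\begin{smallmatrix}K^*\\ L^*\end{smallmatrix}\right]\Sigma\left[\begin{smallmatrix}K & L\end{smallmatrix}\right]$ with a signature matrix $\Sigma=\diag(I_{q_+},-I_{q_-})$, writes $\Phi(z)=G^\sim(z)\Sigma G(z)$ with $G(z)=K(zI_n-A)^{-1}B+L$ of rational rank $q$, evaluates at a generic admissible $\omega_0$ where $G(\eio[\omega_0])$ is onto $\K^q$, and concludes $\Sigma\succeq 0$, i.e.\ $q_-=0$. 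Both routes rest on the same two ingredients (the identity of Lemma~\ref{lem:zerokyp} and genericity of the pointwise rank); yours needs the kernel argument above to be complete, while the cited one sidesteps the singularity issue by working with the factor rather than with subspace compressions.
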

\begin{proof}

The proof is analogous to the continuous-time case \cite{reis_lure_2011} and can be found in \cite[Lemma 5.2]{bankmann_linear-quadratic_2015}.
\end{proof}

\begin{example}[Example \ref{ex:simplecircuitfeedback} revisited]\label{ex:simplecircuitsdelure}
Consider the system $\wsystem*$ as in \eqref{eq:simplecircuitmatrices} and Example~\ref{ex:simplecircuitkyp}. We have seen in Example~\ref{ex:simplecircuitkyp} that with
\begin{equation*}
  \mathcal M_s (P_s)=
   \begin{bmatrix}
    2		& -P_s-1 \\
     -P_s-1		& P_s+2
  \end{bmatrix}
\end{equation*}
$P_s= \sqrt{3}$
solves the KYP inequality~\eqref{eq:LMI} for the EDE system $\wsystemS *$ as in \eqref{eq:simpleciruictEDE}.
In particular, we have that $\rk \mathcal M_s (P_s)= 1 = \rkr \Phi_s(z)$ for the Popov function $\Phi_s(z)\in\K(z)$ of the EDE system. Thus we obtain
\begin{equation*}
  \mathcal M_s (P_s)
    =
    \begin{bmatrix}
    {\sqrt2}	& 0\\
   - \frac{\sqrt3+1}{\sqrt2}	& 1
  \end{bmatrix}
  \begin{bmatrix}
    1 & 0\\
    0 & 0
  \end{bmatrix}
  \begin{bmatrix}
    {\sqrt2}	& - \frac{\sqrt3+1}{\sqrt2}\\
    0	& 1
  \end{bmatrix}  
\end{equation*}
and hence,
\begin{equation*}
 (P_s,\, K_s,\, L_s) = \left(\sqrt 3,\, {\sqrt2},\, -\frac{\sqrt3+1}{\sqrt2}\right)
\end{equation*}
is a solution of the Lur'e equation \eqref{eq:clure}. 
\end{example}
Now we are ready to show that the existence of a solution of the Lur'e equation \eqref{eq:clure} is equivalent to the existence of a certain deflating subspace of the palindromic matrix pencil as in \eqref{eq:palpenc}. This result is the continuous-time analog of \cite[Theorem 11]{reis_lure_2011}.
 \begin{theorem}\label{th:cluredefss}Let $\wsystem[I_n]$ be given and consider the associated palindromic pencil $z\A^*-\A$ as in \eqref{eq:palpenc}. Further, let $q=\rkr \Phi(z)$ and assume that $\rk\,\left[\,I_n-A \enspace -\!B \,\right] =n$.                                                                                                     
 Then the following are equivalent:
 \begin{enumerate}[label=(\alph*)]
  \item \label{it:clurepalpencdefssa} There exists a solution $(X,\,K,\,L)\in\mat{n}{n}\times\mat{q}{n}\times\in\mat{q}{m}$ of the Lur'e equation \eqref{eq:clure}.
  \item \label{it:clurepalpencdefssb} It holds that $\Phi(\eio)\succeq 0$ for all $\omega\in[0,2\pi)$ such that $\det(\eio I_n-A)\neq 0$. Furthermore, there exist matrices $Y_\mu,\,Y_x\in\mat{n}{n+m},\,Y_u\in\mat{m}{n+m}$ and $Z_\mu,\,Z_x\in\mat{n}{n+q},\,Z_u\in\mat{m}{n+q}$ such that for
  \begin{equation*}
   Y=\begin{bmatrix}
      Y_\mu\\
      Y_x\\
      Y_u
     \end{bmatrix},\qquad
   Z=\begin{bmatrix}
      Z_\mu\\
      Z_x\\
      Z_u
     \end{bmatrix}
  \end{equation*}
  the following holds:
  \begin{enumerate}[label=(\roman*)]
   \item \label{it:clurepalpencdefssbi} The matrix 
   \begin{equation*}
    \begin{bmatrix}
     I_n -A 	& -B
    \end{bmatrix}
    \begin{bmatrix}
     Y_x\\
     Y_u
    \end{bmatrix}
   \end{equation*}
 has full row rank $n$.
   \item \label{it:clurepalpencdefssbii} The space $\mathcal Y= \im Y$ is maximally $(\A^*-\A)$-neutral.
   \item \label{it:clurepalpencdefssbiii}There exist $\tilde E,\,\tilde A \in\mat{n+q}{n+m}$ such that $(z\A^*-\A)Y=Z(z\tilde E - \tilde A)$.
  \end{enumerate}

 \end{enumerate}

 \end{theorem}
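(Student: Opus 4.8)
The plan is to follow the blueprint of the continuous-time result \cite[Theorem~11]{reis_lure_2011}, with the palindromic pencil \eqref{eq:palpenc} replacing the even pencil; the extra hypothesis $\rk\left[\, I_n-A \enspace -\!B\,\right]=n$ takes over the role played by non-degeneracy at $s=0$ in the continuous-time argument, the difference being that it now concerns $\lambda=1$, the point at which the $(z-1)$-factors in the lower-right blocks of $z\A^*-\A$ become active. Throughout one uses that for an EDE system $\cV=\K^{n+m}$ (Proposition~\ref{prop:sysspace}), so that ``$\succeq_{\cV}$'' and ``$=_{\cV}$'' are unprojected. I would treat the two implications separately.

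\emph{Direction (a)~$\Rightarrow$~(b).} Since the right-hand side of \eqref{eq:clure} is positive semi-definite, $X$ solves the KYP inequality \eqref{eq:LMI}, and Theorem~\ref{th:kyp}\ref{it:kypa} yields $\Phi(\eio)\succeq0$ for all $\omega$ with $\det(\eio I_n-A)\neq0$. For the subspace I would set $W(z):=K(zI_n-A)^{-1}B+L$; combining Lemma~\ref{lem:zerokyp} with \eqref{eq:clure} gives $\Phi(z)=W(z)^\sim W(z)$, while Proposition~\ref{prop:lurefullrankrhs} (via \eqref{eq:lurerankcond}) shows that $W(z)$ has full row rank $q$. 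Then I construct $Y$ and $Z$ explicitly from $(X,K,L)$: the $x$- and $u$-rows $Y_x,Y_u$ are chosen so that $\left[\, I_n-A \enspace -\!B\,\right]\left[\begin{smallmatrix}Y_x\\Y_u\end{smallmatrix}\right]$ has full row rank $n$ (which gives (i), using the hypothesis on $\rk\left[\, I_n-A \enspace -\!B\,\right]$), and $Y_\mu$ is the associated costate, essentially $Y_\mu=XY_x$ up to a correction in the input directions dictated by the three Lur'e identities $A^*XA-X+Q=K^*K$, $A^*XB+S=K^*L$, $B^*XB+R=L^*L$. A direct computation then shows $Y_\mu^*\bigl((I_n-A)Y_x-BY_u\bigr)$ is Hermitian, which is exactly $(\A^*-\A)$-neutrality; maximality (ii) follows from a dimension count, since $\rk\left[\, I_n-A \enspace -\!B\,\right]=n$ forces $\rk(\A^*-\A)=2n$, so maximal neutral subspaces have dimension $n+m$, the column count of $Y$. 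Finally (iii) is obtained by letting $Z$ be a basis matrix of $\im(\A^*Y)+\im(\A Y)$ and checking, using $\rkr W(z)=q$, that this span has dimension $n+q$; then $\tilde E,\tilde A$ are read off from $\A^*Y=Z\tilde E$ and $\A Y=Z\tilde A$.

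\emph{Direction (b)~$\Rightarrow$~(a).} From $(z\A^*-\A)Y=Z(z\tilde E-\tilde A)$, comparing the coefficients of $z^1$ and $z^0$ gives $\A^*Y=Z\tilde E$ and $\A Y=Z\tilde A$; I write out the three block rows of each (with $E=I_n$). Using (i) one first checks that $\left[\begin{smallmatrix}Y_x\\Y_u\end{smallmatrix}\right]$ is invertible: if it had a kernel vector $v$, then $Yv$ would be a nonzero vector of $\mathcal Y$ supported on the $\mu$-block, and $(\A^*-\A)$-neutrality would force $Y_\mu v=0$, contradicting full column rank of $Y$. Hence $\mathcal Y$ is the graph of a linear map $(x,u)\mapsto Xx+\Xi u$, the neutrality (ii) together with the costate identities from the second and third block rows forces $X=X^*$, and the remaining identities, after eliminating $Z$, yield $\rk\mathcal M(X)\le q$. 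Since $\Phi(\eio)\succeq0$ on the unit circle (part of (b)) always entails $\rk\mathcal M(X)\ge\rkr\Phi(z)=q$ (as $\Phi(z)$ is obtained from $\mathcal M(X)$ by the identity in the proof of Theorem~\ref{th:kyp}\ref{it:kypa}), we obtain $\rk\mathcal M(X)=q$, and Lemma~\ref{lem:minxkl} then produces the required $(X,K,L)\in\mat{n}{n}\times\mat{q}{n}\times\mat{q}{m}$.

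\emph{Main obstacle.} The delicate step is the bookkeeping forced by the rectangularity of $z\A^*-\A$, whose normal rank is $2n+q<2n+m$ when $q<m$: one must show that (iii) can be arranged with $Z$ having \emph{exactly} $n+q$ columns and, dually, that in the other direction the Lur'e right-hand side comes out with rank exactly $q$. This is where the discrete-time case is genuinely harder than the continuous-time one, as the $(z-1)$-factors make the eigenvalue $\lambda=1$ interact with the right-hand side of the Lur'e equation, and one must use $\rk\left[\, I_n-A \enspace -\!B\,\right]=n$ to keep that interaction non-degenerate. My strategy here is to first use \eqref{eq:lurerankcond} (equivalently, a staircase/minimal-realization form of $W(z)$) to split off the $(m-q)$-dimensional ``silent'' input directions, reducing to the regular case $q=m$ where the argument parallels the discrete-time algebraic Riccati situation, and then to reassemble.
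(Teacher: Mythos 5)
Your proposal follows essentially the paper's own route: for (a)$\Rightarrow$(b) an explicit construction of $Y$, $Z$ and $z\tilde E-\tilde A=\left[\begin{smallmatrix} zI_n-A & -B\\ (z-1)K & (z-1)L\end{smallmatrix}\right]$ from $(X,K,L)$, with neutrality checked via $Y^*(\A^*-\A)Y=0$ and maximality by the dimension count that $\rk\,[\,I_n-A\;\; -B\,]=n$ forces $\rk(\A^*-\A)=2n$; for (b)$\Rightarrow$(a) a normalization of $Y$, extraction of a Hermitian $X$, the bound $\rk\mathcal M(X)\le q$ from (iii), equality via the factorization of $\Phi$ through $\mathcal M(X)$ (Lemma~\ref{lem:zerokyp}), and then Lemma~\ref{lem:minxkl}. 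Your observation that (i), neutrality and full column rank of $Y$ directly make $\left[\begin{smallmatrix}Y_x\\ Y_u\end{smallmatrix}\right]$ invertible is correct and a touch slicker than the paper's column-operation argument in transformed coordinates.

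Two caveats. First, in (b)$\Rightarrow$(a) the step ``$\mathcal Y$ is the graph of $(x,u)\mapsto Xx+\Xi u$ and neutrality forces $X=X^*$'' is not right as written: after normalizing $\left[\begin{smallmatrix}Y_x\\ Y_u\end{smallmatrix}\right]=I_{n+m}$, neutrality says that $Y_\mu^*\,[\,I_n-A\;\; -B\,]$ is Hermitian, not that the $x$-block of $Y_\mu$ is; using $\rk\,[\,I_n-A\;\;-B\,]=n$ this yields $Y_\mu=\bigl[\,X(A-I_n)\;\; XB\,\bigr]$ for a Hermitian $X$, and it is this $X$ (not the graph-map block, which for the paper's own solution equals the generally non-Hermitian $X(A-I_n)$) that must be fed into $\rk\mathcal M(X)\le q$. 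This is precisely the role of the paper's coordinate change built from a right inverse and a kernel basis matrix of $[\,I_n-A\;\;-B\,]$; the fix is local, but the assertion as stated would fail. Second, the ``main obstacle'' detour — splitting off the $m-q$ silent input directions and reducing to $q=m$ — is unnecessary and would itself require justification (restricting inputs changes the behavior and the Lur'e data): the paper handles the rectangular pencil directly by normalizing $Z_\mu$, evaluating $(z\A^*-\A)Y=Z(z\tilde E-\tilde A)$ at $z=1$ and at $z=\infty$, and eliminating the auxiliary blocks, which gives $\rk\mathcal M(X)\le q$ without any such reduction; equality then follows from the Popov factorization and Lemma~\ref{lem:minxkl} as you indicate.
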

 \begin{proof}
  Denote by $C\in \mat{n+m}{n}$ and $C_c\in \mat{n+m}{m}$ the right inverse and a basis matrix of the kernel of
  \begin{equation*}
    \begin{bmatrix}
      I_n -A & -B
    \end{bmatrix},
  \end{equation*}
 respectively. Further let 
  \begin{equation*}
    \begin{bmatrix}
     C_1^- \\ C_2^-
    \end{bmatrix}
    :=
    \begin{bmatrix}
     C_{11}^-	&C_{12}^-\\
     C_{21}^-	&C_{22}^-
    \end{bmatrix}
    :=
   \begin{bmatrix}
    C	& C_c
   \end{bmatrix}^{-1},
  \end{equation*}
  where $C_1^-=\left[\,I_n-A \quad -B \,\right]\in \mat{n}{n+m}$, $C_2^-\in \mat{m}{n+m}$, $C_{11}^-=I_n-A\in \mat{n}{n}$, $C_{12}^-=-B\in \mat{n}{m}$, $C_{21}^-\in \mat{m}{n}$, and $C_{22}^-\in \mat{m}{m}$. 
  
  First assume that there exists a solution $(X,\,K,\,L)$ of \eqref{eq:clure}. Then we have $\Phi(\eio)\succeq 0$ for all $\omega\in[0,2\pi)$ such that $\det(\eio I_n-A)\neq 0$ . Set
  \begin{equation}\label{eq:deflssc1}
   Y=
   \begin{bmatrix}
      Y_\mu\\
      Y_x\\
      Y_u
     \end{bmatrix}
     =
   \begin{bmatrix}
    X(A-I_n)	& XB \\
    I_n		& 0  \\
    0		& I_m
   \end{bmatrix}, \qquad
    Z=\begin{bmatrix}
      Z_\mu\\
      Z_x\\
      Z_u
     \end{bmatrix}
     =
     \begin{bmatrix}
      I_n			& 0 \\
      (I_n-A^*)X		& K^*  \\
      -B^*X			& L^*
    \end{bmatrix},
  \end{equation}
  and 
  \begin{equation}\label{eq:deflssc2}
   z\tilde E - \tilde A =
   \begin{bmatrix}
    zI_n-A	& -B		\\
    (z-1)K	&(z-1)L
   \end{bmatrix}.
  \end{equation}
  Property \ref{it:clurepalpencdefssbi} follows, since
  \begin{equation*}
    \rk 
    \begin{bmatrix}
     I_n -A 	& -B
    \end{bmatrix}
    \begin{bmatrix}
     Y_x\\
     Y_u
    \end{bmatrix}
    =
    \rk
    \begin{bmatrix}
     I_n -A 	& -B
    \end{bmatrix}=n
  \end{equation*}
  by assumption. For property \ref{it:clurepalpencdefssbii} we first note that for
    \begin{equation}\label{eq:clurepalpencV}
  V:=
    \begin{bmatrix}
    I_n		& 0	&0 \\
    0		& C_{11}^-	&C_{12}^-\\
    0		& C_{21}^-	&C_{22}^-
   \end{bmatrix}\in\mat{2n+m}{2n+m}
  \end{equation}
  we have
  \begin{equation}\label{eq:clurepalpencE}
    V^{-*}(\A^*-\A)V^{-1}
    =
    \begin{bmatrix}
      0 & I_n & 0 \\
      I_n & 0 & 0\\
      0 & 0 & 0
    \end{bmatrix}
    =:\E.
  \end{equation}
  Then $\im Y$ is maximally $(\A^*-\A)$-neutral if and only if 
  $\im \hat Y$  is maximally $\E$-neutral, where
   \begin{equation*}\label{eq:Yhat}
    \hat Y := V Y
   \begin{bmatrix}
    C	& C_c
  \end{bmatrix}
   =
   \begin{bmatrix}
    -X		& 0 \\
    I_n		& 0  \\
    0		& I_m
   \end{bmatrix}.   
  \end{equation*}
  On the one hand, $\im \hat Y$ is $\E$-neutral, since
  \begin{equation*}
    \hat Y ^*\E \hat Y =
    \begin{bmatrix}
      -X+X& 0\\
      0	 & 0
    \end{bmatrix}= 0.
  \end{equation*}
  On the other hand, we have that $n+m = \rk \hat Y$ and  the rank of every $\E$-neutral space is bounded  from above by $n+m$. Therefore, $\im \hat Y$ is  maximally $\E$-neutral which shows \ref{it:clurepalpencdefssbii}. Finally, we have \ref{it:clurepalpencdefssbiii} by
  \begin{align*}
    &(z\A^*-\A)Y\\
    =&
      \begin{bmatrix}
      zI_n 					- A			& -B			\\
      z \left(A^*X(A-I_n)+Q\right) 		-	 X(A-I_n)-Q	& z(A^*XB + S) - XB-S	\\
      z\left(B^*X(A-I_n)+S^*\right)		- S^*			& z(B^*XB + R) - R
      \end{bmatrix}     
      \\
     = & 
      \begin{bmatrix}
        zI_n 					- A			& -B						\\
        z\left((I_n-A^*)X+K^*K\right)		- (I_n-A^*)XA -K^*K	& zK^*L - (I_n-A^*)XB -K^*L 	\\
        z\left(-B^*X+L^*K\right)		- B^*XA -L^*K		& zL^*L + B^*XB - L^*L 
      \end{bmatrix}\\
      =&
    Z(z\tilde E - \tilde A).
  \end{align*}  
  Now assume that we are in the situation of \ref{it:dluredefssb}. Then by \ref{it:clurepalpencdefssbii}, $\im \hat Y$ is maximally $\E$-neutral for 
  \begin{equation*}
    \hat Y := 
    \begin{bmatrix}
      \hat Y_\mu\\
      \hat Y_x\\
      \hat Y_u
     \end{bmatrix}
    =V Y
   \begin{bmatrix}
    C	& C_c
   \end{bmatrix}
  \end{equation*}
  and $V$ and $\E$ as in \eqref{eq:clurepalpencV} and \eqref{eq:clurepalpencE}. By property \ref{it:clurepalpencdefssbi} we obtain
  \begin{equation*}
    \rk\hat Y_x = \rk 
    \begin{bmatrix}
      I_n	& 0
    \end{bmatrix}
    \begin{bmatrix}
      \hat Y_x\\
      \hat Y_u
    \end{bmatrix}
    = \rk
    \begin{bmatrix}
     I_n -A 	& -B
    \end{bmatrix}
    \begin{bmatrix}
     Y_x\\
     Y_u
    \end{bmatrix}
    \begin{bmatrix}
    C	& C_c
   \end{bmatrix} =n.
  \end{equation*}
 Thus, there exists an invertible $T_1\in\mat{n+m}{n+m}$ such that
  \begin{equation*}
   \hat Y T_1=
     \begin{bmatrix}
      \hat Y_{\mu_1}	&  \hat Y_{\mu_2}\\
       I_n		& 0		\\
      \hat Y_{u_1}	& \hat Y_{u_2}
     \end{bmatrix}.
  \end{equation*}
  Thus, $\hat YT_1$ is still maximally $\E$-neutral and we obtain
  \begin{equation*}
  0=(\hat YT_1)^*\E\hat YT_1=
   \begin{bmatrix}
    I_n	& - \hat Y_{\mu_1}^*	& 0 \\
    0	& - \hat Y_{\mu_2}^*	& 0
   \end{bmatrix}
    \hat Y T_1
   =
      \begin{bmatrix}
    \hat Y_{\mu_1} - \hat Y_{\mu_1}^*	&  \hat Y_{\mu_2} \\
     -\hat  Y_{\mu_2}^*			& 0
   \end{bmatrix};   
  \end{equation*}
   in particular $X:=-\hat Y_{\mu_1}$ is Hermitian.
  Hence, maximal $\E$-neutrality implies full rank of $\hat Y_{u_2}$. Applying another column transformation to $\hat Y$ via an invertible $T_2\in\mat{n+m}{n+m}$ yields
  \begin{equation*}
   \hat Y T_1 T_2=
     \begin{bmatrix}
       -X		& 0		\\
       I_n		& 0		\\
      0		& I_m
     \end{bmatrix}.
  \end{equation*}
  Doing the backtransformation for $Y$ we obtain
  \begin{equation*}
   Y=  V^{-1}\hat Y T_1 T_2 
   \begin{bmatrix}
    C	& C_c
   \end{bmatrix}^{-1}
\hat T,
  \end{equation*}
  where
  \begin{equation*}
   \hat T:=
   \begin{bmatrix}
    C	& C_c
   \end{bmatrix}   
   (T_1 T_2)^{-1}
   \begin{bmatrix}
    C	& C_c
   \end{bmatrix}^{-1}.
  \end{equation*}
 This implies
 \begin{equation*}
  Y\hat T^{-1} = 
  \begin{bmatrix}
    X(A-I_n)	& XB \\
    I_n		& 0  \\
    0		& I_m
   \end{bmatrix}.
 \end{equation*}
We partition $z\hat E - \hat A := (z\tilde E - \tilde A)\hat T^{-1}$ into 
 \begin{equation*}
  z\hat E - \hat A =
  \begin{bmatrix}
   z \hat E_{1} - \hat A_{1} 	& z \hat E_{2} - \hat A_{2} 
  \end{bmatrix},
 \end{equation*}
 where $z \hat E_{1} - \hat A_{1}\in\matz{n+q}{n}$ and $z \hat E_{2} - \hat A_{2}\in\mat{n+q}{m}$.
 Then property \ref{it:clurepalpencdefssbiii} implies
 \begin{align*}
 &
  \begin{bmatrix}
   zI_n -A				& -B \\
   z\left(A^*X(A-I_n) +  Q\right)   - X(A-I_n) -Q	& z(A^*XB +S) - XB - S \\
   z\left(B^*X(A-I_n) + S^*\right) - S^*					& z(B^*XB +R) - R			
  \end{bmatrix}\\
  =&
    \begin{bmatrix}
      Z_\mu\\
      Z_x\\
      Z_u
     \end{bmatrix}
    \begin{bmatrix}
      z \hat E_{1} - \hat A_{1} 	& z \hat E_{2} - \hat A_{2}
    \end{bmatrix},
 \end{align*}
 yielding $I_n = Z_\mu \hat E_1$ and thus $\rk Z_\mu=n$. Therefore, there exists invertible $T_3\in\mat{n+m}{n+m}$ such that $Z_\mu T_3 =\, [ \,I_n\quad 0\,]$. Then for 
 \begin{equation*}
  ZT_3=:
  \begin{bmatrix}
   I_n 		&0\\
   Z_{x_1}	&Z_{x_2}\\
   Z_{u_1}	&Z_{u_2}
  \end{bmatrix},\qquad
  T_3^{-1}
  ( z\hat E - \hat A )=:
   \begin{bmatrix}
   z \hat E_{11} - \hat A_{11} 	& z \hat E_{12} - \hat A_{12}\\ 
   z \hat E_{21} - \hat A_{21} 	& z \hat E_{22} - \hat A_{22}
   \end{bmatrix}
 \end{equation*}
partitioned accordingly,
 we obtain
  \begin{align}\label{eq:cluredefss1}
  \begin{split}
 &
  \begin{bmatrix}
   zI_n -A				& -B \\
   z\left(A^*X(A-I_n) +  Q\right)   - X(A-I_n) -Q	& z(A^*XB +S) - XB - S 			\\
   z\left(B^*X(A-I_n) + S^*\right) - S^*					& z(B^*XB +R) - R			
  \end{bmatrix}\\
  =&
   \begin{bmatrix}
   I_n 		&0\\
   Z_{x_1}	&Z_{x_2}\\
   Z_{u_1}	&Z_{u_2}
  \end{bmatrix}
    \begin{bmatrix}
   z \hat E_{11} - \hat A_{11} 	& z \hat E_{12} - \hat A_{12}\\ 
   z \hat E_{21} - \hat A_{21} 	& z \hat E_{22} - \hat A_{22}
   \end{bmatrix}.
   \end{split}
 \end{align}
 Thus, the first equation gives $\hat E_{11}=I_n$, $\hat A_{11}=A$, $\hat E_{12}=0$, and $\hat A_{12}=B$.
 For $z=1$ we obtain from \eqref{eq:cluredefss1} that
 \begin{equation*}
   \begin{bmatrix}
   I_n - A		& -B \\
   (A^*-I_n)X(A-I_n) 	& (A^*-I_n)XB \\
   B^*X(A-I_n) 		& B^*XB 			
  \end{bmatrix}\\
  =
   \begin{bmatrix}
   I_n 		&0\\
   Z_{x_1}	&Z_{x_2}\\
   Z_{u_1}	&Z_{u_2}
  \end{bmatrix}
    \begin{bmatrix}
   I_n - A 		&  -B\\ 
    \hat E_{21} -\hat A_{21}	&  \hat E_{22} - \hat A_{22}
   \end{bmatrix}.
 \end{equation*}
 Multiplying from the right with $C$ results in
  \begin{equation*}
   \begin{bmatrix}
   (I_n-A^*)X 	& 0 \\
  0		&-B^*X
  \end{bmatrix}
  \begin{bmatrix}
  I_n-A 	& -B\\
  I_n-A		& -B
  \end{bmatrix}
  C
  = 
  \begin{bmatrix}
   Z_{x_1}\\
   Z_{u_1}
  \end{bmatrix}
  +
  \begin{bmatrix}
   Z_{x_2}\\
   Z_{u_2}
  \end{bmatrix}
  \begin{bmatrix}
    \hat E_{21} -\hat A_{21}	&  \hat E_{22} - \hat A_{22}
  \end{bmatrix}
  C
 \end{equation*}
 and thus 
 \begin{equation*}
  \begin{bmatrix}
   Z_{x_1}\\
   Z_{u_1}
  \end{bmatrix}=
  \begin{bmatrix}
   (I_n-A^*)X\\
  -B^*X
  \end{bmatrix} -
  \begin{bmatrix}
   Z_{x_2}\\
   Z_{u_2}
  \end{bmatrix}
  \begin{bmatrix}
    \hat E_{21} -\hat A_{21}	&  \hat E_{22} - \hat A_{22}
  \end{bmatrix}
  C.
 \end{equation*}
  Inserting this relation into \eqref{eq:cluredefss1} for $z=\infty$ gives
  \begin{align*}&
   \begin{bmatrix}
    A^*X(A-I_n) + Q		&  A^*XB + S	\\
    B^*X(A-I_n)	    + S^*	&  B^*XB + R
  \end{bmatrix}\\
  =&
    \begin{bmatrix}
   Z_{x_2}\\
   Z_{u_2}
  \end{bmatrix}
  \begin{bmatrix}
   \hat E_{21}	&  \hat E_{22}
  \end{bmatrix}
  +\left(
  \begin{bmatrix}
   (I_n-A^*)X\\
  -B^*X
  \end{bmatrix} -
  \begin{bmatrix}
   Z_{x_2}\\
   Z_{u_2}
  \end{bmatrix}
  \begin{bmatrix}
    \hat E_{21} -\hat A_{21}	&  \hat E_{22} - \hat A_{22}
  \end{bmatrix}
  C\right)
  \begin{bmatrix}
    I_n & 0
  \end{bmatrix},
  \end{align*}
  which leads to
  \begin{align*}
  \mathcal M (X) &=
   \begin{bmatrix}
    A^*XA-X + Q		&  A^*XB + S	\\
    B^*XA	    + S^*	&  B^*XB + R
  \end{bmatrix}\\
  &=
    \begin{bmatrix}
   Z_{x_2}\\
   Z_{u_2}
  \end{bmatrix}\left(
  \begin{bmatrix}
   \hat E_{21}	&  \hat E_{22}
  \end{bmatrix}
  -
  \begin{bmatrix}
    \hat E_{21} -\hat A_{21}	&  \hat E_{22} - \hat A_{22}
  \end{bmatrix}
  \begin{bmatrix}
   C	& 0
  \end{bmatrix}
  \right).
  \end{align*}
  Thus we have
  \begin{equation}\label{eq:cluredefss2}
  \rk \mathcal M (X)
\le q.
  \end{equation}
  Further, by Lemma \ref{lem:zerokyp}, for $\omega\in[0,2\pi)$ we can rewrite $\Phi(\eio )$  as
  \begin{equation*}
    \Phi(\eio)=\begin{bmatrix}
      (\eio I_n-A)^{-1}B	\\
      I_m		
  \end{bmatrix} ^*
  \mathcal{M}(X)
  \begin{bmatrix}
      (\eio I_n-A)^{-1}B	\\
      I_m		
  \end{bmatrix}
  \end{equation*}
  and thus in \eqref{eq:cluredefss2} we even have equality. Therefore, we can apply Lemma~\ref{lem:minxkl} and hence, we have shown that \ref{it:clurepalpencdefssa} holds.

 \end{proof}

 In the case of a BVD pencil we can prove a similar statement.
 \begin{theorem}
   \label{prop:cluredefss}
   Let $\wsystem[I_n]$ be given and consider the associated BVD pencil $z\E-\A$ as in~\eqref{eq:bvdpenc}. Further, let $q=\rkr \Phi(z)$.
 Then the following are equivalent:
 \begin{enumerate}[label=(\alph*)]
  \item \label{it:clurebvdpencdefssa} There exists a solution $(X,\,K,\,L)\in\mat{n}{n}\times\mat{q}{n}\times\mat{q}{m}$ of \eqref{eq:clure}.
  \item \label{it:clurebvdpencdefssb} It holds that $\Phi(\eio)\succeq 0$ for all $\omega\in[0,2\pi)$ such that $\det(\eio I_n-A)\neq 0$. Furthermore, there exist matrices $Y_\mu,\,Y_x\in\mat{n}{n+m},\,Y_u\in\mat{m}{n+m}$ and $Z_\mu,\,Z_x\in\mat{n}{n+q},\,Z_u\in\mat{m}{n+q}$ such that for
  \begin{equation*}
   Y=\begin{bmatrix}
      Y_\mu\\
      Y_x\\
      Y_u
     \end{bmatrix},\qquad
   Z=\begin{bmatrix}
      Z_\mu\\
      Z_x\\
      Z_u
     \end{bmatrix}
  \end{equation*}
  the following hold:
  
  \begin{enumerate}[label=(\roman*)]
   \item \label{it:clurebvdpencdefssbi}The matrix 
   \begin{equation*}
   Y_x=
    \begin{bmatrix}
     I_n  	& 0
    \end{bmatrix}
    \begin{bmatrix}
     Y_x\\
     Y_u
    \end{bmatrix}
   \end{equation*}
 has full row rank $n$.
   \item \label{it:clurebvdpencdefssbii}  The space $\mathcal Y= \im Y$ is maximally $\E^e$-neutral, where 
   \begin{equation*}
   \E^e:=
    \begin{bmatrix}
     0 	& -I_n	& 0\\
     I_n& 0	& 0\\
     0	& 0	& 0
    \end{bmatrix}.
   \end{equation*}
   \item \label{it:clurebvdpencdefssbiii} There exist $\tilde E,\,\tilde A \in\mat{n+q}{n+m}$ such that $(z\E-\A)Y=Z(z\tilde E - \tilde A)$.
  \end{enumerate}

 \end{enumerate}

  \end{theorem}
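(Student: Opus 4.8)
The plan is to follow the proof of Theorem~\ref{th:cluredefss} essentially verbatim, with the palindromic pencil replaced by the BVD pencil~\eqref{eq:bvdpenc} and with the role played there by the matrix $\E$ now taken over by the fixed skew-Hermitian matrix $\E^e$. Since the present statement assumes no rank condition on $\left[\,I_n-A\enspace-B\,\right]$, the congruence $V$ and the auxiliary matrices $C$, $C_c$ from the proof of Theorem~\ref{th:cluredefss} disappear, which in fact shortens the argument; one only uses that $\rk\E^e=2n$, so an $\E^e$-neutral subspace of $\K^{2n+m}$ has dimension at most $n+m$, with equality precisely for the maximally $\E^e$-neutral ones.

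For~\ref{it:clurebvdpencdefssa}$\Rightarrow$\ref{it:clurebvdpencdefssb}, let $(X,K,L)$ solve~\eqref{eq:clure}. Lemma~\ref{lem:zerokyp} (with $P=X$, $E=I_n$) gives $\Phi(z)=G(z)^\sim\mathcal M(X)\,G(z)$ with $G(z):=\begin{bsmallmatrix}(zI_n-A)^{-1}B\\ I_m\end{bsmallmatrix}$, so $\mathcal M(X)=\begin{bsmallmatrix}K^*\\ L^*\end{bsmallmatrix}\begin{bsmallmatrix}K&L\end{bsmallmatrix}\succeq0$ yields $\Phi(\eio)\succeq0$ whenever $\det(\eio I_n-A)\neq0$. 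I would then take
\begin{equation*}
Y=\begin{bmatrix}-X&0\\ I_n&0\\ 0&I_m\end{bmatrix},\qquad
Z=\begin{bmatrix}I_n&0\\ -A^*X&-K^*\\ -B^*X&-L^*\end{bmatrix},\qquad
z\tilde E-\tilde A=\begin{bmatrix}zI_n-A&-B\\ K&L\end{bmatrix}.
\end{equation*}
Condition~\ref{it:clurebvdpencdefssbi} is immediate since $Y_x=\left[\,I_n\enspace0\,\right]$; condition~\ref{it:clurebvdpencdefssbii} follows from the direct checks $Y^*\E^e Y=0$ (using $X=X^*$) and $\rk Y=n+m$; and condition~\ref{it:clurebvdpencdefssbiii} is a short computation verifying $(z\E-\A)Y=Z(z\tilde E-\tilde A)$ in which the three Lur'e identities $A^*XA-X+Q=K^*K$, $A^*XB+S=K^*L$, $B^*XB+R=L^*L$ each enter once.

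For the converse~\ref{it:clurebvdpencdefssb}$\Rightarrow$\ref{it:clurebvdpencdefssa}, I would first use~\ref{it:clurebvdpencdefssbi} to pick an invertible $T_1$ with $YT_1=\begin{bsmallmatrix}Y_{\mu_1}&Y_{\mu_2}\\ I_n&0\\ Y_{u_1}&Y_{u_2}\end{bsmallmatrix}$; evaluating $(YT_1)^*\E^e(YT_1)=0$ forces $Y_{\mu_1}=Y_{\mu_1}^*$ and $Y_{\mu_2}=0$, and maximal $\E^e$-neutrality (that is, $\rk(YT_1)=n+m$) forces $Y_{u_2}$ to be invertible, so a further column transformation $T_2$ brings $Y$ to $YT_1T_2=\begin{bsmallmatrix}-X&0\\ I_n&0\\ 0&I_m\end{bsmallmatrix}$ with $X:=-Y_{\mu_1}=X^*$. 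Inserting $YT_1T_2$ into~\ref{it:clurebvdpencdefssbiii} (and absorbing $T_1T_2$ into $z\tilde E-\tilde A$), the first block row forces $Z_\mu$ to have a right inverse, hence $\rk Z_\mu=n$, so a row transformation $T_3$ puts $Z$ into the form $\begin{bsmallmatrix}I_n&0\\ Z_{x_1}&Z_{x_2}\\ Z_{u_1}&Z_{u_2}\end{bsmallmatrix}$, and matching the first block row then pins the top row of the reduced pencil to $\begin{bsmallmatrix}zI_n-A&-B\end{bsmallmatrix}$. Comparing the coefficients of $z^1$ and $z^0$ in the second and third block rows lets me solve for $Z_{x_1}$, $Z_{u_1}$ and eliminate them, collapsing everything into
\begin{equation*}
\mathcal M(X)=\begin{bmatrix}Z_{x_2}\\ Z_{u_2}\end{bmatrix}\begin{bmatrix}\hat A_{21}-\hat E_{21}A & \hat A_{22}-\hat E_{21}B\end{bmatrix},
\end{equation*}
where $\hat E_{21},\hat A_{21},\hat A_{22}$ are the remaining blocks of the reduced pencil, so $\rk\mathcal M(X)\le q$. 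Since Lemma~\ref{lem:zerokyp} again gives $\Phi(\eio)=G(\eio)^*\mathcal M(X)\,G(\eio)$ and $\rkr\Phi(z)=q$, this forces $\rk\mathcal M(X)=q$, and because $\Phi(\eio)\succeq0$ on the unit circle, Lemma~\ref{lem:minxkl} produces the required solution $(X,K,L)\in\mat{n}{n}\times\mat{q}{n}\times\mat{q}{m}$ of~\eqref{eq:clure}.

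The hard part will be the bookkeeping in the converse direction: keeping the transformations $T_1$, $T_2$, $T_3$ consistent throughout the identity in~\ref{it:clurebvdpencdefssbiii}, and identifying precisely which combinations of the $z=0$ and $z=\infty$ coefficient equations in block rows two and three reduce to the single rank-$q$ factorisation of $\mathcal M(X)$ above — exactly as in the proof of Theorem~\ref{th:cluredefss}, but without the extra layer of $C$, $C_c$. Pinning down the correct ansatz (the sign $-X$ in place of $X$, and the absence of a factor $(z-1)$ in the lower block of $z\tilde E-\tilde A$, unlike the palindromic case) is the other point requiring care; once it is fixed, verifying \ref{it:clurebvdpencdefssbi}--\ref{it:clurebvdpencdefssbiii} is routine.
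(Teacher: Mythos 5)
Your proposal is correct and takes essentially the same route as the paper: the paper defers the details to the cited thesis, but your argument is precisely the BVD analogue of the proof of Theorem~\ref{th:cluredefss}, and your ansatz $Y=\begin{bsmallmatrix}-X&0\\ I_n&0\\ 0&I_m\end{bsmallmatrix}$ agrees (in the EDE case) with the construction recorded in Remark~\ref{rem:deflssbvd}. The simplification you exploit — that $\E^e$ is a fixed rank-$2n$ form so no rank assumption on $\left[\,I_n-A\enspace-B\,\right]$ and no auxiliary matrices $C,\,C_c$ are needed — together with the coefficient comparison yielding $\mathcal M(X)=\begin{bsmallmatrix}Z_{x_2}\\ Z_{u_2}\end{bsmallmatrix}\begin{bsmallmatrix}\hat A_{21}-\hat E_{21}A & \hat A_{22}-\hat E_{21}B\end{bsmallmatrix}$ and the final appeal to Lemmas~\ref{lem:zerokyp} and~\ref{lem:minxkl}, matches the paper's template.
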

\begin{proof}
  See \cite[Theorem 5.5]{bankmann_linear-quadratic_2015}.
\end{proof}

\subsection{Implicit Difference Equations}
In this section we generalize the results from the previous section to implicit difference equations. As for the KYP inequality we need relations between the Lur'e equation \eqref{eq:dlure} corresponding to the original system and the associated equation corresponding to the feedback equivalent system $\wsystemF$ as in \eqref{eq:fbsys}. These findings are related to the results in \cite{reis_kalmanyakubovichpopov_2015} in the continuous-time case.
\begin{lemma}\label{lem:dlure2flure}
 Let $\wsystem$ be given and $q=\rkr\Phi(z)$.
 Then  $(X\,,K\,,L)\in\mat{n}{n}\times\mat{q}{n}\times\mat{q}{m}$ is a solution of \eqref{eq:dlure} if and only if
 \begin{equation}\label{eq:flureequivalence}
  (X_F,\,K_F,\,L_F) := (W^{-*}XW^{-1},\,KT+LFT,\,L)
 \end{equation}
 is a solution of \eqref{eq:dlure} associated to the feedback system
 \begin{equation*}
   \wsystemF   
 \end{equation*}
 as in \eqref{eq:fbsys}, \ie
 \begin{equation*}
   \mathcal{M}_F(X_F)=
  \begin{bmatrix}
  A_F^*X_FA_F-E_F^*X_FE_F + Q_F		&  A_F^*X_FB_F + S_F	\\
  B^*_FX_FA_F	    + S_F^*	&  B_F^*X_FB_F + R_F
 \end{bmatrix} =_{\cV F}
 \begin{bmatrix}
  K_F^*\\
  L_F^*
 \end{bmatrix}
 \begin{bmatrix}
  K_F & L_F
 \end{bmatrix}.
 \end{equation*}
\end{lemma}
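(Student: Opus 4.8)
The plan is to establish the equivalence by a direct substitution, using the feedback-equivalence relations \eqref{eq:fbsys} together with the transformation behaviour of the system space from Lemma~\ref{lem:feedbacksysspace}. Recall that by Lemma~\ref{lem:equivalence} the KYP matrix transforms congruently, namely $\mathcal{M}_F(X_F)=\mathcal{T}_F^*\,\mathcal{M}(X)\,\mathcal{T}_F$ when $X_F=W^{-*}XW^{-1}$. The idea is to combine this identity with the corresponding congruence for the right-hand side of the Lur'e equation and then project onto the system spaces, which are related by $\cV=\mathcal{T}_F\,\cV F$.

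\textbf{First} I would observe that the right-hand side transforms in a compatible way. Writing $K_F=KT+LFT$ and $L_F=L$, a short computation gives
\begin{equation*}
  \begin{bmatrix} K_F^* \\ L_F^* \end{bmatrix}\begin{bmatrix} K_F & L_F \end{bmatrix}
  = \mathcal{T}_F^* \begin{bmatrix} K^* \\ L^* \end{bmatrix}\begin{bmatrix} K & L \end{bmatrix} \mathcal{T}_F,
\end{equation*}
since $\begin{bmatrix} K_F & L_F\end{bmatrix} = \begin{bmatrix} KT+LFT & L\end{bmatrix} = \begin{bmatrix} K & L\end{bmatrix}\begin{bmatrix} T & 0 \\ FT & I_m\end{bmatrix} = \begin{bmatrix} K & L\end{bmatrix}\mathcal{T}_F$. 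Hence both sides of the Lur'e equation are simultaneously congruent via $\mathcal{T}_F$: we have $\mathcal{M}_F(X_F) = \mathcal{T}_F^*\mathcal{M}(X)\mathcal{T}_F$ and the same identity for the factorized right-hand sides.

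\textbf{Next}, the equality $\mathcal{M}(X) =_{\cV} \begin{bmatrix} K^* \\ L^*\end{bmatrix}\begin{bmatrix} K & L\end{bmatrix}$ means precisely $V^*\bigl(\mathcal{M}(X) - \begin{bmatrix} K^* \\ L^*\end{bmatrix}\begin{bmatrix} K & L\end{bmatrix}\bigr)V = 0$ for a basis matrix $V$ of $\cV$. Choosing $V = \mathcal{T}_F V_F$ with $V_F$ a basis matrix of $\cV F$ (legitimate by Lemma~\ref{lem:feedbacksysspace}, noting $\mathcal{T}_F$ is invertible), the congruence identities above yield
\begin{equation*}
  V^*\Bigl(\mathcal{M}(X) - \begin{bmatrix} K^* \\ L^*\end{bmatrix}\begin{bmatrix} K & L\end{bmatrix}\Bigr)V
  = V_F^* \mathcal{T}_F^*\Bigl(\mathcal{M}(X) - \begin{bmatrix} K^* \\ L^*\end{bmatrix}\begin{bmatrix} K & L\end{bmatrix}\Bigr)\mathcal{T}_F V_F
  = V_F^*\Bigl(\mathcal{M}_F(X_F) - \begin{bmatrix} K_F^* \\ L_F^*\end{bmatrix}\begin{bmatrix} K_F & L_F\end{bmatrix}\Bigr)V_F.
\end{equation*}
Thus the left-hand side vanishes if and only if the right-hand side does, which is exactly the asserted equivalence between $=_{\cV}$ and $=_{\cV F}$. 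Finally, one must check the dimensions: $\rkr\Phi_F(z)=\rkr\Phi(z)=q$ holds by Proposition~\ref{prop:popov}\ref{it:phifphia} since $\Theta_F(z)$ is invertible, so $(X_F,K_F,L_F)$ lives in the correct space $\mat{n}{n}\times\mat{q}{n}\times\mat{q}{m}$, and the map $(X,K,L)\mapsto(X_F,K_F,L_F)$ is invertible (with inverse $(X_F,K_F,L_F)\mapsto(W^*X_FW,\,(K_F-L_FF)T^{-1},\,L_F)$), so both directions of the ``if and only if'' follow.

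\textbf{The main obstacle}, such as it is, is purely bookkeeping: verifying the congruence of the right-hand side under $\mathcal{T}_F$ and confirming that the particular affine substitution $K_F=KT+LFT$, $L_F=L$ is exactly the one making the factored right-hand side transform the same way as $\mathcal{M}(X)$ does in Lemma~\ref{lem:equivalence}. Once the observation $\begin{bmatrix} K_F & L_F\end{bmatrix}=\begin{bmatrix} K & L\end{bmatrix}\mathcal{T}_F$ is in place, everything reduces to the congruence argument above; no genuinely new difficulty arises beyond what was already handled in the KYP setting of Section~\ref{chap:kyp}.
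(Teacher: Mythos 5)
Your proof is correct and follows essentially the same route as the paper: observe that $\begin{bmatrix} K_F & L_F\end{bmatrix}=\begin{bmatrix} K & L\end{bmatrix}\mathcal{T}_F$, use Proposition~\ref{prop:popov}\ref{it:phifphia} to preserve the rank $q$, and conclude via the congruence $\mathcal{M}_F(X_F)=\mathcal{T}_F^*\mathcal{M}(X)\mathcal{T}_F$ of Lemma~\ref{lem:equivalence} together with $\cV=\mathcal{T}_F\cV F$. You merely spell out the projection onto the system spaces more explicitly than the paper does.
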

\begin{proof} First note that 
for 
\begin{equation*}
\mathcal{T}_F =
    \begin{bmatrix}
      T	& 0 \\
      FT & I_m
    \end{bmatrix}
\end{equation*}
we have
\begin{equation*}\label{eq:lurerhseq}
\begin{bmatrix}
  K	& L
\end{bmatrix}
\mathcal{T}_F
=
\begin{bmatrix}
  KT + LFT	& L
\end{bmatrix}  .
\end{equation*}
In addition, by  Proposition~\ref{prop:popov}\ref{it:phifphia} we obtain that $q=\rkr\Phi(z)=\rkr\Phi_F(z)$. Thus,  Lemma~\ref{lem:equivalence} %
 immediately yields the assertion. 
\end{proof}
Moreover, we now characterize the connection between the Lur'e equation \eqref{eq:dlure} corresponding to the system $\wsystemF$ in feedback equivalence form  as in \eqref{eq:fbsys} and the Lur'e equation \eqref{eq:clure} corresponding to the associated EDE part as in \eqref{eq:EDEsys}.
\begin{lemma}\label{lem:dlure2clure}
 Let $\wsystem$ be given and consider the system $\wsystemF$ as in \eqref{eq:fbsys} in feedback equivalence form \eqref{eq:fef}. Further, consider $(X_F\,,K_F,\,L_F)$ as in \eqref{eq:flureequivalence} partitioned according to the block structure of the feedback equivalence form. 
 
 Then with $q=\rkr\Phi(z)$ we have that
 $(X\,,K\,,L)\in\mat{n}{n}\times\mat{q}{n}\times\mat{q}{m}$ is a solution of \eqref{eq:dlure} if and only if $(X_{11},\,K_1,\,L-K_2B_2)\in\mat{n_1}{n_1}\times\mat{q}{n_1}\times\mat{q}{m}$ is a solution of \eqref{eq:clure} for the EDE system 
 \begin{equation*}
  \wsystemS 
  \end{equation*}
  as in \eqref{eq:EDEsys}.
\end{lemma}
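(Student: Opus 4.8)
The plan is to chain together two reductions. The first one is already available: by Lemma~\ref{lem:dlure2flure}, the triple $(X,K,L)$ solves \eqref{eq:dlure} if and only if $(X_F,K_F,L_F)$ as in \eqref{eq:flureequivalence} solves the Lur'e equation for the feedback equivalent system $\wsystemF$, and by Proposition~\ref{prop:popov}\ref{it:phifphia}--\ref{it:phifphib} we have $q=\rkr\Phi(z)=\rkr\Phi_F(z)=\rkr\Phi_s(z)$, so that the minimal rank appearing on the right-hand side is the same for all three systems. It therefore remains to show that $(X_F,K_F,L_F)$ solves the Lur'e equation for $\wsystemF$ in feedback equivalence form \eqref{eq:fef} (projected onto the system space $\cV F$) if and only if $(X_{11},K_1,L-K_2B_2)$ solves \eqref{eq:clure} for the EDE part $\wsystemS$ from \eqref{eq:EDEsys}.

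For this second step I would work with the basis matrix of $\cV F$ obtained from Proposition~\ref{prop:sysspace}\ref{it:sysspacea} by deleting the zero columns of the matrix $V_F$ in \eqref{eq:reprsysspace}, namely
\[
  \hat V_F:=
  \begin{bmatrix}
    I_{n_1}	& 0\\
    0		& -B_2\\
    0		& 0\\
    0		& I_m
  \end{bmatrix}\in\mat{n+m}{n_1+m},
\]
which has full column rank and satisfies $\im\hat V_F=\cV F$. With this choice the Lur'e equation $\mathcal M_F(X_F)=_{\cV F}\begin{bmatrix}K_F^*\\L_F^*\end{bmatrix}\begin{bmatrix}K_F&L_F\end{bmatrix}$ is equivalent to the genuine matrix identity $\hat V_F^*\mathcal M_F(X_F)\hat V_F=\bigl(\begin{bmatrix}K_F&L_F\end{bmatrix}\hat V_F\bigr)^*\bigl(\begin{bmatrix}K_F&L_F\end{bmatrix}\hat V_F\bigr)$. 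Two block computations then close the argument. On the left-hand side, the block multiplication already performed in the proof of Lemma~\ref{lem:kypreduction} shows that for every $(x_1,u)\in\K^{n_1+m}$ the quadratic form of $\hat V_F^*\mathcal M_F(X_F)\hat V_F$ agrees with that of $\mathcal M_s(X_{11})$; since both matrices are Hermitian, this upgrades to $\hat V_F^*\mathcal M_F(X_F)\hat V_F=\mathcal M_s(X_{11})$. On the right-hand side, partitioning $K_F=\begin{bmatrix}K_1&K_2&K_3\end{bmatrix}$ according to the block sizes $n_1,n_2,n_3$ of \eqref{eq:fef} and using $L_F=L$ gives $\begin{bmatrix}K_F&L_F\end{bmatrix}\hat V_F=\begin{bmatrix}K_1&L-K_2B_2\end{bmatrix}$. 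Substituting both, the projected Lur'e equation for $\wsystemF$ becomes
\[
  \mathcal M_s(X_{11})
  =
  \begin{bmatrix}K_1^*\\(L-K_2B_2)^*\end{bmatrix}
  \begin{bmatrix}K_1&L-K_2B_2\end{bmatrix},
\]
which is precisely \eqref{eq:clure} for $\wsystemS$ with solution triple $(X_{11},K_1,L-K_2B_2)$; the membership $(X_{11},K_1,L-K_2B_2)\in\mat{n_1}{n_1}\times\mat{q}{n_1}\times\mat{q}{m}$ and the Hermiticity of $X_{11}$ (inherited from $X_F=X_F^*$) are then immediate. Reading this chain of equivalences in both directions yields the assertion.

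I expect the only genuine obstacle to be bookkeeping: making certain that the partitions of $X_F$ and $K_F$ and the column blocks of $V_F$ are aligned so that $X_{11}$, $K_1$, $K_2$, $B_2$ indeed play the roles claimed in the statement, and quoting the identity $\hat V_F^*\mathcal M_F(X_F)\hat V_F=\mathcal M_s(X_{11})$ (with the modified weights $Q_s,S_s,R_s$ of \eqref{eq:EDEsys}) from the proof of Lemma~\ref{lem:kypreduction} rather than re-deriving that lengthy computation. Beyond Lemmas~\ref{lem:dlure2flure} and~\ref{lem:kypreduction} and Proposition~\ref{prop:popov}, no new structural input is needed.
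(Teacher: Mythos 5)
Your argument is correct, and it is essentially the proof the paper intends: the paper itself only cites \cite[Lemma 5.7]{bankmann_linear-quadratic_2015} rather than arguing inline, and your chain — Lemma~\ref{lem:dlure2flure} to pass to $(X_F,K_F,L_F)$ with $q=\rkr\Phi(z)=\rkr\Phi_F(z)=\rkr\Phi_s(z)$ from Proposition~\ref{prop:popov}, the basis matrix $\hat V_F$ of $\cV F$ from Proposition~\ref{prop:sysspace}\ref{it:sysspacea}, the identity $\hat V_F^*\mathcal M_F(X_F)\hat V_F=\mathcal M_s(X_{11})$ already contained in the computation of Lemma~\ref{lem:kypreduction}, and $\begin{bmatrix}K_F&L_F\end{bmatrix}\hat V_F=\begin{bmatrix}K_1&L-K_2B_2\end{bmatrix}$ — is exactly the reduction that machinery is built for. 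No gap; the only implicit point (also implicit in the paper's statement) is that in the backward direction $X=X^*$ is taken as part of the given data, which is how the lemma is later applied.
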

\begin{proof}
	See \cite[Lemma 5.7]{bankmann_linear-quadratic_2015}.
\end{proof}

For the rest of this chapter we assume that $\wsystem$ is I-controllable, \ie there exists a feedback $F\in\mat{m}{n}$ such that the system 
\begin{equation*}
 \wsystemF
\end{equation*}
as in \eqref{eq:fbsys}  is in feedback equivalence form such that $n_3=0$. This is justified by the fact that the subsystem described by $\system[E_{33}][I_{n_3}][0][m][n_3]$ obtained from the feedback equivalence form \eqref{eq:fef} has only the zero solution and thus does not contribute to the dynamics of the system. Indeed, in the proofs of Lemma~\ref{lem:kypreduction} and Lemma~\ref{lem:dlure2clure} the parts of $\wsystemF*$ corresponding to the last $n_3$ variables do not contribute to the analysis. 
The following proposition makes this precise, using the same projection ansatz as in {\cite[Theorem 5.9]{reis_kalmanyakubovichpopov_2015}}. 
\begin{proposition}
 Let $\wsystem$ be given and consider the system $\wsystemF$ as in \eqref{eq:fbsys} in feedback equivalence form \eqref{eq:fef}. Further, let $q=\rkr \Phi(z)$.
 Define the projector
 \begin{equation*}\label{eq:projector}
\Pi:= W^{-1}
\begin{bmatrix}
  I_{n_1} & 0 & 0\\
  0	&0 &0\\
  0 & 0 &0
\end{bmatrix}
W
\in\mat{n}{n}.
\end{equation*}
Then we have 
 \begin{equation}\label{eq:projectorspace}
  \im \Pi = E\Vshift
 \end{equation}
and the following statements hold:
\begin{enumerate}[label=(\alph*)]
  \item \label{it:projectora} The system $\system[\Pi E]$ is I-controllable and 
  \begin{equation*}
    \behavior*=\behavior*[\Pi E][\Pi E][\Pi E].
  \end{equation*}
  In particular, the system space of $\system[\Pi E]*$ is $\cV$.
  \item \label{it:projectorb}  There exists a solution $P\in\mat{n}{n}$ of the KYP inequality \eqref{eq:LMI}, \ie $\mathcal M (P)\succeq_{\cV}0$, if and only if $\mathcal M_\Pi (P)\succeq_{\cV}0$,
  where $\mathcal{M}_\Pi(P)$ is the matrix in \eqref{eq:LMI} corresponding to the system $\system[\Pi E]$.
  \item \label{it:projectorc} There exists a solution $(X,\,K,\,L)\in\mat{n}{n}\times\mat{q}{n}\times\mat{q}{m}$ of the Lur'e equation \eqref{eq:dlure} if and only if $(X,\,K,\,L)$ also fulfills the Lur'e equation \eqref{eq:dlure} corresponding to the  system $\system[\Pi E]$. 
  
\end{enumerate}
\end{proposition}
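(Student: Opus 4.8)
The plan is to transport everything to the feedback equivalence form \eqref{eq:fef} and to use that $\Pi$ restricts to the identity on $E\Vshift$. Set $\Pi_F:=\diag(I_{n_1},0,0)\in\mat{n}{n}$, so that $\Pi=W^{-1}\Pi_F W$ and, by \eqref{eq:fbsys}, $W(\Pi E)T=\Pi_F\,WET=\Pi_F E_F$; since the only nonzero entries of $E_F$ outside its leading $I_{n_1}$-block lie in the last $n_3$ columns (see \eqref{eq:fef}), we have $\Pi_F E_F=\Pi_F$, hence $W(\Pi E)T=\Pi_F$. To prove \eqref{eq:projectorspace} I would first record $\Vshift=T\,\mathcal W_F$, where $\mathcal W_F$ is the space of consistent initial shift variables of \eqref{eq:fef} (immediate from the definition of $\Vshift$ and $E=W^{-1}E_FT^{-1}$), and then compute $E_F\mathcal W_F=\im\Pi_F$: nilpotency of $E_{33}$ forces the last $n_3$ state components of every element of the behavior of \eqref{eq:fef} to vanish, so $E_F$ annihilates them and $E_F\mathcal W_F$ reduces to its leading $I_{n_1}$-block. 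Combining, $E\Vshift=W^{-1}E_FT^{-1}\cdot T\mathcal W_F=W^{-1}E_F\mathcal W_F=W^{-1}\im\Pi_F=\im\Pi$.

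For \ref{it:projectora}, the identity $W(\Pi E)T=\Pi_F$ together with \eqref{eq:fbsys} shows that $\system[\Pi E]*$ is feedback equivalent to the system $(\Pi_F,A_F,B_F)$ via the \emph{same} $W$ and $\mathcal T_F$. Reading off \eqref{eq:fef}, this system is already in feedback equivalence form, with the two trivial diagonal blocks $-I_{n_2}$ and $zE_{33}-I_{n_3}=-I_{n_3}$ amalgamated into a single block of size $n_2+n_3$; so we may take $n_3=0$, and Lemma~\ref{lem:fefcontr}\ref{it:fefcontrc} yields that $\system[\Pi E]*$ is I-controllable (in particular $\system[\Pi E]$). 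For the behaviors I would compare \eqref{eq:fef} with its projected version: in both, $E_{33}$ (resp.\ the block $-I_{n_3}$) forces $x_3\equiv 0$, and the surviving equations $x_{2,j}=-B_2u_j$ and $x_{1,j+1}=A_{11}x_{1,j}+B_1u_j$ are literally the same, so the behaviors coincide; transporting this back through $\mathcal T_F$ gives $\behavior*=\behavior*[\Pi E][\Pi E][\Pi E]$. Equal behaviors have equal system spaces by Definition~\ref{def:systemspace}, which is the last claim of \ref{it:projectora}.

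Parts \ref{it:projectorb} and \ref{it:projectorc} both rest on the single observation that $\Pi Ex=Ex$ for every $(x,u)\in\cV$. Indeed, time-shifting a solution produces another solution, so every value $x_j$ of a solution lies in $\Vshift$, hence $Ex_j\in E\Vshift=\im\Pi$; since $\cV$ is spanned by such values and $x\mapsto Ex$ is linear, $Ex\in\im\Pi$ on all of $\cV$. Now $\mathcal M(P)$ and $\mathcal M_\Pi(P)$ differ only in their $(1,1)$-block, by the Hermitian matrix $\diag\bigl((\Pi E)^*P(\Pi E)-E^*PE,\;0\bigr)$, and $(\Pi Ex)^*P(\Pi Ex)=(Ex)^*P(Ex)$ shows this difference vanishes as a quadratic form on $\cV$; polarization then gives $V^*\mathcal M(P)V=V^*\mathcal M_\Pi(P)V$ for every basis matrix $V$ of $\cV$, which is \ref{it:projectorb}. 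The same computation with $X$ in place of $P$ gives \ref{it:projectorc}, provided the value $q=\rkr\Phi(z)$ in \eqref{eq:dlure} is the same for both systems; but by Proposition~\ref{prop:sysspace}\ref{it:sysspacec} the columns of $\begin{bsmallmatrix}(\lambda E-A)^{-1}B\\ I_m\end{bsmallmatrix}$ lie in $\cV$, so $\Pi E(\lambda E-A)^{-1}B=E(\lambda E-A)^{-1}B$, whence $(\lambda\Pi E-A)(\lambda E-A)^{-1}B=B$ and therefore $(z\Pi E-A)^{-1}B=(zE-A)^{-1}B$ as rational matrices; thus the Popov function of $\system[\Pi E]*$ coincides with $\Phi(z)$ and has rank $q$.

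The main obstacle is the bookkeeping in \ref{it:projectora}: confirming that the projected feedback form is again of the precise type \eqref{eq:fef} with $n_3=0$, and that the (otherwise trivial) equality of behaviors of the two feedback forms may legitimately be pulled back through $\mathcal T_F$. Once that is settled, parts \ref{it:projectorb} and \ref{it:projectorc} collapse --- via polarization --- to the single identity $\Pi Ex=Ex$ on $\cV$, which says precisely that the quadratic term $E^*PE$ is insensitive to replacing $E$ by $\Pi E$ on the system space; this is the discrete-time counterpart of the projection argument in \cite[Theorem~5.9]{reis_kalmanyakubovichpopov_2015}.
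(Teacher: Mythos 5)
Your proposal is correct, but for parts \ref{it:projectorb} and \ref{it:projectorc} it follows a genuinely different route than the paper. The paper argues by reduction: it observes that $(\Pi E,\,A,\,B)$ is feedback equivalent, via the same $W$ and $\mathcal T_F$, to the form \eqref{eq:fef} with $E_{23}$ and $E_{33}$ set to zero, and then notes that these two matrices never enter the proofs of Lemma~\ref{lem:kypreduction} and Lemma~\ref{lem:dlure2clure}; hence the original and the projected system reduce to the identical EDE data, which yields \ref{it:projectorb} and \ref{it:projectorc} simultaneously (and makes the equality of the two Popov-function ranks automatic), while part \ref{it:projectora} and $\im\Pi=E\Vshift$ are delegated to the manipulations of \cite[Theorem~5.9]{reis_kalmanyakubovichpopov_2015}. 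You instead work directly with the original data: from $\im\Pi = E\Vshift$ and shift-invariance you obtain $\Pi E x = Ex$ for all $(x,u)$ in the system space, so $\mathcal M(P)$ and $\mathcal M_\Pi(P)$ induce the same sesquilinear form on $\cV$ (indeed $V^*\mathcal M(P)V=V^*\mathcal M_\Pi(P)V$ follows directly from $\Pi E=E$ on the $x$-components of $\cV$, polarization is not even needed), and you verify separately, via Proposition~\ref{prop:sysspace}\ref{it:sysspacec}, that $(z\Pi E-A)^{-1}B=(zE-A)^{-1}B$, so the Popov functions and hence $q$ coincide --- a point your formulation of \ref{it:projectorc} genuinely needs and which tacitly uses regularity of $z\Pi E-A$, supplied by your treatment of \ref{it:projectora}. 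Your direct computation of $\im\Pi=E\Vshift$ and of the behavior equality in feedback coordinates (amalgamating the $n_2$- and $n_3$-blocks so that Lemma~\ref{lem:fefcontr}\ref{it:fefcontrc} applies) is likewise more self-contained than the paper's citation. In short, the paper's proof is shorter because it recycles the already established reduction machinery, whereas yours is more elementary and transparent: parts \ref{it:projectorb} and \ref{it:projectorc} collapse to the single identity $\Pi E=E$ on the system space, at the modest extra cost of checking invariance of the Popov function.
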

\begin{proof}
 Part~\ref{it:projectora} and \eqref{eq:projectorspace} follow with the algebraic manipulations mentioned in the proof of  {\cite[Theorem 5.9]{reis_kalmanyakubovichpopov_2015}}.
 
 Now set 
 \begin{equation*}
   \Pi_F:= W\Pi W^{-1}=
   \begin{bmatrix}
  I_{n_1} & 0 & 0\\
  0	&0 &0\\
  0 & 0 &0
\end{bmatrix}.
 \end{equation*}
 For parts \ref{it:projectorb} and  \ref{it:projectorc} note that the system $\system[\Pi_FE_F][A_F][B_F]$ is in feedback equivalence form \eqref{eq:fef} where compared to $\system[E_F][A_F][B_F]$ the matrices $E_{23}$ and $E_{33}$ are set to zero. Looking carefully at the proofs of Lemma~\ref{lem:kypreduction} and Lemma~\ref{lem:dlure2clure} we see that these matrices have no effect in the respective results and thus the assertion follows. 
\end{proof}

As a next step, we  perform transformations of the palindromic or BVD pencils  corresponding to the system $\wsystemF$ in feedback equivalence form  as in \eqref{eq:fbsys} such that we obtain the respective palindromic or BVD pencils corresponding to the EDE system $\wsystemS$ as in \eqref{eq:EDEsys} in the first diagonal block of the transformed pencil. 
 \begin{lemma}\label{lem:dpal2cpal} Let $\wsystemF$ as in \eqref{eq:fbsys} be given in feedback equivalence form \eqref{eq:fef} such that $n_3=0$. Further, let the corresponding palindromic pencil $z\A_F^*-\A_F$ as in  \eqref{eq:palpenc} be given. Denote by $z\A_s^*-\A_s$ the pencil corresponding to the EDE system $\wsystemS$ as in \eqref{eq:EDEsys}, \ie
 \begin{equation}\label{eq:EDEpalpenc}
   z\A_s^*-\A_s=
      \begin{bmatrix}
    0			& zI_{n_1}-A_s		& -B_s 		\\
    zA_s^*-I_{n_1}	& (z-1)Q_s	& (z-1)S_s 	\\
	zB_s^*		& (z-1)S_s^*	& (z-1)R_s
  \end{bmatrix}\in\matz{2n_1+m}{2n_1+m}.
 \end{equation}
 Then there exists an invertible $\hat U\in\mat{2n+m}{2n+m}$ such that 
  \begin{equation*}\label{eq:dpal2cpalresult}
   \hat U^* (z\A_F^*-\A_F) \hat  U = \left[
   \begin{array}{@{}c|cc@{} }
    z\A_s^*-\A_s	& zD	& 0	\shline{1.6}
    -D^*		& 0	& zI_{n_2}	\\	
    0			& -I_{n_2}  & 0
   \end{array}\right], 
  \end{equation*}
  where
  \begin{equation*}\label{eq:DAFAS}
  D=
    \begin{bmatrix}
      0			\\
      Q_{12}		\\
      S_2^*-B_2^*Q_{22} 	      
    \end{bmatrix}
  \end{equation*}
  and
  \begin{align}\label{eq:UAFAS}
  \begin{split}
   \hat  U&=
    \begin{bmatrix}
    I_{n_1}		& 0		& 0			&  0		& 0		\\
    0			& -Q_{12}^*	& -S_2 +Q_{22}B_2	& -Q_{22}	& I_{n_2}	\\	
    0			& I_{n_1}	& 0			&  0		& 0		\\
    0			& 0		& -B_2			& I_{n_2}	& 0		\\
    0			& 0		& I_m			& 0             & 0		
   \end{bmatrix}\\
   &=
   \underbrace{
   \begin{bmatrix}
    I_{n_1}		& 0		& 0		&  0	& 0			\\
    0			& 0		& 0		&  0	& I_{n_2}		\\	
    0			& I_{n_1}	& 0		&  0	& 0			\\
    0			& 0		& 0		&I_{n_2}& 0			\\
    0			& 0		& I_m		& 0     & 0		
   \end{bmatrix}}_{=:\tilde P}
   \underbrace{
   \begin{bmatrix}
    I_{n_1}		& 0		& 0		&  0		& 0				\\
    0			& I_{n_1}	& 0		&  0		& 0				\\
    0			& 0		& I_m		& 0      	& 0			        \\
    0			& 0		& -B_2		& I_{n_2}	& 0				\\
    0			& -Q_{12}^*	& -S_2+Q_{22}B_2& -Q_{22}	& I_{n_2}				
   \end{bmatrix}.}_{=:\tilde U}
   \end{split}
  \end{align}

 \end{lemma}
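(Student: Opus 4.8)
The plan is to carry out the congruence explicitly, using the factorisation $\hat U=\tilde P\tilde U$ recorded in~\eqref{eq:UAFAS}. Since $\tilde P$ is a block permutation matrix and $\tilde U$ is block lower triangular with identity diagonal blocks, both are invertible; hence so is $\hat U$, and
\[
  \hat U^*(z\A_F^*-\A_F)\hat U=\tilde U^*\bigl(\tilde P^*(z\A_F^*-\A_F)\tilde P\bigr)\tilde U .
\]
First I would substitute the feedback equivalence form~\eqref{eq:fef} with $n_3=0$ into the palindromic pencil~\eqref{eq:palpenc}: here $E_F=\diag(I_{n_1},0)$, $A_F=\diag(A_{11},I_{n_2})$, $B_F=\left[\begin{smallmatrix}B_1\\B_2\end{smallmatrix}\right]$, and $Q_F,S_F,R_F$ are partitioned as in~\eqref{eq:fef}, so that $z\A_F^*-\A_F$ becomes an explicit block pencil with block rows and columns of sizes $n_1,n_2,n_1,n_2,m$ indexed by the variables $(\mu_1,\mu_2,x_1,x_2,u)$.

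The congruence by $\tilde P$ is a pure relabelling: a short check shows that it reorders the blocks into the order $(\mu_1,x_1,u,x_2,\mu_2)$, which puts the blocks indexed by $(\mu_1,x_1,u)$ into the leading $(2n_1+m)\times(2n_1+m)$ corner, where they already take the shape~\eqref{eq:EDEpalpenc} with $A_s=A_{11}$, $B_s=B_1$, $Q_s=Q_{11}$ but still with $S_1$ and $R$ in place of $S_s$ and $R_s$ in the $(x_1,u)$- and $(u,u)$-positions, and leaves the $(x_2,\mu_2)$-blocks afterwards. The subsequent congruence by $\tilde U$ then performs the row and column operations encoded by its off-diagonal blocks, most importantly adding $-B_2$ times the $x_2$-block to the $u$-block, which realises the relation $x_2=-B_2u$ forced by the second block equation of~\eqref{eq:fef}, together with the further operations built from $-Q_{12}^*$, $-S_2+Q_{22}B_2$ and $-Q_{22}$ prescribed by the remaining off-diagonal entries of $\tilde U$.

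It then remains to read off the transformed pencil block by block. The key point is that the combined operations turn $(z-1)S_1$ into $(z-1)(S_1-Q_{12}B_2)=(z-1)S_s$ and $(z-1)R$ into $(z-1)R_s$ with $R_s=B_2^*Q_{22}B_2-B_2^*S_2-S_2^*B_2+R$, exactly as in~\eqref{eq:EDEsys}: the term $Q_{12}B_2$ in $S_s$ comes from the single elimination of $x_2$ in the $(x_1,u)$-coupling, while the three correction terms of $R_s$ arise from the double elimination in the $(u,u)$-block, with $B_2^*Q_{22}B_2$ the quadratic contribution and $-B_2^*S_2$, $-S_2^*B_2$ the cross terms. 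The blocks coupling the leading corner to the trailing $(x_2,\mu_2)$-part collapse to $zD$ and $-D^*$ with $D=\left[\begin{smallmatrix}0\\Q_{12}\\S_2^*-B_2^*Q_{22}\end{smallmatrix}\right]$ (the constant parts of these couplings cancel, leaving only a $z$-homogeneous block), and the trailing $2n_2\times 2n_2$ part reduces to $\left[\begin{smallmatrix}0 & zI_{n_2}\\-I_{n_2} & 0\end{smallmatrix}\right]$, inherited from the $(x_2,\mu_2)$-coupling $zI_{n_2}$, $-I_{n_2}$ of the original pencil once all the spurious $(z-1)$-weighted contributions in the new $(x_2,x_2)$-, $(x_2,u)$- and $(\mu_2,\cdot)$-positions have cancelled.

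The only genuinely delicate part is this last bookkeeping: tracking the many $(z-1)$-weighted blocks through both congruences and checking both that $R_s$ emerges with precisely its three correction terms and that every would-be $(z-1)$-term in the trailing positions vanishes, so that only $\pm I_{n_2}$ and the block $zD$ survive there. I do not anticipate a conceptual obstacle beyond this careful verification.
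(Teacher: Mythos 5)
Your plan is correct: carrying out the congruence with $\hat U=\tilde P\tilde U$ block by block (permuting the variables to the order $(\mu_1,x_1,u,x_2,\mu_2)$ and then performing the eliminations encoded in $\tilde U$) does yield exactly the stated form — I verified the key entries, namely that the $(x_1,u)$-block becomes $(z-1)(S_1-Q_{12}B_2)=(z-1)S_s$, the $(u,u)$-block becomes $(z-1)(R-S_2^*B_2-B_2^*S_2+B_2^*Q_{22}B_2)=(z-1)R_s$, the couplings to the trailing part collapse to $zD$ and $-D^*$ with $D=\bigl[\,0;\;Q_{12};\;S_2^*-B_2^*Q_{22}\,\bigr]$, and the trailing $2n_2\times 2n_2$ block reduces to $\left[\begin{smallmatrix}0 & zI_{n_2}\\ -I_{n_2} & 0\end{smallmatrix}\right]$, all as you describe. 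The paper itself gives no in-text argument but defers to the first author's thesis, and that proof is the same direct congruence computation, so your approach coincides with the intended one (only your parenthetical about the couplings being ``$z$-homogeneous'' is slightly loose: in the $-D^*$ block it is the $z$-parts, not the constant parts, that cancel).
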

 \begin{proof}
   See \cite[Lemma 5.9]{bankmann_linear-quadratic_2015}.
 \end{proof}

 \begin{corollary}\label{cor:dbvd2cbvd} Let $\wsystemF$ as in \eqref{eq:fbsys} be given in feedback equivalence form \eqref{eq:fef} such that $n_3=0$. Further, let the corresponding BVD pencil $z\E_F-\A_F$ as in \eqref{eq:bvdpenc} be given. Denote by $z\E_s-\A_s$ the BVD pencil corresponding to the EDE system $\wsystemS$ as in \eqref{eq:EDEsys}, \ie
 \begin{equation*}\label{eq:EDEbvdpenc}
   z\E_s-\A_s=
      \begin{bmatrix}
    0			& zI_{n_1}-A_s	& -B_s 		\\
    zA_s^*-I_{n_1}	& -Q_s		& -S_s	 	\\
	zB_s^*		& -S_s^*	& -R_s
  \end{bmatrix}\in\matz{2n_1+m}{2n_1+m}.
 \end{equation*}
Then there exist invertible $\hat U$ and $\check U$ such that
  \begin{equation*}\label{eq:dbvd2cbvd}
   \hat U^* (z\E-\A) \check U = \left[
   \begin{array}{@{}c|cc@{} }
    z\E_s-\A_s		& 0		& 0	\shline{1.6}
    -D^*		& 0		& zI_{n_2}	\\	
    0			& -I_{n_2}  	& 0
   \end{array}\right].
  \end{equation*}
 \end{corollary}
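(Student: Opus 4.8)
The plan is to obtain the corollary from Lemma~\ref{lem:dpal2cpal} by exploiting that the BVD pencil $z\E_F-\A_F$ and the palindromic pencil $z\A_F^*-\A_F$ share the same second coefficient matrix $\A_F$ and that their first coefficient matrices differ only in the weight block. Writing $M_F:=\begin{bsmallmatrix} Q_F & S_F \\ S_F^* & R_F\end{bsmallmatrix}$ and $\Pi:=\begin{bsmallmatrix} 0 & I_n & 0 \\ 0 & 0 & I_m\end{bsmallmatrix}$ for the projection onto the state-input coordinates, we have $\E_F=\A_F^*-\Pi^*M_F\Pi$, hence $z\E_F-\A_F=(z\A_F^*-\A_F)-z\,\Pi^*M_F\Pi$. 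So I would first conjugate by the same $\hat U$ as in~\eqref{eq:UAFAS}. Comparing powers of $z$ in the conclusion of Lemma~\ref{lem:dpal2cpal} gives $\hat U^*\A_F\hat U=\begin{bsmallmatrix} \A_s & 0 & 0 \\ D^* & 0 & 0 \\ 0 & I_{n_2} & 0\end{bsmallmatrix}$ and therefore $\hat U^*\A_F^*\hat U=\begin{bsmallmatrix} \A_s^* & D & 0 \\ 0 & 0 & I_{n_2} \\ 0 & 0 & 0\end{bsmallmatrix}$, with block sizes $2n_1+m,n_2,n_2$ and $\A_s,\A_s^*$ the coefficient matrices of the EDE palindromic pencil~\eqref{eq:EDEpalpenc}.

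It then remains to evaluate $\hat U^*\Pi^*M_F\Pi\hat U=(\Pi\hat U)^*M_F(\Pi\hat U)$. From~\eqref{eq:UAFAS} one reads off $\Pi\hat U=\begin{bsmallmatrix} 0 & I_{n_1} & 0 & 0 & 0 \\ 0 & 0 & -B_2 & I_{n_2} & 0 \\ 0 & 0 & I_m & 0 & 0\end{bsmallmatrix}$, and a direct multiplication, using the definitions of $Q_s,S_s,R_s$ in~\eqref{eq:EDEsys} and of $D$ in Lemma~\ref{lem:dpal2cpal}, yields
\[
  (\Pi\hat U)^*M_F(\Pi\hat U)=
  \begin{bmatrix} N_s & D & 0 \\ D^* & Q_{22} & 0 \\ 0 & 0 & 0\end{bmatrix},
  \qquad
  N_s:=\begin{bmatrix} 0 & 0 & 0 \\ 0 & Q_s & S_s \\ 0 & S_s^* & R_s\end{bmatrix} .
\]
Since $\A_s^*-N_s=\E_s$ is precisely the first coefficient matrix of the EDE BVD pencil $z\E_s-\A_s$ from the statement, subtracting gives
\[
  \hat U^*(z\E_F-\A_F)\hat U=
  \begin{bmatrix} z\E_s-\A_s & 0 & 0 \\ -(z+1)D^* & -zQ_{22} & zI_{n_2} \\ 0 & -I_{n_2} & 0\end{bmatrix} .
\]

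Finally, I would remove the residual $z$-dependence in the lower block rows by a constant right transformation. Post-multiplying by the block lower triangular (hence invertible) matrix $R:=\begin{bsmallmatrix} I_{2n_1+m} & 0 & 0 \\ 0 & I_{n_2} & 0 \\ D^* & Q_{22} & I_{n_2}\end{bsmallmatrix}$ turns the $(2,1)$-block $-(z+1)D^*$ into $-(z+1)D^*+zD^*=-D^*$ and the $(2,2)$-block $-zQ_{22}$ into $-zQ_{22}+zQ_{22}=0$, while leaving the remaining blocks (in particular the EDE block and the $(2,3)$- and $(3,2)$-blocks) unchanged. Setting $\check U:=\hat U R$ then produces exactly the claimed block form. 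Such a one-sided clean-up is possible here because, in contrast to the palindromic setting, we may use two independent equivalence transformations rather than a congruence; this is also why the coupling term $zD$ of Lemma~\ref{lem:dpal2cpal} gets cancelled.

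The only non-routine point is the explicit computation of $(\Pi\hat U)^*M_F(\Pi\hat U)$ and the recognition, via the identities~\eqref{eq:EDEsys}, that its $(1,1)$-corner equals $N_s$, that its coupling block to the auxiliary part equals $D$, and that $\A_s^*-N_s=\E_s$; once this bookkeeping is done, the statement follows at once.
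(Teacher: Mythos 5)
Your derivation is correct: I verified the splitting $z\E_F-\A_F=(z\A_F^*-\A_F)-z\,\Pi^*M_F\Pi$, the identification of $\hat U^*\A_F\hat U$ and $\hat U^*\A_F^*\hat U$ from Lemma~\ref{lem:dpal2cpal}, the computation of $(\Pi\hat U)^*M_F(\Pi\hat U)$ (whose blocks indeed reproduce $N_s$, $D$ and $Q_{22}$ via the definitions in \eqref{eq:EDEsys}, with $\E_s=\A_s^*-N_s$), and the clean-up with the constant right factor, so that $\check U=\hat U R$ gives exactly the claimed block form. The paper states this corollary without a written proof (deferring to the thesis), but obtaining it from Lemma~\ref{lem:dpal2cpal} with the same $\hat U$ plus an additional constant column transformation — exploiting that here one may use a two-sided equivalence rather than a congruence — is precisely the intended route, so your argument matches the paper's approach.
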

Now we are able to prove a generalization of Theorem~\ref{th:cluredefss}. This result is related to the result in \cite[Theorem 6.2]{reis_kalmanyakubovichpopov_2015} in the continuous-time case.

 \begin{theorem}\label{thm:dluredfss}  Let $\wsystem$ be I-controllable. Further, let the corresponding palindromic pencil $z\A^*-\A$ \eqref{eq:palpenc} be given.  In addition, let $q=\rkr\Phi(z)$ and assume that $\rk\,\left[
                                                                                                                    \,E-A \quad B \,\right]
                                                                                                                   =n$.
 Then the following are equivalent:
 \begin{enumerate}[label=(\alph*)]
  \item \label{it:dluredefssa} There exists a solution $(X,\,K,\,L)\in\mat{n}{n}\times\mat{q}{n}\times\mat{q}{m}$ of the Lur'e equation \eqref{eq:dlure}.
  \item \label{it:dluredefssb} It holds that $\Phi(\eio)\succeq 0$ for all $\omega\in[0,2\pi)$ such that $\det(\eio E-A)\neq 0$. Furthermore, there exist matrices $Y_\mu,\,Y_x\in\mat{n}{n+m},\,Y_u\in\mat{m}{n+m}$ and $Z_\mu,\,Z_x\in\mat{n}{n+q},\,Z_u\in\mat{m}{n+q}$ such that for
  \begin{equation*}
   Y=\begin{bmatrix}
      Y_\mu\\
      Y_x\\
      Y_u
     \end{bmatrix},\qquad
   Z=\begin{bmatrix}
      Z_\mu\\
      Z_x\\
      Z_u
     \end{bmatrix}
  \end{equation*}
  the following holds:
  \begin{enumerate}[label=(\roman*)]
   \item \label{it:dluredefssbi}The matrix 
   \begin{equation*}
    \begin{bmatrix}
     E-A	& -B
    \end{bmatrix}
    \begin{bmatrix}
     Y_x\\
     Y_u
    \end{bmatrix}
   \end{equation*}
 has  rank $n_1$.
   \item \label{it:dluredefssbii} The space $\mathcal Y= \im Y$ is of dimension $n+m$ and $(\A^*-\A)$-neutral.
   \item \label{it:dluredefssbiii} It holds that 
   \begin{equation*}
      \cV=\im
      \begin{bmatrix}
        Y_x\\
        Y_u
      \end{bmatrix}.
   \end{equation*}

   \item \label{it:dluredefssbiv} There exist $\tilde E,\,\tilde A \in\mat{n+q}{n+m}$ such that $(z\A^*-\A)Y=Z(z\tilde E - \tilde A)$.
  \end{enumerate}

 \end{enumerate}

 \end{theorem}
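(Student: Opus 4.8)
The plan is to reduce the assertion to the explicit-difference-equation case of Theorem~\ref{th:cluredefss}, in the same spirit as the reduction used for the KYP Lemma in Section~\ref{chap:kyp}, and then to reassemble the implicit data by means of the structured form of the palindromic pencil from Lemma~\ref{lem:dpal2cpal}.

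\emph{Reduction to feedback equivalence form.} Since $\system*$ is I-controllable, Lemma~\ref{lem:fefcontr}\ref{it:fefcontrc} supplies $W,T,F$ such that $\wsystemF$ is in feedback equivalence form \eqref{eq:fef} with $n_3=0$. By Lemma~\ref{lem:dlure2flure}, $(X,K,L)$ solves \eqref{eq:dlure} if and only if $(X_F,K_F,L_F)$ of \eqref{eq:flureequivalence} solves the Lur'e equation for $\wsystemF$; by Proposition~\ref{prop:popov}\ref{it:phifphia} together with Proposition~\ref{prop:fef} we have $q=\rkr\Phi(z)=\rkr\Phi_F(z)$ and, on the part of the unit circle where the relevant pencils are nonsingular, $\Phi(\eio)\succeq 0\iff\Phi_F(\eio)\succeq 0$; and the rank condition is feedback-invariant, $\rk\begin{bsmallmatrix}E-A & B\end{bsmallmatrix}=\rk\begin{bsmallmatrix}E_F-A_F & B_F\end{bsmallmatrix}$. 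Moreover a short computation (as in Lemma~\ref{lem:equivalence}) shows the palindromic pencils are congruent,
\begin{equation*}
 U^*(z\A^*-\A)U=z\A_F^*-\A_F,\qquad U=\begin{bmatrix}W^* & 0 & 0\\ 0 & T & 0\\ 0 & FT & I_m\end{bmatrix},
\end{equation*}
whose $(x,u)$-block equals $\mathcal{T}_F$; hence $(\A^*-\A)$-neutrality, the dimension $n+m$, the defining relation in~\ref{it:dluredefssbiv}, the rank in~\ref{it:dluredefssbi}, and --- via Lemma~\ref{lem:feedbacksysspace} --- the identity $\cV=\im\begin{bsmallmatrix}Y_x\\ Y_u\end{bsmallmatrix}$ in~\ref{it:dluredefssbiii} all transfer along $Y\mapsto U^{-1}Y$. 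Thus I may assume $\system*$ is itself in feedback equivalence form \eqref{eq:fef} with $n_3=0$; let $\wsystemS$ as in \eqref{eq:EDEsys} denote the associated EDE part.

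\emph{Passage to the EDE part and forward direction.} By Lemma~\ref{lem:dlure2clure}, solutions of \eqref{eq:dlure} correspond to solutions $(X_s,K_s,L_s):=(X_{11},K_1,L-K_2B_2)$ of \eqref{eq:clure} for $\wsystemS$, and $q=\rkr\Phi_s(z)$ by Proposition~\ref{prop:popov}\ref{it:phifphib}. Evaluating \eqref{eq:fef} at $z=1$ with $n_3=0$ gives $\rk\begin{bsmallmatrix}E-A & -B\end{bsmallmatrix}=\rk\begin{bsmallmatrix}I_{n_1}-A_s & -B_s\end{bsmallmatrix}+n_2$, so $\rk\begin{bsmallmatrix}E-A & B\end{bsmallmatrix}=n$ forces $\rk\begin{bsmallmatrix}I_{n_1}-A_s & -B_s\end{bsmallmatrix}=n_1$, which is exactly the hypothesis of Theorem~\ref{th:cluredefss} for $\wsystemS$. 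For \ref{it:dluredefssa}$\,\Rightarrow\,$\ref{it:dluredefssb} I take the data \eqref{eq:deflssc1}--\eqref{eq:deflssc2} attached to $(X_s,K_s,L_s)$, i.e.\ $Y_s,Z_s$ and $z\tilde E_s-\tilde A_s=\begin{bsmallmatrix}zI_{n_1}-A_s & -B_s\\ (z-1)K_s & (z-1)L_s\end{bsmallmatrix}$ with $\im Y_s$ maximally $(\A_s^*-\A_s)$-neutral. With $\hat U,D$ as in Lemma~\ref{lem:dpal2cpal}, the congruence $\hat U^*(z\A^*-\A)\hat U=:S(z)$ places $z\A_s^*-\A_s$ in the top-left corner, and I would set
\begin{equation*}
 \tilde Y=\begin{bmatrix}Y_s & 0\\ 0 & 0\\ 0 & I_{n_2}\end{bmatrix},\quad \tilde Z=\begin{bmatrix}Z_s & 0\\ 0 & I_{n_2}\\ 0 & 0\end{bmatrix},\quad z\tilde E-\tilde A=\begin{bmatrix}z\tilde E_s-\tilde A_s & 0\\ -D^*Y_s & zI_{n_2}\end{bmatrix}.
\end{equation*}
Using $(z\A_s^*-\A_s)Y_s=Z_s(z\tilde E_s-\tilde A_s)$ one checks directly that $S(z)\tilde Y=\tilde Z(z\tilde E-\tilde A)$, that $z\tilde E-\tilde A$ has full row rank $n+q$, and that $\tilde Y^*S(1)\tilde Y=Y_s^*(\A_s^*-\A_s)Y_s=0$; since $\rk\begin{bsmallmatrix}E-A & B\end{bsmallmatrix}=n$ yields $\rk(\A^*-\A)=2n$, the largest dimension of an $(\A^*-\A)$-neutral subspace is $n+m$, so the $(n+m)$-dimensional $\im\tilde Y$ is maximal, giving~\ref{it:dluredefssbii}. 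Setting $Y:=\hat U\tilde Y$, $Z:=\hat U^{-*}\tilde Z$ gives $(z\A^*-\A)Y=Z(z\tilde E-\tilde A)$, that is~\ref{it:dluredefssbiv}, and a bookkeeping computation of the $(x,u)$-block of $\hat U\tilde Y$ yields $\im\begin{bsmallmatrix}Y_x\\ Y_u\end{bsmallmatrix}=\cV$ and $\rk\bigl(\begin{bsmallmatrix}E-A & -B\end{bsmallmatrix}\begin{bsmallmatrix}Y_x\\ Y_u\end{bsmallmatrix}\bigr)=\rk\begin{bsmallmatrix}I_{n_1}-A_s & -B_s\end{bsmallmatrix}=n_1$, i.e.~\ref{it:dluredefssbiii} and~\ref{it:dluredefssbi}; together with $\Phi(\eio)\succeq 0$ (where defined) $\iff\Phi_s(\eio)\succeq 0$ this proves the implication.

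\emph{Converse and main obstacle.} Given data as in~\ref{it:dluredefssb}, the same reductions carry the subspace into the coordinates of $S(z)$. The step I expect to be the hardest is to recover the EDE data from there: one must show that a maximally $S(1)$-neutral deflating subspace of $S(z)$ of dimension $n+m$ satisfying the transported forms of~\ref{it:dluredefssbi} and~\ref{it:dluredefssbiii} is column-equivalent to $\begin{bsmallmatrix}Y_s & 0\\ 0 & 0\\ 0 & I_{n_2}\end{bsmallmatrix}$ with $Y_s$ a deflating subspace of $z\A_s^*-\A_s$ meeting the hypotheses of Theorem~\ref{th:cluredefss}\ref{it:clurepalpencdefssb} for $\wsystemS$ --- in other words, one has to disentangle the forced trivial $2n_2$-dimensional block of $S(z)$ and the coupling term $D$ from the genuine EDE deflating subspace while keeping track of neutrality. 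Once this is done, Theorem~\ref{th:cluredefss} supplies $(X_s,K_s,L_s)$, and Lemmas~\ref{lem:dlure2clure} and~\ref{lem:dlure2flure} lift it to a solution $(X,K,L)$ of \eqref{eq:dlure}. This disentanglement is the discrete-time counterpart of the corresponding part of the proof of \cite[Theorem~6.2]{reis_kalmanyakubovichpopov_2015}.
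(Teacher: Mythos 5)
Your reduction to feedback equivalence form and your proof of \ref{it:dluredefssa}$\Rightarrow$\ref{it:dluredefssb} follow the paper essentially verbatim: Lemma~\ref{lem:dlure2flure} and Lemma~\ref{lem:dlure2clure} to pass to the EDE triple $(X_{11},K_1,L-K_2B_2)$, Theorem~\ref{th:cluredefss} to get $Y_s,Z_s$, Lemma~\ref{lem:dpal2cpal} to embed $z\A_s^*-\A_s$ into the transformed palindromic pencil, and the augmented matrices $\bigl[\begin{smallmatrix}Y_s&0\\0&0\\0&I_{n_2}\end{smallmatrix}\bigr]$, $\bigl[\begin{smallmatrix}Z_s&0\\0&I_{n_2}\\0&0\end{smallmatrix}\bigr]$ are exactly the paper's $\hat Y,\hat Z$; your neutrality computation and the translation of the rank hypothesis $\rk\left[E-A\;\;B\right]=n$ into $\rk\left[I_{n_1}-A_s\;\;-B_s\right]=n_1$ are correct. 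The only cosmetic deviation is that the paper post-multiplies by the invertible matrix $\hat V$ to exhibit explicit block forms of $Y_F$ from which \ref{it:dluredefssbi} and \ref{it:dluredefssbiii} are read off, whereas you leave this as a ``bookkeeping computation''; that is acceptable.

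The genuine gap is the converse \ref{it:dluredefssb}$\Rightarrow$\ref{it:dluredefssa}: you name the ``disentanglement'' as the main obstacle and defer it, but this is precisely the substantive half of the paper's proof, and it does not follow by symmetry or by citation. Concretely, the paper proceeds as follows: using Proposition~\ref{prop:sysspace}\ref{it:sysspacea} together with \ref{it:dluredefssbiii} it normalizes the $(x,u)$-rows of $Y_F$, forms $\hat Y=\hat U^{-1}Y_F$, and exploits the $(\hat\A^*-\hat\A)$-neutrality of $\im\hat Y$ to derive $Y_{\mu_{13},F}^*\left[I_{n_1}-A_s\;\;-B_s\right]=0$, whence $Y_{\mu_{13},F}=0$ by the rank assumption; this is what makes the extracted column block of $\hat Y$ have full column rank $n_1+m$, hence \emph{maximally} $(\A_s^*-\A_s)$-neutral image (note \ref{it:dluredefssbii} only gives neutrality plus dimension, so maximality of the extracted subspace has to be earned, not assumed). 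After column transformations $T_1$ (producing a Hermitian $X_{11}$) and $T_2$, the transported relation \ref{it:dluredefssbiv} is still \emph{coupled} through the block $zD$; the paper removes the coupling by showing via \eqref{eq:thmdlurefullnormalrank} that $\left[Z_{21}\;\;Z_{22}\right]$ has rank $n_2$ and that $z\tilde E_{22}-\tilde A_{22}$ has full normal rank, so that for some $\lambda_0$ one can solve for $\tilde Z_{12}$ and obtain a genuine deflating relation $(z\A_s^*-\A_s)Y_s=\tilde Z_{11}(\cdots)$ to which Theorem~\ref{th:cluredefss} applies; only then do Lemmas~\ref{lem:dlure2clure} and~\ref{lem:dlure2flure} lift the EDE solution to \eqref{eq:dlure}. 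Without carrying out these steps (vanishing of $Y_{\mu_{13},F}$, maximal neutrality of the extracted subspace, and elimination of the $D$-coupling), your proposal proves only one implication of the theorem.
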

 \begin{proof} First we show that the statement is invariant under feedback transformations. Therefore, assume we have given the system $\wsystemF$ in feedback equivalence form as in \eqref{eq:fbsys} such that $n_3=0$ with corresponding transformation matrices $W$ and $\mathcal T_F$ and corresponding palindromic pencil $z\A_F^* -\A_F$ as in \eqref{eq:palpenc}. Then by Lemma~\ref{lem:dlure2flure}, part \ref{it:dluredefssa} is equivalent to the existence of a solution  $(X_F,\,K_F,\,L_F)$ as in \eqref{eq:flureequivalence} of the Lur'e equation \eqref{eq:dlure} corresponding to $\wsystemF*$. 
 
 To show the equivalence of statement \ref{it:dluredefssb} to according statements for the system in feedback equivalence form  let
 \begin{equation*}
 U_F:=
  \begin{bmatrix}
   W^*		& 0 		& 0	 \\
   0		& T	 	& 0	 \\
   0		& FT 		& I_m 
  \end{bmatrix}\in\mat{2n+m}{2n+m}
 \end{equation*}
  and set
 \begin{equation}\label{eq:transformdss2FB}  
  Y_F:=\begin{bmatrix}
       Y_{\mu,F}\\
      Y_{x,F}\\
      Y_{u,F}
     \end{bmatrix}
     := U_F^{-1} Y 
     ,\quad
   Z_F:=\begin{bmatrix}
      Z_{\mu,F}\\
       Z_{x,F}\\
      Z_{u,F}
     \end{bmatrix}
     = U_F^{*} Z .%
 \end{equation}
 Then $ \A_F = U_F^{*}\A U_F$  and
  statement \ref{it:dluredefssbi} is equivalent to 
  \begin{align*}
   &\rk 
   \begin{bmatrix}
     E_F-A_F	& -B_F
    \end{bmatrix}
    \begin{bmatrix}
     Y_{x,F}\\
     Y_{u,F}
    \end{bmatrix}\\
    =\,&\rk
    \begin{bmatrix}
     E-A	& -B
    \end{bmatrix}
    \begin{bmatrix}
     T	 	& 0	 \\
    FT 	& I_m 
    \end{bmatrix}
    \begin{bmatrix}
     T	 	& 0	 \\
    FT 		& I_m 
    \end{bmatrix}^{-1}
    \begin{bmatrix}
     Y_x\\
     Y_u
    \end{bmatrix}=n_1.
  \end{align*}
  Furthermore, we have that $\rk Y_F = \rk Y =n+m$ and  $\im Y$ is $(\A^*-\A)$-neutral if and only if  $\im Y_F$ is $(\A_F^*-\A_F)$-neutral.  In addition, by Proposition~\ref{prop:sysspace}\ref{it:sysspacea} we obtain that \ref{it:dluredefssbiii} is equivalent to
  \begin{equation*}
    \cV F=\im
    \begin{bmatrix}
     Y_{x,F}\\
     Y_{u,F}
    \end{bmatrix}.
  \end{equation*}
  Finally, statement \ref{it:dluredefssbiv} is equivalent to  $(z\A_F^*-\A_F)Y_F=Z_F(z\tilde E - \tilde A)$ by the definition of $\A_F,\,Y_F\,$ and $Z_F$.
 Hence, we have shown that it is sufficient to prove the equivalence between \ref{it:dluredefssa} and \ref{it:dluredefssb} for the system $\wsystemF*$ in feedback equivalence form.
             
 Now we show that statement \ref{it:dluredefssb} follows from statement \ref{it:dluredefssa}. From Lemma \ref{lem:dlure2clure} we infer that $(X_{11},\,K_1,\,L-K_2B_2)$ is a solution of the Lur'e equation \eqref{eq:clure} for the EDE system 
 \begin{equation*}
 \wsystemS
  \end{equation*}
  as in \eqref{eq:EDEsys}.
  By denoting the corresponding palindromic pencil arising in the optimal control problem by $z\A_s^* - \A_s$ as in \eqref{eq:EDEpalpenc}, Theorem~\ref{th:cluredefss} implies the existence of 
    \begin{equation*}
    Y_s=
   \begin{bmatrix}
    X_{11}(A_{11}-I_{n_1})	& X_{11}B_1 \\
    I_{n_1}			& 0  	\\
    0				& I_m
   \end{bmatrix}, \qquad
    Z_s =
     \begin{bmatrix}
      I_{n_1}			& 0 \\
      (I_{n_1}-A_{11}^*)X_{11}	& K_1^*  \\
      -B_1^*X_{11}		& (L-K_2B_2)^*
    \end{bmatrix}
  \end{equation*}
  as in \eqref{eq:deflssc1} and 
  \begin{equation*}
z\hat E_s - \hat A_s=
  \begin{bmatrix}
     zI_{n_1}-A_{11}	& -B_1				\\
    (z-1)K_1		&(z-1)(L-K_2B_2)		  
  \end{bmatrix}
\end{equation*}
as in \eqref{eq:deflssc2} such that $(z\A_s^*-\A_s)Y_s=Z_s(z\hat E_s - \hat A_s)$. Note that as in Theorem~\ref{th:cluredefss}, $\im Y_s$ is maximally $(\A_s^*-\A_s)$-neutral.
  
  From Lemma~\ref{lem:dpal2cpal} we obtain an invertible transformation matrix $\hat U\in\mat{2n+m}{2n+m}$ as in \eqref{eq:UAFAS} such that  
  \begin{equation}\label{eq:dluredefsspalpenc2}
   z\hat\A^*-\hat\A := \hat U^* (z\A_F^*-\A_F) \hat U = \left[
   \begin{array}{@{}c|cc@{} }
    z\A_s^*-\A_s	& zD	& 0	\shline{1.6}
    -D^*		& 0	& zI_{n_2}	\\	
    0			& -I_{n_2}  & 0
   \end{array}\right]
  \end{equation}
  with
    \begin{equation*}\
  D=
    \begin{bmatrix}
      0			\\
      Q_{12}		\\
      S_2^*-B_2^*Q_{22} 	      
    \end{bmatrix}\in\mat{2n_1+m}{n_2}.
  \end{equation*}
  By inspecting the proof of Theorem~\ref{th:cluredefss} we find that
  \begin{equation}\label{eq:dluredefss1}
   (z\hat\A^*-\hat\A) \hat Y = \hat Z (z\hat E - \hat A),
  \end{equation}
  where 
 \begin{equation*}
  \hat Y = \left[
   \begin{array}{@{}c|c@{} }
     Y_s	& 0 \shline{1.6}
     0		& 0		\\		
     0		& I_{n_2}	
   \end{array}\right], \qquad
   \hat Z =\left[
      \begin{array}{@{}c|c@{} }
      Z_s				& 0\shline{1.6}
      0					&I_{n_2}\\
      0					& 0	%
    \end{array}\right],\qquad
    z\hat E - \hat A=\left[
      \begin{array}{@{}c|c@{} }
      z\hat E_s-\hat A_s				& 0\shline{1.6}
      -D^*Y_s				& zI_{n_2}
    \end{array}\right].
  \end{equation*}

  Thus we have
  \begin{align*}
    \hat Y^*(\hat\A^*-\hat\A)\hat Y &= 
  \left[
   \begin{array}{@{}c|cc@{} }
     Y_s^*	& 0 		& 0 \shline{1.6}		
     0		& 0		& I_{n_2}	
   \end{array}\right]
   \left[
   \begin{array}{@{}c|cc@{} }
    \A_s^*-\A_s		& D	& 0	\shline{1.6}
    -D^*		& 0	& I_{n_2}	\\	
    0			& -I_{n_2}  & 0
   \end{array}\right]
    \left[
   \begin{array}{@{}c|c@{} }
     Y_s	& 0 \shline{1.6}
     0		& 0		\\		
     0		& I_{n_2}	
   \end{array}\right]=0,
  \end{align*}
  and we obtain that $\im \hat Y$ is $n+m$ dimensional and $(\hat \A^*-\hat \A)$-neutral. Set
  \begin{equation*}
    \hat V =
    \begin{bmatrix}
      I_{n_1}	& 0			& 0			\\
      0		& 0			& I_m			\\
      Q_{12}^*	& -I_{n_2}		& -B_2 + S_2-Q_{22}B_2	
    \end{bmatrix}.
  \end{equation*}
  Transforming the quantities in \eqref{eq:dluredefss1} to feedback equivalence form \eqref{eq:fbsys} we obtain
   \begin{equation*}\label{eq:dluredefss2}
   (z\A_F^*-\A_F)  Y_F\hat V =  Z_F (z\tilde E - \tilde A),
  \end{equation*}
  where
\begin{equation}\label{eq:deflss}
  Y_F \hat V=
  \begin{bmatrix}
       Y_{\mu,F}\\
      Y_{x,F}\\
      Y_{u,F}
     \end{bmatrix}\hat V
     :=
       \begin{bmatrix}
      Y_{\mu_1,F}\\
      Y_{\mu_2,F}\\
      Y_{x_1,F}\\
      Y_{x_2,F}\\
      Y_{u,F}
     \end{bmatrix}:=
  \hat U\hat Y\hat V=\left[
   \begin{array}{@{}ccc@{} }
    X_{11}(A_{11}-I_{n_1})	&0		& X_{11}B_1				\\
    0				& -I_{n_2}	& -B_2					\shline{1.6}
    I_{n_1}			& 0		& 0  					\\
    0				& 0		& -B_2					\\
    0				& 0		& I_m			
   \end{array}\right],
  \end{equation}
  $Z_F:= \hat U^{-*}\hat Z$, and
  \begin{equation*}\label{eq:ztildeE-tildeA}
    (z\tilde E - \tilde A):=(z\hat E - \hat A)\hat V=\left[
    \begin{array}{@{}ccc@{} }
     zI_{n_1}-A_{11}			& 0		& -B_1					\\
    (z-1)K_1				& 0		& (z-1)(L-K_2B_2)			\\		  
      (z-1)Q_{12}^*			& -zI_{n_2}	& -zB_2 +(z-1) (S_2 - Q_{22}B_2 )
    \end{array}\right].
  \end{equation*}

 Then we obtain property \ref{it:dluredefssbi} by
  \begin{align*}
    \rk 
   \begin{bmatrix}
     E_F-A_F	& -B_F
    \end{bmatrix}
    \begin{bmatrix}
     Y_{x,F}\\
     Y_{u,F}
    \end{bmatrix}
    &=
    \rk
    \begin{bmatrix}
      I_{n_1}-A_{11}	& 0		& -B_1\\
      0			&-I_{n_2}	& -B_2
    \end{bmatrix}
    \begin{bmatrix}
        I_{n_1}			& 0  		& 0		\\
    0				& -B_2		& 0		\\
    0				& I_m		& 0
    \end{bmatrix}\\
    &=\rk
    \begin{bmatrix}
       I_{n_1}-A_{11}	& -B_1 & 0\\
       0		& 0 	& 0
    \end{bmatrix}
    =n_1.
  \end{align*}
  Property \ref{it:dluredefssbii} follows from the fact that $\im \hat Y$ is $n+m$ dimensional and $(\hat \A^*-\hat \A)$-neutral. Furthermore, by Proposition~\ref{prop:sysspace}\ref{it:sysspacea} we have property \ref{it:dluredefssbiii}. Altogether, this shows statement \ref{it:dluredefssb}.
  
  Now assume that  \ref{it:dluredefssb} holds for the system $\wsystemF*$ in feedback equivalence form, \ie properties \ref{it:dluredefssbi}--\ref{it:dluredefssbiv} are satisfied. From these properties we construct a deflating subspace for the palindromic pencil $z\A_s^*-\A_s$ such that we can apply Theorem~\ref{th:cluredefss}. Therefore, with the help of by Proposition~\ref{prop:sysspace}\ref{it:sysspacea} and \ref{it:dluredefssbiii} we accordingly partition $Y_F$  into
  \begin{equation*}
  Y_F =
  \begin{bmatrix}
       Y_{\mu,F}\\
      Y_{x,F}\\
      Y_{u,F}
     \end{bmatrix}
     :=
       \begin{bmatrix}
      Y_{\mu_1,F}\\
      Y_{\mu_2,F}\\
      Y_{x_1,F}\\
      Y_{x_2,F}\\
      Y_{u,F}
     \end{bmatrix}:=
            \begin{bmatrix}
            Y_{\mu_{11},F} & Y_{\mu_{12},F} & Y_{\mu_{13},F} \\
            Y_{\mu_{21},F} & Y_{\mu_{22},F} & Y_{\mu_{23},F}\\
            I_{n_1} & 0 & 0\\
            0 & -B_2 & 0\\
            0 & I_m & 0
            \end{bmatrix}
     \end{equation*}
     and denote by $\hat \A$ and $\hat U$ the matrices %
     we obtain from Lemma~\ref{lem:dpal2cpal} such that \eqref{eq:dluredefsspalpenc2} holds.
     Then, for $\hat Y := \hat U^{-1} Y_F$ we have

  \begin{equation*} 
  \hat Y= \hat U^{-1} Y_F =
    \begin{bmatrix}
   I_{n_1}	& 0		& 0				&  0		& 0					\\
    0		& 0		& I_{n_1}			&  0		& 0					\\
    0		& 0		& 0				& 0      	& I_m					\\
    0		& 0		& 0				& I_{n_2}	& B_2					\\	
    0		& I_{n_2}	& Q_{12}^*			& Q_{22}	& S_2						
   \end{bmatrix}
     \begin{bmatrix}
      Y_{\mu_1,F}\\
      Y_{\mu_2,F}\\
      Y_{x_1,F}\\
      Y_{x_2,F}\\
      Y_{u,F}
     \end{bmatrix}
     =
    \begin{bmatrix}
      Y_{\mu_1,F}\\
      Y_{x_1,F}\\
      Y_{u,F}\\
      0\\
     \hat Y_{\mu_2,F}    
     \end{bmatrix}
  \end{equation*}
  for some
  \begin{equation*}
  \hat Y_{\mu_2,F}:=
  \begin{bmatrix}
  \hat Y_{\mu_{21},F} & \hat Y_{\mu_{22},F} & \hat Y_{\mu_{23},F}
  \end{bmatrix}.
  \end{equation*}
   Thus, $\im \hat Y$ is $n+m$ dimensional by property \ref{it:dluredefssbii} and $(\hat \A^*-\hat \A)$-neutral.
   In particular we obtain 
   \begin{align*}
   0&=
   \begin{bmatrix}
    Y_{\mu_{13},F}^*& 0 & 0 & 0 & \hat Y_{\mu_{23},F}^*
   \end{bmatrix}
   \left[
   \begin{array}{@{}ccc|cc@{} }
   0				& I_{n_1}-A_{11}	& -B_1 	 					& 0					& 0		\\
   A_{11}^*-I_{n_1}	& 0				& 0							& Q_{12} 			& 0 	\\
   B_1^*			& 0				& 0							& S_2^* -B_2^*Q_{22}& 0		\shline{1.6}
   0				& -Q_{12}^*		& Q_{22}B_2^* - S_2			& 0					& I_{n_2}\\	
   0				& 0				& 0							&-I_{n_2} 			& 0
   \end{array}\right]
            \begin{bmatrix}
            Y_{\mu_{11},F} & Y_{\mu_{12},F}  \\
            I_{n_1} & 0 \\
            0 & I_m \\
            0&0\\
                        \hat Y_{\mu_{21},F} & \hat Y_{\mu_{22},F} 
            \end{bmatrix}\\
   &=
 Y_{\mu_{13},F}^*
 \begin{bmatrix}
 I_{n_1} - A_s & - B_s
 \end{bmatrix},
   \end{align*}
   and hence $Y_{\mu_{13},F}=0$.
This shows that the matrix
   \begin{equation*}\label{eq:thmdlurefullrank}
     \begin{bmatrix}
      Y_{\mu_1,F}\\
      Y_{x_1,F}\\
       Y_{u,F}
     \end{bmatrix} 
   \end{equation*}
   has full column rank and thus its image is also maximally $(\A_s^*-\A_s)$-neutral. This, together with the fact that $\rk \hat Y = n+m$, allows us to perform a column transformation of $\hat Y$ via $T_1\in\mat{n+m}{n+m}$ such that
  \begin{equation*}
    \begin{bmatrix}
      Y_{\mu_1,F}\\
      Y_{x_1,F}\\
      Y_{u,F}\\
     \hat Y_{\mu_2,F}\\
     \hat Y_{x_2,F}
     \end{bmatrix}
     T_1=
     \hat Y T_1=
      \left[
   \begin{array}{@{}cc|c@{} }
    X_{11}(A_{11}-I_{n_1})	& X_{11}B_1 	&	0		\\
     I_{n_1}			& 0 		& 0			\\
     0				& I_m		& 0			\shline{1.6}		
     0				& 0		& 0		\\
     0				& 0		& I_{n_2}		
   \end{array}\right]
  \end{equation*}
  with some Hermitian $X_{11}\in\mat{n_1}{n_1}$, similar as in the proof of Theorem~\ref{th:cluredefss}.
  Set
    \begin{equation*}
  Y_s:=
    \begin{bmatrix}
     X_{11}(A_{11}-I_{n_1})	& X_{11}B_1 				\\
     I_{n_1}			& 0 					\\
     0				& I_m					
    \end{bmatrix}.
  \end{equation*}

From property \ref{it:dluredefssbiv} we obtain
\begin{align}\label{eq:thmdluredeflss}
\begin{split} 
  \left[
   \begin{array}{@{}c|cc@{} }
    z\A_s^*-\A_s	& zD	& 0	\shline{1.6}
    -D^*		& 0	& zI_{n_2}	\\	
    0			& -I_{n_2}  & 0
   \end{array}\right]
   \left[
   \begin{array}{@{}c|c@{} }
     Y_s	& 0 \shline{1.6}
     0		& 0\\		
     0		& I_{n_2}	
   \end{array}\right]&=
   \begin{bmatrix}
     Z_{11}	& Z_{12}	\\
     Z_{21}	& Z_{22}	
   \end{bmatrix}
   \begin{bmatrix}
     z\hat E_{11}-\hat A_{11}	& z\hat E_{12}-\hat A_{12}\\
     z\hat E_{21}-\hat A_{21}	& z\hat E_{22}-\hat A_{22}
   \end{bmatrix},
   \end{split}
\end{align}
where $Z_{11}\in\mat{2n_1+m}{n_1+q}$, $Z_{12}\in\mat{2n_1+m}{n_2}$, $Z_{21}\in\mat{2n_2}{n_1+q}$, $Z_{22}\in\mat{2n_2}{n_2}$, $z\hat E_{11}-\hat A_{11}\in\matz{n_1+q}{n_1+m}$, $z\hat E_{12}-\hat A_{12}\in\matz{n_1+q}{n_2}$, $z\hat E_{21}-\hat A_{21}\in\matz{n_2}{n_1+m}$, and $z\hat E_{22}-\hat A_{22}\in\matz{n_2}{n_2}$. From the last block column and block row of \eqref{eq:thmdluredeflss}  we obtain
\begin{equation}\label{eq:thmdlurefullnormalrank}
     \begin{bmatrix}
     zI_{n_2}\\
     0		
   \end{bmatrix}=
    \begin{bmatrix}
     Z_{21}	& Z_{22}	
   \end{bmatrix}
   \begin{bmatrix}
      z\hat E_{12}-\hat A_{12}\\
      z\hat E_{22}-\hat A_{22}
   \end{bmatrix}
\end{equation}
and thus we have 
\begin{equation*}
  \rk 
  \begin{bmatrix}
    Z_{21}	&Z_{22}
  \end{bmatrix}=n_2.
\end{equation*}
Therefore, we can determine a transformation matrix $T_2\in\mat{n+q}{n+q}$ such that
\begin{equation*}
  \rk 
  \begin{bmatrix}
    Z_{21}	&Z_{22}
  \end{bmatrix}T_2=
   \begin{bmatrix}
    0	& \tilde Z_{22}
  \end{bmatrix}
\end{equation*}
for some $\tilde Z_{22}\in\mat{2n_2}{n_2}$. Set 
\begin{equation*}
  \begin{bmatrix}
     \tilde Z_{11}	& \tilde Z_{12}	\\
     0			& \tilde Z_{22}	
   \end{bmatrix}:=
  \begin{bmatrix}
     Z_{11}	& Z_{12}	\\
     Z_{21}	& Z_{22}	
   \end{bmatrix}T_2
 \end{equation*}
 and
 \begin{equation*}
    \begin{bmatrix}
     z\tilde E_{11} - \tilde A_{11}	& z \tilde E_{12}-\tilde A_{12}\\
     z\tilde E_{21} - \tilde A_{21}	& z\tilde E_{22}-\tilde A_{22}
   \end{bmatrix}:=T_2^{-1}
   \begin{bmatrix}
     z\hat E_{11}-\hat A_{11}	& z\hat E_{12}-\hat A_{12}\\
     z\hat E_{21}-\hat A_{21}	& z\hat E_{22}-\hat A_{22}
   \end{bmatrix},
\end{equation*}
accordingly partitioned.
Thus reevaluating \eqref{eq:thmdlurefullnormalrank} for the transformed matrices we also obtain full normal rank $n_2$ of 
$z\tilde E_{22}-\tilde A_{22}$. Hence there exists some $\lambda_0\in\C$ such that
$\lambda_0 \tilde E_{22}-\tilde A_{22}$ is invertible.
From the last block column and first block row  of \eqref{eq:thmdluredeflss} we infer
\begin{equation*}
    0=\tilde Z_{11}(\lambda_0 \tilde E_{12}-\tilde A_{12}) + \tilde Z_{12}(\lambda_0 \tilde E_{22}-\tilde A_{22}).
\end{equation*}
Thus, $\tilde Z_{12}$ can be expressed as
\begin{equation*}
  \tilde Z_{12}=-\tilde Z_{11}(\lambda_0 \tilde E_{12}-\tilde A_{12})(\lambda_0 \tilde E_{22}-\tilde A_{22})^{-1}.
\end{equation*}
Inserting this relation into the first block row and block column of \eqref{eq:thmdluredeflss} we have
\begin{equation*}
   (z\A_s^*-\A_s)Y_s = \tilde Z_{11}\left(z\tilde E_{11}-\tilde A_{11} - (\lambda_0 \tilde E_{12}-\tilde A_{12})(\lambda_0 \tilde E_{22}-\tilde A_{22})^{-1}(z\tilde E_{21}-\tilde A_{21}) \right).
\end{equation*}
Hence, we are finally in the position to apply Theorem~\ref{th:cluredefss}. From this we obtain a solution $(X_s,\,K_s,\,L_s)$ of \eqref{eq:clure} corresponding to the system $\wsystemS*$. By Lemma~\ref{lem:dlure2clure} we then also find a solution $(X_F,\,K_F,\,L_F)$ of \eqref{eq:dlure} corresponding to the system in feedback equivalence form $\wsystemF*$.  
 \end{proof}

 \begin{remark}\label{rem:deflss} Let an I-controllable system $\wsystem$ be given and let $z\A^*-\A$ be the palindromic pencil as in \eqref{eq:palpenc}. Further, assume that there exists a solution $(X,\,K,\,L)$ of the Lur'e equation \eqref{eq:dlure}. 
  \begin{enumerate}[label=(\alph*)]
  \item \label{it:deflssa} The matrix pencil $z\tilde E - \tilde A\in\matz{n+q}{n+m}$ that we have obtained in the proof of Theorem~\ref{thm:dluredfss} fulfills $\rkr (z\tilde E - \tilde A)=n+q$ by Proposition~\ref{prop:lurefullrankrhs}, since
  \begin{align}\label{eq:lurerhsfullrank}
  \begin{split}
  n+q=&\rkr 
    \begin{bmatrix}
      zE-A	& -B	\\
      (z-1)K	&(z-1)L
    \end{bmatrix}\\    
    =& \rkr
     \begin{bmatrix}
       W & 0\\
       0 & I_m
     \end{bmatrix} 
     \begin{bmatrix}
      zE-A	& -B	\\
      (z-1)K	&(z-1)L
    \end{bmatrix}
    \mathcal{T}_F\\
    =& \rkr
    \begin{bmatrix}
      zI_{n_1}-A_{11}	& 	0	& -B_1	\\
      0			& - I_{n_2}	& -B_2  \\
      (z-1)K_1		& (z-1)K_2	&(z-1)L 
    \end{bmatrix}\\
    =& \rkr
    \begin{bmatrix}
      zI_{n_1}-A_{11}	& 	0	& -B_1	\\
      0			& - I_{n_2}	& 0  	\\
      (z-1)K_1		&  0		&(z-1)(L-K_2B_2) 
    \end{bmatrix}.
    \end{split}
  \end{align}
  In particular, this means that the existence of solutions of \eqref{eq:dlure} implies the existence of a deflating subspace of the palindromic pencil $z\A^*-\A$.
   \item \label{it:deflssb}  In the proof of Theorem~\ref{thm:dluredfss} we have constructed a deflating subspace $\im Y_F$ as in \eqref{eq:deflss} for the system 
   $\wsystemF*$ in feedback equivalence form \eqref{eq:fbsys} from a solution $(X_F,\, K_F,\,L_F)$ of the Lur'e equation \eqref{eq:dlure}. From here we can construct a deflating subspace $\im Y$ for the original system by using \eqref{eq:transformdss2FB}. By Lemma~\ref{lem:dlure2clure} it is justified to set
   \begin{equation*}\label{eq:X_FonlyX11}
   W^{-*}XW^{-1}=X_F:=
     \begin{bmatrix}
       X_{11} & 0\\
       0	&0
     \end{bmatrix}.
   \end{equation*}

   Thus, we have  
   \begin{align*}
   Y:=&U_FY_F\hat V
   \mathcal T_F^{-1}	
  \\
  =&\left[
   \begin{array}{@{}c|cc@{} }
   W^*		& 0 		& 0	 \shline{1.6}
   0		& T	 	& 0	 \\
   0		& FT 		& I_m 
  \end{array}\right]
   \left[
   \begin{array}{@{}ccc@{} }
    X_{11}(A_{11}-I_{n_1})	& 0		& X_{11}B_1		\\
    0				& -I_{n_2}	& -B_2			\shline{1.6}		
    I_{n_1}			& 0		& 0			\\  		
    0				& 0		& -B_2			\\
    0				& 0		& I_m		
   \end{array}\right]
   \mathcal T_F^{-1}
   \\ 
   =&
 \left[
   \begin{array}{@{}c|cc@{} }
   W^*		& 0 		& 0	 \shline{1.6}
   0		& T	 	& 0	 \\
   0		& FT 		& I_m 
  \end{array}\right]
  \left[
  \begin{array}{@{}c|cc@{} }
   -X_F + (I_n - E_F)	&  0 		& 0 	 \shline{1.6}
    0			& I_{n} 	& 0	 \\
      0			& 0		& I_m	 
  \end{array}
  \right]
    \left[
   \begin{array}{@{}ccc@{} }
   I_{n_1} -A_{11}		& 0			& -B_1		\\
    0				& -I_{n_2}		& -B_2			\shline{1.6}		
    I_{n_1}			& 0			& 0			\\  		
    0				& 0			& -B_2			\\
    0				& 0			& I_m		
   \end{array}\right]
   \mathcal T_F^{-1}
   \\
   =&
   \begin{bmatrix}
    X(A-E) + G_1 		& XB + G_2 		\\
    V_1			& V_2
   \end{bmatrix},
   \end{align*}
  where 
    \begin{equation}\label{eq:G1G2pal}
   \im
   \begin{bmatrix}
    G_1 & G_2  
   \end{bmatrix}
    = \im W^*
    \begin{bmatrix}
     0 & 0 \\
     0 & I_{n_2}
    \end{bmatrix}
    W
    \begin{bmatrix}
     E-A & -B
    \end{bmatrix} \subseteq \ker E^*,
  \end{equation}  
  \begin{equation}\label{eq:Vsigma1}
    \begin{bmatrix}
       V_1	& V_2
    \end{bmatrix}:=\mathcal T_F V_F \mathcal T_F^{-1},
  \end{equation}
  and 
  \begin{equation*}\label{eq:Vsigma2}
    V_F:=
    \begin{bmatrix}
       I_{n_1}			& 0			& 0			\\  		
    0				& 0			& -B_2			\\
    0				& 0			& I_m		
    \end{bmatrix}
  \end{equation*}
  spans the system space $\cV F$, see Proposition~\ref{prop:sysspace}\ref{it:sysspacea}.
  Altogether, this leads to $(z\A^*-\A) Y=Z(z\check E-\check A)$, where $Z=U_F^{*}Z_F$ and $z\check E-\check A:=(z\tilde E - \tilde A)\mathcal T_F^{-1}$.
  \end{enumerate}

 \end{remark}
 
 \begin{example}[Example \ref{ex:simplecircuitfeedback} revisited]\label{ex:simplecircuitidelure}
 Consider  the system $\wsystem*$ as in \eqref{eq:simplecircuitmatrices} and Example~\ref{ex:simplecircuitkyp}. Note that since $n_3=0$ in \eqref{eq:simplecircuitfeedback}, the system $\system*$ is I-controllable according to Table~\ref{tab:algchar}.
We have seen in Example~\ref{ex:simplecircuitsdelure} that 
\begin{equation*}
 (X_s,\, K_s,\, L_s) = \left(\sqrt 3,\, {\sqrt2},\, -\frac{\sqrt3+1}{\sqrt2}\right)
\end{equation*}
is a solution of the Lur'e equation \eqref{eq:clure}
corresponding to  the EDE system 
$
 \wsystemS *
$
 as in \eqref{eq:simpleciruictEDE}.
By Lemma~\ref{lem:dlure2clure} we obtain that 
\begin{equation*}\label{eq:simplecircuitfeedbacklure}
 X_F=
 \begin{bmatrix}
  \sqrt3 & 0\\
  0	& 0
 \end{bmatrix},
 \quad K_F=
 \begin{bmatrix}
  {\sqrt2} & 0
 \end{bmatrix}, \quad
  L_F = -\frac{\sqrt3+1}{\sqrt2}
\end{equation*}
solves the Lur'e equation of the system in feedback equivalence form. Therefore, by Lemma~\ref{lem:dlure2flure} we see that
\begin{gather*}
\begin{split}
\label{eq:simplecircuitlure}
 X=W^*X_FW=
       \begin{bmatrix}
	  1	& -1\\
	  1			& 0        
      \end{bmatrix}
 \begin{bmatrix}
  \sqrt{3} & 0\\
  0	& 0
 \end{bmatrix}
       \begin{bmatrix}
	  1	& 1\\
	  -1			& 0        
      \end{bmatrix}=
      \begin{bmatrix}
       \sqrt{3} & \sqrt{3}\\
       \sqrt{3} & \sqrt{3}
      \end{bmatrix},\\
 K=K_FT^{-1}
 \begin{bmatrix}
  {\sqrt2} & 0
 \end{bmatrix}
 \begin{bmatrix}
      0			& 1 \\
      1			& -1
      \end{bmatrix}
 =  \begin{bmatrix}
  0 &{\sqrt2}
 \end{bmatrix}, \quad
  L= -\frac{\sqrt3+1}{\sqrt2}
  \end{split}
\end{gather*}
solves the Lur'e equation \eqref{eq:dlure} corresponding to the original system.

Thus, according to Remark~\ref{rem:deflss} the matrix $Y\in\mat{5}{3}$ defined by
\begin{equation}\label{eq:simplecircuitdeflatingsubspace}
Y=
  \begin{bmatrix}
    X(A-E) + G_1 		& XB + G_2 		\\
    V_1			& V_2
 \end{bmatrix}
 =\left[
 \begin{array}{@{}cc|c@{}}
   0 + 1	& 0 - 1 	& -\sqrt{3} +1 \\
   0 + 0	& 0 + 0 	& -\sqrt{3} + 0 \shline{1.6}
   0		& 1    		& 1		\\
   0		& 1		& 0		\\
   0		& 0		& 1
 \end{array}\right]
\end{equation}
is a basis matrix of the deflating subspace of the palindromic  pencil in \eqref{eq:simplecircuitpalpenc}. 
\end{example}

 As in the EDE case we can show a similar statement for BVD pencils as in \eqref{eq:bvdpenc}.

  \begin{theorem}\label{prop:dluredfss}   Let $\wsystem$ be I-controllable. Further, let the corresponding BVD pencil $z\E-\A$ as in \eqref{eq:bvdpenc} be given.  In addition, let $q=\rkr\Phi(z)$.                                        
 Then the following are equivalent:
 \begin{enumerate}[label=(\alph*)]
  \item \label{it:dluredefssbvda} There exists a solution $(X,\,K,\,L)\in\mat{n}{n}\times\mat{q}{n}\times\mat{q}{m}$ of the Lur'e equation~\eqref{eq:dlure}.
  \item \label{it:dluredefssbvdb} It holds that $\Phi(\eio)\succeq 0$ for all $\omega\in[0,2\pi)$ such that $\det(\eio E-A)\neq 0$. Furthermore, there exist matrices $Y_\mu,\,Y_x\in\mat{n}{n+m},\,Y_u\in\mat{m}{n+m}$ and $Z_\mu,\,Z_x\in\mat{n}{n+q},\,Z_u\in\mat{m}{n+q}$ such that for
  \begin{equation*}
   Y=\begin{bmatrix}
      Y_\mu\\
      Y_x\\
      Y_u
     \end{bmatrix},\qquad
   Z=\begin{bmatrix}
      Z_\mu\\
      Z_x\\
      Z_u
     \end{bmatrix}
  \end{equation*}
  the following holds:
  \begin{enumerate}[label=(\roman*)]
   \item \label{it:dluredefssbvdbi}The matrix 
   \begin{equation*}
    \begin{bmatrix}
    E	& 0
    \end{bmatrix}
    \begin{bmatrix}
     Y_x\\
     Y_u
    \end{bmatrix}
   \end{equation*}
 has  rank $n_1$.
   \item \label{it:dluredefssbvdbii} The space $\mathcal Y= \im Y$ is of dimension $n+m$ and $\E^e$-neutral, where 
   \begin{equation*}
   \E^e:=
    \begin{bmatrix}
     0 	& -E	& 0\\
     E^*& 0	& 0\\
     0	& 0	& 0
    \end{bmatrix}.
   \end{equation*}
   \item \label{it:dluredefssbvdbiii} It holds that 
   \begin{equation*}
      \cV=\im
      \begin{bmatrix}
        Y_x\\
        Y_u
      \end{bmatrix}.
   \end{equation*}

   \item \label{it:dluredefssbvdbiv} There exist $\tilde E,\,\tilde A \in\mat{n+q}{n+m}$ such that $(z\E-\A)Y=Z(z\tilde E - \tilde A)$.
  \end{enumerate}

 \end{enumerate}

 \end{theorem}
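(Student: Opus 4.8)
The plan is to transcribe the proof of Theorem~\ref{thm:dluredfss} with the palindromic pencil $z\A^*-\A$ replaced throughout by the BVD pencil $z\E-\A$ of~\eqref{eq:bvdpenc}, Lemma~\ref{lem:dpal2cpal} replaced by Corollary~\ref{cor:dbvd2cbvd}, and Theorem~\ref{th:cluredefss} replaced by Theorem~\ref{prop:cluredefss}. As in that proof, the first step is a reduction to a system already in feedback equivalence form~\eqref{eq:fef}: by I-controllability one may choose a feedback $F$ so that the form~\eqref{eq:fbsys} of $\wsystem$ has $n_3=0$. Statement~\ref{it:dluredefssbvdb} is invariant under this transformation: the equivalence of \ref{it:dluredefssbvda} with its feedback counterpart is Lemma~\ref{lem:dlure2flure}; the pencil $z\E-\A$ is carried to $z\E_F-\A_F$ by one-sided transformations built from $W$, $T$, $F$ as in Corollary~\ref{cor:dbvd2cbvd}, with $\E^e$ carried to the corresponding $\E^e_F$ by the congruence with the same data so that $\E^e$-neutrality and the dimension of $\mathcal Y$ are preserved; \ref{it:dluredefssbvdbiii} translates because $\cV=\mathcal T_F\cV F$ by Proposition~\ref{prop:sysspace}\ref{it:sysspacea}; and the rank condition \ref{it:dluredefssbvdbi} is preserved, with the rank value $n_1$ unchanged, using $E_F=WET$ and $B_F=WB$. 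Hence it suffices to treat the system in feedback equivalence form with $n_3=0$.

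For the implication \ref{it:dluredefssbvda}$\Rightarrow$\ref{it:dluredefssbvdb}, a solution $(X_F,\,K_F,\,L_F)$ of~\eqref{eq:dlure} yields, via Lemma~\ref{lem:dlure2clure}, a solution $(X_{11},\,K_1,\,L-K_2B_2)$ of~\eqref{eq:clure} for the EDE part $\wsystemS*$ as in~\eqref{eq:EDEsys}, and $\Phi(\eio)\succeq 0$ passes to the EDE Popov function by Proposition~\ref{prop:popov}\ref{it:phifphib}. Theorem~\ref{prop:cluredefss} then produces EDE-level matrices $Y_s$, $Z_s$ of the explicit block shape of~\eqref{eq:deflssc1} and a right pencil of the shape of~\eqref{eq:deflssc2} with $(z\E_s-\A_s)Y_s=Z_s(z\hat E_s-\hat A_s)$ and $\im Y_s$ maximally $\E^e_s$-neutral. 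Using the block decomposition $\hat U^*(z\E_F-\A_F)\check U$ of Corollary~\ref{cor:dbvd2cbvd}, whose leading diagonal block is $z\E_s-\A_s$, I would pad $Y_s$, $Z_s$ with an $I_{n_2}$-block exactly as in the proof of Theorem~\ref{thm:dluredfss}, verify that the padded subspace is $(n+m)$-dimensional and neutral with respect to the congruence-transformed $\E^e$, transport everything back through $\check U$, $\hat U$, then $U_F$ and $\mathcal T_F$, and read off \ref{it:dluredefssbvdbi}--\ref{it:dluredefssbvdbiv} from the resulting explicit block form, with \ref{it:dluredefssbvdbiii} obtained from Proposition~\ref{prop:sysspace}\ref{it:sysspacea}. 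As in Remark~\ref{rem:deflss}\ref{it:deflssb}, the reconstructed $Y$ then has the form $\begin{bsmallmatrix}X(A-E)+G_1&XB+G_2\\V_1&V_2\end{bsmallmatrix}$ with $\im\begin{bsmallmatrix}G_1&G_2\end{bsmallmatrix}\subseteq\ker E^*$.

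For the converse, from \ref{it:dluredefssbvdbi}--\ref{it:dluredefssbvdbiv} for the feedback form I would use \ref{it:dluredefssbvdbiii} together with Proposition~\ref{prop:sysspace}\ref{it:sysspacea} to partition $Y_F$ so that three of its block rows read $\begin{bsmallmatrix}I_{n_1}&0&0\end{bsmallmatrix}$, $\begin{bsmallmatrix}0&-B_2&0\end{bsmallmatrix}$, $\begin{bsmallmatrix}0&I_m&0\end{bsmallmatrix}$, and transform $Y_F$ to the decomposed coordinates of Corollary~\ref{cor:dbvd2cbvd} to collapse the $n_2$-components as in the palindromic proof. A pairing computation against the transformed $\E^e$ shows $Y_{\mu_{13},F}\begin{bsmallmatrix}I_{n_1}-A_s&-B_s\end{bsmallmatrix}=0$, hence $Y_{\mu_{13},F}=0$, so the relevant $(2n_1+m)\times(n_1+q)$ sub-block of the transformed $Y_F$ has full column rank and spans a maximally $\E^e_s$-neutral subspace. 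After a column transformation this extracts $Y_s=\begin{bsmallmatrix}X_{11}(A_{11}-I_{n_1})&X_{11}B_1\\I_{n_1}&0\\0&I_m\end{bsmallmatrix}$ with $X_{11}$ Hermitian, together with its right pencil; squeezing out the trivial $zI_{n_2}$-part (using that the decoupled pencil $z\tilde E_{22}-\tilde A_{22}$ has full normal rank $n_2$, so $\lambda_0\tilde E_{22}-\tilde A_{22}$ is invertible for some $\lambda_0\in\C$) leaves a deflating-subspace datum for $z\E_s-\A_s$ meeting all hypotheses of Theorem~\ref{prop:cluredefss}. That theorem supplies a solution of~\eqref{eq:clure} for $\wsystemS*$, Lemma~\ref{lem:dlure2clure} a solution of~\eqref{eq:dlure} for the feedback form, and Lemma~\ref{lem:dlure2flure} one for the original system.

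The main obstacle, as already in Theorem~\ref{thm:dluredfss}, is the bookkeeping: because the BVD pencil is not palindromic and is acted on by two distinct one-sided transformations ($\hat U$ on the left, $\check U$ on the right) rather than a single congruence, one must check at each step that the right factor alone induces a genuine congruence of $\E^e$ --- so that the neutrality statements survive the passage to and from the decomposed pencil --- and that the padding and un-padding of subspaces is compatible with the block decomposition of Corollary~\ref{cor:dbvd2cbvd}. The rank-$n_1$ (rather than rank-$n$) condition in \ref{it:dluredefssbvdbi}, forced by the singular $E$, and the verification of the precise shape of the reconstructed $Y$ --- in particular the inclusion $\im\begin{bsmallmatrix}G_1&G_2\end{bsmallmatrix}\subseteq\ker E^*$ --- are the most delicate computations; the remainder is a routine transcription of the palindromic argument.
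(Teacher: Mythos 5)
Your plan is essentially the paper's own route: the result is proved by exactly this transcription of the palindromic argument (Corollary~\ref{cor:dbvd2cbvd} in place of Lemma~\ref{lem:dpal2cpal}, Theorem~\ref{prop:cluredefss} in place of Theorem~\ref{th:cluredefss}), with the details deferred to the cited thesis, and your reduction, padding/un-padding, and back-transformation steps match it, including the congruence $U_F^*\E^e U_F=\E^e_F$ that keeps the neutrality statements intact. One bookkeeping correction: in the BVD case the EDE-level data and the reconstructed subspace have the shapes of Remark~\ref{rem:deflssbvd}\ref{it:deflssbbvd}, namely $Y=\begin{bsmallmatrix}-XE+G_1 & G_2\\ V_1 & V_2\end{bsmallmatrix}$ with $\im\begin{bsmallmatrix}G_1 & G_2\end{bsmallmatrix}\subseteq\ker E^*$, not the palindromic forms \eqref{eq:deflssc1}--\eqref{eq:deflssc2} and Remark~\ref{rem:deflss}\ref{it:deflssb} that you quoted.
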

 \begin{proof}
   See \cite[Theorem 5.14]{bankmann_linear-quadratic_2015}.
 \end{proof}

 \begin{remark}\label{rem:deflssbvd} Let an I-controllable system $\wsystem$ be given and let $z\E-\A$ be the BVD pencil as in \eqref{eq:bvdpenc}. Further, assume that there exists a solution $(X,\,K,\,L)$ of the Lur'e equation \eqref{eq:dlure}. 
  \begin{enumerate}[label=(\alph*)]
  \item \label{it:deflssabvd} The matrix pencil $(z\tilde E - \tilde A)\in\matz{n+q}{n+m}$ we obtain in the proof of Theorem~\ref{prop:dluredfss} fulfills $\rkr (z\tilde E - \tilde A)=n+q$, see \eqref{eq:lurerhsfullrank}.
  In particular, this means that the existence of solutions of \eqref{eq:dlure} implies the existence of a deflating subspace of the BVD pencil $z\E-\A$.
   \item \label{it:deflssbbvd}  In the proof of Theorem~\ref{prop:dluredfss} we have constructed a deflating subspace $Y_F$  for the weighted system 
   $\wsystemF*$ in feedback equivalence form \eqref{eq:fbsys} from a solution $(X_F,\, K_F,\,L_F)$ of the Lur'e equation \eqref{eq:dlure}. From here we can construct a deflating subspace $Y$ for the original system. By Lemma~\ref{lem:dlure2clure} it is justified to set
   \begin{equation*}\label{eq:X_FonlyX11bvd}
   W^{-*}XW^{-1}=X_F:=
     \begin{bmatrix}
       X_{11} & 0\\
       0	&0
     \end{bmatrix}.
   \end{equation*}
   
   Thus we have  
   \begin{align*}
   Y:=&U_FY_F \hat V
   \mathcal T_F^{-1}	
  =\left[
   \begin{array}{@{}c|cc@{} }
   W^*		& 0 		& 0	 \shline{1.6}
   0		& T	 	& 0	 \\
   0		& FT 		& I_m 
  \end{array}\right]
   \left[
   \begin{array}{@{}ccc@{} }
    -X_{11}			& 0		& 0		\\
    0			& -I_{n_2}	& -B_2			\shline{1.6}		
    I_{n_1}			& 0		& 0			\\  		
    0				& 0		& -B_2			\\
    0				& 0		& I_m		
   \end{array}\right]
   \mathcal T_F^{-1}
   =
   \begin{bmatrix}
    -XE  + G_1 		& G_2 		\\
    V_1		& V_2
   \end{bmatrix},
   \end{align*}
  where $V_1,\,V_2$ are as in \eqref{eq:Vsigma1} and 
  \begin{equation}\label{eq:G1G2}
    \im
   \begin{bmatrix}
    G_1 & G_2  
   \end{bmatrix}
    = \im  W^*
    \begin{bmatrix}
  0 & 0 & 0\\
  0			& -I_{n_2}	&  -B_2				
    \end{bmatrix}\mathcal T _F^{-1}	
  \subseteq \ker E^*.
  \end{equation}  

  Altogether, this leads to $(z\E-\A) Y=Z(z\check E-\check A)$, where $Z=U_F^{*}Z_F$ and $z\check E-\check A:=(z\tilde E - \tilde A)\mathcal T_F^{-1}$.
  \end{enumerate}

 \end{remark}
 \begin{remark} A major difference between Theorem~\ref{thm:dluredfss} and Theorem~\ref{prop:dluredfss} or Theorem~\ref{th:cluredefss} and Theorem~\ref{prop:cluredefss} is that in the BVD case we do not need the artificial assumption
\begin{equation*}\label{eq:rankcondition}
\rk
\begin{bmatrix}
  E -A & -B
\end{bmatrix}=n,
\end{equation*}
or equivalently
\begin{equation*}\label{eq:rankcondition2}
\rk
\begin{bmatrix}
  I_{n_1} -A_{11} & -B_1
\end{bmatrix}=n_1,
\end{equation*}
\ie controllability at one. If the system $\system*$ is obtained by discretization with the implicit Euler method, we see that in the limiting case $h\to 0$ this corresponds to
\begin{equation*}
  \rk
  \lim\limits_{h\to0}
  \begin{bmatrix}
  I_{n_1} -hA_{11} & -hB_1
\end{bmatrix}
=
  \rk  
  \begin{bmatrix}
  I_{n_1} & 0
\end{bmatrix}
=n_1,
\end{equation*}
which is trivially fulfilled. Therefore, for sufficiently small $h$ we may assume validity of this assumption.
\end{remark}

\section{Application to Optimal Control}\label{chap:applications}
In this section we discuss the structure of solutions of the discrete-time optimal control problem \eqref{eq:objectivefunction} corresponding to the system $\wsystem$  based on the results from the previous subsection. %
First we show relations between so-called stabilizing solutions of the Lur'e equation \eqref{eq:dlure} and feasibility of the optimal control problem. Then we show characterizations for the existence and uniqueness of the optimal control.

\subsection{Stabilizing solutions}

In this subsection we state several discrete-time versions of results from \cite{ilchmann_outer_2014}. If not explicitly stated otherwise, these results can be proven analogously, \ie by using the same algebraic transformations and the identical properties of the solution operators.

\begin{definition}[Stabilizing solution] Let $\wsystem$ be given and assume that a solution $(X,\, K,\, L)\in\mat{n}{n}\times\mat{p}{n}\times\mat{p}{m}$ of the corresponding Lur'e equation \eqref{eq:dlure} exists.
	Then $(X,\, K,\, L)$ is also called \emph{stabilizing} solution if in addition it holds
	\begin{equation}\label{eq:stablure}
	\rk
	\begin{bmatrix}
	\lambda E-A	& -B	\\
	K	&L
	\end{bmatrix} = n+q
	\end{equation}
	for all $\lambda\in\C$ with $|\lambda|\ge1$.	
\end{definition}
The following is an adaptation of \cite[Proposition 6.4(b)]{ilchmann_outer_2014}.
\begin{proposition}\label{prop:analogprpo64b}
	Let $\system$ be I-controllable. If \propfourI then  we have
	\begin{equation}\label{eq:kronrb}
	\rk \begin{bmatrix}
	E & A & B\\
	0 & K & L
	\end{bmatrix}= 
	\rk 
	\begin{bmatrix}
	\lambda E - A & -B\\
	K & L
	\end{bmatrix}
	\end{equation}
	for all $\lambda\in\C$ with $|\lambda|>1$.
\end{proposition}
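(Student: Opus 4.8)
The plan is to show that both matrices have rank exactly $n+q$, where $q=\rkr\Phi(z)$ and $(X,K,L)\in\mat{n}{n}\times\mat{q}{n}\times\mat{q}{m}$ denotes the solution of the Lur'e equation~\eqref{eq:dlure} under consideration. The starting point is Proposition~\ref{prop:lurefullrankrhs}, which by~\eqref{eq:lurerankcond} tells us that the pencil $\left[\begin{smallmatrix}zE-A & -B\\ (z-1)K & (z-1)L\end{smallmatrix}\right]$ has full row normal rank $n+q$; since scaling its last $q$ rows by the nonzero rational function $z-1$ does not change the $\K(z)$-rank, the pencil $\left[\begin{smallmatrix}zE-A & -B\\ K & L\end{smallmatrix}\right]$ has full row normal rank $n+q$ as well. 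For the left-hand side of the claimed identity, write $\mathcal E:=\left[\begin{smallmatrix}E & 0\\ 0 & 0\end{smallmatrix}\right]$ and $\mathcal A:=\left[\begin{smallmatrix}A & B\\ K & L\end{smallmatrix}\right]$ and delete the zero columns of $[\mathcal E\ \mathcal A]$ coming from the block $\left[\begin{smallmatrix}0\\0\end{smallmatrix}\right]$; this shows $\rk\left[\begin{smallmatrix}E & A & B\\ 0 & K & L\end{smallmatrix}\right]=\rk[\mathcal E\ \mathcal A]$. A left null vector $v$ of $[\mathcal E\ \mathcal A]$ annihilates both $\mathcal E$ and $\mathcal A$, hence $v^*(z\mathcal E-\mathcal A)=0$ for all $z$, where $z\mathcal E-\mathcal A=\left[\begin{smallmatrix}zE-A & -B\\ -K & -L\end{smallmatrix}\right]$; by full row normal rank $v=0$, so $\rk\left[\begin{smallmatrix}E & A & B\\ 0 & K & L\end{smallmatrix}\right]=n+q$. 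It then remains to prove $\rk\left[\begin{smallmatrix}\lambda E-A & -B\\ K & L\end{smallmatrix}\right]=n+q$ for every $\lambda\in\C$ with $|\lambda|>1$.

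Suppose, for contradiction, that this fails for some $\lambda_0$ with $|\lambda_0|>1$: then there is $0\neq v=\left[\begin{smallmatrix}v_1\\ v_2\end{smallmatrix}\right]$ with $v_1^*(\lambda_0E-A)+v_2^*K=0$ and $v_1^*B=v_2^*L$. Fix an arbitrary $x^0\in\K^n$ and $\varepsilon>0$; by hypothesis there is $(x,u)\in\mathfrak{B}_{(E,A,B)}\cap(\ell^2(\K^n)\times\ell^2(\K^m))$ with $Ex_0=Ex^0$ and $\|w\|_{\ell^2}<\varepsilon$, where $w:=Kx+Lu$. Using $v_1^*A=\lambda_0v_1^*E+v_2^*K$ and $v_1^*B=v_2^*L$ in $Ex_{j+1}=Ax_j+Bu_j$ yields the scalar recursion $v_1^*Ex_{j+1}=\lambda_0\,(v_1^*Ex_j)+v_2^*w_j$, and therefore
\[
 \lambda_0^{-N}\,v_1^*Ex_N=v_1^*Ex_0+\lambda_0^{-1}\sum_{j=0}^{N-1}\lambda_0^{-j}\,v_2^*w_j .
\]
As $N\to\infty$ the left-hand side tends to $0$, because $x\in\ell^2(\K^n)$ forces $x_N\to0$ while $|\lambda_0|^{-N}\to0$; and by the Cauchy--Schwarz inequality $\bigl|\sum_{j=0}^{\infty}\lambda_0^{-j}v_2^*w_j\bigr|\le\|v_2\|\,(1-|\lambda_0|^{-2})^{-1/2}\,\|w\|_{\ell^2}<C\varepsilon$ with $C:=\|v_2\|\,(1-|\lambda_0|^{-2})^{-1/2}$ independent of $x^0$ and $\varepsilon$. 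Hence $|v_1^*Ex^0|=|v_1^*Ex_0|\le C\varepsilon$; letting $\varepsilon\to0$ gives $v_1^*Ex^0=0$, and since $x^0$ was arbitrary, $v_1^*E=0$.

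Feeding $v_1^*E=0$ back into the two defining relations gives $v_1^*A=v_2^*K$ and $v_1^*B=v_2^*L$, so that $v^*\left[\begin{smallmatrix}\mu E-A & -B\\ K & L\end{smallmatrix}\right]=\left[\begin{smallmatrix}\mu v_1^*E-v_1^*A+v_2^*K & -v_1^*B+v_2^*L\end{smallmatrix}\right]=0$ for \emph{every} $\mu\in\C$. Thus $v\neq0$ is a constant left null vector of $\left[\begin{smallmatrix}zE-A & -B\\ K & L\end{smallmatrix}\right]$, contradicting the full row normal rank established above. Hence no such $\lambda_0$ exists, and $\rk\left[\begin{smallmatrix}\lambda E-A & -B\\ K & L\end{smallmatrix}\right]=n+q=\rk\left[\begin{smallmatrix}E & A & B\\ 0 & K & L\end{smallmatrix}\right]$ for all $|\lambda|>1$, as claimed.

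I expect the substantive part to be the recursion argument of the second paragraph: identifying the correct generating-function combination $v_1^*Ex_j$ and justifying the passage to the limit, where it is essential that $|\lambda_0|>1$ places $(\lambda_0^{-j})_j$ in $\ell^1\subseteq\ell^2$ and that $x\in\ell^2$ annihilates the boundary term $\lambda_0^{-N}v_1^*Ex_N$. The I-controllability hypothesis enters only to guarantee $\Vshift=\K^n$, that is, that a trajectory with prescribed $Ex_0$ exists for every $x^0\in\K^n$, in line with the quantifier in the hypothesis; the remaining rank bookkeeping is routine once Proposition~\ref{prop:lurefullrankrhs} is in hand.
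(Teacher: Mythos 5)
Your second paragraph is the analytically substantive part and it is correct: from a left annihilator $(v_1,v_2)$ of $\left[\begin{smallmatrix}\lambda_0 E-A & -B\\ K & L\end{smallmatrix}\right]$ you get the scalar recursion $v_1^*Ex_{j+1}=\lambda_0\,v_1^*Ex_j+v_2^*(Kx_j+Lu_j)$, and the division by $\lambda_0^N$ together with Cauchy--Schwarz and $|\lambda_0|>1$ forces $v_1^*E=0$. This is a genuinely different route from the paper, which instead reduces \eqref{eq:kronrb} to a structural statement about the Kronecker canonical form of the pencil, transfers the approximation hypothesis to the individual Kronecker blocks, and excludes K1 blocks with $|\lambda|>1$ via a discrete Lyapunov equation $P-G^*PG=-I_k$ (the remaining block types being handled by reference to the continuous-time proof). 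Your argument avoids the KCF entirely and, once repaired as below, proves the full statement directly.

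The gap is in the surrounding rank bookkeeping. The proposition does not assume that $(X,K,L)$ solves the Lur'e equation \eqref{eq:dlure}, nor that $K,L$ have $q=\rkr\Phi(z)$ rows; its only hypotheses are I-controllability and the approximation property, with $K\in\mat{p}{n}$, $L\in\mat{p}{m}$ arbitrary. Indeed it is invoked in the proof of Lemma~\ref{lem:approx} for a factorization with $p\ge q$ rows, where the pencil $\left[\begin{smallmatrix}zE-A&-B\\K&L\end{smallmatrix}\right]$ need not have full row normal rank $n+p$; and for, say, $K=0$, $L=0$ the hypothesis can hold while both matrices in \eqref{eq:kronrb} have rank at most $n$. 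So the appeal to Proposition~\ref{prop:lurefullrankrhs} and \eqref{eq:lurerankcond}, and the intermediate claim that both ranks equal $n+q$, are neither available nor what is asserted; the conclusion is only the \emph{equality} of the two ranks. Your own computation closes this without any full-rank statement: every left null vector $(w_1,w_2)$ of $\left[\begin{smallmatrix}E&A&B\\0&K&L\end{smallmatrix}\right]$ yields the left null vector $(w_1,-w_2)$ of $\left[\begin{smallmatrix}\lambda E-A&-B\\K&L\end{smallmatrix}\right]$, so $\rk\left[\begin{smallmatrix}\lambda E-A&-B\\K&L\end{smallmatrix}\right]\le\rk\left[\begin{smallmatrix}E&A&B\\0&K&L\end{smallmatrix}\right]$ for every $\lambda$; conversely, for $|\lambda_0|>1$ your recursion argument shows that any left null vector $(v_1,v_2)$ of $\left[\begin{smallmatrix}\lambda_0E-A&-B\\K&L\end{smallmatrix}\right]$ has $v_1^*E=0$, hence $v_1^*A=v_2^*K$ and $v_1^*B=v_2^*L$, so that $(v_1,-v_2)$ annihilates $\left[\begin{smallmatrix}E&A&B\\0&K&L\end{smallmatrix}\right]$. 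The two left kernels thus have equal dimension and \eqref{eq:kronrb} follows. Restructure the proof this way, dropping the contradiction against ``rank $n+q$'', and it is complete under exactly the stated hypotheses.
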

\begin{proof}
	The major part of the proof is completely analogous to the continuous-time case, since only algebraic operations and the linearity of the shift operator is used.
	The basic idea is that the relation \eqref{eq:kronrb} is equivalent to the fact that the Kronecker canonical form of the matrix pencil
	\begin{equation*}
		\begin{bmatrix}
		z E - A & -B\\
		K & L
		\end{bmatrix}
	\end{equation*}
	consists only of blocks of type K1 with $|\lambda|\le1$ and blocks of type K4 of size $1\times0$ \cite[Remark 2.8(b)]{ilchmann_outer_2014}. Also note, that due to the block structure of the KCF we have that for every $\varepsilon>0$ and every $v^{j,0}\in\C^{k_j}$ and corresponding block $K_j(z) = zF_j - G_j$ in the KCF there exists $v^j\in\ell_2(\C^{k_j})$ such that
	\begin{equation*}
	 F_jv_0^j = F_jv^{j,0},\quad \lim_{i\to\infty}F_jv^j_i=0,\quad \left\|F_j\OpShift v^j - G_jv^j\right\|_{\ell_2}<\varepsilon.  
	\end{equation*}

	Here we show, that all blocks of type K1 fulfill $|\lambda| \le 1$. To this end we thus proof the following fact: Let a block of the KCF of the form $zF - G := K(z) = z I_k - (\lambda I_k + N_k) \in \matz[\C]{k}{k}$ be given. If for any $v^0 \in \C^k$ and any $\varepsilon > 0$ there exists a $v\in\ell^2(\C^k)$ such that $v_0 = v^0$ and $\|\sigma v - G v\| < \varepsilon$, then $|\lambda| \le 1$. 
	
	For the sake of a contradiction, assume that $|\lambda|>1$. Then the eigenvalue $\frac1\lambda$ of $G^{-1}$ fulfills $\frac1{|\lambda|}<1$ and thus there exists positive definite $P$ such that
	\begin{equation*}
	P - G^*PG = - I_k.
	\end{equation*}
	Set $\tilde\varepsilon = (v^0)^*Pv^0>0$ and $\varepsilon:= \|(P+P^2)^\frac12 G^{-1}\|_2^{-2}\tilde\varepsilon$. Then for any $v^0\in\C^k\setminus\{0\}$ and  $w:=\sigma v - G v$ with $\|w\|^2_{\ell_2}<\varepsilon$ one finds that
	\begin{align*}
	\begin{split}
	v_{0}^*Pv_{0}-v_{j}^*Pv_{j} 
	&= -\sum_{k=0}^{j-1}{\OpShift(v_{k}^*Pv_{k}) - v_{k}^*Pv_{k}}
	= -\sum_{k=0}^{j-1}{({Gv_{k} + w_{k})}^*P{(Gv_{k} + w_{k})} - v_{k}^*Pv_{k}}\\ 
	&=-\sum_{k=0}^{j-1}{
		\begin{pmatrix}
		v_k\\
		w_k
		\end{pmatrix}^*
		\begin{bmatrix}
		G^*PG-P	& G^*P\\
		PG		& P
		\end{bmatrix}
		\begin{pmatrix}
		v_k\\
		w_k
		\end{pmatrix}
	}\\
&	= -
	\sum_{k=0}^{j-1}{
		\begin{pmatrix}
		v_k+ (I+P)G^{-1}w_k\\
		G^{-1}w_k
		\end{pmatrix}^*
		\begin{bmatrix}
		I	& 0\\
		0		& -P-P^2
		\end{bmatrix}
		\begin{pmatrix}
		v_k+ (I+P)G^{-1}w_k\\
		G^{-1}w_k
		\end{pmatrix}
	}\\
	&\le \|(P+P^2)^\frac12 G^{-1}\|_2^2\|w\|^2_{\ell^2}<\tilde\varepsilon
	\end{split}
	\end{align*}
	and thus in the limit case $\tilde\varepsilon = \lim_{j\to\infty} v_{0}^*Pv_{0} - v_{j}^*Pv_{j}< \tilde\varepsilon$.
\end{proof}

\begin{lemma}\label{lem:approx}
	Let $\wsystem$ be given. Further, assume that $(X,\, K,\, L)\in\mat{n}{n}\times\mat{p}{n}\times\mat{p}{m}$ fulfills \eqref{eq:dlure} with
	\begin{equation}\label{eq:tildeKLrank}
	\rk \begin{bmatrix}
	 K &  L
	\end{bmatrix} = p.
	\end{equation}
	If \propfour
	then there exists a stabilizing solution of the Lur'e equation \eqref{eq:stablure}.
\end{lemma}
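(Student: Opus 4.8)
The plan is to mimic the strategy used for the KYP lemma and the deflating subspace theorems: reduce to an explicit difference equation via feedback equivalence, and then invoke the analogous ODE-style argument. First I would observe that, by Proposition~\ref{prop:lurefullrankrhs} (applied in the form of Remark~\ref{rem:deflss}\ref{it:deflssa}) together with the rank condition \eqref{eq:tildeKLrank}, the assumption $\rk\left[\,K\enspace L\,\right]=p$ forces $p=q$, and the pencil
\[
 z\tilde E - \tilde A:=
 \begin{bmatrix}
  zE-A & -B\\
  (z-1)K & (z-1)L
 \end{bmatrix}
\]
has full normal rank $n+q$. Thus a stabilizing solution in the sense of \eqref{eq:stablure} is precisely one for which this pencil additionally has no blocks of type K1 with $|\lambda|\ge1$ and no blocks of type K3 in its KCF; equivalently, by Proposition~\ref{prop:analogprpo64b}, for which $\rk\left[\begin{smallmatrix} \lambda E-A & -B\\ K & L\end{smallmatrix}\right]=n+q$ for all $|\lambda|\ge 1$.

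Next I would pass to feedback equivalence form. Using Lemma~\ref{lem:dlure2flure} and Lemma~\ref{lem:dlure2clure}, the given solution $(X,K,L)$ corresponds to a solution $(X_{11},K_1,L-K_2B_2)$ of the EDE Lur'e equation \eqref{eq:clure} for the system $\wsystemS*$ as in \eqref{eq:EDEsys}, and by Remark~\ref{rem:deflss}\ref{it:deflssa} (the chain of rank equalities in \eqref{eq:lurerhsfullrank}) the stabilizing rank condition \eqref{eq:stablure} for the full system is equivalent to the corresponding condition for the EDE data $(A_{11},B_1,K_1,L-K_2B_2)$. The approximate controllability hypothesis $\textbf{(P4)}$ also transfers: writing out $\|Kx+Lu\|_{\ell^2}$ in feedback coordinates and using $\cV = \mathcal T_F\cV_F$ together with the representation \eqref{eq:reprsysspace} of the system space, the property \propfour for the original system becomes the property \propfourI for the EDE system $\wsystemS*$ (here the last $n_3=0$ components drop out and $x=(x_1,-B_2u,0)$). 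So it suffices to prove the lemma for an explicit difference equation $\wsystem[I_n]$.

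In the EDE case I would argue directly. Starting from a (non-stabilizing) solution $(X,K,L)$ of \eqref{eq:clure} with $\rk[K\ L]=q$, consider the KCF of $z\tilde E-\tilde A$. Blocks of type K2 and K4 are impossible because the $(1,1)$ block is $zI_n-A$, which is regular with trivial K2/K4 structure, and the bottom rows cannot create infinite eigenvalues; so the only ``bad'' blocks are those of type K1 with $|\lambda|\ge1$ and those of type K3 (zero columns are excluded by full normal rank $n+q$, since $[\tilde E\ \tilde A]$ has $n+q$ rows and the column count is $n+m\ge n+q$). For each such bad block one produces, via the block structure of the KCF, a nonzero sequence $(x,u)$ that lies in the behavior, makes $\|Kx+Lu\|_{\ell^2}$ arbitrarily small but whose $\ell^2$-norm is bounded below away from zero and which does not decay — this is exactly the mechanism in the proof of Proposition~\ref{prop:analogprpo64b}, where a Lyapunov matrix $P$ with $P-G^*PG=-I$ is built for the unstable block. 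Feeding such a sequence into the cost and using $\mathcal M(X)=[K\ L]^*[K\ L]$ on $\cV$ contradicts \propfourI. Hence no bad block exists, i.e. \eqref{eq:stablure} holds, so $(X,K,L)$ is already stabilizing; if the original $p$-tuple had $p>q$ one first compresses $[M\ N]$ to full row rank $q$ by Proposition~\ref{prop:lurefullrankrhs}.

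The main obstacle I expect is the careful bookkeeping in the reduction step — showing that \propfour for the implicit system is genuinely equivalent to \propfourI for the reduced EDE system, in particular matching up the quantity $Kx+Lu$ and the constraint $Ex_0=Ex^0$ after the feedback and state-space transformations, and checking that the discarded $n_3$ components contribute nothing (which relies on I-controllability, assumed throughout this chapter). The rest is an adaptation of the Lyapunov-inequality argument already carried out in detail in the proof of Proposition~\ref{prop:analogprpo64b}.
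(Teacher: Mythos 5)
The core of your argument does not survive scrutiny. Your opening claim that $\rk\left[\,K\enspace L\,\right]=p$ together with Proposition~\ref{prop:lurefullrankrhs} forces $p=q$ is false: that proposition presupposes that a $q$-row solution of \eqref{eq:dlure} with the same $X$ already exists (which is exactly what the lemma must produce), and even then it only yields $q\le p$. In the situation of Lemma~\ref{lem:approx} the pair $(K,L)$ is merely a full-row-rank factorization of $\mathcal M(X)$ on $\cV$, and $p>q$ is perfectly possible; in that case the pencil $\left[\begin{smallmatrix} zE-A & -B\\ K & L\end{smallmatrix}\right]$ has normal row rank $n+q<n+p$, so the given triple can never be stabilizing, and your final assertion that ``$(X,K,L)$ is already stabilizing'' cannot be repaired by ``compressing to rank $q$'' --- that step is circular, because producing a $q$-row pair whose Gram matrix still represents $\mathcal M(X)$ on $\cV$ \emph{and} which satisfies \eqref{eq:stablure} at every $|\lambda|\ge 1$ is precisely the content of the lemma. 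This is also where the paper locates the actual work: its proof invokes the discrete-time analogue of the implication ``$\Rightarrow$'' of \cite[Proposition~6.5]{ilchmann_outer_2014} with $Y=I_p$, an outer-factorization-type result that constructs a \emph{new} pair with $q$ rows from the given $p$-row output; Proposition~\ref{prop:analogprpo64b} enters only as an ingredient of that construction, not as the whole argument.

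There are further gaps in the EDE step. The Lyapunov mechanism of Proposition~\ref{prop:analogprpo64b} only excludes eigenvalues with $|\lambda|>1$ and only shows that the rank there equals $\rk\left[\begin{smallmatrix} E & A & B\\ 0 & K & L\end{smallmatrix}\right]$, which need not be $n+q$; it says nothing about $|\lambda|=1$ or about the singular structure, so it does not by itself deliver \eqref{eq:stablure}. Your KCF bookkeeping is also off: full row rank of $\left[\begin{smallmatrix}\lambda E-A & -B\\ K & L\end{smallmatrix}\right]$ is destroyed by K1 blocks at such $\lambda$ and by K4 blocks (left minimal indices), not by K3 blocks, and K4 blocks cannot be excluded by inspecting the $(1,1)$ block alone --- indeed they必 occur whenever $p>q$. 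The direction of your contradiction is likewise muddled: a trajectory that ``does not decay'' is not in $\ell^2$ and cannot be played against the approximation hypothesis, which asserts the \emph{existence} of good $\ell^2$ trajectories; to contradict it one must show that some initial value admits no approximately output-zeroing $\ell^2$ trajectory, which is how the Lyapunov computation is actually used in Proposition~\ref{prop:analogprpo64b}. Finally, that proposition assumes I-controllability, which is not among the hypotheses of Lemma~\ref{lem:approx}, so your reduction needs an additional argument (e.g.\ the projector construction) before it can be applied.
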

\begin{proof}
	By the rank condition \eqref{eq:tildeKLrank} we can use the analogous version of $"\Rightarrow"$ of \cite[Proposition 6.5]{ilchmann_outer_2014} for $Y=I_p$, where Proposition~\ref{prop:analogprpo64b} is used.
\end{proof}

\begin{lemma}\label{lem:approx2}
	Let $\wsystem$ be stabilizable. Further, assume that $(X,\, K,\, L)\in\mat{n}{n}\times\mat{q}{n}\times\mat{q}{m}$ is a stabilizing solution of the Lur'e equation \eqref{eq:dlure}.
	Then any given sequence $v\in\ell^2(\K^q)$ can be approximated arbitrarily well by $y = Kx + Lu$ and some $\behavior\cap\left(\ell_2(\K^n)\times\ell_2(\K^m)\right)$. In other words, for every $\varepsilon>0$ and every $x^0\in\Vshift$ there exists $\behavior\cap\left(\ell_2(\K^n)\times\ell_2(\K^m)\right)$ such that $Ex_0= Ex^0$ and
	\begin{equation*}
	\|v - Kx- Lu\|_{\ell^2} < \varepsilon.
	\end{equation*}
\end{lemma}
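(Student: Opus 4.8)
The statement to prove (Lemma~\ref{lem:approx2}) asserts that for a stabilizable weighted system and a stabilizing solution $(X,K,L)$ of the Lur'e equation \eqref{eq:dlure}, every $v\in\ell^2(\K^q)$ can be approximated in $\ell^2$-norm by $y=Kx+Lu$ along $\ell^2$-trajectories $(x,u)$ of the behavior with prescribed initial shift variable $Ex_0=Ex^0$. The plan is to reduce this to the feedback equivalence form and then to an EDE computation, using that the ``right-hand side pencil'' $z\tilde E-\tilde A$ has full normal rank $n+q$ (Remark~\ref{rem:deflss}\ref{it:deflssa}). First I would invoke stabilizability to pick a feedback $F$ putting $\wsystemF$ into feedback equivalence form \eqref{eq:fbsys}, \eqref{eq:fef}, with the EDE part $\wsystemS$ as in \eqref{eq:EDEsys}; by Lemma~\ref{lem:dlure2flure} and Lemma~\ref{lem:dlure2clure} the stabilizing solution carries over, and by Proposition~\ref{prop:popov} and the rank identities in Remark~\ref{rem:deflss}\ref{it:deflssa} the relevant ranks are preserved, so it suffices to prove the claim for the EDE system $\wsystemS$ with $E=I$. (One also checks that the $n_3$-block contributes nothing, exactly as in the proof of Theorem~\ref{thm:dluredfss}.)

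In the EDE setting the stabilizing property \eqref{eq:stablure} says that the pencil $z\tilde E-\tilde A=\left[\begin{smallmatrix} zI_{n_1}-A_s & -B_s \\ (z-1)K_s & (z-1)L_s\end{smallmatrix}\right]$ has full normal rank $n_1+q$ and no generalized eigenvalues $\lambda$ with $|\lambda|\ge 1$; equivalently, its Kronecker canonical form consists only of K1-blocks with $|\lambda|<1$ and K3-blocks (rectangular, no eigenvalue). The key step is then: given $v\in\ell^2(\K^q)$, solve the ``inhomogeneous'' difference equation $\sigma x_j=A_sx_j+B_su_j$, $y_j:=K_sx_j+L_su_j$, and arrange $y\approx v$. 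The clean way is to work block by block in the KCF of $z\tilde E-\tilde A$: on each K3-block of size $k\times(k+1)$ the map from the input part of the trajectory onto the corresponding output components is, after the canonical-form change of variables, essentially a shift, so one can hit any prescribed $\ell^2$-output component (indeed, exactly, or arbitrarily well if one wants strict $\ell^2$-decay); on each K1-block with $|\lambda|<1$ the associated homogeneous recursion decays geometrically, so starting from the prescribed initial value one produces an $\ell^2$-trajectory whose contribution to the output is $\ell^2$ and can be made arbitrarily small on the components where we must have $y\approx v$. Concretely I would mirror the computation sketched in the proof of Proposition~\ref{prop:analogprpo64b}: for each block $K_j(z)=zF_j-G_j$ in the KCF and each prescribed datum, there exists $v^j\in\ell^2$ with $F_jv_0^j=F_jv^{j,0}$, $\lim_i F_jv_i^j=0$, and $\|F_j\sigma v^j-G_jv^j\|_{\ell^2}<\varepsilon$; assembling these across blocks and applying the invertible transformations of the KCF back gives a genuine $\ell^2$-trajectory $(x,u)$ of the EDE with $Ex_0=Ex^0$ and $\|v-K_sx-L_su\|_{\ell^2}<\varepsilon$. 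Finally, transform back through $U_F$, $\mathcal T_F$ and $W$ to obtain the claim for the original implicit system, using that $\ell^2$ and the stabilization constraint are preserved by these (bounded, invertible) transformations and that $Ex_0=Ex^0$ is equivalent to $E_Fx_{0,F}=E_Fx^0_F$ on the relevant variables.

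\textbf{Main obstacle.} The delicate point is the surjectivity-up-to-$\varepsilon$ step: one needs that the output operator $(x,u)\mapsto Kx+Lu$, restricted to $\ell^2$-trajectories with a fixed consistent initial value, has dense range in $\ell^2(\K^q)$. This is precisely where the stabilizing rank condition \eqref{eq:stablure} — equivalently the absence of K1-blocks with $|\lambda|\ge1$ and the full normal rank $n+q$ — is used, and it is the discrete-time analogue of \cite[Proposition~6.5]{ilchmann_outer_2014}; in fact, since \propfour is assumed in the hypotheses of Lemma~\ref{lem:approx} and Lemma~\ref{lem:approx2} is one step further, the cleanest route is to cite the discrete-time version of \cite[Proposition~6.5 and 6.7]{ilchmann_outer_2014} (provable by the identical algebraic manipulations, as stated at the start of this subsection) and only spell out the block-diagonal $\ell^2$-approximation argument where the discrete shift replaces the derivative. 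Handling the ``$\lim_i F_jv_i^j=0$'' requirement together with the $\ell^2$ bound on K3-blocks — i.e. matching both the initial value and the asymptotics while keeping everything square-summable — is the part that requires the most care, and it is essentially the content of the estimate carried out in the proof of Proposition~\ref{prop:analogprpo64b}.
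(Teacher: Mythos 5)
Your route is genuinely different from the paper's, and as written it has gaps at the decisive step. The paper does not pass to feedback equivalence form for this lemma at all: it observes that the stabilizing property yields $\rk\left[\begin{smallmatrix} E & A & B\\ 0 & K & L\end{smallmatrix}\right]=n+q$ and then invokes discrete-time counterparts of \cite[Prop.~6.4(a)]{ilchmann_outer_2014} and \cite[Thm.~5.1]{ilchmann_outer_2014}, whose proof is frequency-domain: one passes to Z-transforms and shows that the multiplication operator induced by the transfer function $K(zE-A)^{-1}B+L$ has dense range in the Hardy space $\mathcal H_2^q$. Your reduction to the EDE part is harmless, but already your identification of the relevant pencil is off: the stabilizing condition \eqref{eq:stablure} concerns $\left[\begin{smallmatrix} zE-A & -B\\ K & L\end{smallmatrix}\right]$, not $\left[\begin{smallmatrix} zE-A & -B\\ (z-1)K & (z-1)L\end{smallmatrix}\right]$; the latter always loses rank at $z=1$ (its rank there is $\rk\left[\begin{smallmatrix} E-A & B\end{smallmatrix}\right]\le n<n+q$), so your claim that it has full rank for all $|\lambda|\ge 1$ is false. (Also, \propfourI is not among the hypotheses of this lemma, so you cannot lean on it the way you suggest.)

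The more serious gap is the block-by-block KCF argument itself. Writing $R(z)=\left[\begin{smallmatrix} zE-A & -B\\ K & L\end{smallmatrix}\right]=U\,R_{\rm KCF}(z)\,V$ with constant invertible $U,V$, what you must produce is a trajectory with $R(\sigma)\vect{x,u}=\vect{0,y}$, where the first $n$ rows are \emph{exactly} zero (the dynamics) and the last $q$ rows approximate $v$. In KCF coordinates the right-hand side becomes $U^{-1}\vect{0,v}$, whose entries are spread over all Kronecker blocks and are not free blockwise; conversely the error $v-Kx-Lu$ you must control mixes all blocks through $U$. Hence the assertion that on each K3 block one can ``hit any prescribed $\ell^2$-output component'' while K1 blocks only contribute small terms is not meaningful as stated: solving the blocks independently does not let you enforce the dynamics rows exactly while steering only the output rows. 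This is exactly the difficulty the Hardy-space argument avoids, since there the dynamics are eliminated by working directly with the input--output multiplication operator and proving its range is dense in $\mathcal H_2^q$. Your fallback of citing the discrete-time analogues is, in the end, what the paper does, but you cite the statements used for Lemma~\ref{lem:approx} (the analogues of Prop.~6.5/6.7) rather than those actually needed here (Prop.~6.4(a) together with Thm.~5.1), and the estimate you point to from the proof of Proposition~\ref{prop:analogprpo64b} establishes the opposite direction (blocks with $|\lambda|>1$ cannot even be approximately stabilized), not the dense-range property that is the heart of this lemma.
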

\begin{proof}
	Since $(X,\, K,\, L)$ is a stabilizing solution one can easily show that
	\begin{equation*}
	\rk
	\begin{bmatrix}
	E & A &B\\
	0 & K &L
	\end{bmatrix}= n+ q.
	\end{equation*}
Then the result is obtained by using the discrete-time version of \cite[Proposition 6.4(a)]{ilchmann_outer_2014}. The proof is tedious but analogous and relies on a discrete-time version  of \cite[{Theorem 5.1}]{ilchmann_outer_2014}, which relates the algebraic characterization \eqref{eq:stablure} to the fact, that arbitrary sequences $v\in\ell^2(\K)$ can be approximated arbitrary well by $y = Kx + Lu$ and given $\behavior\cap\left(\ell_2(\K^n)\times\ell_2(\K^m)\right)$. The argumentation uses the Hardy space $\mathcal H_2^{q}$ of analytic functions $\mathbf{G} : \{ \lambda \in \C \,:\, \left| \lambda \right| > 1 \} \to \C^{q}$ with $\int_0^{2\pi} \left\|\mathbf{G}(\mathrm{e}^{\mathrm{i} \omega})\right\|_{\rm F}^2 \mathrm{d} \omega < \infty$ and revolves around the fact, that the multiplication operator mapping the Z-transform of the input to the Z-transform of the output has dense range in $\mathcal H_2^{q}$.

\end{proof}

\subsection{Feasibility}

We characterize feasibility and structure of the optimal control problem with the existence of a stabilizing solution of the Lur'e equation. First we show that the existence of a stabilizing solution implies feasibility and an explicit characterization of the optimal value function $\inffunc$ for given $x^0\in\Vshift$.
\begin{theorem}
	Let $\wsystem$ be given with no uncontrollable modes on the unit circle and assume there exists a stabilizing solution $(X,\, K,\, L)\in\mat{n}{n}\times\mat{q}{n}\times\mat{q}{m}$ of the Lur'e equation \eqref{eq:stablure}. Then the optimal control problem is feasible, \ie $\inffunc\in\R$ for all $x^0\in\Vshift$ and $\inffunc = (x^0)^*E^*XEx^0$.
\end{theorem}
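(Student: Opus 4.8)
The standard technique is a completion-of-squares / telescoping argument along trajectories, so the plan is to rewrite the cost functional in terms of $X$ and the Lur'e right-hand side, then show that the infimum over stabilizing trajectories equals the boundary term $(x^0)^*E^*XEx^0$. First I would fix $x^0\in\Vshift$ and an arbitrary admissible trajectory $\behavior\cap(\ell^2(\K^n)\times\ell^2(\K^m))$ with $Ex_0=Ex^0$ and $\lim_{j\to\infty}Ex_j=0$. Using the implicit difference equation $E\OpShift x_j = Ax_j + Bu_j$, one computes for each $j$
\begin{align*}
  (Ex_{j+1})^*X(Ex_{j+1}) - (Ex_j)^*X(Ex_j)
  &= \begin{pmatrix} x_j \\ u_j \end{pmatrix}^*
     \begin{bmatrix} A^*XA - E^*XE & A^*XB \\ B^*XA & B^*XB \end{bmatrix}
     \begin{pmatrix} x_j \\ u_j \end{pmatrix},
\end{align*}
so that adding and subtracting the weight matrix $\left[\begin{smallmatrix} Q & S \\ S^* & R \end{smallmatrix}\right]$ and invoking the Lur'e equation \eqref{eq:dlure} — which holds only on $\cV$, but $(x_j,u_j)^*\in\cV$ for every $j$ by Definition~\ref{def:systemspace} — gives the pointwise identity
\begin{equation*}
  \begin{pmatrix} x_j \\ u_j \end{pmatrix}^*
  \begin{bmatrix} Q & S \\ S^* & R \end{bmatrix}
  \begin{pmatrix} x_j \\ u_j \end{pmatrix}
  = (Ex_j)^*X(Ex_j) - (Ex_{j+1})^*X(Ex_{j+1}) + \|Kx_j + Lu_j\|^2.
\end{equation*}

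Summing this telescoping identity from $j=0$ to $\infty$ and using $Ex_0 = Ex^0$ together with $\lim_{j\to\infty}Ex_j = 0$ (here one needs that $x,u\in\ell^2$ so the tail term vanishes and all series converge) yields
\begin{equation*}
  \objfunc = (x^0)^*E^*XEx^0 + \sum_{j=0}^\infty \|Kx_j + Lu_j\|_{\K^q}^2
  \;\ge\; (x^0)^*E^*XEx^0.
\end{equation*}
This already gives $\inffunc \ge (x^0)^*E^*XEx^0 > -\infty$ and hence feasibility from below, provided one argues that the value of $\inffunc$ does not change when the infimum is restricted to $\ell^2$-trajectories; this restriction is harmless because any admissible non-$\ell^2$ trajectory either has infinite cost or can be handled by the stabilizability/R-controllability hypotheses inherited from the absence of uncontrollable modes on the unit circle. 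For the reverse inequality I would invoke Lemma~\ref{lem:approx2}: since $(X,K,L)$ is a stabilizing solution and, dropping to the subsystem where the system is stabilizable (the no-uncontrollable-modes hypothesis ensures stabilizability on $E\Vshift$), the sequence $v\equiv 0$ can be approximated arbitrarily well by $Kx+Lu$ along admissible $\ell^2$-trajectories with $Ex_0=Ex^0$ and $Ex_j\to 0$; feeding such a near-optimal trajectory into the displayed formula makes $\sum_j\|Kx_j+Lu_j\|^2$ as small as desired, so $\inffunc \le (x^0)^*E^*XEx^0$. Combining the two bounds gives $\inffunc = (x^0)^*E^*XEx^0 \in \R$.

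The main obstacle I expect is \emph{not} the algebraic telescoping identity, which is routine, but rather the justification of the limiting/approximation steps: namely that the infimum is genuinely attained in the limit over $\ell^2$-trajectories and that the boundary term at infinity vanishes. This is where Proposition~\ref{prop:analogprpo64b} and Lemma~\ref{lem:approx2} do the real work — one must verify that the stabilizing rank condition \eqref{eq:stablure} translates, via the Hardy-space density argument underlying Lemma~\ref{lem:approx2}, into the density of $\{Kx+Lu : (x,u)\in\behavior\cap\ell^2,\ Ex_0=Ex^0\}$ in $\ell^2(\K^q)$, so that the residual sum $\sum_j\|Kx_j+Lu_j\|^2$ can be pushed to zero while keeping the initial condition fixed. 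A secondary technical point is handling the discrepancy between the full state $x^0\in\Vshift$ and $E\Vshift$, i.e.\ checking that $(x^0)^*E^*XEx^0$ depends only on $Ex^0$, which is immediate from the form of the expression, and that feasibility of the projected problem (after reducing via I-controllability/stabilizability as in the preceding section) transfers back to the original one.
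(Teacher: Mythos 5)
Your overall route is the same as the paper's: telescoping the quadratic form $x_j^*E^*XEx_j$ along a trajectory, invoking the Lur'e identity on the system space $\mathcal{V}$ (legitimate since $(x_j^*,u_j^*)^*\in\mathcal{V}$ for every $j$), obtaining $\mathcal{J}(x,u)=(x^0)^*E^*XEx^0+\|Kx+Lu\|_{\ell^2}^2$ for square-summable trajectories, and then pushing $\|Kx+Lu\|_{\ell^2}$ to zero via Lemma~\ref{lem:approx2} with $v=0$ (for which you correctly note that stabilizability must first be extracted from the rank condition \eqref{eq:stablure} together with the controllability hypothesis).

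There is, however, one step where your argument as written does not go through: the lower bound. You establish $\mathcal{J}(x,u)\ge (x^0)^*E^*XEx^0$ only for trajectories in $\ell^2(\K^n)\times\ell^2(\K^m)$, and you dismiss the remaining admissible trajectories with the claim that a non-$\ell^2$ trajectory ``either has infinite cost or can be handled by the stabilizability/R-controllability hypotheses.'' Neither alternative is justified: the weight matrix $\left[\begin{smallmatrix}Q & S\\ S^* & R\end{smallmatrix}\right]$ carries no definiteness assumption in this setting, so an admissible trajectory that is not square-summable may a priori have finite or even arbitrarily negative cost, and controllability properties of the system say nothing about the cost of that particular trajectory -- ruling this out is precisely what the lower bound is supposed to do, so the argument is circular as stated. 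The repair is immediate and is what the paper does: apply your pointwise identity (or, equivalently, only the inequality $\mathcal{M}(X)\succeq_{\mathcal V}0$) to \emph{finite partial sums} $\sum_{k=j_1}^{j_2-1}$ along an \emph{arbitrary} admissible trajectory, which yields
\begin{equation*}
 \sum_{k=0}^{j_2-1}\begin{pmatrix}x_k\\ u_k\end{pmatrix}^*\begin{bmatrix}Q & S\\ S^* & R\end{bmatrix}\begin{pmatrix}x_k\\ u_k\end{pmatrix}\;\ge\; x_0^*E^*XEx_0-x_{j_2}^*E^*XEx_{j_2},
\end{equation*}
and then let $j_2\to\infty$ using only the boundary condition $\lim_{j\to\infty}Ex_j=0$; no $\ell^2$ assumption and no separate treatment of non-$\ell^2$ trajectories is needed. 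With that modification (and attributing stabilizability to the rank condition \eqref{eq:stablure} rather than to a reduction ``on $E\mathcal{W}$''), your proof coincides with the paper's.
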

\begin{proof}
	First, since  $\wsystem$ has no uncontrollable modes on the unit circle and $(X,\, K,\, L)$ is a stabilizing solution we obtain that
	\begin{equation*}
	\rk 
	\begin{bmatrix}
	\lambda E - A & -B
	\end{bmatrix}
	+ q =
		\rk
		\begin{bmatrix}
		\lambda E-A	& -B	\\
		K	&L
		\end{bmatrix} = n+q
	\end{equation*}
	for all $\lambda\in\C$ with $|\lambda|\ge 1$. Thus, $\wsystem*$ is stabilizable.

Let $x^0\in\Vshift$. Then, we have $\behavior$ with $Ex_0=Ex^0$ and $\lim_{j\to\infty}Ex_j=0$. By the definition of the system space $\cV\subseteq\K^{n+m}$, we obtain $\vect*{x_j,u_j}\in\cV$ for all $j\in\N_0$. Thus, for $j_2\ge j_1$ we have that
\begin{align}\label{eq:manipulate_objfunc}
\begin{split}
&x_{j_2}^*E^*XEx_{j_2} - x_{j_1}^*E^*XEx_{j_1}
= \sum_{k=j_1}^{j_2-1}{({Ax_{k} + Bu_{k})}^*X{(Ax_{k} + Bu_{k})} - x_{k}^*E^*XEx_{k}}\\ 
=&\sum_{k=j_1}^{j_2-1}{
	\begin{pmatrix}
	x_k\\
	u_k
	\end{pmatrix}^*
	\begin{bmatrix}
	A^*XA-E^*XE	& A^*XB\\
	B^*XA		& B^*XB
	\end{bmatrix}
	\begin{pmatrix}
	x_k\\
	u_k
	\end{pmatrix}
}
\ge -
\sum_{k=j_1}^{j_2-1}{
	\begin{pmatrix}
	x_k\\
	u_k
	\end{pmatrix}^*
	\begin{bmatrix}
	Q		& S\\
	S^*	& R
	\end{bmatrix}
	\begin{pmatrix}
	x_k\\
	u_k
	\end{pmatrix}
}. 
\end{split}
\end{align}
For $j_1=0,\,j_2\to\infty$  we thus  obtain for the objective function $\objfunc$ that
\begin{equation*}
x_0^*E^*XEx_0\le \objfunc 
\end{equation*}
and thus
\begin{equation}\label{eq:optimalvalueboundedbelow}
x_0^*E^*XEx_0\le \inffunc < \infty.
\end{equation}
Furthermore, since $(X,\,K,\,L)$ is a stabilizing solution of the Lur'e equation \eqref{eq:dlure}, for every $x^0\in\Vshift$ and $\behavior\cap\left(\ell^2(\K^n) \times \ell^2(\K^m)\right)$ with $Ex_0=Ex^0$ we obtain in \eqref{eq:manipulate_objfunc} that
\begin{align*}\label{eq:lureoc}
\begin{split}
-x_{0}^*E^*XEx_{0} 
=&\sum_{k=0}^{\infty}{
	\begin{pmatrix}
	x_k\\
	u_k
	\end{pmatrix}^*
	\left(
	\mathcal M (X)
	-
	\begin{bmatrix}
	Q		& S\\
	S^*	& R
	\end{bmatrix}
	\right)
	\begin{pmatrix}
	x_k\\
	u_k
	\end{pmatrix}
}\\
=&\sum_{k=0}^{\infty}{
	\begin{pmatrix}
	x_k\\
	u_k
	\end{pmatrix}^*
	\begin{bmatrix}
	K^*\\
	L^*
	\end{bmatrix}
	\begin{bmatrix}
	K & L
	\end{bmatrix}
	\begin{pmatrix}
	x_k\\
	u_k
	\end{pmatrix}
} - \objfunc
\end{split}
\end{align*}
and thus
\begin{equation}\label{eq:objlure}
x_0^*E^*XEx_0 + \|Kx + Lu\|_{\ell^2}^2 = \objfunc.
\end{equation}

Since $(X,\,K,\,L)$ is a stabilizing solution, using Lemma~\ref{lem:approx2} for $v=0$ we further conclude that
\begin{equation*}\label{eq:assumptionoptcontrol}
(x^0)^*E^*XEx^0 = \inffunc
\end{equation*}
for all $x^0\in\Vshift$.
\end{proof}
Next we show, that the opposite implication is also true, \ie that feasibility implies the existence of a stabilizing solution of the Lur'e equation.
\begin{theorem}
	Let $\wsystem$ be given with no uncontrollable modes on the unit circle. Assume that the optimal control problem is feasible, \ie $\inffunc\in\R$. Then there exists a stabilizing solution of the Lur'e equation.
	
\end{theorem}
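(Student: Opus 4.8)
The plan is to recover the stabilising solution from the optimal value function, in the spirit of the converse implication just proved above and of the continuous-time arguments in \cite{ilchmann_outer_2014}. First I would normalise by a feedback: since $\system*$ has no uncontrollable modes on the unit circle, Lemma~\ref{lem:fefcontr}\ref{it:fefcontra} shows that the unit-circle modes of the EDE part of a feedback equivalence form~\eqref{eq:fef} are controllable, so pole placement there yields a feedback $F$ with $zE-(A+BF)$ having no eigenvalues on the unit circle (Proposition~\ref{prop:fef}). Feedback equivalence leaves the objective~\eqref{eq:objectivefunction} invariant once the weights are modified, transports the system space $\cV$ by Lemma~\ref{lem:feedbacksysspace}, and carries (stabilising) solutions of the Lur'e equation to (stabilising) solutions of the transformed one by Lemma~\ref{lem:dlure2flure}; hence we may and do assume from now on that $\det(\eio E-A)\neq 0$ for all $\omega$, so $\Phi$ is finite on the entire unit circle. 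Two further consequences of feasibility enter below: since $\inffunc\in\R$ for every $x^0\in\Vshift$, an admissible (hence stabilising) trajectory exists from every consistent initial value, so $\system*$ is stabilisable; and, by the destabilising-control argument foreshadowed in Section~\ref{chap:applications} (if $v^{*}\Phi(\eio[\omega_0])v<0$, a windowed oscillation $u_j\approx\operatorname{Re}(\eio[j\omega_0])v$ with the associated $x_j$, completed by a stabilising tail, would drive $\objfunc$ to $-\infty$), one has $\Phi(\eio)\succeq 0$ for all $\omega\in[0,2\pi)$.

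The core step is to prove that the optimal value function $V\colon E\Vshift\to\R$, $V(Ex^0):=\inffunc$, is a Hermitian quadratic form. Feasibility makes $V$ finite-valued; the scaling of~\eqref{eq:objectivefunction} gives $V(\alpha\xi)=|\alpha|^{2}V(\xi)$; and superposing near-optimal trajectories for $\xi_1,\xi_2$ and for $\xi_1,-\xi_2$ makes the cross terms cancel in the sum, which yields the parallelogram identity $V(\xi_1+\xi_2)+V(\xi_1-\xi_2)=2V(\xi_1)+2V(\xi_2)$ on $E\Vshift$. Hence $V(Ex^0)=(x^0)^{*}E^{*}XEx^0$ for some Hermitian $X\in\mat{n}{n}$ (equivalently, $V(Ex^0)\ge(x^0)^{*}E^{*}PEx^0$ for every solution $P$ of~\eqref{eq:LMI}, and $X$ is a largest such $P$). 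I would then apply the one-step dynamic programming inequality: for an arbitrary $\behavior$ with $Ex_0=Ex^0$ one has $V(Ex_0)\le\begin{pmatrix}x_0\\u_0\end{pmatrix}^{*}\begin{bsmallmatrix}Q&S\\S^{*}&R\end{bsmallmatrix}\begin{pmatrix}x_0\\u_0\end{pmatrix}+V(Ex_1)$; inserting $Ex_1=Ax_0+Bu_0$ and $V(E\,\cdot\,)=(\,\cdot\,)^{*}E^{*}XE(\,\cdot\,)$ turns this into $\begin{pmatrix}x_0\\u_0\end{pmatrix}^{*}\mathcal M(X)\begin{pmatrix}x_0\\u_0\end{pmatrix}\ge 0$, and since the time-zero values of behaviours span the system space $\cV$ (Definition~\ref{def:systemspace}) we obtain $\mathcal M(X)\succeq_{\cV}0$. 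Finally I would factor $N^{*}\mathcal M(X)N=N^{*}\begin{bmatrix}K^{*}\\L^{*}\end{bmatrix}\begin{bmatrix}K&L\end{bmatrix}N$ for a basis matrix $N$ of $\cV$ with $\begin{bmatrix}K&L\end{bmatrix}$ of full row rank $p$, so that $(X,K,L)$ solves~\eqref{eq:dlure} and $\begin{pmatrix}x\\u\end{pmatrix}^{*}\mathcal M(X)\begin{pmatrix}x\\u\end{pmatrix}=\|Kx+Lu\|^{2}$ on $\cV$.

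To conclude, I would verify the approximation hypothesis of Lemma~\ref{lem:approx} for this $(X,K,L)$. For $\behavior\cap(\ell^2(\K^n)\times\ell^2(\K^m))$ with $Ex_0=Ex^0$, telescoping exactly as in~\eqref{eq:manipulate_objfunc} and using the factorisation on $\cV$ gives $\objfunc=(x^0)^{*}E^{*}XEx^0+\|Kx+Lu\|_{\ell^2}^{2}$, cf.~\eqref{eq:objlure}. On the other hand $(x^0)^{*}E^{*}XEx^0=\inffunc$ is the infimum of $\objfunc$ over \emph{all} admissible trajectories, and, using stabilisability, the tail of any near-optimal trajectory can be replaced (past a time at which $Ex_j$ is already small) by a square-summable stabilising continuation at arbitrarily small extra cost; hence the infimum of $\objfunc$ over admissible square-summable trajectories equals $(x^0)^{*}E^{*}XEx^0$ as well, i.e.\ $\inf\|Kx+Lu\|_{\ell^2}^{2}=0$ over such trajectories. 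This says precisely that \propfour\ so that Lemma~\ref{lem:approx} applies and yields a stabilising solution of the Lur'e equation~\eqref{eq:dlure}, which is the assertion.

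The hard part will be making the second paragraph rigorous: showing that $V$ is genuinely a quadratic form, and not merely a supremum of quadratic forms, requires excluding degeneracies at the boundary of the admissible set, and this is coupled with the claim in the last paragraph that near-optimal trajectories may be chosen in $\ell^2(\K^n)\times\ell^2(\K^m)$, for which stabilisability and the feedback normalisation of the first paragraph enter. An alternative that bypasses constructing $V$ directly is a regularisation scheme: for each $k\in\N$ solve the strictly coercive problem with weights $\begin{bsmallmatrix}Q&S\\S^{*}&R\end{bsmallmatrix}+\tfrac1kI_{n+m}$, whose Popov function is positive definite on the unit circle, so that after the feedback normalisation Theorem~\ref{th:kyp} (or Theorem~\ref{thm:dluredfss}) provides solutions $X_k$ of the corresponding Lur'e equations; one would then show that $(X_k)_k$ is bounded and convergent and that its limit is a stabilising solution of the original equation, the obstacle now moving to the a priori bound on, and convergence of, the $X_k$.
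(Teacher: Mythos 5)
Your proposal follows essentially the same route as the paper's proof: establish that $\inffunc$ is a Hermitian quadratic form $(x^0)^*E^*XEx^0$, show that $X$ solves the KYP inequality \eqref{eq:LMI}, factor $\mathcal M(X)$ on $\cV$ with a full-row-rank $\begin{bmatrix} K & L\end{bmatrix}$, and then use the identity \eqref{eq:objlure} together with Lemma~\ref{lem:approx} to upgrade this to a stabilizing solution. The only difference is one of detail: the paper delegates the quadraticity of the value function and the KYP step to \cite[Theorem~3.8.3]{voigt_linear-quadratic_2015}, whereas you sketch these (parallelogram identity, one-step dynamic programming) explicitly and correctly flag them as the technically delicate part.
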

\begin{proof}
	First we have to show, that $\inffunc = (Ex^0)^*XEx^0$ for some Hermitian $X\in\mat nn$. This can be done in an analogous way as in \cite[Theorem 3.8.3]{voigt_linear-quadratic_2015}.
	With similar steps as in \cite[Theorem 3.8.3]{voigt_linear-quadratic_2015} we can also show, that $X$ solves the KYP inequality \eqref{eq:LMI}.
	
	It remains to show, that $X$ also induces a stabilizing solution of the Lur'e equation. Since $X$ solves the KYP inequality there exist $\tilde K \in\mat pn$ and $\tilde L \in\mat pm$ such that \eqref{eq:dlure} and \eqref{eq:tildeKLrank} hold. Thus, by \eqref{eq:objlure} we obtain that for every $x^0\in\Vshift$ and $\varepsilon>0$ there exists $\behavior$ such that $\|\tilde Kx + \tilde Lu\|_{\ell^2}<\varepsilon$. Then, by Lemma~\ref{lem:approx} we also obtain a stabilizing solution of the Lur'e equation.
\end{proof}

\subsection{Existence and Uniqueness of Optimal Controls}
In this section we discuss conditions for the existence and uniqueness of optimal controls. Based on the considerations  
of the previous subsections we pose these conditions in terms of the zero dynamics $\ZD[E][A][B][K][L]$ and the pencil $\left[\begin{smallmatrix} zE - A & -B \\ K & L \end{smallmatrix}\right]$.
\begin{theorem} Let $(E,A,B,Q,S,R) \in \Sigma_{m,n}^w(\K)$ be I-controllable and assume that $(X,K,L) \in \K^{n \times n} \times \K^{q \times n} \times \K^{q \times m}$ is a stabilizing solution of the Lur'e equation \eqref{eq:dlure}. Then the following statements are satisfied:
 \begin{enumerate}[label=(\alph*)]
  \item For every $x^0 \in \K^n$ there exists a trajectory $(x,u) \in \mathfrak{B}_{(E,A,B)}$ with $Ex_0 = Ex^0$ such that $\cW_+(Ex^0) = \cJ(x,u)$ if and only if $\ZD[E][A][B][K][L]$ is strongly stabilizable.
  \item For every $x^0 \in \K^n$ there exists a unique trajectory $(x,u) \in \mathfrak{B}_{(E,A,B)}$ with $Ex_0 = Ex^0$ such that $\cW_+(Ex^0) = \cJ(x,u)$ if and only if $\ZD[E][A][B][K][L]$ is strongly asymptotically stable.
 \end{enumerate}
\end{theorem}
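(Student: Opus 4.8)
The plan is to reduce both statements to the characterizations of (strong) stabilizability and (strong) asymptotic stability of zero dynamics provided by Proposition~\ref{prop:zd}. Throughout I read an \emph{optimal trajectory} for $x^0$ as an admissible one, i.e.\ a pair $(x,u)\in\mathfrak{B}_{(E,A,B)}$ with $Ex_0=Ex^0$ and $\lim_{j\to\infty}Ex_j=0$ satisfying $\cJ(x,u)=\cW_+(Ex^0)$. Two preliminary observations set the stage. First, I-controllability gives $\Vshift=\K^n$, so every $x^0\in\K^n$ is consistent and admits an admissible trajectory. Second, deleting the last $q$ rows of the rank-$(n+q)$ matrix in \eqref{eq:stablure} gives $\rk\begin{bmatrix}\lambda E-A & B\end{bmatrix}=n$ for all $\lambda$ with $|\lambda|\ge 1$, so $(E,A,B)$ has no uncontrollable modes on the unit circle; consequently the feasibility theorem above applies and $\cW_+(Ex^0)=(x^0)^*E^*XEx^0\in\R$ for every $x^0\in\K^n$.

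The core of the proof is the identity linking the objective to the closed-loop output $Kx+Lu$. For $(x,u)\in\mathfrak{B}_{(E,A,B)}$ with $Ex_0=Ex^0$, summing the telescoping relation \eqref{eq:manipulate_objfunc} over $k=0,\dots,N-1$ and using $\vect{x_k,u_k}\in\cV$ together with $\mathcal M(X)=_{\cV}\begin{bmatrix}K^*\\L^*\end{bmatrix}\begin{bmatrix}K&L\end{bmatrix}$ yields
\begin{equation*}
 \sum_{k=0}^{N-1}\vect{x_k,u_k}^*\begin{bmatrix}Q&S\\S^*&R\end{bmatrix}\vect{x_k,u_k}
 =\sum_{k=0}^{N-1}\| Kx_k+Lu_k \|^2 + (x^0)^*E^*XEx^0 - x_N^*E^*XEx_N.
\end{equation*}
For an admissible trajectory the last term tends to $0$, and the partial sums of $\| Kx_k+Lu_k \|^2$ are monotone, so letting $N\to\infty$ shows that $\cJ(x,u)$ is well defined in $(-\infty,\infty]$ and
\begin{equation*}
 \cJ(x,u)=(x^0)^*E^*XEx^0+\| Kx+Lu \|_{\ell^2}^2.
\end{equation*}
Since $(x^0)^*E^*XEx^0=\cW_+(Ex^0)$, an admissible $(x,u)$ with $Ex_0=Ex^0$ satisfies $\cJ(x,u)=\cW_+(Ex^0)$ if and only if $Kx_k+Lu_k=0$ for all $k$, i.e.\ if and only if $(x,u)\in\ZD[E][A][B][K][L](x^0)$; conversely, every $(x,u)\in\ZD[E][A][B][K][L](x^0)$ with $\lim_{j\to\infty}Ex_j=0$ is admissible and, by the same identity, optimal. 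Hence the optimal trajectories for $x^0$ are precisely the elements of $\ZD[E][A][B][K][L](x^0)$ satisfying $\lim_{j\to\infty}Ex_j=0$.

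The two statements now follow by bookkeeping through Proposition~\ref{prop:zd}. For part~(a): the existence of an optimal trajectory for every $x^0\in\K^n$ means that for every $x^0\in\K^n$ there is a $(x,u)\in\ZD[E][A][B][K][L](x^0)$ with $\lim_{j\to\infty}Ex_j=0$; this forces $\VshiftZD[E][A][B][K][L]=\K^n$, and together with Proposition~\ref{prop:zd}(b) it is equivalent to $\VshiftZD[E][A][B][K][L]=\K^n$ plus stabilizability of $\ZD[E][A][B][K][L]$, that is, to strong stabilizability (in the ``if'' direction, a trajectory with $\lim_{j\to\infty}(x_j,u_j)=0$ furnished by stabilizability also satisfies $\lim_{j\to\infty}Ex_j=0$). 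For part~(b): uniqueness of the optimal trajectory for every $x^0\in\K^n$ means that for every $x^0\in\K^n$ there is a unique $(x,u)\in\ZD[E][A][B][K][L](x^0)$ with $\lim_{j\to\infty}Ex_j=0$; this again forces $\VshiftZD[E][A][B][K][L]=\K^n$, and Proposition~\ref{prop:zd}(d) identifies the remaining requirement with asymptotic stability of $\ZD[E][A][B][K][L]$, so the conjunction is strong asymptotic stability. Conversely, strong asymptotic stability implies strong stabilizability, hence existence by part~(a), while Proposition~\ref{prop:zd}(d) supplies uniqueness.

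The step I expect to be delicate is the key identity: one must ensure that $\cJ(x,u)$ is meaningful for admissible trajectories not a priori in $\ell^2$ (the weight $\begin{bmatrix}Q&S\\S^*&R\end{bmatrix}$ being only Hermitian), and organize the telescoping so that the non-negative series $\sum_k\| Kx_k+Lu_k \|^2$ carries the convergence. This is exactly what legitimizes the dichotomy that $\cJ(x,u)=\cW_+(Ex^0)$ forces $Kx+Lu\equiv 0$; everything afterward is a translation through Proposition~\ref{prop:zd}.
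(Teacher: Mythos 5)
Your proposal is correct and follows essentially the same route as the paper: the identity $\cJ(x,u)=(x^0)^*E^*XEx^0+\|Kx+Lu\|_{\ell^2}^2$ (the paper's \eqref{eq:objlure}) identifies optimal trajectories with elements of $\ZD[E][A][B][K][L](x^0)$ satisfying $\lim_{j\to\infty}Ex_j=0$, and Proposition~\ref{prop:zd}(b),(d) together with $\VshiftZD[E][A][B][K][L]=\K^n$ translates this into strong stabilizability and strong asymptotic stability. Your extra care (deriving the identity for general admissible trajectories rather than only $\ell^2$ ones, and checking via the rank condition \eqref{eq:stablure} that the feasibility theorem applies) merely fills in details the paper leaves implicit.
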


\begin{proof}
 \begin{enumerate}[label=(\alph*)]
  \item In view of \eqref{eq:objlure}, we see that for every $x^0 \in \K^n$ there exists a trajectory $(x,u) \in \mathfrak{B}_{(E,A,B)}$ with $Ex_0 = Ex^0$ such that $\cW_+(Ex^0) = \cJ(x,u)$, if and only for each $x^0 \in \K^n$ there exists a $(x,u) \in \ZD[E][A][B][K][L](x^0)$ with $\lim_{j \to \infty} Ex_j = 0$. Using Proposition \ref{prop:zd}(b), this is equivalent to $\ZD[E][A][B][K][L]$ being strongly stabilizable.
  \item For every $x^0 \in \K^n$ there exists a unique trajectory $(x,u) \in \mathfrak{B}_{(E,A,B)}$ with $Ex_0 = Ex^0$ such that $\cW_+(Ex^0) = \cJ(x,u)$, if and only for each $x^0 \in \K^n$ there exists a unique $(x,u) \in \ZD[E][A][B][K][L](x^0)$ with $\lim_{j \to \infty} Ex_j = 0$. With Proposition \ref{prop:zd}(d), this is equivalent to $\ZD[E][A][B][K][L]$ being strongly asymptotically stable.
 \end{enumerate}
\end{proof}
Using the results of Proposition \ref{prop:zd}, we directly obtain the following corollary.
\begin{corollary}Let $(E,A,B,Q,S,R) \in \Sigma_{m,n}^w(\K)$ be I-controllable and assume that $(X,K,L) \in \K^{n \times n} \times \K^{q \times n} \times \K^{q \times m}$ is a stabilizing solution of the Lur'e equation \eqref{eq:dlure}. Define $\cR(z) := \left[\begin{smallmatrix} zE-A & -B \\ K & L \end{smallmatrix}\right] \in \K(z)^{(n+q) \times (n+m)}$. Then the following statements are satisfied:
 \begin{enumerate}[label=(\alph*)]
  \item For every $x^0 \in \K^n$ there exists a trajectory $(x,u) \in \mathfrak{B}_{(E,A,B)}$ with $Ex_0 = Ex^0$ such that $\cW_+(Ex^0) = \cJ(x,u)$, if and only if $\rk \cR(\lambda) = n+q$ for all $\lambda \in \C$ with $|\lambda| \ge 1$ and the index of $\cR(z)$ is at most one.
  \item For every $x^0 \in \K^n$ there exists a unique trajectory $(x,u) \in \mathfrak{B}_{(E,A,B)}$ with $Ex_0 = Ex^0$ such that $\cW_+(Ex^0) = \cJ(x,u)$, if and only if $\rk \cR(\lambda) = n+m$ for all $\lambda \in \C$ with $|\lambda| \ge 1$ and the index of $\cR(z)$ is at most one.
 \end{enumerate}
\end{corollary}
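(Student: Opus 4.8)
The plan is to derive the corollary directly from the preceding theorem by translating the stabilizability and stability properties of the zero dynamics $\ZD[E][A][B][K][L]$ into algebraic conditions on $\cR(z)$ via Proposition~\ref{prop:zd}. First I would recall the equivalences established in the preceding theorem: under the stated hypotheses, for every $x^0\in\K^n$ there is a trajectory $(x,u)\in\mathfrak{B}_{(E,A,B)}$ with $Ex_0=Ex^0$ and $\cW_+(Ex^0)=\cJ(x,u)$ if and only if $\ZD[E][A][B][K][L]$ is strongly stabilizable, and such a trajectory is unique if and only if $\ZD[E][A][B][K][L]$ is strongly asymptotically stable.

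Statement (b) is then immediate: Proposition~\ref{prop:zd}(f), applied to the pencil $\cR(z)=\left[\begin{smallmatrix} zE-A & -B\\ K & L\end{smallmatrix}\right]$, characterizes strong asymptotic stability of $\ZD[E][A][B][K][L]$ precisely by $\rk\cR(\lambda)=n+m$ for all $\lambda\in\C$ with $|\lambda|\ge1$ together with the index of $\cR(z)$ being at most one. For (a), Proposition~\ref{prop:zd}(e) characterizes strong stabilizability of $\ZD[E][A][B][K][L]$ by the two conditions $\rkr\cR(z)=\rk\cR(\lambda)$ for all $\lambda\in\C$ with $|\lambda|\ge1$, and the index of $\cR(z)$ being at most one. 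Here both sides of the first equality already equal $n+q$: since $(X,K,L)$ is a stabilizing solution, \eqref{eq:stablure} gives $\rk\cR(\lambda)=n+q$ for all $|\lambda|\ge1$; and since $(X,K,L)$ with $K\in\K^{q\times n}$, $L\in\K^{q\times m}$ solves \eqref{eq:dlure}, Proposition~\ref{prop:lurefullrankrhs} yields $\rkr\left[\begin{smallmatrix} zE-A & -B\\ (z-1)K & (z-1)L\end{smallmatrix}\right]=n+q$, whence $\rkr\cR(z)=n+q$ because scaling the last $q$ rows by the unit $(z-1)^{-1}\in\K(z)$ does not change the rank over $\K(z)$. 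Consequently the first condition of Proposition~\ref{prop:zd}(e) holds automatically, so $\ZD[E][A][B][K][L]$ is strongly stabilizable if and only if the index of $\cR(z)$ is at most one; combining this with the now-automatic rank equality $\rk\cR(\lambda)=n+q$ for all $|\lambda|\ge1$ gives exactly the condition in (a), and the preceding theorem closes the argument.

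Since the whole proof reduces to bookkeeping with already-established results, there is no genuine obstacle. The only step that is not purely formal is the identification of the normal rank $\rkr\cR(z)=n+q$, for which one must appeal to Proposition~\ref{prop:lurefullrankrhs} (after clearing the $(z-1)$ factors) rather than to the stabilizing-solution property alone; everything else is a direct substitution of Proposition~\ref{prop:zd} into the preceding theorem.
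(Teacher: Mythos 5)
Your proof is correct and follows essentially the same route as the paper, which obtains the corollary by directly combining the preceding theorem (existence/uniqueness of optimal trajectories $\Leftrightarrow$ strong stabilizability/strong asymptotic stability of $\ZD[E][A][B][K][L]$) with Proposition~\ref{prop:zd}(e),(f). One small quibble: the normal rank $\rkr\cR(z)=n+q$ already follows from the stabilizing-solution property alone, since $\rk\cR(\lambda)=n+q$ at any single $\lambda$ with $|\lambda|\ge 1$ bounds the normal rank from below while the row count $n+q$ bounds it from above, so the detour through Proposition~\ref{prop:lurefullrankrhs} and the $(z-1)$ scaling, while valid, is not actually needed.
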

Existence and uniqueness of optimal controls can also be read of the PKCF of the palindromic matrix pencil $z \A^* - \A$ in \eqref{eq:palpenc}. For this, one would need to analyze the spectral properties of the matrix pencil $\left[\begin{smallmatrix} zE-A & -B \\ (z-1)K & (z-1)L \end{smallmatrix}\right]$ in \eqref{eq:lurerankcond} corresponding to a stabilizing solution of the Lur'e equation and the structure of the deflating subspaces of individual blocks of the PKCF in detail as in \cite{voigt_linear-quadratic_2015}. For brevity of the article we leave out this result here.

\subsection{Application to palindromic and BVD matrix pencils}
Let us now discuss implications of the aforementioned results for the structure of optimal control with respect to the associated palindromic and BVD matrix pencils.
Thus, assume that $(X,\, K,\, L)\in\mat{n}{n}\times\mat{q}{n}\times\mat{p}{m}$ is a stabilizing solution of the Lur'e equation \eqref{eq:dlure}.
If  $x^0\in\Vshift$ is given, then $\behavior$ with $Ex_0=Ex^0$ and 
$\lim\limits_{j\rightarrow\infty} Ex_j= 0$ is an optimal control if and only if $\|Kx + Lu\|_{\ell^2}=0$. If this is the case, then $\vect{x,u}$ fulfills
\begin{gather*}
\begin{bmatrix}
E	& 0 \\
0	& 0
\end{bmatrix}
\begin{bmatrix}
x_{j+1}\\
u_{j+1}
\end{bmatrix}=
\begin{bmatrix}
A	& B \\
K	& L
\end{bmatrix}
\begin{bmatrix}
x_{j}\\
u_{j}
\end{bmatrix},\quad
Ex_0=Ex^0,\quad \lim\limits_{j\to\infty} Ex_j=0.
\end{gather*}

For an I-controllable system $\wsystemF$ as in \eqref{eq:fbsys} in feedback equivalence form with corresponding transformation matrices $W,T,F$ we set
\begin{equation*}
 X_F= W^{-*}X W^{-1}=
 \begin{bmatrix}
  X_{11} & X_{12}\\
  X_{21} & X_{22}
 \end{bmatrix}.
\end{equation*}
Since
\begin{align*}
 E^*XE
 =&T^{-*}
  \begin{bmatrix}
  I_{n_1} 	& 0\\
    0 		& 0
 \end{bmatrix}
  \begin{bmatrix}
  X_{11} & X_{12}\\
  X_{12}^* & X_{22}
 \end{bmatrix}
  \begin{bmatrix}
  I_{n_1} 	& 0\\
    0 		& 0
 \end{bmatrix}
 T^{-1}
  = 
 T^{-*}
  \begin{bmatrix}
  I_{n_1} 	& 0\\
    0 		& 0
 \end{bmatrix}
  \begin{bmatrix}
  X_{11} &	0\\
      0	 &	 0
 \end{bmatrix}
  \begin{bmatrix}
  I_{n_1} 	& 0\\
    0 		& 0
 \end{bmatrix}
 T^{-1},
\end{align*}
by Lemma~\ref{lem:dlure2clure} we can without loss of generality set $X_{12}=0$ and $X_{22}=0$.

In addition, further assuming that 
$
 \rk \begin{bmatrix}E-A & B\end{bmatrix}=n,
$   
from Theorem~\ref{prop:dluredfss} we obtain a deflating subspace $Y\in\matz{2n+m}{n+m}$ of the BVD pencil $z\E-\A$ as in \eqref{eq:bvdpenc}, \ie we have $Z\in\mat{2n+m}{n+q}$ and a matrix pencil $z\check E-\check A\in\matz{n+q}{n+m}$ such that $(z\E-\A )Y=Z(z\check E-\check A)$. It can be constructed as in Remark~\ref{rem:deflssbvd}\ref{it:deflssbbvd}. Inserting $\OpShift$ for $z$  leads to
\begin{multline}\label{eq:optcontroldefssbvd}
\left[
    \begin{array}{@{}c|cc@{} }
    0				& \OpShift E-A		& -B 			\shline{1.6}
      \OpShift A^*-E^*		& -Q			& -S 	\\
	\OpShift B^*		& -S^*			& -R
  \end{array}\right]
  \left[
    \begin{array}{@{}c|c@{} }
    -XE + G_1 			& G_2 		\shline{2.5}
    V_1			& V_2
\end{array}\right]
   \begin{pmatrix}
     x_j\\
     u_j
   \end{pmatrix}
   \\
   =Z
   \left[
    \begin{array}{@{}ccc@{} }
     \OpShift I_{n_1}-A_{11}			& 0			& -B_1						\\
     K_1				& 0			& L-K_2B_2			\\		  
     -Q_{12}^*			&- \OpShift I_{n_2}	& -\OpShift B_2 +Q_{22}B_2-S_2 
    \end{array}\right]   \mathcal T_F^{-1}
    \begin{pmatrix}
     x_j\\
     u_j
   \end{pmatrix},
\end{multline}
 where $\im \left[\,G_1 \enspace G_2\,\right]\subseteq \ker E^*$, see \eqref{eq:G1G2},
  \begin{equation*}
    \begin{bmatrix}
       V_1	& V_2
    \end{bmatrix}:=\mathcal T_F V_F \mathcal T_F^{-1},
  \end{equation*}
  and 
  \begin{equation*}
    V_F:=
    \begin{bmatrix}
       I_{n_1}			& 0			& 0			\\  		
    0				& 0			& -B_2			\\
    0				& 0			& I_m		
    \end{bmatrix}.
  \end{equation*}
Since $\vect{x_j,u_j}\in\cV$,
it follows with Proposition~\ref{prop:sysspace}\ref{it:sysspacea} that 
\begin{equation*}
\mathcal T_F^{-1} 
\begin{pmatrix}
  x_j\\
  u_j
\end{pmatrix}
= 
\begin{pmatrix}
  x_{1,j}\\
  -B_2u_j\\
  u_j
\end{pmatrix}
\end{equation*}
 for some $x_{1,j}\in \K^{n_1}$.
Then by Lemma~\ref{lem:dlure2clure} and Lemma~\ref{lem:dlure2flure} we have that 
\begin{equation}\label{eq:bvdrhs1bvd}
  \begin{bmatrix}
    K_1	& L-K_2B_2
  \end{bmatrix}
  \begin{pmatrix}
    x_{1,j} \\
    u_j
  \end{pmatrix}
  =
  \begin{bmatrix}
    K_F	& L_F
  \end{bmatrix}\mathcal{T}_F^{-1}
  \begin{pmatrix}
    x_j \\
    u_j
  \end{pmatrix}
  =
    \begin{bmatrix}
    K	& L
  \end{bmatrix}
  \begin{pmatrix}
    x_j \\
    u_j
  \end{pmatrix}=0.
\end{equation}
In addition, from \eqref{eq:feedbackkyp} we obtain
\begin{equation*}
  \begin{bmatrix}
    Q_{12}^*	& Q_{22}	& S_2
  \end{bmatrix}=K_2^*
  \begin{bmatrix}
    K_1	& K_2 & L
  \end{bmatrix}
\end{equation*}
and thus
\begin{align}\label{eq:bvdrhs2bvd}
  \begin{split}
 \begin{bmatrix}
    Q_{12}^*	& S_2 -Q_{22}B_2
  \end{bmatrix}
  \begin{pmatrix}
    x_{1,j} \\
    u_j
  \end{pmatrix}
  =&
    \begin{bmatrix}
    Q_{12}^*	& Q_{22}	& S_2
  \end{bmatrix}
  \mathcal{T}_F^{-1}
  \begin{pmatrix}
    x_j \\
    u_j
  \end{pmatrix}
  =K_2^*
  \begin{bmatrix}
    K_1	& K_2 & L
  \end{bmatrix}
  \mathcal{T}_F^{-1}
  \begin{pmatrix}
    x_j \\
    u_j
  \end{pmatrix}\\
  =&K_2^*
  \begin{bmatrix}
    K	& L
  \end{bmatrix}
  \begin{pmatrix}
    x_j \\
    u_j
  \end{pmatrix}=0.
  \end{split}
\end{align}
Thus, by equations \eqref{eq:bvdrhs1bvd} and \eqref{eq:bvdrhs2bvd} the right-hand-side of \eqref{eq:optcontroldefssbvd} is zero. Furthermore, by Proposition~\ref{prop:sysspace}\ref{it:sysspaced} we have that 
\begin{equation*}
V
\begin{pmatrix}
  x_j\\
  u_j
\end{pmatrix}
=
\begin{pmatrix}
  x_j\\
  u_j
\end{pmatrix}.
\end{equation*}
Set
\begin{equation*}\label{eq:lagrangereprbvd}
\mu_j := 
\begin{bmatrix}
  -XE + G_1 & G_2 
\end{bmatrix}
\begin{pmatrix}
  x_j\\
  u_j
\end{pmatrix}.
\end{equation*}
Thus  
\begin{equation*}
 \lim\limits_{j\to\infty}{E^*\mu_j} =\lim\limits_{j\to\infty}{-E^*XEx_j}=0,
\end{equation*}
and hence, $(\mu_j)_j$
is part of a solution of the boundary value problem
\begin{gather*}
  \begin{bmatrix}
    0			&  E	&  	0		\\
       A^*		& 0	& 0 			\\
	 B^*		& 0	& 0
  \end{bmatrix}
  \OpShift
  \begin{pmatrix}
    \mu\\
    x\\
    u
  \end{pmatrix}
  =
    \begin{bmatrix}
    0		&  A	& B			\\
    E^*		& Q	& S 			\\
	0	& S^*	& R
  \end{bmatrix}
  \begin{pmatrix}
    \mu\\
    x\\
    u
  \end{pmatrix},\qquad
  Ex_0 = E x^0, \quad \lim\limits_{j\to\infty}{E^*\mu_j}=0.
\end{gather*}

Moreover, for an I-controllable system $\wsystem*$ we can  take the same approach for a deflating subspace $Y\in\matz{2n+m}{n+m}$ of the palindromic pencil $z\A^*-\A$ as in \eqref{eq:palpenc} obtained  in Theorem~\ref{thm:dluredfss}. There we have $Z\in\mat{2n+m}{n+q}$ and a matrix pencil $z\check E-\check A\in\matz{n+q}{n+m}$ such that $(z\A^*-\A) Y=Z(z\check E-\check A)$. It can be constructed as in Remark~\ref{rem:deflss}\ref{it:deflssb}. Inserting $\OpShift$ for $z$ leads to
\begin{multline}\label{eq:optcontroldefss}\left[
    \begin{array}{@{}c|cc@{} }
    0				& \OpShift E-A		& -B 			\shline{1.6}
      \OpShift A^*-E^*		& (\OpShift-1)Q		& (\OpShift-1)S 	\\
	\OpShift B^*		& (\OpShift-1)S^*	& (\OpShift-1)R
  \end{array}\right]\left[
  \begin{array}{@{}c|c@{} }
    -X(E-A) +G_1		& -XB + G_2 		\shline{2.3}
    V_1			& V_2
   \end{array}\right]
   \begin{pmatrix}
     x_j\\
     u_j
   \end{pmatrix}
   \\
   =Z
   \left[
    \begin{array}{@{}ccc@{} }
     \OpShift I_{n_1}-A_{11}			& 0			& -B_1						\\
    (\OpShift -1)K_1				& 0			& (\OpShift -1)(L-K_2B_2)			\\		  
      (\OpShift -1)Q_{12}^*			& -\OpShift I_{n_2}	& -\OpShift B_2 +(\OpShift -1) (S_2 - Q_{22}B_2 )
    \end{array}\right]   \mathcal T_F^{-1}
    \begin{pmatrix}
     x_j\\
     u_j
   \end{pmatrix},
\end{multline}
 where $\im \left[\,G_1 \enspace G_2\,\right]\subseteq \ker E^*$, see \eqref{eq:G1G2pal}.

Again by equations \eqref{eq:bvdrhs1bvd} and \eqref{eq:bvdrhs2bvd} the right-hand-side of \eqref{eq:optcontroldefss} is $0$. Set
\begin{equation}\label{eq:lagrangereprpal}
m_j :=  
\begin{bmatrix}
  X(A - E) + G_1 & XB + G_2
\end{bmatrix}
\begin{pmatrix}
  x_j\\
  u_j
\end{pmatrix}.
\end{equation}
Thus  
\begin{align*}
 \sum_{k=0}^\infty{E^*m_k}=&  \sum_{k=0}^\infty{-E^*XEx_k + E^*X
 \begin{bmatrix}
   A  & B
\end{bmatrix}
\begin{pmatrix}
  x_k\\
  u_k
\end{pmatrix} 
 }\\
 =&
 \sum_{k=0}^\infty{-E^*XE(x_k - x_{k+1})}\\
 =&-E^*XEx_0 = E^* \mu_0
\end{align*}
and hence, $(m_j)_j$
is part of a solution of the boundary value problem
\begin{gather}
\begin{split}\label{eq:bvppal}
  \begin{bmatrix}
    0			&  E	&  	0		\\
       A^*		& Q	& S 			\\
	 B^*		& S^*	& R
  \end{bmatrix}
  \OpShift
  \begin{pmatrix}
    m\\
    x\\
    u
  \end{pmatrix}
  =
    \begin{bmatrix}
    0		&  A	& B			\\
    E^*		& Q	& S 			\\
	0	& S^*	& R
  \end{bmatrix}
  \begin{pmatrix}
    m\\
    x\\
    u
  \end{pmatrix},\quad
  Ex_0 = E x^0, \quad \sum_{k=0}^\infty{E^*m_k}=E^*\mu_0.
  \end{split}
\end{gather}

\begin{example}[Example \ref{ex:simplecircuitfeedback} revisited]
 Consider  the system $\wsystem*$ as in \eqref{eq:simplecircuitmatrices} and Example~\ref{ex:simplecircuitkyp}. In Example~\ref{ex:simplecircuitidelure} we have seen that 
\begin{equation*}(X,\,K,\,L)=\left(
  \begin{bmatrix}
    \sqrt{3} & \sqrt{3}\\
    \sqrt{3} & \sqrt{3}
  \end{bmatrix},\,
    \begin{bmatrix}
  0 &{\sqrt2}
 \end{bmatrix},\,
   -\frac{\sqrt3+1}{\sqrt2}
  \right)
\end{equation*}
is a solution of the Lur'e equation \eqref{eq:dlure}. We have
\begin{equation*}
  E^*XE=
  \begin{bmatrix}
    0	& 0\\
    0	& \sqrt3
  \end{bmatrix}
\end{equation*}
and thus by \eqref{eq:optimalvalueboundedbelow} for every 
\begin{equation*}
  x^0=
  \begin{pmatrix}
    x^0_1\\
    x^0_2
  \end{pmatrix}
  \in\Vshift
\end{equation*}
the optimal value $\inffunc$ is bounded from below by $\sqrt3 \,|x^0_2|^2$.

Indeed, setting 
\begin{equation*}
 u_j = \frac{2}{\sqrt3 +1}\left(1-\frac{2}{\sqrt3 +1}\right)^j x^0_2,
\end{equation*}
we obtain that
\begin{equation*}
 x_j = 
 \begin{pmatrix}
  1-\frac{2}{\sqrt3 +1}\\
  1
 \end{pmatrix}
 \left(1-\frac{2}{\sqrt3 +1}\right)^j x^0_2
\end{equation*}
solves the system given by \eqref{eq:simplecircuitmatrices} with 
\begin{align*}
 \objfunc = &
 \sum_{j=0}^\infty{
  \|x_j\|^2 + \|u_j\|^2
 }
 =|x_2^0|^2\frac{12-6\sqrt3}{1-\left(1-\frac{2}{\sqrt3 +1}\right)^2}=|x_2^0|^2\sqrt3,
\end{align*}
\ie $(x,u)$ is an optimal control fulfilling $Ex_0=Ex^0$ and $\lim\limits_{j\to\infty}Ex_j=0$.

In particular, from \eqref{eq:simplecircuitdeflatingsubspace} we obtain
that
\begin{equation*}
m_j=
 \begin{bmatrix}
  1 & -1 & -\sqrt3 +1\\
  0 & 0 & -\sqrt3
 \end{bmatrix}
 \begin{pmatrix}
  x_j\\
  u_j
 \end{pmatrix}
 =
 -\sqrt{3} \frac{2}{\sqrt3 +1}
 \begin{pmatrix}
   1\\
  1
 \end{pmatrix}
\left(1-\frac{2}{\sqrt3 +1}\right)^j x^0_2
\end{equation*}
fulfills the boundary value problem \eqref{eq:bvppal}, where
\begin{equation*}
  \mu_0=  -\sqrt3
  \begin{pmatrix}
    1\\
    1
  \end{pmatrix}
x_2^0.
\end{equation*}

\end{example}

\section{Conclusions and Outlook}
We have discussed several problems arising in the discrete-time linear-quadratic optimal control problem and we have seen their relations to the results that have been obtained in the continuous-time setting.  In Section~\ref{chap:kyp} we have discussed an extension of the Kalman-Yakubovich-Popov inequality for standard difference equations to the case of implicit difference equations. The characterizations are  analogous to what was obtained in \cite{voigt_linear-quadratic_2015, reis_kalmanyakubovichpopov_2015} in the continuous-time case. Nonetheless, some more technical difficulties had to be tackled. For an analogous relaxation of the controllability assumption to sign-controllability we would need the discrete-time analog of of \cite[Theorem 6.1]{clements_spectral_1997}. 

In Section~\ref{chap:inertia} we further related the spectral properties of the palindromic pencil associated to the discrete-time optimal control problem \eqref{eq:objectivefunction} to the positivity of the Popov function on the unit circle. To this end, we introduced the notion of quasi-Hermitian matrices which allows for a generalization of the concept of inertia. 

In Section~\ref{chap:lure} we introduced Lur'e equations for explicit as well as for implicit difference equations. We have shown that solvability of these equations is equivalent to the existence of certain deflating subspaces of the BVD and palindromic pencil arising in the discrete-time control problem \eqref{eq:objectivefunction}. In the palindromic case we needed the additional assumption that the given system is controllable at the eigenvalue one, which can always be achieved for discrete-time systems originating from discretization. It is an open question whether this condition can be dropped if the latter is not the case.

In Section~\ref{chap:applications} we have seen how we can use these results to characterize feasibility of the optimal control problem as well as existence and uniqueness of optimal controls. Furthermore, we have shown how the deflating subspaces are related to the solutions of the related two-point boundary value problems.

\nomenclature[asets]{$\N$}{$=\{1,2,\ldots\}$; set of natural numbers}
\nomenclature[asets]{$\N_0$}{$=\N\cup\{0\}$}
\nomenclature[asets]{$\R$}{field of real numbers}
\nomenclature[asets]{$\R^+$}{set of positive real numbers}
\nomenclature[asets]{$\R^+_0$}{set of non-negative real numbers}
\nomenclature[asetsb]{$\C$}{field of complex numbers}
\nomenclature[asetsz]{$\K$}{$\in\{\C,\R\}$}
\nomenclature[av]{$\K[z]$}{ring of polynomials with coefficients in $\K$}
\nomenclature[aw]{$\K(z)$}{field of rational functions that can be expressed as fraction of elements of $\K[z]$}
\nomenclature[bmatra]{$e_i^k$}{$i$-th unit vector in $\K^k$}
\nomenclature[bmatri]{$K^{\N_0}$}{set of all sequences $x=(x_j)_j$ whose components lie in the space $K$}
\nomenclature[bmatria]{$\cR^{m \times n}$}{set of $m$ by $n$ matrices with entries in a ring $\cR$}
\nomenclature[bmatrix]{$A^*$}{conjugate transpose of a matrix $A\in\mat{m}{n}$}
\nomenclature[bmatrix]{$A^{-*}$}{conjugate transpose of the inverse of an invertible matrix $A\in\mat{n}{n}$}
\nomenclature[bmatrix]{$\det A$}{determinant of a matrix $A\in\mat{n}{n}$}
\nomenclature[bmatrix]{$A^+$}{Moore-Penrose pseudo inverse of a matrix $A\in\mat{m}{n}$}
\nomenclature[cb]{$\rkr A(z)$}{rank of a rational matrix $A(z)\in\matrz{m}{n}$}

\nomenclature[cb]{$G^\sim(z)$}{$:=G\left(\overline{z}^{-1}\right)^*$ for a rational matrix $G(z)\in\matrz{n}{n}$}

\nomenclature[dddddd]{$\pipe x\pipe_2$}{2-norm of a vector $x\in\K^n$}
\nomenclature[ddddde]{$\ell^2(\K^n)$}{space of quadratic-summable sequences $x\in(\K^n)^{\N_0}$, \ie $\sum_{k=0}^{\infty}{\pipe x_j\pipe_2}<\infty$}
\nomenclature[dddddf]{$\pipe x\pipe_{\ell^2}$}{$=\left(\sum_{k=0}^{\infty}{\pipe x_j\pipe^2_2}\right)^\frac12$; $\ell^2$-norm of a sequence $x\in\ell^2(\K^n)$}
\nomenclature[fasystemsc]{$\Vshift$}{set of all $x^0\in\K^n$ such that there exists $\behavior$ with $Ex_0=Ex^0$ }
\nomenclature[zzz]{$\mathcal W ^0_{\outputsystem*}$}{set of all $x^0\in\K^n$ such that there exists $\vect{x,u}\in\ZD$ with $Ex_0=Ex^0$}

\nomenclature[fasystemsa]{$\behavior *$}{set of all $\vect{x,u}$ which solve the discrete-time IDE \eqref{eq:linsystemdisc}}
\nomenclature[fasystemsb]{$\cV$}{system space of $\system$, see Def.~\ref{def:systemspace}}
\nomenclature[zz]{$\ZD$}{set of all $\behavior$ such that $Cx_j + Du_j =0,\,j\in\N_0$}

{%
\section*{Bibliography}
\bibliographystyle{abbrvnat}
\bibliography{../bib/master,../bib/literatureresearch}
}
\end{document}